\definecolor{BleuTresFonce}{rgb}{0.215, 0.215, 0.36}
\definecolor{airforceblue}{rgb}{0.36, 0.54, 0.66}
\theoremstyle{plain}
\newtheorem{thm}{Theorem}%[section]
\newtheorem*{thm*}{Theorem}
\newtheorem{lemma}{Lemma}%[section]
\newtheorem{prop}{Proposition}%[section]
\newtheorem{cor}{Corollary}%[section]
\newtheorem*{theointro1}{Theorem 2}
\newtheorem*{theointro2}{Theorem 3}
\newtheorem*{theointro3}{Theorem 6}
\newtheorem*{theointro4}{Theorem 7}
\theoremstyle{definition}
\newtheorem{defin}{Definition}%[section]
\newtheorem*{nota}{\sc Notations}%[section]
\theoremstyle{remark}
\newtheorem{rmk}{\sc Remark}%[section]
\newtheorem{eg}{\sc Example}%[section]
\newcommand{\id}{\mathrm{Id}}
\newcommand{\ii}{\mathrm{id}}
\newcommand{\Lin}{Lin}
\newcommand{\BijC}{\mathsf{Bij}_{C}}
\newcommand{\PP}{\mathcal{P}}
\newcommand{\Sy}{\mathbb{S}}
\newcommand{\Tree}{\mathsf{Tree}}
\newcommand{\poubelle}[1]{}
\title{From homotopy operads to infinity-operads}
\author{Brice Le Grignou}
\address{University  Nice Sophia Antipolis, CNRS,  LJAD, UMR 7351, 06100 Nice, France.}
\email{Brice.LE\_GRIGNOU@unice.fr}
\date{\today}
\subjclass[2010]{18D50, 55U10, 18G30 and 55U40}
\keywords{Operads, higher categories, nerve functor}
\thanks{The author was supported by the ANR HOGT and the ANR SAT grants.}
\begin{document}

\maketitle

\begin{abstract}
The goal of the present paper is to compare, in a precise way, two notions of operads up to homotopy which appear in the literature. Namely,  we construct a functor from the category of strict unital homotopy colored operads  to the category of infinity-operads. The former notion, that we make precise, is the operadic generalization of the notion of A-infinity-categories and the latter notion was defined by Moerdijk--Weiss in order to generalize the simplicial notion of infinity-category of Joyal--Lurie. This functor extends in two directions the simplicial nerve of Faonte--Lurie for A-infinity-categories and the homotopy coherent nerve of Moerdijk--Weiss for differential graded operads; it is also shown to be equivalent to a big nerve \`a la Lurie for differential graded operads. We prove that it satisfies some homotopy properties with respect to weak equivalences and fibrations; for instance, it is shown to be a right Quillen functor.
\end{abstract}

\setcounter{tocdepth}{1}
\tableofcontents

\section*{Introduction}

In Algebra, the structure relations hold strictly. This is for instance the case for sets, groups, rings, vector spaces, etc. So all of these examples are well encoded by categories. In this context, two objects are considered to represent equivalent notions if they are related by an isomorphism. However, in some areas of Mathematics, this equivalence relation is too strong and one would like to consider only weakly equivalent objects. For example, two categories are considered to be essentially the same if they are equivalent, two chain complexes give rise to the same homology groups when they are quasi-isomorphic, and two topological spaces have the same homotopy type if they are related by weak homotopy equivalences. In these examples, one can consider the Dwyer--Kan localization with respect to the class of weak equivalences. This provides us with a higher category structure made up of $2$-morphisms, which are morphisms between morphisms, and, in general, $n$-morphisms, which are morphisms between $(n-1)$-morphisms, for integers $n$. These $n$-morphisms are invertible for $n\geq2$ and they encode coherent higher homotopies. In this way, one can perform higher algebra in these $(\infty,1)$-categories, which can actually be either categories enriched in simplicial sets, like the Dwyer--Kan localization, or Joyal's quasi-categories, called $\infty$-categories by Jacob Lurie \cite{Lurie09}. Recall that a quasi-category is a simplicial set satisfying a lifting condition, which defines the composition of morphisms up to homotopy.\\

Categories enriched in chain complexes, called dg categories, are a kind of linear version of $\infty$-categories. In his book \cite{Lurie12}, Jacob Lurie presents several ways to interpret  dg categories as $\infty$-categories. Relaxing the associativity relation of the composition rule in a dg category leads to the definition of an $\mathcal{A}_{\infty}$-category, notion which now plays a key role in symplectic geometry \cite{FOOO09I}. In \cite{Faonte13}, Giovanni Faonte extends one of Lurie's constructions to the case of $\mathcal{A}_{\infty}$-categories and so builds a simplicial nerve of $\mathcal{A}_{\infty}$-categories. Let $[n]$ be the poset $0<1< \cdots <n$ canonically enriched into a dg category; the simplicial nerve of an $\mathcal{A}_{\infty}$-category $\mathsf{C}$ is defined by the following simplicial set:
\begin{equation*}
\mathrm{N}_{\mathcal{A}_{\infty}}(\mathsf{C})_n:=\mathrm{Hom}_{\mathcal{A}_{\infty}\textrm{-}\mathsf{cat}}([n],\mathsf{C})\ ,
\end{equation*}
for any integer $n$. Faonte's main result asserts that this actually forms a quasi-category.\\

Representations of associative algebras are made up of linear operators. To encode multi-linear operators, which are operators with many inputs but one output, one can use representations of operads. Furthermore, to encode multi-linear operators on a many components object, one can use reprensentations of colored operads. A colored operad $\mathscr{P}$ is the data of a set of colors and sets of multi-linear operations, with inputs and output labeled  by the colors. One can only compose operations with matching colors. Moreover, the action of the symmetric groups allows one to permute the inputs. Note that any category may be viewed as a colored operad, where the objects are the colors and where the maps are  operations of arity one.\\

On the one hand, the notion of quasi-category has an analogue in the world of operads which is called an $\infty$-operad; this notion has been developed by Ieke Moerijk and Ittay Weiss in \cite{MoerdijkWeiss07}. First one needs an operadic generalization of the notion of a simplicial set: it is defined as a contravariant functor from trees to sets and called a dendroidal set. In the same way as a quasi-category is a simplicial set satisfying a lifting property, an $\infty$-operad is a dendroidal set satisfying a similar lifting property. On the other hand, the differential graded notion of an $\mathcal{A}_{\infty}$-algebra has also been extended  by Pepijn Van der Laan \cite{VanDerLaan03} to a notion called homotopy operad. A homotopy operad is the data of operations, with zero or many inputs but one output, structured in a family of differential graded $\mathbb{K}$-modules. The group actions and composition maps are well defined but the latter ones are ``associative'' only up to a family of higher homotopies. Actually, a homotopy operad whose operations only have one input is an $\mathcal{A}_{\infty}$-algebra. \\

The present paper has the following two main goals. The first one is to define a suitable notion of homotopy colored operads, with a homotopy control of the unit, and which completes the following commutative diagram.
\begin{equation*}
\xymatrix@R=20pt@C=12pt{%
& \mathrm{algebras}  \ar@{->}[rr]^{\mathrm{multi-linear\ operations}}\ar@{->}[dl]_{\mathrm{many\ colors}}\ar@{->}[dd]|!{[d];[d]}\hole && \mathrm{operads} %
\ar@{->}[dl] \ar@{->}[dd]\\
\text {dg categories}\ar@{->}[rr]\ar@{->}[dd]_{\mathrm{up\ to\ homotopy}}&& \mathrm{colored} \  \mathrm{operads} \ar@{->}[dd]\\
& \mathcal{A}_{\infty}\textrm{-}\mathrm{algebras} \ar@{->}[rr]|!{[r];[r]}\hole \ar@{->}[dl] %
&& \mathrm{homotopy} \  \mathrm{operads} \ar@{->}[dl]\\
\mathcal{A}_{\infty}\textrm{-}\mathrm{categories}\ar@{->}[rr] && 
\begin{minipage}[c]{4cm} 
\bf strict unital homotopy 
colored operads
\end{minipage} 
}
\end{equation*}
The second goal is to extend the nerve of Faonte--Lurie to the operadic level, study this extension and compare it with other constructions which appear in the literature. More precisely, we build a nerve functor $\mathrm{N}^{\Omega}$ from the category of strict unital homotopy colored operads to dendroidal sets, such that for any $\mathcal{A}_{\infty}$-category $A$, the simplicial set induced by the dendroidal set $\mathrm{N}^{\Omega}(A)$ is equal to $\mathrm{N}_{\mathcal{A}_{\infty}}(A)$.

\begin{equation*}
\xymatrix{\mathcal{A}_{\infty}\textrm{-}\mathrm{categories} \ar@{^{(}->}[d] \ar[rr]^(0.51){\mathrm{N}_{\mathcal{A}_{\infty}}} && \mathrm{simplicial \ sets} \\
\text{strict unital homotopy colored operads} \ar[rr]^(0.65){\mathrm{N}^{\Omega}} && \text{dendroidal sets} \ar[u]}
\end{equation*}

This \textit{dendroidal nerve} is defined as follows. For any tree $T$, let us denote by $\mathbb{K}\Omega(T)$ the canonical algebraic colored operad induced by $T$ and consider it as a strict unital homotopy colored operad. The dendroidal nerve $\mathrm{N}^{\Omega}(\mathscr{P})$ of a strict unital homotopy colored operad $\mathscr{P}$ is defined by
$$
\mathrm{N}^{\Omega}(\mathscr{P})_T:=\mathrm{Hom}_{\mathsf{suOp}_{\infty}}(\mathbb{K}\Omega(T),\mathscr{P}) \ ,
$$
where $\mathsf{suOp}_{\infty}$ is the category of strict unital homotopy colored operads, with their morphisms, sometimes called $\infty$-morphisms. The following theorem gives the first comparison statement between the two worlds of homotopy operads and $\infty$-operads.

\begin{theointro1}
The dendroidal nerve of a strict unital homotopy colored operad is an $\infty$-operad.
\end{theointro1}

The dendroidal nerve $\mathrm{N}^{\Omega}$ generalizes the homotopy coherent nerve $\mathrm{hcN}$ of colored operads on chain complexes built by Ieke Moerdijk and Ittay Weiss in \cite{MoerdijkWeiss07}.

\begin{theointro2}
There is a canonical isomorphism
$$
\mathrm{hcN}(\mathscr{P}) \simeq \mathrm{N}^{\Omega}(\mathscr{P}) \ ,
$$
which is natural in dg colored operads $\mathscr{P}$.
\end{theointro2}

We prove that the dendroidal nerve functor satisfies some nice homotopy properties. For instance, if we endow the category of dendroidal sets with the Cisinski--Moerdijk model structure introduced in \cite{CisinskiMoerdijk11}, we can characterize the morphisms of strict unital homotopy colored operads whose image under the nerve $\mathrm{N}^{\Omega}$ are weak equivalences (resp. fibrations). As a consequence, if we consider the category of colored operads on chain complexes over a field of characteristic zero with the model structure introduced in \cite{Caviglia14}, then we can show that the homotopy coherent nerve $\mathrm{hcN}$ is a right Quillen functor.

\begin{theointro3}
There is a Quillen adjunction
$$
\xymatrix{\mathsf{dgOp} \ar@<1ex>[r]^{\mathrm{hcN}} & \mathsf{dSet} \ar@<1ex>[l]^{W_!^{dg}} \ .}
$$
\end{theointro3}

Finally, from a category enriched in chain complexes, one can truncate the mapping spaces and apply to them the Dold--Kan construction to obtain a simplicial category. Then, one can apply the simplicial nerve of simplicial categories and get a quasi-category. This procedure called the big nerve is introduced by Lurie in \cite[Section 1.3.1]{Lurie12}. He shows that this nerve is equivalent to the simplicial nerve $\mathrm{N}_{\mathcal{A}_{\infty}}$. The big nerve and the latter result can be extended to the operadic level.

\begin{theointro4}
There exists a transformation of functors $\alpha^*$ from the big nerve of dg operads to the homotopy coherent nerve such that, for any dg colored operad $\mathscr P$, the morphism $\alpha^*(\mathscr P): \mathrm{N}^{\mathrm{big}}_{\mathrm{dg}}(\mathscr P) \rightarrow \mathrm{hcN}(\mathscr P)$ is a weak equivalence of dendroidal sets.
\end{theointro4}

\paragraph*{\bf Layout}
In the first section of this paper, we recall the definitions of colored operads, dendroidal sets, and $\infty$-operads; we provide the reader with more details on the relations between trees and operads. 
In the second section, we define colored generalizations of the notions of cooperads, homotopy operads and strict unital homotopy operads.
The third section is the main part of this paper. There, we introduce the dendroidal nerve which goes from the category of strict unital colored homotopy operads to the category of dendroidal sets. We prove that its image actually lands in the category of $\infty$-operads and we show that its restriction to dg operads is equal to the homotopy coherent nerve of Moerdijk--Weiss. Then we investigate its homotopical behavior and show that the homotopy coherent nerve is a right Quillen functor.
In the fourth section, we introduce the big nerve of dg colored operads which extends the big nerve of dg categories of Lurie. Finally, we show that it is pointwise equivalent to the homotopy coherent nerve.
In the appendix, we prove a equivalence between two notions of operations spaces of $\infty$-operads.\\

\paragraph*{\bf Acknowledgements} I would like to thank Damien Calaque for leading me to the idea of this paper, Clemens Berger for sharing his clear approach to colored operads and my advisor Bruno Vallette for his precious advice and careful review of this paper. I am very grateful to Ieke Moerdijk for sharing with me some of his ideas about the homotopy theory of dendroidal sets.\\

\paragraph*{\bf Conventions}
\begin{itemize}
\item[$\triangleright$] We work over a unital commutative ring $\mathbb{K}$.

\item[$\triangleright$] We denote by $\mathsf{Set}$ the category of sets, $\Delta$ the category of finite sets $\{0,\ldots,n\}$, for $n \geq 0$, with monotonic functions and $\mathsf{sSet}$ the category of simplicial sets.

\item[$\triangleright$] We denote by $\mathsf{gr}\textsf{-}\mathsf{Mod}$ the category of graded $\mathbb{K}$-modules, $\mathsf{dg}\textsf{-}\mathsf{Mod}$ the category of chain complexes of $\mathbb{K}$-modules, and $\mathsf{dg}\textsf{-}\mathsf{Mod}^{\geq 0}$ the category of nonnegatively graded chain complexes of $\mathbb{K}$-modules. These three categories are endowed with their canonical symmetric monoidal structure.

\item[$\triangleright$] For any chain complex $V$, the \textit{suspension} of $V$ is the shifted chain complex $sV$ such that $(sV)_n:=V_{n-1}$ and $d_{sV}(sv):=-sd_V(v)$. 

\item[$\triangleright$] If $x$ is a homogeneous element of a graded $\mathbb{K}$-module, then its degree is denoted by $|x|$.

\item[$\triangleright$] Let $\mathsf{gr}\textsf{-}\mathsf{Mod}^{\mathsf{deg}}$ be the category of graded $\mathbb{K}$-modules with graded morphisms.

%\item[$\triangleright$] Greek letters will denote functions between sets and roman letters will denote other kinds of morphisms.

\item[$\triangleright$]  Finally, we let $[n]$ denote the poset $0<1<\cdots <n$ and $\underline{n}$ the set $\{1,\ldots,n\}$.
%\item $\mathsf{Op}_{\inf}$, $\infty\mathsf{-Op}$...
\end{itemize}

%%%%%%%%%%%%%%%%%%%%%%%%%%

%\input{recollectionf.tex}%%%%%%%   RECOLLECTIONS

\section{Recollections on colored operads and dendroidal sets}

In this section, we recall the notions of colored operads, dendroidal sets and $\infty$-operads. These concepts are intimately related to the properties of trees. First, we make more precise the appendix of \cite{BergerMoerdijk07} defining colored operads as monoids. This clear presentation will allow us to introduce the relevant new operadic notions in the next section. 
Then, we give a short survey on dendroidal sets and $\infty$-operads together with their homotopical properties after the original references \cite{MoerdijkWeiss07}, \cite{CisinskiMoerdijk11} and \cite{CisinskiMoerdijk13a}.

\subsection{Colored operads as monoids}
We give a monoidal definition of colored operads over a symmetric monoidal category $(\mathsf{E}, \otimes , \mathds{1}_{\mathsf{E}})$ with colimits preserved by the monoidal product. This notion can be found in the appendix of the paper \cite{BergerMoerdijk07} by Berger--Moerdijk. We start working over a fixed \textit{set of colors} $C$.

\begin{defin}[The groupoid $\BijC$]
Let $\BijC$ be the category whose objects are pairs $(\chi:X \to C ; c)$, where $c$ is a color in ${C}$ and $\chi$ is a function from a finite set $X$ to the set $C$. A morphism from $(c;\chi:X \rightarrow C)$ to $(\chi':Y \rightarrow C ; c)$ consists of a a bijection $\beta: X \rightarrow Y$ such that the following diagram commutes 
$$
\xymatrix{X \ar[rd]_{\chi} \ar[rr]^{\beta} && Y \ar[ld]^{\chi'} \\
&  C \ .
}
$$
There are no morphisms between objects $(\chi ; c)$ and $(\chi' ; c')$ when $c \neq c'$.
\end{defin}

\begin{defin}[$(C,\mathbb{S})${-module}] 
A $(C,\mathbb{S})$\textit{-module} is an $\mathsf{E}$-presheaf on $\mathsf{Bij}_{C}$, i.e. a functor from  the category $\mathsf{Bij}_{C}^{op}$ to $\mathsf{E}$. The category of $(C,\mathbb{S})$-modules is the category of $\mathsf{E}$-presheaves on $\mathsf{Bij}_{C}$. We denote it by $(C,\mathbb{S})\textsf{-}\mathsf{Mod}$.
\end{defin}

\paragraph{\sc Notation} Let $\mathcal{V}$ be a $(C,\mathbb{S})$-module, let $c$, $c_1$, \ldots, $c_m$ be elements of $C$ and let $\phi$ the function from $\underline{n}$ to $C$ which sends $i$ to $c_i$. We will sometimes write $\mathcal{V}(c_1, \ldots,c_m ; c)$ to denote $\mathcal{V}(\phi ; c)$.\\

One can interpret a $(C,\mathbb{S})$-module as a collection of operations with one output and zero or many inputs labeled by colors. For example, let $\mathcal V$ be a $(C,\mathbb{S})$-module, let $\chi:X \rightarrow C$ be a function and let $c$ be an element of $C$. Then $\mathcal V(\chi ; c)$ represents the operations whose inputs are the elements $x \in X$  colored by $\chi(x)$ and  whose output is colored by $c$. To compose operations of two $(C,\mathbb{S})$-modules with respect to the colors, we introduce the following  {composite product}.

\begin{defin}[Composite product] Let $\mathcal V$ and $\mathcal W$ be two $(C,\mathbb{S})$-modules. Their \textit{composite product} is the $(C,\mathbb{S})$-module $\mathcal V \circ \mathcal W$ which sends every object $(\chi: X \rightarrow C ; c)$ of the category $\mathsf{Bij}_{C}$ to the following colimit: $\mathcal V \circ\mathcal  W(\chi:X \rightarrow C ; c) := $
$$
 \coprod_{k \geq 1} \left( \coprod_{\psi:\, \underline{k}  \rightarrow C} \  \coprod_{\alpha:\, X \rightarrow \underline{k}} \mathcal V(\psi ; c) \otimes \mathcal W(\chi_{|\alpha^{-1}(1)} ; \psi(1)) \otimes \cdots \otimes\mathcal  W(\chi_{|\alpha^{-1}(k)} ; \psi(k)) \right) _{\mathbb{S}_k}\ , 
$$
\noindent where the second coproduct is taken over all  functions $\psi:\underline{k} \rightarrow C$ and where the third coproduct is taken over all functions $\alpha: X \rightarrow \underline{k}$. This colimit is a coset under the following actions of the symmetric groups $\mathbb{S}_k$, for integers $k \geq 1$. 
\begin{itemize}
\item[$\triangleright$] A permutation $\sigma$ in $\mathbb{S}_k$ induces an isomorphism $(\psi \sigma^{-1} ; c) \rightarrow (\psi ; c)$ in the groupoid $\mathsf{Bij}_{C}$ and so an isomorphism $\mathcal V(\sigma): \mathcal V(\psi ; c) \rightarrow  \mathcal V(\psi \sigma^{-1} ; c)$ in $\mathsf{E}$.

\item[$\triangleright$] This permutation also induces an isomorphism $\sigma^*$ from $\mathcal W(\phi_{|\alpha^{-1}(1)} ; \psi(1)) \otimes \cdots  \allowbreak \otimes \allowbreak  \mathcal{W}(\allowbreak \phi_{|\alpha^{-1}(k)} ; \psi(k)) $ to $  \mathcal W(\phi_{|\alpha^{-1}(\sigma^{-1}(1))} ; \psi(\sigma^{-1}(1))) \otimes \cdots \otimes \mathcal W(\phi_{|\alpha^{-1}(\sigma^{-1}(k))} ; \psi(\sigma^{-1}(k)))$ through the symmetric monoidal structure of $\mathsf{E}$.
\end{itemize}

\noindent The global action of $\mathbb{S}_k$ is given by the isomorphisms $V(\sigma) \otimes \sigma^*$, 
 for every permutation $\sigma \in \mathbb{S}_k$.
\end{defin}

Let $I_{C}$ be the $(C,\mathbb{S})$-module defined by $I_{C}(c;c):=I_{C}(* \mapsto c ; c)= \mathds{1}_{\mathsf{E}}$ , for any $c\in C$, and by $I_{C}(\chi:X \rightarrow C ; c)= \emptyset$ (the initial object in the category $\mathsf{E}$) if the cardinal of $X$ is not $1$ or if the image of $\chi$ is different from the color $\{c\}$ of the output.

\begin{prop}
The category of $(C,\mathbb{S})$-modules together with the composite product $\circ$ and the unit object $I_{C}$ forms a monoidal category.
\end{prop}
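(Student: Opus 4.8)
The plan is to verify the three structural ingredients — bifunctoriality of $\circ$, an associativity isomorphism, and left/right unit isomorphisms — and then to check the pentagon and triangle coherence axioms. That $\circ$ is a bifunctor from $(C,\mathbb{S})\textsf{-}\mathsf{Mod} \times (C,\mathbb{S})\textsf{-}\mathsf{Mod}$ to $(C,\mathbb{S})\textsf{-}\mathsf{Mod}$ is immediate: the defining formula is built out of the monoidal product $\otimes$ together with colimits (the coproducts over $k$, $\psi$, $\alpha$ and the $\mathbb{S}_k$-coinvariants), all of which are functorial, and the residual $\BijC^{op}$-action that makes $\mathcal V \circ \mathcal W$ a $(C,\mathbb{S})$-module is induced by relabeling the set $X$, which is natural in both variables.

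The heart of the argument is the associativity constraint. Here I would use the standing hypothesis that $\otimes$ preserves colimits in each variable in order to commute the monoidal product past the coproducts and coinvariants appearing in the composite product. Unwinding the definitions, both $(\mathcal U \circ \mathcal V)\circ \mathcal W$ and $\mathcal U \circ(\mathcal V \circ \mathcal W)$, evaluated on an object $(\chi: X \to C; c)$, rewrite as a single colimit indexed by the same combinatorial datum: a two-level colored partition of the inputs, that is, a pair of composable functions organizing $X$ into groups fed into $\mathcal W$, themselves grouped and fed into $\mathcal V$ and finally into $\mathcal U$, together with the compatible intermediate colorings. I would make this precise by exhibiting a canonical bijection between the two indexing sets of such data and checking that it is equivariant for the interacting symmetric-group actions, so that the induced comparison descends to the coinvariants. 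The universal property of the colimit then produces the natural isomorphism $a_{\mathcal U, \mathcal V, \mathcal W}$.

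For the units, I would feed the explicit description of $I_C$ into the formula. In $I_C \circ \mathcal V$ the factors $I_C(\psi; c)$ vanish unless $k = 1$ and $\psi(1) = c$, which collapses the outer coproduct and the trivial $\mathbb{S}_1$-coinvariants and leaves exactly $\mathcal V(\chi; c)$; symmetrically, in $\mathcal V \circ I_C$ the factors $I_C(\chi_{|\alpha^{-1}(i)}; \psi(i))$ force every fibre $\alpha^{-1}(i)$ to be a singleton with matching color, so that $\alpha$ becomes a bijection $X \cong \underline{k}$ and, after passing to coinvariants, the whole expression collapses to $\mathcal V(\chi; c)$. These collapses are natural in $\mathcal V$ and supply the unit isomorphisms.

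Finally, the coherence axioms. Since every structure isomorphism above is the canonical comparison map induced, through the universal property, by a canonical bijection of indexing data, the pentagon and triangle diagrams commute as soon as the corresponding bijections of three-level (respectively decorated) indexing data agree — which they do, both sides being described by the same nested colored partition of $X$. Equivalently, and more conceptually, one can realize each $(C,\mathbb{S})$-module $\mathcal V$ as the analytic endofunctor $S_{\mathcal V}$ of the category $\mathsf E^{C}$ of $C$-indexed families, sending $(M_d)_{d\in C}$ to the family whose $c$-component is $\coprod_{k\geq 1}\coprod_{\psi:\underline k \to C}\bigl(\mathcal V(\psi;c)\otimes M_{\psi(1)}\otimes\cdots\otimes M_{\psi(k)}\bigr)_{\mathbb{S}_k}$; the composite-product formula is precisely engineered so that $S_{\mathcal V \circ \mathcal W}\cong S_{\mathcal V}\circ S_{\mathcal W}$ and $S_{I_C}\cong \mathrm{Id}$, whence associativity, unitality and their coherences reflect the strict monoidal structure of endofunctor composition. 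I expect the main obstacle to be precisely the equivariant bookkeeping in the associativity step: matching the nested colored partitions while tracking the interacting $\mathbb{S}_k$- and $\mathbb{S}_j$-actions and confirming that the reindexing is compatible with the coinvariants, a verification that rests essentially on $\otimes$ commuting with the colimits involved.
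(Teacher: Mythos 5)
Your proof is correct and follows essentially the same route as the paper: the paper's own proof is a one-line deferral to the classical non-colored case (Loday--Vallette, Section~5.1), whose content is precisely your direct verification --- distributing $\otimes$ over the coproducts and coinvariants (using the standing hypothesis that the monoidal product of $\mathsf{E}$ preserves colimits), identifying both iterated composites with one colimit over two-level colored partitions of the inputs, and collapsing the unit terms via the initial-object and singleton-fibre analysis you give. One caution about your closing ``equivalently, and more conceptually'' paragraph: deducing the pentagon and triangle for $\circ$ from the strict monoidal structure of endofunctor composition requires the assignment $\mathcal V \mapsto S_{\mathcal V}$ to reflect commutativity of diagrams (e.g.\ to be faithful), which is not automatic for an arbitrary symmetric monoidal $\mathsf{E}$ since passing to $\mathbb{S}_k$-coinvariants can lose information; so that remark should be read as motivation, not as a substitute for the equivariant bookkeeping that your main argument correctly carries out.
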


\begin{proof}
The proof is similar to the classical case of non-colored operads, see \cite[Section~5.1]{LodayVallette12}.
\end{proof}

\begin{defin}[$C$-colored operad]
A $C$-\textit{colored operad}  is a monoid  $(\PP,\gamma, \eta)$ in the monoidal category of $(C,\mathbb{S})$-modules: the composition map $\gamma: \PP \circ \PP \rightarrow \PP$ is associative and the map $\eta : I_{C} \rightarrow \PP$ is a unit.
\end{defin}

\subsection{Morphisms of colored operads}  We provide here a detailed definition of the suitable notion of morphism between two operads colored over possibly different sets of colors. From now on, we consider the set of colors to be part of the data and we work over varying sets of colors. 

\begin{defin}[Colored $\mathbb S$-module]\label{defcolor}
A \textit{colored} $\mathbb{S}$\textit{-module} $(C,\mathcal{V})$ is made up of  a set $C$ of colors and a $(C,\mathbb{S})$-module $\mathcal V$.
\end{defin}

We will define morphisms of $(C,\mathbb{S})$-modules in an analogous way as morphisms of presheaves on topological spaces are defined. Let us recall that an $\mathsf{E}$-presheaf on a topological space $X$ is a $\mathsf{E}$-presheaf over the category $\mathsf{Open}_X$ of open subsets of $X$ with inclusions. Let $X$ and $Y$ be topological spaces and  let $\mathcal{F}$ and $\mathcal{G}$ be presheaves respectively on $X$ and $Y$.  Any continuous function $f:X \rightarrow Y$ induces a functor $F$ from $\mathsf{Open}_Y$ to $\mathsf{Open}_X$. The presheaf $f^{-1}\mathcal{G}$ on $X$ is defined by $f^{-1}\mathcal{G}(U):=\lim_{f(U)\subset V} \mathcal{G}(V)$ and the presheaf $f_*\mathcal{F}$ on $Y$ by $f_*\mathcal{F}(U):=\mathcal{F}(f^{-1}(U)) =\mathcal{F}(F(U))$. Furthermore, a morphism of presheaves on topological spaces from $(X,\mathcal{F})$ to $(Y,\mathcal{G})$ is the data of a continuous function $f$ from $X$ to $Y$, and hence a functor $F$ from $\mathsf{Open}_Y$ to $\mathsf{Open}_X$, together with a morphism of presheaves over $\mathsf{Open}_Y$ from $\mathcal{G}$  to $f_*\mathcal{F} $, or equivalently, a morphism of presheaves over $\mathsf{Open}_X$ from $f^{-1}\mathcal{G}$ to $\mathcal{F}$.

\begin{defin}[Pullback and pushforward of colored $\mathbb S$-module]
Let $\phi:C \rightarrow {D}$ be a function between two sets of colors and let $\mathcal V$ and  $\mathcal W$ be a $(C,\mathbb{S})$-module and a $({D},\mathbb{S})$-module respectively. We define $\phi^*\mathcal W$ to be the following $(C,\mathbb{S})$-module 
$$
\phi^*\mathcal W(\chi ; c):=\mathcal W(\phi \chi ; \phi(c))
$$
and we define $\phi_!\mathcal V$ to be the following $({D},\mathbb{S})$-module
$$
\phi_!\mathcal V(\rho ; d):=\coprod_{\rho=\phi \chi , \atop \phi(c)=d} \mathcal V(\chi ; c) \ ,
$$
for any  $\rho:X \rightarrow {D}$ and $d\in {D}$. The coproduct is taken over the colors $c$ in $C$ such that $\phi(c)=d$ and the functions $\chi: X \rightarrow C$ such that $\rho= \phi \chi$. 
\end{defin}

These two constructions are functorial.

\begin{lemma}
For any function $\phi:C \rightarrow {D}$, the functor $\phi^*$ is right adjoint to the functor $\phi_!$; equivalently there exist natural bijections 
$$
\mathrm{Hom}_{({D},\mathbb{S})\textsf{-}\mathsf{Mod}}(\phi_!\mathcal V, \mathcal W) \cong \mathrm{Hom}_{(C,\mathbb{S})\textsf{-}\mathsf{Mod}}(\mathcal V,\phi^*\mathcal W) \ .
$$
\end{lemma}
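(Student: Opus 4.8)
The plan is to recognize this adjunction as an instance of the restriction / left-Kan-extension adjunction for $\mathsf{E}$-presheaves and to make it fully explicit through the universal property of the coproduct that defines $\phi_!$. First I would record that $\phi$ induces a functor $\Phi \colon \mathsf{Bij}_C \to \mathsf{Bij}_D$ sending an object $(\chi \colon X \to C\,;c)$ to $(\phi\chi \colon X \to D\,;\phi(c))$ and a morphism $\beta \colon (\chi;c) \to (\chi';c)$, that is a bijection with $\chi'\beta = \chi$, to the same underlying bijection $\beta \colon (\phi\chi;\phi(c)) \to (\phi\chi';\phi(c))$; this is legitimate since $\phi\chi'\circ\beta = \phi\chi$. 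With this notation, $\phi^*\mathcal W = \mathcal W \circ \Phi^{op}$ is precisely restriction along $\Phi$, so the statement amounts to saying that $\phi_!$ computes the left Kan extension $\mathrm{Lan}_{\Phi}$ of $\mathsf{E}$-presheaves. One could simply invoke this general categorical fact (the relevant colimits exist since $\mathsf{E}$ is cocomplete), but for a self-contained argument matching the presheaf-theoretic framing used above, I would verify the adjunction by hand.

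Next I would unwind the left-hand side using the universal property of the coproduct. A morphism $f \colon \phi_!\mathcal V \to \mathcal W$ of $(D,\mathbb{S})$-modules assigns to every object $(\rho \colon X \to D\,;d)$ a map out of
$$
\phi_!\mathcal V(\rho;d) = \coprod_{\rho=\phi\chi,\ \phi(c)=d} \mathcal V(\chi;c) \ ,
$$
equivalently a family of maps $f^{(\chi,c)}_{(\rho;d)} \colon \mathcal V(\chi;c) \to \mathcal W(\rho;d)$ indexed by the pairs $(\chi,c)$ with $\phi\chi = \rho$ and $\phi(c)=d$. Restricting this family to the "diagonal", i.e.\ evaluating at $\rho = \phi\chi$ and $d = \phi(c)$, produces maps $g_{(\chi;c)} := f^{(\chi,c)}_{(\phi\chi;\phi(c))} \colon \mathcal V(\chi;c) \to \mathcal W(\phi\chi;\phi(c)) = \phi^*\mathcal W(\chi;c)$. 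Conversely, from a morphism $g \colon \mathcal V \to \phi^*\mathcal W$ I would define $f$ on the summand of $\phi_!\mathcal V(\rho;d)$ indexed by $(\chi,c)$ to be $g_{(\chi;c)} \colon \mathcal V(\chi;c) \to \mathcal W(\phi\chi;\phi(c)) = \mathcal W(\rho;d)$, the universal property of the coproduct then assembling these into $f_{(\rho;d)}$. At the level of underlying families these two assignments are manifestly inverse to one another, so all the content lies in matching the two equivariance conditions.

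The main step, and the place where I expect the genuine work to be, is to check that $\mathsf{Bij}_D$-naturality of $f$ is equivalent to $\mathsf{Bij}_C$-naturality of $g$. Here I would describe explicitly how the $\mathsf{Bij}_D$-action on $\phi_!\mathcal V$ interacts with the coproduct decomposition: a morphism $\beta \colon (\rho;d) \to (\rho';d)$ of $\mathsf{Bij}_D$ carries the summand $\mathcal V(\chi';c')$ of $\phi_!\mathcal V(\rho';d)$, indexed by $(\chi',c')$ with $\phi\chi'=\rho'$, to the summand indexed by $(\chi'\beta, c')$ of $\phi_!\mathcal V(\rho;d)$, which is legitimate because $\phi(\chi'\beta) = \rho'\beta = \rho$, and it acts there by $\mathcal V$ applied to $\beta$ regarded as a morphism $(\chi'\beta;c') \to (\chi';c')$ in $\mathsf{Bij}_C$. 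In other words the $\mathsf{Bij}_D$-action permutes the summands by precomposition and acts on each summand through the corresponding $\mathsf{Bij}_C$-action, precisely the behaviour dictated by $\Phi$; threading this identification through the definitions shows that the naturality square for $f$ at $\beta$ is the image under $\Phi$ of the naturality square for $g$, and conversely every $\mathsf{Bij}_C$-square arises this way. I would also note the degenerate case separately: when $d \notin \phi(C)$ or $\rho$ does not factor through $\phi$, the indexing coproduct is empty and $\phi_!\mathcal V(\rho;d) = \emptyset$, so $f_{(\rho;d)}$ is the unique map out of the initial object and imposes no extra condition, consistent with $g$ only constraining objects in the image of $\Phi$. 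The final verification, that the resulting bijection is natural in both $\mathcal V$ and $\mathcal W$, is then routine and follows directly from the coproduct description of $f$ in terms of $g$.
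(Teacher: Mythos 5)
Your proof is correct. There is, however, nothing in the paper to compare it against: the paper's entire proof of this lemma reads ``The proof is straightforward and left to the reader,'' so your argument supplies exactly the details that were omitted. Two remarks on the content. First, the heart of your verification is the right one: since every summand index $(\chi,c)$ of $\phi_!\mathcal V(\rho;d)$ automatically satisfies $\rho=\phi\chi$ and $d=\phi(c)$, the families defining a morphism $\phi_!\mathcal V\to\mathcal W$ and those defining a morphism $\mathcal V\to\phi^*\mathcal W$ are indexed by the same set (the objects of $\mathsf{Bij}_C$), and the only real work is your observation that the $\mathsf{Bij}_D$-action on $\phi_!\mathcal V$ permutes summands by precomposition while acting on each summand through $\mathcal V(\beta)$ for $\beta$ viewed as the $\mathsf{Bij}_C$-morphism $(\chi'\beta;c')\to(\chi';c')$; this is precisely what makes naturality over $\mathsf{Bij}_D$ equivalent, summand by summand, to naturality over $\mathsf{Bij}_C$, using that $\phi^*\mathcal W(\beta)=\mathcal W(\Phi(\beta))$. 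Second, your Kan-extension framing is also accurate and can be justified: $\Phi$ is faithful and both $\mathsf{Bij}_C$ and $\mathsf{Bij}_D$ are groupoids, so the comma categories computing $\mathrm{Lan}_{\Phi^{op}}\mathcal V$ have at most one morphism between any two objects; each isomorphism class contains a unique representative with identity structure map, whence the colimit collapses to the coproduct defining $\phi_!$. Your hands-on verification makes this reduction unnecessary, but it is a correct and clarifying way to see why the paper's coproduct formula is the ``right'' left adjoint.
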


\begin{proof}
The proof is straightforward and left to the reader. 
\end{proof}

These functors behave well with respect to the composition of functions: for any functions $\phi: {B} \rightarrow C$ and $\psi: C \rightarrow {D}$, $(\psi \phi)^*=\phi^*  \psi^*$ and $(\psi \phi)_!=\psi_!  \phi_!$.

\begin{defin}[Morphism of colored $\mathbb{S}$-modules]\label{coloredsmod}
A \textit{morphism of colored $\mathbb{S}$-modules}  from $(C,\mathcal V) $ to $({D}, \mathcal W)$ amounts to  the data of  a function $\phi$ from $C$ to ${D}$ and a morphism $f^*$ of $(C,\mathbb{S})$-modules from $\mathcal V$ to $\phi^* \mathcal W$, or equivalently, a morphism $f_!$ of $({D},\mathbb{S})$-modules from $\phi_! \mathcal V$ to $\mathcal W$. Such a morphism, defined either by the couple $(\phi,f^*)$ or by the couple $(\phi,f_!)$, will be denoted simply by $f$. 
\end{defin}

A morphism $f:(C,\mathcal V) \rightarrow ({D},\mathcal W)$ of colored $\mathbb{S}$-modules is therefore the data of a function $\phi:C \rightarrow D$ and morphisms $f^*(\chi ; c): \mathcal V(\chi ; c) \rightarrow  \mathcal W(\phi  \chi ; \phi(c))$ for any object $(\chi:X \rightarrow C ; c)$ of $\BijC$ such that, for any bijection $\beta: Y \rightarrow X$, the following diagram commutes.
$$
\xymatrix@C=40pt{\mathcal V(c; \chi) \ar[r]^(0.42){f^*(\chi ; c)} \ar[d]_{\mathcal V(\beta)} &\mathcal W(\phi \chi ; \phi(c)) \ar[d]^{\mathcal W(\beta)}\ \  \\
\mathcal V(\chi \beta ; c) \ar[r]_(0.42){f^*(\chi \beta ; c)} &\mathcal  W(\phi \chi \beta ; \phi(c))\ .
}
$$

\begin{prop}
Colored $\mathbb{S}$-modules and their morphisms form a category, denoted by  $\mathbb{S}\textsf{-}\mathsf{Mod}$.
\end{prop}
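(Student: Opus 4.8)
The plan is to make colored $\mathbb S$-modules into a category by exhibiting composition and identities, working throughout with the pullback presentation of a morphism: recall from Definition~\ref{coloredsmod} that a morphism $f:(C,\mathcal V)\to(D,\mathcal W)$ consists of a function $\phi:C\to D$ together with a morphism $f^*:\mathcal V\to\phi^*\mathcal W$ of $(C,\mathbb S)$-modules. I would take the identity of $(C,\mathcal V)$ to be the pair $(\mathrm{id}_C,\mathrm{id}_{\mathcal V})$, which makes sense because $\mathrm{id}_C^*\mathcal V=\mathcal V$ on the nose.

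Next I would define the composite of $f=(\phi,f^*):(C,\mathcal V)\to(D,\mathcal W)$ with $g=(\psi,g^*):(D,\mathcal W)\to(E,\mathcal U)$. Its underlying function is $\psi\phi:C\to E$; for the module part, I would apply the functor $\phi^*$ to $g^*:\mathcal W\to\psi^*\mathcal U$, obtaining $\phi^*(g^*):\phi^*\mathcal W\to\phi^*\psi^*\mathcal U$, and then precompose with $f^*$ inside $(C,\mathbb S)\textsf{-}\mathsf{Mod}$. Invoking the strict identity $\phi^*\psi^*=(\psi\phi)^*$ recorded above, the morphism $\phi^*(g^*)\circ f^*$ lands in $(\psi\phi)^*\mathcal U$, so that setting $g\circ f:=(\psi\phi,\,\phi^*(g^*)\circ f^*)$ yields a genuine morphism of colored $\mathbb S$-modules.

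It then remains to check the three category axioms, and I expect each to reduce to the functoriality of the pullback. The unit laws would follow immediately from the facts that $\mathrm{id}_C^*$ is the identity functor and that $\phi^*$ preserves identities. For associativity, given a further morphism $h=(\theta,h^*):(E,\mathcal U)\to(F,\mathcal T)$, both $h\circ(g\circ f)$ and $(h\circ g)\circ f$ have underlying function $\theta\psi\phi$, and a direct unfolding should present both module parts as $(\psi\phi)^*(h^*)\circ\phi^*(g^*)\circ f^*$.

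The crux, and the only point requiring care, is the identity $(\psi\phi)^*(h^*)=\phi^*\bigl(\psi^*(h^*)\bigr)$ needed in the associativity check: this is exactly the assertion that the equality of functors $(\psi\phi)^*=\phi^*\psi^*$ holds strictly, hence also on morphisms. I would verify this directly from the defining formula $\phi^*\mathcal W(\chi;c)=\mathcal W(\phi\chi;\phi(c))$, which shows the compatibility is a strict equality rather than a mere natural isomorphism; together with the fact that $\phi^*$ preserves composition, this makes the composition strictly associative and obviates any coherence data.
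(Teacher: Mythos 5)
Your proposal is correct and follows essentially the same route as the paper: the paper's proof consists precisely of the composition formula $gf:=(\psi\phi,\,\phi^*(g^*)\,f^*)$, which is exactly your definition, with the identity morphisms and the axiom checks (resting on the strict equalities $\mathrm{id}_C^*=\mathrm{Id}$ and $(\psi\phi)^*=\phi^*\psi^*$, the latter recorded in the paper just before Definition~\ref{coloredsmod}) left implicit. Your verification of associativity via the strict functor equality on morphisms is the correct way to fill in what the paper omits.
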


\begin{proof}
One defines the composite of two morphisms $f=(\phi,f^*)$ and $g=(\psi, g^*)$ by 
$$g f:=(\psi \phi, \phi^*(g^*) f^*) \ .$$
\end{proof}

\begin{rmk}
For any set $C$, there is an inclusion of categories $(C,\mathbb{S})\textsf{-}\mathsf{Mod} \hookrightarrow \mathbb{S}\textsf{-}\mathsf{Mod}$ which sends a $(C,\mathbb{S})$-module $\mathcal V$ to the $\mathbb{S}$-module $(C,\mathcal  V)$. A morphism $f=(\phi,f^*)$ is in the image of this inclusion if and only if $\phi$ is the identity of the set $C$.
\end{rmk}

\begin{lemma}
For any function $\phi: C \rightarrow {D}$, the functor $\phi^*$ is lax monoidal, i.e. there are morphisms $\phi_{\mathcal V,\mathcal  W}^*:\phi^*\mathcal V \circ \phi^*\mathcal  W \rightarrow \phi^*(\mathcal V \circ \mathcal W)$, natural in $\mathcal V$ and $\mathcal W$ and $\phi_I^*: I_{C} \rightarrow \phi^*I_{{D}}$, satisfying associativity and unitality conditions, see \cite{MacLane98} for more details.
\end{lemma}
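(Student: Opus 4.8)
The plan is to construct both structure maps by a single reindexing operation and then to verify that it descends to the symmetric-group coinvariants and respects the coherence data; essentially all the content lives in this reindexing, the rest being bookkeeping.

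First I would unfold both sides of the desired map $\phi_{\mathcal V,\mathcal W}^*$ on a fixed object $(\chi:X\to C; c)$. Using the defining formula $\phi^*\mathcal V(\psi;c)=\mathcal V(\phi\psi;\phi(c))$, the source $(\phi^*\mathcal V\circ\phi^*\mathcal W)(\chi;c)$ becomes a coproduct over $k\geq 1$ of $\mathbb{S}_k$-coinvariants of terms indexed by pairs $(\psi:\underline{k}\to C,\ \alpha:X\to\underline{k})$, each of the form $\mathcal V(\phi\psi;\phi(c))\otimes\mathcal W(\phi\chi_{|\alpha^{-1}(1)};\phi\psi(1))\otimes\cdots\otimes\mathcal W(\phi\chi_{|\alpha^{-1}(k)};\phi\psi(k))$. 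The target $\phi^*(\mathcal V\circ\mathcal W)(\chi;c)=(\mathcal V\circ\mathcal W)(\phi\chi;\phi(c))$ is the analogous coproduct indexed instead by pairs $(\rho:\underline{k}\to D,\ \alpha)$. The key observation I would make is that restriction commutes with postcomposition by $\phi$, namely $(\phi\chi)_{|\alpha^{-1}(j)}=\phi\circ\chi_{|\alpha^{-1}(j)}$, so that the summand attached to $(\psi,\alpha)$ on the source agrees \emph{on the nose} with the summand attached to $(\phi\psi,\alpha)$ on the target. I would therefore define $\phi_{\mathcal V,\mathcal W}^*$ as the map induced on coproducts by the reindexing $(\psi,\alpha)\mapsto(\phi\psi,\alpha)$, acting as the identity on each matching summand; note that when $\phi$ is not injective this simply collapses several source summands into one target summand, which is harmless for a map of coproducts.

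Next I would check that this passes to the coinvariants: a permutation $\sigma$ acts by $\psi\mapsto\psi\sigma^{-1}$ on the source and $\rho\mapsto\rho\sigma^{-1}$ on the target, and since $\phi(\psi\sigma^{-1})=(\phi\psi)\sigma^{-1}$ with an identical action on the tensor factors, the reindexing is $\mathbb{S}_k$-equivariant and descends to the quotients. Naturality in $\mathcal V$ and $\mathcal W$ is then immediate, as the map is built from identities applied factorwise and hence commutes with any morphism applied factorwise. For the unit constraint I would define $\phi_I^*:I_C\to\phi^* I_D$ componentwise: on $(c;c)$ both sides are $\mathds{1}_{\mathsf E}$ and I take the identity, while on every other object the source $I_C(\chi;c)=\emptyset$ forces the unique map out of the initial object.

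Finally I would verify the associativity and unitality axioms. Both legs of the associativity diagram applied to $\phi^*\mathcal U\circ\phi^*\mathcal V\circ\phi^*\mathcal W$ unfold to the very same reindexing — replace every $C$-valued labelling function by its postcomposition with $\phi$ — and use the associator of $\mathsf E$ identically on each side, so they coincide; the unitality triangles reduce, after unfolding, to the components where $\phi_I^*$ is the identity or where the source is initial, where the required equalities are trivial. The only place genuinely requiring care is the compatibility with the symmetric-group coinvariants in the composite product; once the reindexing is seen to be equivariant, every remaining verification collapses to an equality of identity maps.
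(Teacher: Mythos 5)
Your proof is correct and takes essentially the same approach as the paper: the paper's own proof consists precisely of the observation that the $(\psi,\alpha)$-summand of $(\phi^*\mathcal V\circ\phi^*\mathcal W)(\chi;c)$ equals on the nose the $(\phi\psi,\alpha)$-summand of $\phi^*(\mathcal V\circ\mathcal W)(\chi;c)$, and lets this reindexing induce $\phi^*_{\mathcal V,\mathcal W}$. The extra verifications you spell out ($\mathbb{S}_k$-equivariance so the map descends to coinvariants, the unit map $\phi_I^*$, naturality, and the coherence axioms) are exactly the bookkeeping the paper leaves implicit.
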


\begin{proof}
For any $({D},\mathbb{S})$-modules $\mathcal V$ and $\mathcal W$, the morphism $\phi_{\mathcal V,\mathcal W}^*$ is built from the following equality:
\begin{align*}
& \phi^*(\mathcal V)(\psi ; c) \otimes\phi^*(\mathcal W)(\chi_{|\alpha^{-1}(1)} ; \psi(1)) \otimes \cdots \otimes \phi^*(\mathcal W)(\chi_{|\alpha^{-1}(k)} ; \psi(k))=\\
& \mathcal V(\phi \psi ; \phi(c)) \otimes \mathcal W(\phi \chi_{|\alpha^{-1}(1)} ; \phi(\psi(1))) \otimes \cdots \otimes \mathcal W(\phi \chi_{|\alpha^{-1}(k)} ; \phi(\psi(k)))\ .
\end{align*}
\end{proof}

\begin{rmk}
The monoidal functor $\phi^*$ is strong if and only if the map $\phi$ is bijective. 
\end{rmk}

\begin{prop}
Let $\phi:C \rightarrow {D}$ be a function, and let $(\PP,\gamma,\eta)$ be a ${D}$-colored operad. The $(C,\mathbb{S})$-module $\phi^*\PP$ has a canonical structure of $C$-operad $(\phi^*\PP,\gamma^{\phi},\eta^{\phi})$ induced by the structure of ${D}$-operad on $\PP$.
\end{prop}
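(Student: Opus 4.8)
The plan is to present this proposition as an instance of the standard fact that a lax monoidal functor carries monoids to monoids. The previous lemma endows $\phi^*$ with a lax monoidal structure, consisting of the natural transformation $\phi^*_{\mathcal V,\mathcal W}:\phi^*\mathcal V \circ \phi^*\mathcal W \to \phi^*(\mathcal V \circ \mathcal W)$ and the unit morphism $\phi^*_I:I_C \to \phi^* I_{{D}}$; these are precisely the data needed to transport the monoid structure of $\PP$ along $\phi^*$.

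First I would write down the candidate structure maps. The composition map $\gamma^{\phi}$ is the composite
$$
\phi^*\PP \circ \phi^*\PP \xrightarrow{\phi^*_{\PP,\PP}} \phi^*(\PP \circ \PP) \xrightarrow{\phi^*(\gamma)} \phi^*\PP
$$
and the unit $\eta^{\phi}$ is the composite
$$
I_C \xrightarrow{\phi^*_I} \phi^* I_{{D}} \xrightarrow{\phi^*(\eta)} \phi^*\PP \ .
$$

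Next I would check the two monoid axioms for $(\phi^*\PP,\gamma^{\phi},\eta^{\phi})$. For associativity, I would expand both sides of the associativity diagram for $\gamma^{\phi}$ and exhibit the resulting diagram as a pasting of three commuting regions: the associativity coherence diagram of the lax monoidal structure of $\phi^*$, the naturality squares of the transformation $\phi^*_{-,-}$, and the image under $\phi^*$ of the associativity diagram for $\gamma$, which commutes since $\PP$ is a $D$-colored operad. The left and right unit axioms are treated in the same way, combining the unitality coherence of $\phi^*$, the naturality of its structure morphisms, and the image under $\phi^*$ of the unit axiom for $\eta$.

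I do not expect any genuine obstacle: the argument is purely formal. The only care required is in bookkeeping the associator and unitor constraints of the composite product together with the coherence cells of the lax monoidal functor $\phi^*$. In fact, one could forgo the explicit diagram chase altogether and simply invoke the general lemma that lax monoidal functors preserve monoid objects, applied to the lax monoidal functor $\phi^*$ of the previous lemma.
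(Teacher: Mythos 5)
Your proposal is correct and is exactly the paper's argument: the paper proves this proposition by invoking the general fact that a lax monoidal functor (here $\phi^*$, from the previous lemma) sends monoids to monoids. Your explicit description of $\gamma^{\phi}$ and $\eta^{\phi}$ and the sketched coherence check simply unfold that one-line argument.
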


\begin{proof}
This is a corollary of the previous lemma since the image of a monoid under a lax monoidal functor is again a monoid. 
\end{proof}

\begin{defin}[The category of colored operads]\label{newdef}\leavevmode
\begin{itemize}
\item[$\triangleright$]
A \textit{colored operad} $\mathscr{P}=(C,\PP,\gamma, \eta)$  is the data of a set of colors $C$ and a $C$-colored operad $(\PP,\gamma, \eta)$.

\item[$\triangleright$]A \textit{morphism of colored operads} from $\mathscr{P}=(C,\PP,\gamma,\eta)$ to $\mathscr{Q}=({D},\mathcal Q,\nu,\theta)$ is a morphism of colored $\mathbb{S}$-modules $f=(\phi,f^*)$ such that $f^*$ is a morphism of $C$-operads from $(\PP,\gamma,\eta)$ to $(\phi^*\mathcal Q,\nu^{\phi},\theta^{\phi})$, i.e. a morphism of monoids in the monoidal category of $(C,\mathbb{S})$-modules.
\end{itemize}
\end{defin}

\begin{prop}\label{prop:CategoryOp}
Colored operads together with their morphisms form a category denoted $\mathsf{Op}$. 
 \end{prop}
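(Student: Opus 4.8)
The plan is to leverage the fact, established above, that colored $\mathbb{S}$-modules and their morphisms already form the category $\mathbb{S}\textsf{-}\mathsf{Mod}$. Since a morphism of colored operads is by Definition \ref{newdef} nothing but a morphism of colored $\mathbb{S}$-modules $f=(\phi,f^*)$ satisfying the extra condition that $f^*$ be a morphism of monoids in $(C,\mathbb{S})\textsf{-}\mathsf{Mod}$, it suffices to exhibit colored operads as a (non-full) subcategory of $\mathbb{S}\textsf{-}\mathsf{Mod}$. Concretely, I would verify only two things: that identity morphisms of colored $\mathbb{S}$-modules are operad morphisms, and that the composite $gf$ of two operad morphisms, taken in $\mathbb{S}\textsf{-}\mathsf{Mod}$, again satisfies the monoid-morphism condition. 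The associativity of composition and the unit laws are then inherited directly from the category structure on $\mathbb{S}\textsf{-}\mathsf{Mod}$, so no further checking is required for them.

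For the identity, note that $\mathrm{Id}_C^*=\mathrm{Id}$ on $(C,\mathbb{S})\textsf{-}\mathsf{Mod}$, so the target operad structure $(\mathrm{Id}_C^*\PP,\gamma^{\mathrm{Id}},\eta^{\mathrm{Id}})$ is literally $(\PP,\gamma,\eta)$, and $\mathrm{Id}_\PP$ is tautologically a morphism of monoids. For composition, consider $f=(\phi,f^*)\colon\mathscr{P}\to\mathscr{Q}$ and $g=(\psi,g^*)\colon\mathscr{Q}\to\mathscr{R}$ with $\mathscr{R}=(E,\mathcal R,\rho,\epsilon)$. By the formula in the proof of Proposition \ref{prop:CategoryOp}'s predecessor, the composite in $\mathbb{S}\textsf{-}\mathsf{Mod}$ is $gf=(\psi\phi,\phi^*(g^*)f^*)$, so I must show that $\phi^*(g^*)f^*$ is a morphism of $C$-operads from $(\PP,\gamma,\eta)$ to $\big((\psi\phi)^*\mathcal R,\rho^{\psi\phi},\epsilon^{\psi\phi}\big)$. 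The key steps are: first, $f^*$ is a monoid morphism $\PP\to\phi^*\mathcal Q$ by hypothesis; second, since $\phi^*$ is lax monoidal (by the Lemma proved above), it sends monoids to monoids and, as for any lax monoidal functor, monoid morphisms to monoid morphisms, whence $\phi^*(g^*)$ is a monoid morphism $\phi^*\mathcal Q\to\phi^*\psi^*\mathcal R$; and third, the composite of two monoid morphisms is a monoid morphism.

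The one point that deserves care is the identification of target structures: I would use the equality $(\psi\phi)^*=\phi^*\psi^*$ recorded above, together with the fact that the lax monoidal structure maps $\phi^*_{\mathcal V,\mathcal W}$ are built from genuine equalities of objects in $\mathsf{E}$ (as displayed in the proof of the lax monoidal Lemma, they are identity morphisms). This makes the lax structure on the composite functor $\phi^*\psi^*$ coincide strictly with that on $(\psi\phi)^*$, and hence the two induced operad structures agree, namely $(\rho^{\psi})^{\phi}=\rho^{\psi\phi}$ and $(\epsilon^{\psi})^{\phi}=\epsilon^{\psi\phi}$. This compatibility is the main, though mild, obstacle; once it is in place, $\phi^*(g^*)f^*$ lands in the correct monoid and the three steps above combine to show it is a morphism of $C$-operads, completing the proof that $\mathsf{Op}$ is a category.
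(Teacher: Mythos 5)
Your proposal is correct and follows essentially the same route as the paper's proof: composition is defined by the formula $gf=(\psi\phi,\phi^*(g^*)f^*)$ inherited from $\mathbb{S}\textsf{-}\mathsf{Mod}$, the lax monoidality of $\phi^*$ guarantees that $\phi^*(g^*)$ is a morphism of monoids so that the composite is again an operad morphism, and the identities come from the category of colored $\mathbb{S}$-modules. Your extra verification that $(\psi\phi)^*=\phi^*\psi^*$ holds strictly at the level of lax monoidal structures, so that the target operad structures $(\rho^{\psi})^{\phi}$ and $\rho^{\psi\phi}$ agree, is a point the paper leaves implicit, but it is the same argument.
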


\begin{proof} We first prove that the composite of two morphisms of colored operads is a morphism of colored operads. 
Let $\mathscr{P}=(C,\PP,\gamma,\eta)$, $\mathscr{Q}=({D},\mathcal Q,\nu,\theta)$ and $\mathscr{R}=(E, \mathcal R, \mu, \upsilon)$ be three colored operads, and let $f=(\phi,f^*) : \mathscr{P} \to \mathscr{Q}$ and $g=(\psi,g^*) : \mathscr{Q} \to \mathscr{R}$ be two morphisms of colored operads. Their composite is equal to 
$g f:=(\psi \phi, \phi^*(g^*) f^*)$. Since the functor $\phi^*$ is a lax monoidal functor, it preserves morphisms of monoids. So the composite $\phi^*(g^*) f^*$ is a morphism of $(C,\Sy)$-operads. 
The unit morphisms for the composite of morphisms of colored operads are the unit morphisms of the category of colored $\mathbb{S}$-modules.
\end{proof}

\begin{defin}[Restriction of colored operads to categories]
Let $C$ be a set of colors and let $j: \mathsf{Bij}_C(1) \hookrightarrow \mathsf{Bij}_C$ be the embedding of the full sub-category of $\mathsf{Bij}_C$ made up of objects $(\chi:X \rightarrow C ; c)$ where $X$ has one elements. It induces a restriction functor $j^*$ from the category of $C$-colored operads to the category of categories enriched over $\mathsf{E}$ and whose set of objects is $C$. It extends canonically to a functor $j^*: \mathsf{Op} \rightarrow \mathsf{Cat}_{\mathsf{E}}$ from colored operads enriched over $\mathsf{E}$ to small categories enriched over $\mathsf{E}$.
\end{defin}

\paragraph{\sc Convention} Since we will only work with colored operads throughout the text, we will often use  the simpler terminology of $\Sy$-modules and operads, i.e. dropping the understood adjective ``colored''.

\subsection{The category of trees}

The theory of operads is intrinsically related to the combinatorics of trees. So we begin this section with a precise definition of the notion of tree used here. The formalism of trees will be used in the next section to introduce the concept of dendroidal set \cite{MoerdijkWeiss07}, which is an operadic generalization of the concept of simplicial set. Finally, we will recall the definition of an $\infty$-operad, which is to dendroidal sets what $\infty$-categories are to simplicial sets, that is a weak Kan object. 

\begin{defin}[Graph]
A \textit{graph} is a quadruple $G=({V}, {F}, \upsilon, \rho)$ where ${V}$ is a finite set of elements called the \textit{vertices}, ${F}$ is a finite set of elements called the \textit{flags} or \textit{half-edges}, $\upsilon$ is a function from the set ${F}$ of flags to the set ${V}$ of vertices and $\rho$ is an involution of the set ${F}$. The orbits of this involution are called \textit{edges}. An edge is \textit{inner} if it contains two flags and  \textit{outer} otherwise. 
\end{defin}

\begin{eg}
For instance, two vertices linked by an edge is just a set of vertices with two objects $\{v_1,v_2\}$, a set of flags with two objects $\{f_1,f_2\}$, a function $\upsilon$ such that $\upsilon(f_i)=v_i$ and an involution $\rho$ such that $\rho(f_1)=f_2$. 
\end{eg}

\begin{defin}[Tree]
A \textit{(rooted) tree} $T=({V},{F}, \upsilon,\rho, r)$ is  a connected graph $({V},{F},\upsilon,\rho)$ with no cycles and a distinguished outer edge $r$ called the \textit{root}. The remaining outer edges are called \textit{leaves}. In this context, each vertex has one output edge and possibly many input edges (there can be no input edge). The number of inputs of a vertex is called its \textit{arity}.
\end{defin}
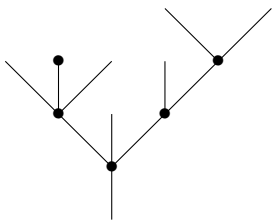
\begin{figure}[h]
\begin{tikzpicture}[scale=0.7]

\draw (2,1) node {$\bullet$} ;
\draw (1,2) node {$\bullet$} ;
\draw (3,2) node {$\bullet$} ;
\draw (4,3) node {$\bullet$} ;
\draw (1,3) node {$\bullet$} ;

\draw (2,0) -- (2,1) ;

\draw (2,1) -- (1,2) ;
\draw (2,1) -- (2,2) ;
\draw (2,1) -- (3,2) ;

\draw (1,2) -- (0,3) ;
\draw (1,2) -- (1,3) ;
\draw (1,2) -- (2,3) ;

\draw (3,2) -- (3,3) ;
\draw (3,2) -- (4,3) ;

\draw (4,3) -- (3,4) ;
\draw (4,3) -- (5,4) ;
\end{tikzpicture}
\caption{Example of a tree.}
\end{figure}

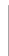
\begin{figure}[h]
\begin{tikzpicture}[scale=0.7]
\draw (0,0) -- (0,1) ;
\end{tikzpicture}
\caption{The \textit{trivial} tree with no vertex but one edge.}
\end{figure}

\begin{defin}[Sub-tree]
Let $T=({V},{F},\upsilon,\rho,r)$ be a tree. A \textit{non trivial sub-tree} $T'$ of $T$ is a non-empty subset ${V}'$ of ${V}$ which is connected, i.e. for any two vertices $v_1$ and $v_2$ of $V'$, there exists a path between them in the tree $T$ which only visits vertices in ${V}'$. This subset determines a new tree also denoted $T'$ which is the $5$-tuple $({V}',{F}',\upsilon',\rho',r')$ where $F'$ is the set of flags $f$ of ${F}$ such that $\upsilon(f)$ is in $V'$, the function $\upsilon'$ is the restriction of $\upsilon$ to the set ${F}'$ and for any flag $f$ in ${F}'$, then $\rho'(f)=\rho(f)$ if $\rho(f) \in {F}'$ and $\rho(f)=f$ otherwise. The root $r'$ is the edge of $T'$ which is the flag closest to the root $r$ of $T$.
\end{defin}

\begin{defin}[Partition of a tree]
Let $T=({V},{F},u,\rho,r)$ be a tree. A \textit{partition} of $T$ with no trivial component is the data of non-trivial sub-trees $T_1, \ldots, T_k$  with no common vertices such that their union contains every vertex of $T$. Moreover, we denote by $T/T_1\cdots T_n$ the tree obtained from $T$ by contracting into one vertex each sub-tree $T_i$.
\end{defin}

\begin{figure}[h]
\begin{tikzpicture}[scale=0.7]

\draw (2,1) node {$\bullet$} ;
\draw (1,2) node {$\bullet$} ;
\draw (3,2) node {$\bullet$} ;
\draw (4,3) node {$\bullet$} ;
\draw (1,3) node {$\bullet$} ;

\draw (2,0) -- (2,1) ;

\draw (2,1) -- (1,2) ;
\draw (2,1) -- (2,2) ;
\draw (2,1) -- (3,2) ;

\draw (1,2) -- (0,3) ;
\draw (1,2) -- (1,3) ;
\draw (1,2) -- (2,3) ;

\draw (3,2) -- (3,3) ;
\draw (3,2) -- (4,3) ;

\draw (4,3) -- (3,4) ;
\draw (4,3) -- (5,4) ;

\draw (9,1) node {$\bullet$} ;
\draw (8,2) node {$\bullet$} ;
\draw (10,2) node {$\bullet$} ;

\draw (9,0) -- (9,1) ;

\draw (9,1) -- (8,2) ;
\draw (9,1) -- (9,2) ;
\draw (9,1) -- (10,2) ;

\draw (8,2) -- (7.5,3) ;
\draw (8,2) -- (8.5,3) ;

\draw (10,2) -- (9.5,3) ;
\draw (10,2) -- (10,3) ;
\draw (10,2) -- (10.5,3) ;

\draw (2,0.9) circle (1);
\draw (1,2.9) circle (1.1);
\draw (3.8,3) circle (1.5);

\draw (2,5) node {$T=T_1 \sqcup T_2 \sqcup T_3$};
\draw (9,5) node {$T/T_1T_2T_3$};
\end{tikzpicture}
\caption{An example of a partitioned tree and its associated contraction.}
\end{figure}
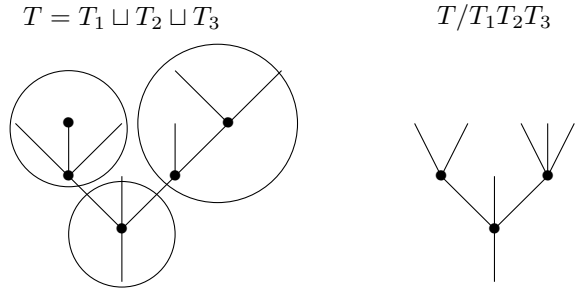

Following  Moerdijk--Weiss \cite{MoerdijkWeiss07}, we can consider the following set theoretical colored operad $\Omega(T)$ generated by any  tree $T$.

\begin{defin}[The operad $\Omega(T)$]\label{deftreeop}
For any tree $T$, $\Omega(T)$ is the operad colored by the set of edges of $T$, freely generated by the set of vertices of $T$. In details, $\Omega(T)(c;\chi : X \to C):=\{*\}$ if there is a (possibly trivial) sub-tree of $T$ with output $c$ and inputs $\chi(x_1), \ldots, \chi(x_n)$. Otherwise $\Omega(T)(c;\chi : X \to C):=\emptyset$. The composite map is given by the grafting of sub-trees inside $T$.
\end{defin}

\begin{defin}[The category of trees]
The category of trees, written $\mathsf{Tree}$ is made up of trees, as defined above; the morphisms from a tree $T$ to a tree $T'$ are the morphisms of operads from $\Omega(T)$ to $\Omega(T')$, i.e.
$$
\mathrm{Hom}_{\mathsf{Tree}}(T,T'):=\mathrm{Hom}_{\mathsf{Op}}(\Omega(T), \Omega(T'))\ .
$$
\end{defin}

Here are three families of simple morphisms of trees.  An \textit{outer coface} $\delta_v$ adds a new external vertex $v$, i.e. a vertex attached to at most one other vertex. An \textit{inner coface} $\delta_e$ introduces an inner edge $e$. A \textit{codegeneracy} $\sigma$ erase an arity $1$ vertex. The inner and outer cofaces, the codegeneracies and the isomorphisms generate all the morphisms of trees. More details can be found in the paper \cite{MoerdijkWeiss07}.

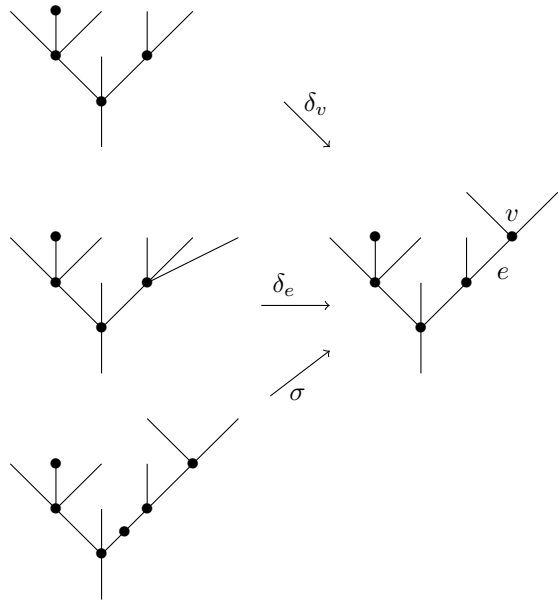
\begin{figure}[h]
\begin{tikzpicture}[scale=0.6]

\draw[->] (6,11)--(7,10);
\draw (6.7,10.5) node[above] {$\delta_v$} ;

\draw (2,11) node {$\bullet$} ;
\draw (1,12) node {$\bullet$} ;
\draw (3,12) node {$\bullet$} ;

\draw (1,13) node {$\bullet$} ;

\draw (2,10) -- (2,11) ;

\draw (2,11) -- (1,12) ;
\draw (2,11) -- (2,12) ;
\draw (2,11) -- (3,12) ;

\draw (1,12) -- (0,13) ;
\draw (1,12) -- (1,13) ;
\draw (1,12) -- (2,13) ;

\draw (3,12) -- (3,13) ;
\draw (3,12) -- (4,13) ;

%%%%%%%%%%%%%%%%%%%%

\draw[->] (5.5,6.5)--(7,6.5);
\draw (6,6.5) node[above] {$\delta_e$} ;

\draw (2,6) node {$\bullet$} ;
\draw (1,7) node {$\bullet$} ;
\draw (3,7) node {$\bullet$} ;
\draw (1,8) node {$\bullet$} ;

\draw (2,5) -- (2,6) ;

\draw (2,6) -- (1,7) ;
\draw (2,6) -- (2,7) ;
\draw (2,6) -- (3,7) ;

\draw (1,7) -- (0,8) ;
\draw (1,7) -- (1,8) ;
\draw (1,7) -- (2,8) ;

\draw (3,7) -- (3,8) ;
\draw (3,7) -- (4,8) ;

\draw (3,7) -- (5,8) ;

%%%%%%%%%%%%%%%%%%%%%%%%%%%%%%%%%%

\draw[->] (5.7,4.5)--(7,5.5);
\draw (6.3,4.2) node[above] {$\sigma$} ;

\draw (2,1) node {$\bullet$} ;
\draw (1,2) node {$\bullet$} ;
\draw (3,2) node {$\bullet$} ;
\draw (4,3) node {$\bullet$} ;
\draw (1,3) node {$\bullet$} ;
\draw (2.5,1.5) node {$\bullet$} ;

\draw (2,0) -- (2,1) ;

\draw (2,1) -- (1,2) ;
\draw (2,1) -- (2,2) ;
\draw (2,1) -- (3,2) ;

\draw (1,2) -- (0,3) ;
\draw (1,2) -- (1,3) ;
\draw (1,2) -- (2,3) ;

\draw (3,2) -- (4,3) ;
\draw (3,2) -- (3,3) ;

\draw (4,3) -- (3,4) ;
\draw (4,3) -- (5,4) ;

%%%%%%%%%%%%%%%%%%%%%%%%%%%%%%%%%

\draw (9,6) node {$\bullet$} ;
\draw (8,7) node {$\bullet$} ;
\draw (10,7) node {$\bullet$} ;
\draw (11,8) node {$\bullet$} ;
\draw (8,8) node {$\bullet$} ;

\draw (11,8.5) node {$v$};
\draw (10.8,7.2) node {$e$} ;

\draw (9,5) -- (9,6) ;

\draw (9,6) -- (8,7) ;
\draw (9,6) -- (9,7) ;
\draw (9,6) -- (10,7) ;

\draw (8,7) -- (7,8) ;
\draw (8,7) -- (8,8) ;
\draw (8,7) -- (9,8) ;

\draw (10,7) -- (10,8) ;
\draw (10,7) -- (11,8) ;

\draw (11,8) -- (10,9) ;
\draw (11,8) -- (12,9) ;
\end{tikzpicture}
\caption{Examples of an outer coface, an inner coface and a codegeneracy}
\end{figure}

\subsection{Dendroidal sets} We introduce here the concept of dendroidal set due to Moerdijk--Weiss \cite{MoerdijkWeiss07}, which is an operadic generalization of the concept of simplicial set.

\begin{defin}[Dendroidal set]
A \textit{dendroidal set} is presheaf on the category $\Tree$, i.e. it is a functor from the category $\mathsf{Tree}^{op}$ to the category $\mathsf{Set}$ of sets. We denote the category of dendroidal sets by $\mathsf{dSet}$. By analogy with simplices of simplicial sets, we call \textit{dendrices} of a dendroidal set $D$ the elements of the image sets $D_T$, for any tree $T$.
\end{defin}

Let $D$ be a dendroidal set. Any outer coface $\delta_v: T-\{v\} \rightarrow T$, any inner coface $\delta_e: T/e \rightarrow T$ and any codegeneracy $\sigma :T  \rightarrow T'$ give respectively an outer face $d_v: D_{T} \rightarrow D_{T-\{v\}}$, an inner face $d_e: D_T \rightarrow D_{T/e}$ and a degeneracy $s: D_{T'} \rightarrow D_T$.\\

We can consider the poset $[n]:=0<1<\cdots<n$ as the linear tree (ladder) with $n$ vertices and $n+1$ edges labeled by the set $\{0,\ldots ,n\}$ from bottom to top. Then we can consider the category $\Delta$ made up of sets $\{0, \ldots ,n\}$ and non-decreasing functions as a full sub-category of the category \textsf{Tree}. Let us denote $i:\Delta \rightarrow \mathsf{Tree}$ this embedding. It induces a restriction functor $i^*:\mathsf{dSet} \rightarrow \mathsf{sSet}$ which sends a dendroidal set $D:\mathsf{Tree}^{op} \rightarrow \mathsf{Set}$ to the following simplicial set:
$$
\xymatrix@M=12pt{i^*D:\Delta^{op} \ar@{^(->}[r]^i & \mathsf{Tree}^{op} \ar[r]^(0.53)D & \mathsf{Set}\ .}
$$
Moreover, the restriction functor $i^*$ has a left adjoint $i_!$ sending a simplicial set $S$ to the dendroidal $i_!S$ such that $(i_!S)_{[n]}=S_n$ for any linear tree $[n]$ and such that $(i_!S)_T= \emptyset$ for any tree $T$ which is not in the image of $i:\Delta \rightarrow \mathsf{Tree}$.\\

Dendroidal sets are thus a generalization of simplicial sets. Moreover, the simplicial sets $\Delta[n]$, $\partial \Delta[n]$ and $\Lambda^k[n]$ have dendroidal analogues which we define below.

\begin{defin}[The dendroidal sets $\Omega\lbrack T\rbrack$, $\partial\Omega\lbrack T \rbrack$, $\Lambda^e \lbrack T \rbrack$ and $\partial^{ext} \Omega\lbrack T \rbrack$]
For any tree $T$, let $\Omega[T]$ be the dendroidal set given by the Yoneda embedding of trees into dendroidal sets, that is 
$$
\Omega[T]:=\mathrm{Hom}_{\mathsf{Tree}}(-,T)\ .
$$
It has several canonical sub-objects that we will use.
\begin{itemize}
\item[$\triangleright$] Let $\partial \Omega[T]$ be the sub-dendroidal set of $\Omega[T]$ generated by all the faces (inner and outer).
$$
\partial\Omega[T]_{T'}:=\{f \in \mathrm{Hom}_{\mathsf{Tree}}(T',T)\ |\ \exists g ,\  \exists \delta ,\ f=\delta g \}\ ,
$$
where $g$ is a morphism of trees and $\delta$ is a coface targeting $T$. 
\item[$\triangleright$] For any inner edge $e$ of $T$, let $\Lambda^e[T]$ be the sub-dendroidal set of $\Omega[T]$ generated by all the faces (inner and outer) except the one corresponding to $e$, that is
$$
\Lambda^e[T]_{T'}:=\{f \in \mathrm{Hom}_{\mathsf{Tree}}(T',T)\ |\ \exists g ,\  \exists \delta \neq \delta_e,\ f=\delta g \}\ ,
$$
where $g$ is a morphism of trees and $\delta$ is a coface targeting $T$ which is different from the inner coface related to the edge $e$.
\item[$\triangleright$] Finally let $\partial^{ext} \Omega[T]$ be the the sub-dendroidal set of $\Omega[T]$ generated by all the outer faces, that is 
$$
\partial^{ext}\Omega[T]_{T'}:=\{f \in \mathrm{Hom}_{\mathsf{Tree}}(T',T)\ |\ \exists g,\  \exists \delta,\ f=\delta g \}\ ,
$$
where $g$ is a morphism of trees and $\delta$ is an outer coface targeting $T$.

\end{itemize} 

\end{defin}

The category of set-theoretical colored operads is canonically embedded in the category of dendroidal sets through the nerve functor $\mathrm{N}_d: \mathsf{Op} \rightarrow \mathsf{dSet}$ defined as follows:
$$
\mathrm{N}_d(\mathscr P)_T:= \mathrm{Hom}_{\mathsf{Op}}(\Omega(T),\mathscr P) \ .
$$
This functor has a left adjoint $\tau_d$ such that $\tau_d(\Omega[T])=\Omega(T)$.\\

\paragraph*{\sc Convention} For any dendroidal set $D$, the elements of the set $D_|$, where $|$ is the trivial tree, will be called the colors of the dendroidal set $D$. Moreover, if we denote the corolla with $m$ inputs by $C_m$, then the elements of $D_{C_m}$ will be called the arity $m$ operations of $D$. For such a corolla and any leaf $l$ of it, there is a face map $D_T \rightarrow D_|$, which gives the color of the input corresponding to $l$. The face map $D_T \rightarrow D_|$ corresponding to the root gives the color of the output. Indeed, for any set-theoretical colored operad $\mathscr P$, the set $\mathrm{N}_d(\mathscr P)_|$ is the set of colors of $\mathscr P$ and $\mathrm{N}_d(\mathscr P)_{C_m}$ are the operations of $\mathscr P$ of arity $m$.

\subsection{Infinity-operads}

We give here the definition of an $\infty$-operad, which is to dendroidal sets what
$\infty$-categories are to simplicial sets. Recall that an \textit{$\infty$-category} (or \textit{quasi-category} or \textit{weak Kan complex}) is a simplicial set $X$ such that for all $n \geq 2$ and $1 \leq k \leq n-1$ and every morphism $\Lambda^k[n] \rightarrow X$, there is a morphism $\Delta[n] \rightarrow X$ which lifts the horn inclusion $\Lambda^k[n] \rightarrow \Delta[n]$.
$$
\xymatrix@M=8pt{\Lambda^k[n] \ar@{^(->}[d] \ar[r] & X \\
\Delta[n] \ar@{-->}[ur]_{\exists}}
$$
Quasi-categories are models of $(\infty,1)$-categories where the objects are the $0$-vertices, the $1$-mor\-phi\-sms are the $1$-vertices and where the higher vertices encode higher homotopical data. Indeed, the above lifting property for $n=2$ and $k=1$ means that any two $1$-morphisms such that the target of the first one is the source of the second one can be composed up to homotopy. More details can be found in \cite{Lurie09}. The following notion of $\infty$-operad is an operadic generalization of this notion of $\infty$-category.

\begin{defin}[$\infty$-operad]
An \textit{$\infty$-operad}, or \textit{infinity-operad} in plain words, is a dendroidal set $D$ such that for every tree $T$ and any inner edge $e$ of $T$, every morphism from $\Lambda^e[T]$ to $D$ can be lifted to a morphism from $\Omega[T]$ to $D$:
$$
\xymatrix@M=8pt{
\Lambda^e[T]\ \  \ar[r] \ar@{^(->}[d] & D\\
\Omega[T] \ . \ar@{-->}[ur]_{\exists}
}
$$
\end{defin}

The ``horn`` condition of the definition for two-vertex trees means that two operations with compatible colors can be composed up to homotopy. Indeed, let $T$ be a tree with two vertices: it is made up of two sub-trees $T_1$ and $T_2$ joined by an edge $e$. Let $v_1$ (resp. $v_2$) be the unique vertex of the tree $T_1$ (resp. $T_2$).

\vspace{1cm}
\begin{center}
\begin{tikzpicture}[scale=0.7]

\draw (2,1) node {$\bullet$} ;
\draw (2,4) node {$\bullet$} ;
\draw (2,1) node[right] {$v_1$} ;
\draw (2,4) node[right] {$v_2$} ;

\draw (2,0) -- (2,1) ;

\draw (2,1) -- (1,2) ;
\draw (2,1) -- (2,4) ;
\draw (2,1) -- (3,2) ;

\draw (2,4) -- (1,5) ;
\draw (2,4) -- (3,5) ;

\draw (2,1.2) circle (1.3) ;
\draw (2,4.5) circle (1.3) ;
\draw (3.5,1) node[right]{$T_1$};
\draw (3.5,4) node[right]{$T_2$};
\draw (2,2.9) node[right]{$e$};
\end{tikzpicture}

\end{center}

Let $T'$ be the corolla obtained from $T$ by contracting the edge $e$. The morphisms $\delta_{v_2}:T_1 \rightarrow T$, $\delta_{v_1}:T_2 \rightarrow T$ and $\delta_e:T' \rightarrow T$ are the three coface maps targeting the tree $T$. Let $x \in D_{T_1} $ and $y \in D_{T_2}$ such that $d_e (x)=d_e (y) \in D_{|}$. Through the two outer face maps, they determine a morphism $\Lambda^e[T] \rightarrow D$, which induces a morphism $\Omega[T] \rightarrow D$, i.e. a element $z$ of $D_T$. The inner face $d_e(z)$ can be thought of as the composition of $x$ with $y$ along the edge $e$. Moreover, the fact that the morphism from $\Omega[T]$ to $D$ extending $x$ and $y$ is not necessarily unique means that their composite is not necessarily strictly unique.

\subsection{The homotopy theory of dendroidal sets} In their paper \cite{CisinskiMoerdijk11}, Denis-Charles Cisinski and Ieke Moerdijk endow the category of dendroidal sets with a model structure in order to provide a homotopical interpretation for the notion of $\infty$-operad.

\begin{defin}[Categorical fibration]
A \textit{categorical fibration} (or \textit{isofibration}) is a functor $f:\mathsf{C} \rightarrow \mathsf{C}'$ such that, given any isomorphism
$\phi : c'_0 \rightarrow c'_1$ in $\mathsf{C}'$, and any object $c_1$ in $\mathsf{C}$ such that $f(c_1)=c'_1$, there exists an
isomorphism $\psi:c_0 \rightarrow c_1$ in $\mathsf{C}$, such that $f(\psi)=\phi$.
\end{defin}

\begin{thm}\cite[Theorem 2.4, Proposition 2.6]{CisinskiMoerdijk11}\label{thmcisinskimoerdijk}
The category of dendroidal sets is endowed with a model structure such that:
\begin{itemize}
\item[$\triangleright$] the fibrant objects are $\infty$-operads.
\item[$\triangleright$] the cofibrations are the normal monomorphisms, i.e. the monomorphisms $A \rightarrow B$ such that for every tree $T$ the action of the automorphism group $Aut(T)$ on $B_T - A_T$ is free. 
\item[$\triangleright$] the fibrations between fibrant objects, i.e. $\infty$-operads, are the morphisms $f$ such that $i^* \tau_d(f)$ is a  categorical fibration and such that $f$ satisfies the right lifting property with respect to the inner horn inclusions, i.e. the morphisms $\Lambda^e[T] \hookrightarrow \Omega[T]$ where $T$ is a tree and $e$ is an inner edge of $T$.
\end{itemize}
Furthermore, this model structure is left proper and combinatorial.
\end{thm}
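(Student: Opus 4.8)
\section*{Proof proposal}

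The plan is to obtain this model structure as an instance of Cisinski's machinery for producing cofibrantly generated model structures on presheaf toposes. Since $\mathsf{dSet}$ is the category of presheaves on $\Tree$, it is a Grothendieck topos, hence locally presentable; this will give combinatoriality for free once the structure is cofibrantly generated. First I would fix the class of cofibrations to be the normal monomorphisms and check that this is an admissible class of cofibrations in Cisinski's sense: they are generated by a set (the suitably normalized boundary inclusions $\partial\Omega[T] \hookrightarrow \Omega[T]$), they are stable under pushout, transfinite composition and retract, and every object admits a normalization. The freeness of the $\mathrm{Aut}(T)$-action demanded in the definition is exactly what guarantees that normal monomorphisms are closed under the relevant operations and that the generating set can be taken to be these boundary inclusions.

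Next I would equip $\mathsf{dSet}$ with an exact cylinder functor, built from the tensor product with a fixed dendroidal interval, and define the class of \emph{anodyne extensions} to be the smallest saturated class, compatible with this cylinder, containing the inner horn inclusions $\Lambda^e[T] \hookrightarrow \Omega[T]$. Cisinski's theorem then produces a model structure in which the cofibrations are the chosen normal monomorphisms, the trivial cofibrations are the anodyne extensions, and the fibrant objects are precisely those $D$ having the right lifting property (RLP) against every anodyne extension. Because the anodyne extensions are generated by the inner horn inclusions, an object is fibrant if and only if it lifts against all $\Lambda^e[T] \hookrightarrow \Omega[T]$, i.e. if and only if it is an $\infty$-operad; this yields the first two bullet points.

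The genuinely delicate step, and the one I expect to be the main obstacle, is the characterization of the fibrations between fibrant objects in the third bullet. Having the RLP against inner horns is clearly necessary, but one must additionally control the behaviour on isomorphisms, captured by the condition that $i^*\tau_d(f)$ be a categorical fibration. The proof that these two conditions together already force $f$ to have the RLP against \emph{all} anodyne extensions is the heart of the matter: it is the dendroidal analogue of Joyal's theorem that an inner fibration between quasi-categories is a categorical fibration exactly when it is an isofibration on homotopy categories. Concretely, one introduces the interval $J$ given by the nerve of the contractible groupoid on two objects, shows that the endpoint inclusion $\{0\}\hookrightarrow J$ is anodyne for this structure, and then runs a lifting argument reducing extensions along arbitrary anodyne maps to lifts against inner horns and against $\{0\}\hookrightarrow J$; the categorical fibration hypothesis is precisely what makes the latter lifts available.

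Finally, combinatoriality is immediate from local presentability of the topos together with cofibrant generation. For left properness I would argue that weak equivalences are stable under pushout along normal monomorphisms; in the topos-theoretic Cisinski framework this follows once one describes the weak equivalences as the maps inducing bijections on homotopy classes of maps into each $\infty$-operad, combined with the gluing properties of monomorphisms in a presheaf category. The remaining verifications --- that the weak equivalences so produced agree with the expected ones and that the two descriptions of fibrant objects coincide --- are formal consequences of Cisinski's framework once the cylinder and the generating anodyne maps are in place.
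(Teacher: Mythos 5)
You should first be aware that the paper does not prove this statement at all: it is imported verbatim from Cisinski--Moerdijk \cite{CisinskiMoerdijk11} (their Theorem 2.4 and Proposition 2.6), so the only meaningful comparison is with the proof in that reference. Your sketch does reproduce its overall architecture -- a Cisinski-type construction on the presheaf category $\mathsf{dSet}$ with normal monomorphisms as cofibrations, a cylinder built from the interval $J$ (the image under $i_!$ of the nerve of the contractible groupoid on two objects), and anodyne extensions built from the inner horns.

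There is, however, a genuine gap in how you assemble the pieces. Once you define the anodyne extensions as the smallest saturated class containing the inner horns \emph{and compatible with the cylinder} $J \otimes -$, that class is strictly larger than the saturation of the inner horn inclusions: Cisinski's compatibility axiom forces it to contain every pushout-product $J \otimes A \cup \{e\} \otimes B \rightarrow J \otimes B$ for $A \rightarrow B$ a normal monomorphism and $e \in \{0,1\}$. Consequently, your assertion that ``an object is fibrant if and only if it lifts against all $\Lambda^e[T] \hookrightarrow \Omega[T]$'' is not a formal consequence of the construction, and the first two bullet points do not come for free. The statement that every dendroidal set with inner horn filling also lifts against these extra maps is exactly the dendroidal analogue of Joyal's lifting theorem for equivalences -- the very result you correctly identify as the heart of the third bullet -- applied in the absolute case $X \rightarrow *$ (where the isofibration condition on $i^*\tau_d(X) \rightarrow *$ holds vacuously). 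So the first bullet is a special case of the third, not a formality preceding it; in Cisinski--Moerdijk it rests on their theory of equivalences in $\infty$-operads, which occupies a substantial part of their paper. A secondary issue: you cannot invoke Cisinski's machinery off the shelf, since his theory takes the cofibrations to be \emph{all} monomorphisms of a topos, whereas here they are only the normal ones; extending the machinery to this setting, and re-proving left properness (no longer automatic, since not every dendroidal set is cofibrant), is part of what Cisinski--Moerdijk actually have to do.
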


\begin{rmk}
The restriction of this model structure to the category of simplicial sets, which is the slice category $\mathsf{dSet}/\Delta[0]$, is exactly the Joyal model structure. See \cite[2.2.5]{Lurie09}. Furthermore, in \cite[Proposition E.1.10]{Joyal00} Andr\'e Joyal shows that a model structure on a category is determined by its class of cofibrations and its class of fibrant objects.
\end{rmk}

We will need to have a precise description of the weak equivalences between $\infty$-operads.

\begin{defin}[Essentially surjective morphisms]
A morphism of dendroidal sets $A \rightarrow B$ is \textit{essentially surjective} if the induced functor $i^* \tau_d A \rightarrow i^*\tau_d B$ is essentially surjective, i.e. any object of $i^*\tau_d B$ is isomorphic to the image of an object of $i^*\tau_d A$.
\end{defin}

For any integers $m \geq 0$ and $n \geq 0$, let $C_{m,n}$ be the tree made up of a corolla having $m$ inputs and the linear tree with $n$ vertices $[n]$ under it; see Figure \ref{figuretree}. There is a canonical morphism of trees from $[n]$ to $C_{m,n}$. The data of such morphisms $[n] \rightarrow C_{m,n}$ gives us a cosimplicial object in the category of trees for any $m \geq 0$. 

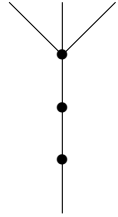
\begin{figure}[h]
\begin{tikzpicture}[scale=0.7]\label{figuretree}

\draw (1,1) node {$\bullet$} ;
\draw (1,2) node {$\bullet$} ;
\draw (1,3) node {$\bullet$} ;

\draw (1,0) -- (1,4) ;

\draw (1,3) -- (0,4) ;
\draw (1,3) -- (2,4) ;
\end{tikzpicture}
\caption{Example: the tree $C_{3,2}$.}
\end{figure}

Let us number the leaves of $C_m$ from $1$ to $m$. Let $A$ be a dendroidal set and let $c$, $c_1$, \ldots, $c_m$ be colors of $A$, i.e. the elements of the set $A_|$. Let $A^L(c_1,\ldots,c_m ; c)$ be the sub-simplicial set of $\mathrm{Hom}_{\mathsf{dSet}}(\Omega[C_{m,-}],A)$ whose $n$ simplices are the morphisms of $\mathrm{Hom}_{\mathsf{dSet}}(\Omega[C_{m,n}],A)$ such that their restriction to the leave $i$ is the color $c_i$ and their restriction to $[n]$ is the degeneracy of the color $c$.

\begin{defin}[Fully faithful morphisms]
A morphism of $\infty$-operads $f:P \rightarrow Q$ is \textit{fully faithful} if for any integer $m \geq 0$ and for any colors $c$, $c_1$, \ldots, $c_m$ of $P$, the morphism
$$
P^L(c_1,\ldots,c_m ; c) \rightarrow Q^L(f(c_1),\ldots,f(c_m) ; f(c))
$$
is a weak homotopy equivalence of simplicial set, i.e. a weak equivalence in the Quillen model structure on simplicial sets.
\end{defin}

\begin{prop}\cite[Proposition 4.17]{Heuts11}
A morphism of $\infty$-operads $f:P \rightarrow Q$ is a weak equivalence if and only it is essentially surjective and fully faithful.
\end{prop}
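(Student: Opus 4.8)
The plan is to prove both implications inside the Cisinski--Moerdijk model structure of Theorem~\ref{thmcisinskimoerdijk}, in which $P$ and $Q$ are fibrant. The organising observation I would set up first is that the class of morphisms that are \emph{simultaneously} essentially surjective and fully faithful satisfies the two-out-of-three property: essential surjectivity is detected on the homotopy categories $i^*\tau_d P$ and $i^*\tau_d Q$, while fully faithfulness is the requirement that every map of operation spaces $P^L(c_1,\dots,c_m;c) \to Q^L(f(c_1),\dots,f(c_m);f(c))$ be a weak homotopy equivalence; both conditions are assembled from classes stable under two-out-of-three (equivalences of categories, respectively weak equivalences of simplicial sets), so their conjunction is stable under two-out-of-three and under composition. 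I will use this repeatedly.

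For the easy direction, suppose $f$ is a weak equivalence. Since $f$ is a weak equivalence between fibrant objects, the homotopy-category functor $i^*\tau_d$ carries it to an equivalence of categories (weak equivalences between $\infty$-operads induce equivalences of their homotopy categories), whence $f$ is essentially surjective. For fully faithfulness, note that each $\Omega[C_{m,n}]$ is a normal, hence cofibrant, dendroidal set, so by Ken Brown's lemma the induced map of function complexes $\mathrm{Hom}_{\mathsf{dSet}}(\Omega[C_{m,-}],P) \to \mathrm{Hom}_{\mathsf{dSet}}(\Omega[C_{m,-}],Q)$ is a weak equivalence of simplicial sets. The operation space $P^L(c_1,\dots,c_m;c)$ is a (homotopy) fibre of this complex under the restriction maps recording the input and output colours; since those restriction maps are fibrations and a weak equivalence between fibrant objects induces weak equivalences on such fibres, fully faithfulness follows.

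For the hard direction, suppose $f$ is essentially surjective and fully faithful. I would factor $f = p\,j$ with $j\colon P \to R$ a trivial cofibration and $p\colon R \to Q$ a fibration; then $R$ is fibrant, $j$ is essentially surjective and fully faithful by the easy direction, and by two-out-of-three so is $p$. It therefore suffices to prove that a \emph{fibration} $p\colon R \to Q$ between $\infty$-operads which is essentially surjective and fully faithful is a weak equivalence; being already a fibration, it is enough to show it is a trivial fibration, that is, that it has the right lifting property against every normal monomorphism, equivalently against the boundary inclusions $\partial\Omega[T] \hookrightarrow \Omega[T]$.

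The heart of the argument, and the step I expect to be the main obstacle, is this lifting statement, which I would prove by induction on the number of vertices of $T$. On the trivial tree $|$ one must upgrade essential surjectivity to honest surjectivity on colours together with a lift, which uses the isofibration property of $i^*\tau_d(p)$ built into the notion of fibration between $\infty$-operads. On corollas $C_m$, and more generally on the trees $C_{m,n}$, the lifting problems against $\partial\Omega[C_{m,n}] \hookrightarrow \Omega[C_{m,n}]$ are controlled precisely by the operation spaces: because $p$ is a fibration, each $R^L(\vec c;c) \to Q^L(f\vec c;fc)$ is not merely a weak equivalence but a trivial Kan fibration, and solving its lifting problems produces the required dendrices. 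For a general tree one reduces to corollas by filling inner faces, using that $p$, being a fibration between $\infty$-operads, lifts against the inner horns $\Lambda^e[T] \hookrightarrow \Omega[T]$. The delicate point throughout is the compatibility between the simplicial description of fully faithfulness---phrased via the sub-simplicial sets $A^L(\dots) \subset \mathrm{Hom}_{\mathsf{dSet}}(\Omega[C_{m,-}],A)$ and the cosimplicial tree $[n] \to C_{m,n}$---and the dendroidal boundary and horn inclusions that actually occur in the lifting problems; reconciling these two models of operation spaces is exactly the purpose of the paper's appendix, and it is what would make the inductive lifting go through.
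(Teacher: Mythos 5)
A preliminary remark: the paper does not prove this proposition at all --- it is imported verbatim from Heuts \cite[Proposition 4.17]{Heuts11}, so there is no internal proof to compare you against. Judged on its own terms, your proposal reconstructs the standard architecture of such characterization theorems (and, in outline, of Heuts's own argument): the easy direction via homotopy function complexes on the cosimplicial trees $C_{m,\bullet}$, and the hard direction by factoring $f = p\,j$ into a trivial cofibration followed by a fibration and showing that an essentially surjective, fully faithful fibration between $\infty$-operads is a trivial fibration, i.e.\ lifts against the boundary inclusions $\partial\Omega[T] \hookrightarrow \Omega[T]$ (which do generate the normal monomorphisms). That skeleton is the right one.

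However, two steps are genuine gaps rather than deferred routine checks. First, your justification of two-out-of-three for the class ``essentially surjective and fully faithful'' is a non sequitur: a conjunction of conditions each built from two-out-of-three classes need not itself satisfy two-out-of-three, and fully faithfulness alone does not satisfy right cancellation. This matters precisely where you use it: to deduce that $p$ is fully faithful from $f = p\,j$, you must handle colors of the intermediate object $R$ that are not in the image of $j$ but only \emph{equivalent} to such colors, and this requires the nontrivial lemma that equivalent colors of an $\infty$-operad induce weak equivalences between the corresponding operation spaces $(-)^L$, naturally enough to transport fully faithfulness along them. That lemma (part of the Cisinski--Moerdijk theory of equivalences/$J$-homotopies, and a real ingredient in Heuts's proof) is nowhere established in your argument, so the reduction to fibrations breaks as written. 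Second, the core of the hard direction --- solving $\partial\Omega[T] \hookrightarrow \Omega[T]$ against $p$ for a general tree --- is dispatched with ``one reduces to corollas by filling inner faces.'' As stated this is not an argument: inner-horn lifting of $p$ does not convert a boundary-extension problem into corolla problems while preserving the prescribed boundary data; the actual reduction goes through anodyne decompositions (Segal cores) of the boundary inclusions and is the technical heart of the proof. Relatedly, the assertions that the restriction maps defining $P^L$ are Kan fibrations and that a fibration of $\infty$-operads induces a \emph{trivial Kan fibration} on $(-)^L$-spaces are themselves unproven claims of the same order of difficulty as the appendix comparison you invoke; flagging the appendix is appropriate, but it addresses the equivalence of the two models of operation spaces, not these fibration statements.
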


\begin{rmk}
In the paper \cite{CisinskiMoerdijk13a}, Cisinski--Moerdijk give an other definition of fully faithful morphisms: a morphism of $\infty$-operads $f:P \rightarrow Q$ is ``fully faithful`` if for any integer $m \geq 0$ and for any colors $c$, $c_1$, \ldots, $c_m$ of $P$, the morphism
$$
P(c_1,\ldots,c_m ; c) \rightarrow Q(f(c_1),\ldots,f(c_m) ; f(c))
$$
is a weak homotopy equivalence of simplicial set, where $P(c_1,\ldots,c_m ; c)$ is a simplicial set defined by the following collection of pullbacks
$$
\xymatrix{P(c_1,\ldots,c_m ; c)_n \ar[r] \ar[d] & \mathrm{Hom}_{\mathsf{dSet}}(\Omega[C_m],P^{(\Delta[n])}) \ar[d]\\
\Delta[0] \ar[r]_(0.3){(c,c_1,\ldots,c_m)} & \mathrm{Hom}_{\mathsf{dSet}}(\coprod_{i=0}^m \Delta[0], P^{(\Delta[n])})\ .}
$$
where $(P^{(\Delta[n])})_{n \in \mathbb{N}}$ is a Reedy fibrant replacement of $P$ which is defined in \cite[3.1]{CisinskiMoerdijk13a}. In the appendix we give a direct proof of the fact that $P(c_1,\ldots,c_m ; c)$ is homotopy equivalent to $P^L(c_1,\ldots,c_m ; c)$. This implies that the two definitions of the fully-faithful morphisms of $\infty$-operads are equivalent.
\end{rmk}

\subsection{The Dold Kan correspondance}\label{section:dold-kan}
We recall here the Dold--Kan correspondance between simplicial $\mathbb{K}$-modules and nonnegatively graded chain complex of $\mathbb{K}$-modules. See \cite[3.2]{GoerssJardine09} and \cite[Section 8.8]{MacLane95} for more details on this subject.\\ 

Let $X$ be a simplicial $\mathbb{K}$-module and let $V$ be chain complex of $\mathbb{K}$-modules.
\begin{itemize}
\item[$\triangleright$] Let $C(X)$ be the Moore complex of $X$, that is the nonnegatively graded chain complex such that $C(X)_n:=X_n$ for any integer $n \geq 1$, and whose differential $d:C(X)_n \rightarrow C(X)_{n-1}$ is given by the formula $d:=\sum_{i=0}^n (-1)^i d_i$ where $d_i$ is the $i^{th}$ face.\\

\item[$\triangleright$] Let $N(X)$ be the normalized Moore complex of  $X$, that is the sub-chain complex of $C(X)$ defined by $N(X):= \cap_{i=0}^{n-1} ker(d_i)$.\\

\item[$\triangleright$] Let $D(X)$ be the sub-chain complex of $C(X)$ generated by the images of the degeneracies $D(X):= \sum_{i=0}^{n-1} im(s_i)$. Besides, let $\pi(X)$ the functorial projection of $C(X)$ on $C/D(X):=C(X)/D(X)$. Then, the composite map $N(X) \hookrightarrow C(X) \twoheadrightarrow C/D(X)$ is a functorial isomorphism.\\

\item[$\triangleright$] Let $\Gamma(V)$ be the simplicial $\mathbb{K}$-module such that for any integer $n$, $\Gamma(V)_n:= \bigoplus_{[n] \twoheadrightarrow [p]} V_p$, where the sum is a taken over the maps of ordered sets $[n] \twoheadrightarrow [p]$ which are surjections. The faces and degeneracies are defined as follows. Let $\eta: [n] \twoheadrightarrow [p]$ be a surjection and $v \in V_p \subseteq \Gamma(V)_n$ an element in the $\eta$-summand of $\Gamma(V)_n$. 
\begin{itemize}
\item[$\triangleright$] For any codegeneracy $\sigma_i : [n+1] \twoheadrightarrow [n]$, the corresponding degeneracy $s_i(v)$ of $v$ is the element $v \in V_p \subseteq \Gamma(V)_{n+1}$ in the $\eta\sigma_i$-summand of $\Gamma(V)_{n+1}$.
\item[$\triangleright$] Let $\delta_i : [n-1] \hookrightarrow [n]$ be a coface. If $\eta\delta_i$ is still a surjection, then the corresponding face $d_i(v)$ of $v$ is the element $v \in V_p \subseteq \Gamma(V)_{n-1}$ in the $\eta\delta_i$-summand of $\Gamma(V)_{n-1}$. Otherwise, $\eta\delta_i$ can be uniquely factorized as the composition $[n-1] \twoheadrightarrow [p-1] \hookrightarrow [p]$ of an ordered surjection $\eta'$ followed by a coface $\delta_j$. If $j=p$, then $d_i(v):= (-1)^p d(v) \in V_{p-1}$ in the $\eta'$-summand of $\Gamma(V)_{n-1}$. Otherwise, $d_i(v):=0$.    
\end{itemize}
\end{itemize}

The functor $\Gamma$ from the category $\mathsf{dg}\text{-}\mathsf{Mod}^{\geq 0}$ of nonnegatively graded chain complexes to the category of simplicial $\mathbb{K}$-modules is both right adjoint and left adjoint to the functor $C/D$. Furthermore, these two functors give us an equivalence of categories. This is the Dold--Kan correspondence in the context of $\mathbb{K}$-modules.\\

The functor $C/D$ from the category of simplicial $\mathbb{K}$-modules to the category of nonnegatively graded chain complexes is lax symmetric monoidal through the Eilenberg--Zilber map. Furthermore, it is lax comonoidal through the Alexander--Whitney map; see \cite[Section 8.8]{MacLane95} for a description of these maps. Hence, its adjoint $\Gamma$ is lax monoidal. However, $\Gamma$ is not symmetric monoidal. 

%%%%%%%%%%%%%%%%%%%%%%%%%

\section{Strict unital homotopy colored operads}

In this section we introduce the new notion of homotopy colored operads with strict unit, which is the operadic generalization of the notion of $\mathcal{A}_\infty$-category \cite{FOOO09I}. For that purpose, we introduce the notions of colored cooperads, conilpotent cofree colored cooperads and coderivations, which are generalizations from the non-colored case in the framework developed in the previous section. The propositions are often proved in the same way as in the non-colored case, but we recall the key properties that  will be used later on.

\subsection{Colored cooperads} We first consider the category of colored cooperads.

\begin{defin}[Colored cooperads]\leavevmode
\begin{itemize}
\item[$\triangleright$]
For any set ${C}$, a \textit{${C}$-cooperad} is a comonoid $(\mathcal{C},\Delta,\varepsilon)$ in the category of $({C},\mathbb{S})$-modules.
\item[$\triangleright$]
More generally, a \textit{colored cooperad} is a quadruple $({C},\mathcal{C}, \Delta, \varepsilon)$, where ${C}$ is a set and where $(\mathcal{C}, \Delta,\varepsilon)$ is a ${C}$-cooperad.
\end{itemize}
\end{defin}

\begin{lemma}
For any function $\phi: {C} \rightarrow {D}$, the functor $\phi_!: ({C},\mathbb{S})\textsf{-}\mathsf{Mod} \rightarrow ({D},\mathbb{S})\textsf{-}\mathsf{Mod}$ is a lax comonoidal functor, i.e. there is a natural morphism $\phi_!(\mathcal{V} \circ \mathcal{W}) \rightarrow \phi_!\mathcal{V} \circ \phi_!\mathcal{W}$ and a morphism $\phi_!I_C \rightarrow I_D$ satisfying coherence conditions.
\end{lemma}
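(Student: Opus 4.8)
The plan is to obtain the lax comonoidal (equivalently, oplax monoidal) structure on $\phi_!$ formally, as the mate of the lax monoidal structure already established on its right adjoint $\phi^*$. Recall from the previous lemmas that $\phi_!$ is left adjoint to $\phi^*$ and that $\phi^*$ is lax monoidal, with structure maps $\phi_{\mathcal V,\mathcal W}^*:\phi^*\mathcal V \circ \phi^*\mathcal W \to \phi^*(\mathcal V \circ \mathcal W)$ and $\phi_I^*:I_C \to \phi^*I_D$. The general principle of doctrinal adjunction (Kelly) asserts precisely that the left adjoint of a lax monoidal functor between monoidal categories carries a canonical oplax monoidal structure; I would invoke this principle and spell out the resulting structure maps.

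Concretely, write $\eta:\mathrm{Id} \to \phi^*\phi_!$ for the unit of the adjunction. First I would define the map $\phi_!(\mathcal V \circ \mathcal W) \to \phi_!\mathcal V \circ \phi_!\mathcal W$ as the adjunct, under $\phi_! \dashv \phi^*$, of the composite
$$
\mathcal V \circ \mathcal W \xrightarrow{\ \eta_{\mathcal V} \circ \eta_{\mathcal W}\ } \phi^*\phi_!\mathcal V \circ \phi^*\phi_!\mathcal W \xrightarrow{\ \phi_{\phi_!\mathcal V,\phi_!\mathcal W}^*\ } \phi^*(\phi_!\mathcal V \circ \phi_!\mathcal W)\ ,
$$
and the counit-type map $\phi_! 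I_C \to I_D$ as the adjunct of $\phi_I^*:I_C \to \phi^*I_D$. Naturality in $\mathcal V$ and $\mathcal W$ follows from the naturality of $\eta$, the naturality of the lax structure maps of $\phi^*$, and the naturality of the adjunction bijection.

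It then remains to verify the coherence (associativity and counitality) axioms. These follow formally: each coherence diagram for $\phi_!$ is the mate, under the adjunction, of the corresponding diagram witnessing that $\phi^*$ is lax monoidal, so it is obtained by transposing and applying the triangle identities of $\phi_! \dashv \phi^*$. The one point requiring care is that the monoidal category $((C,\mathbb{S})\textsf{-}\mathsf{Mod},\circ,I_C)$ is \emph{not} symmetric, since the composite product is non-symmetric; hence one must work in the setting of plain (non-symmetric) monoidal categories and keep track of the order of the factors in each composite. This is exactly the generality in which doctrinal adjunction holds, so it poses no genuine difficulty. I expect this bookkeeping of the coherence diagrams to be the only real work. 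Alternatively, for a reader who prefers an explicit description, the unit map $\phi_! I_C \to I_D$ is simply the fold map $\coprod_{\phi(c)=d}\mathds{1}_{\mathsf{E}} \to \mathds{1}_{\mathsf{E}}$ on each component $(\rho;d)$ with $\rho$ of arity one (and the zero map otherwise), which makes its compatibility with the counit transparent.
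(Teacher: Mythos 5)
Your proof is correct, but it takes a genuinely different route from the paper's. The paper argues by hand: it writes out both $\phi_!(\mathcal{V}\circ\mathcal{W})(\chi';c')$ and $(\phi_!\mathcal{V}\circ\phi_!\mathcal{W})(\chi';c')$ as explicit coproducts over indexing data (colors $c$ with $\phi(c)=c'$, lifts $\chi$ of $\chi'$, functions $\alpha$ and $\upsilon$, and in the second case additional tuples $(c_1,\ldots,c_k)$ with $\phi(c_i)=\phi(\upsilon(i))$), and then observes that the map of indexing sets $\upsilon \mapsto \bigl(\upsilon,(\upsilon(1),\ldots,\upsilon(k))\bigr)$ induces a monomorphism $\phi_!(\mathcal{V}\circ\mathcal{W}) \hookrightarrow \phi_!\mathcal{V}\circ\phi_!\mathcal{W}$ satisfying the required properties; the unit map $\phi_!I_C \to I_D$ is left implicit. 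You instead invoke doctrinal adjunction: since the adjunction $\phi_!\dashv\phi^*$ and the lax monoidal structure on $\phi^*$ are both established earlier in the paper, the mate correspondence formally produces the lax comonoidal structure on $\phi_!$, with the coherence axioms transferred by transposition and the triangle identities. This is legitimate --- Kelly's doctrinal adjunction holds for plain (non-symmetric) monoidal categories, which is exactly the setting of the composite product $\circ$, and your explicit formulas for the two mates are the correct ones. What each approach buys: yours avoids all colimit bookkeeping, gets coherence for free, and exhibits the structure as the canonical one determined by the adjunction; the paper's concrete construction yields information invisible to the abstract argument, namely that the comparison map is a monomorphism, and it gives a hands-on description of the induced coproduct $\Delta_\phi$ on $\phi_!\mathcal{C}$, which is what is actually manipulated when defining morphisms of colored cooperads afterwards. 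For completeness you could note that the two structures coincide: unwinding your mate, the unit $\eta_{\mathcal{V}}$ includes $\mathcal{V}(\chi;c)$ into $\phi^*\phi_!\mathcal{V}(\chi;c)$ as the summand indexed by $(\chi,c)$ itself, and the adjunct map then sends the $\upsilon$-summand to the $\bigl(\upsilon,(\upsilon(1),\ldots,\upsilon(k))\bigr)$-summand, which is precisely the paper's map.
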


\begin{proof}
For any $({C},\mathbb{S})$-modules $\mathcal{V}$ and $\mathcal{W}$ and any object $(c';\chi': X \rightarrow {D})$ of $\mathsf{Bij}_{{D}}$, we have:
$$
\phi_!(\mathcal{V} \circ \mathcal{W})(\chi' ; c')= \coprod_{k \geq 1} \Big(\coprod_{c, \chi, \alpha, \upsilon}  \mathcal{V}(\upsilon ; c)\otimes \mathcal{W}(\chi_{|\alpha^{-1}(1)} ; \upsilon(1)) \otimes \cdots \otimes \mathcal{W}(\chi_{|\alpha^{-1}(k)} ; \upsilon(k)) \Big)_{\mathbb{S}_k}\ ,
$$
where the second coproduct is taken over the colors $c$ in ${C}$ such that $\phi(c)=c'$, the functions $\chi: X \rightarrow {C}$ such that $\phi \chi= \chi'$, the functions $\alpha \rightarrow \underline{k}$ and the functions $\upsilon: \underline{k} \rightarrow {C}$. Furthermore we have,
$$
(\phi_!\mathcal{V} \circ \phi_!\mathcal{W})(\chi' ; c')= \coprod_{k \geq 1} \Big(\coprod_{c, \chi, \alpha, \upsilon\atop  (c_1, \ldots ,c_k)} \mathcal{V}(\upsilon ; c)\otimes \mathcal{W}(\chi_{|\alpha^{-1}(1)} ; c_1) \otimes \cdots \otimes \mathcal{W}(\chi_{|\alpha^{-1}(k)} ; c_k) \Big)_{\mathbb{S}_k}\ ,
$$
where the second coproduct is taken over the colors $c$ in ${C}$ such that $\phi(c)=c'$, the functions $\chi: X \rightarrow {C}$ such that $\phi \chi= \chi'$, the functions $\alpha \rightarrow \underline{k}$, the functions $\upsilon: \underline{k} \rightarrow {C}$, and the $k$-tuples of colors $(c_1, \ldots ,c_k)$ such that $\phi(c_i)=  \phi(\upsilon(i))$. The map
\begin{align*}
 \{\upsilon: \underline{k} \rightarrow {C}\} &\rightarrow\{(\upsilon: \underline{k} \rightarrow {C},(c_1, \ldots ,c_k))\ |\  \phi(c_i)=\phi(\upsilon(i)), \ \forall i\}\\
 \upsilon &\mapsto \big(\upsilon,(\upsilon(1), \ldots ,\upsilon(k))\big)
\end{align*}
\noindent induces a monomorphism $\phi_!(\mathcal{V} \circ \mathcal{W})(\chi' ; c') \hookrightarrow (\phi_!\mathcal{V} \circ \phi_!\mathcal{W})(\chi' ; c')$ which satisfies the required properties.
\end{proof}

\begin{prop}
Let $\phi:{C} \rightarrow {D}$ be a function, and let $(\mathcal{C},\Delta,\varepsilon)$ be a ${C}$-cooperad. The $({D},\mathbb{S})$-module $\phi_!\mathcal{C}$ has a canonical structure of ${D}$-cooperad $(\phi_!\mathcal{C},\Delta_{\phi},\varepsilon_{\phi})$ induced by the structure of ${C}$-cooperad of $\mathcal{C}$.
\end{prop}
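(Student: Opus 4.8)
The plan is to exploit the formal duality with the operad case treated just above. Exactly as a lax monoidal functor sends monoids to monoids---the principle invoked to equip $\phi^*\PP$ with its $C$-operad structure---a lax comonoidal functor sends comonoids to comonoids. The preceding lemma exhibits $\phi_!$ as a lax comonoidal functor, with structure morphisms $\theta_{\mathcal{V},\mathcal{W}}\colon \phi_!(\mathcal{V}\circ\mathcal{W}) \rightarrow \phi_!\mathcal{V}\circ\phi_!\mathcal{W}$ (natural in $\mathcal V,\mathcal W$) and $\theta_I\colon \phi_!I_{C} \rightarrow I_{D}$, satisfying the coherence conditions. So the whole statement should fall out by applying this general categorical fact to the comonoid $(\mathcal{C},\Delta,\varepsilon)$, and I would present it as a one-line corollary of that lemma, mirroring the proof of the operad proposition.

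Concretely, I would define the comultiplication $\Delta_{\phi}$ on $\phi_!\mathcal{C}$ as the composite
\[
\phi_!\mathcal{C} \xrightarrow{\phi_!(\Delta)} \phi_!(\mathcal{C}\circ\mathcal{C}) \xrightarrow{\theta_{\mathcal{C},\mathcal{C}}} \phi_!\mathcal{C}\circ\phi_!\mathcal{C},
\]
and the counit $\varepsilon_{\phi}$ as the composite
\[
\phi_!\mathcal{C} \xrightarrow{\phi_!(\varepsilon)} \phi_!I_{C} \xrightarrow{\theta_I} I_{D}.
\]
The coassociativity of $\Delta_{\phi}$ then reduces to a diagram chase combining the coassociativity of $\Delta$ with the associativity coherence of the lax comonoidal structure maps $\theta$, and likewise the counit axiom for $\varepsilon_{\phi}$ follows from the counit axiom for $\varepsilon$ together with the unitality coherence of $\theta_I$. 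These are precisely the diagrams asserted by the dual of MacLane's statement for monoidal functors, so no genuinely new verification is required beyond transcribing the monoid argument.

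The only point deserving attention---and the place where a reader might want reassurance---is that the lax (as opposed to strong) comonoidal structure of $\phi_!$ is enough: the structure morphism $\theta_{\mathcal{C},\mathcal{C}}$ goes in the direction $\phi_!(\mathcal{C}\circ\mathcal{C}) \rightarrow \phi_!\mathcal{C}\circ\phi_!\mathcal{C}$, which is exactly the direction needed to postcompose with $\phi_!(\Delta)$ and land in $\phi_!\mathcal{C}\circ\phi_!\mathcal{C}$. Since comonoids dualize monoids and lax comonoidal functors dualize lax monoidal functors, the preservation holds without any strongness hypothesis. I therefore expect the proof to be essentially immediate: invoke the previous lemma and the comonoid-preservation principle, optionally recording the explicit formulas for $\Delta_{\phi}$ and $\varepsilon_{\phi}$ above for later use.
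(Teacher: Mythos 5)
Your proposal is correct and follows exactly the paper's own argument: the paper proves this proposition in one line as a corollary of the preceding lemma, since comonoids are carried to comonoids by lax comonoidal functors. Your explicit formulas for $\Delta_{\phi}$ and $\varepsilon_{\phi}$ and the remark on the direction of the structure maps are just the standard unfolding of that same categorical fact.
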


\begin{proof}
This is a corollary of the previous lemma since comonoids induce comonoids through lax comonoidal functors.
\end{proof}

\begin{defin}[Morphisms of colored cooperads]
A \textit{morphism of colored cooperads} from $\mathscr{C}=({C},\mathcal{C},\Delta,\varepsilon)$ to $\mathscr{D}=({D}, \mathcal{D}, \Delta', \varepsilon')$ is a morphism of $\mathbb{S}$-modules $f=(\phi,f_!)$ such that $f_!$ is a morphism of ${D}$-cooperads from $(\phi_!\mathcal{C},\Delta_{\phi},\varepsilon_{\phi})$ to $(\mathcal{D},\Delta',\varepsilon')$.
\end{defin}

\begin{prop}
Colored cooperads with their morphisms form a category denoted by $\mathsf{Coop}$.
\end{prop}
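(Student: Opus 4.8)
The plan is to mimic, dually, the proof of Proposition \ref{prop:CategoryOp}. Since colored $\mathbb{S}$-modules already form a category $\mathbb{S}\textsf{-}\mathsf{Mod}$, and since a morphism of colored cooperads is by definition a morphism of $\mathbb{S}$-modules subject to one extra condition, it suffices to check that this class of morphisms contains the identities and is stable under composition; associativity and the unit axioms are then inherited for free from $\mathbb{S}\textsf{-}\mathsf{Mod}$. So the whole proof reduces to two closure checks.

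First I would dispose of the identities. Given a cooperad $\mathscr{C}=(C,\mathcal{C},\Delta,\varepsilon)$, the identity morphism of $\mathbb{S}$-modules is $(\mathrm{id}_C,\mathrm{id}_{\mathcal{C}})$, and since $(\mathrm{id}_C)_!$ is the identity functor, its pushforward part is simply $\mathrm{id}_{\mathcal{C}}$, which is trivially a morphism of $C$-cooperads.

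The substantial point is composition. Let $f=(\phi,f_!):\mathscr{C}\to\mathscr{D}$ and $g=(\psi,g_!):\mathscr{D}\to\mathscr{E}$ be two morphisms of colored cooperads, with $\mathscr{D}=(D,\mathcal{D},\Delta',\varepsilon')$ and $\mathscr{E}=(E,\mathcal{E},\Delta'',\varepsilon'')$. Dualizing the composition formula of $\mathbb{S}\textsf{-}\mathsf{Mod}$ into pushforward form, and using $(\psi\phi)_!=\psi_!\phi_!$, the pushforward part of the composite is $(gf)_!=g_!\circ\psi_!(f_!):(\psi\phi)_!\mathcal{C}\to\mathcal{E}$. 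Now $f_!$ is a morphism of $D$-cooperads, and since $\psi_!$ is a lax comonoidal functor (by the lemma preceding the proposition), it preserves comonoids and morphisms of comonoids; hence $\psi_!(f_!)$ is a morphism of $E$-cooperads from $\psi_!\phi_!\mathcal{C}=(\psi\phi)_!\mathcal{C}$ to $\psi_!\mathcal{D}$. Composing it with the $E$-cooperad morphism $g_!$ yields a morphism of $E$-cooperads, so $gf$ is indeed a morphism of colored cooperads.

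The only point requiring any care, and which I expect to be the sole real obstacle, is verifying that the $E$-cooperad structure on $(\psi\phi)_!\mathcal{C}$ defined directly through the lax comonoidal functor $(\psi\phi)_!$ agrees with the one obtained by applying $\phi_!$ and then $\psi_!$, so that $g_!$ and $\psi_!(f_!)$ are genuinely composable as cooperad morphisms with matching induced structures. This is exactly the coherence of lax comonoidal structures under the equality $(\psi\phi)_!=\psi_!\phi_!$; it is the dual of the fact exploited in Proposition \ref{prop:CategoryOp} and is a routine verification. With composition and identities established, associativity and unitality follow immediately from those of $\mathbb{S}\textsf{-}\mathsf{Mod}$.
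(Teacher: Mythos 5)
Your proof is correct and follows essentially the same route as the paper: the paper's proof of this proposition simply refers to the proof of Proposition~\ref{prop:CategoryOp} for colored operads, and your argument is exactly the dualization of that proof, replacing the pullback $\phi^*$ and lax monoidality by the pushforward $\psi_!$ and lax comonoidality so that $\psi_!$ preserves morphisms of comonoids, giving $(gf)_! = g_!\,\psi_!(f_!)$. The coherence point you flag (compatibility of the lax comonoidal structures under $(\psi\phi)_! = \psi_!\phi_!$) is indeed the only detail implicitly used, and it is routine, just as in the operad case.
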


\begin{proof}
This proof is similar to the proof of Proposition~\ref{prop:CategoryOp} for colored operads.
\end{proof}

\subsection{Coaugmented colored cooperads} Throughout this section, $\mathsf{E}$ is an abelian monoidal category.

\begin{defin}[Coaugmented colored cooperad]
A \textit{coaugmented colored cooperad} $\mathscr{C}=({C},\mathcal{C}, \Delta, \allowbreak \varepsilon, \allowbreak u)$ is the data of a colored cooperad $({C},\mathcal{C}, \Delta, \varepsilon)$ together with a morphism of ${C}$-cooperads $u:I_{{C}} \rightarrow \mathcal{C}$.
\end{defin}

Since $\mathsf{E}$ is an abelian category, any coaugmented colored cooperad $\mathscr{C}$ has the form $\mathcal{C}=I_{{C}}\oplus \overline{\mathcal{C}}$ where $\overline{\mathcal{C}}$ is the kernel of the counit map $\varepsilon: \mathcal{C}  \rightarrow I_{{C}}$. Furthermore, the restriction $\Delta_{|\overline{\mathcal{C}}}$ of the coproduct $\Delta$ to $\overline{\mathcal{C}}$ is equal to 
$$
\Delta_{|\overline{\mathcal{C}}}= I_{{C}} \circ \id + \id \circ I_{{C}} +\overline{\Delta} \ ,
$$
where the map $\overline{\Delta}$ is made up of the image of $\Delta$ living in the summand of $\overline{\mathcal{C}} \circ (I_{{C}} \oplus \overline{\mathcal{C}})$ where $\overline{\mathcal{C}}$ appears more than once on the right-hand side of the composite product $\circ$. Moreover the restriction of the coproduct $\Delta$ to $I_{{C}}$ is the canonical morphism $I_{{C}} \rightarrow I_{{C}} \circ I_{{C}}$.

\begin{defin}[Morphisms of coaugmented colored cooperads]
A \textit{morphism} of coaugmented colored cooperads from $({C},\mathcal{C},\Delta,\varepsilon, u)$ to $({D},\mathcal{D},\Delta', \varepsilon', u')$ is a morphism of colored cooperads $f=(\phi,f_!)$, whose restriction to $I_{{C}}$ is equal to the identity:
$$
\xymatrix@M=9pt{I_{{C}}(c;c) = \mathds{1}_{\mathsf{E}} \ar[r]^(0.42){\id} & \mathds{1}_{\mathsf{E}}= I_{{D}}(\phi(c);\phi(c))\ .}
$$
This defines the category of coaugmented colored cooperads. 
\end{defin}

\subsection{The tree module and the free colored operad} The tree module is the underlying construction of the free colored operad and the cofree colored cooperad on an arbitrary colored $\mathbb{S}$-module.

\begin{defin}[Colored trees]
Let ${C}$ be a set. A \textit{${C}$-colored tree} $t=(T,\kappa)$ is the data of a tree $T=({V}, {F}, u,\rho,r )$ and a coloring function $ \kappa$ from the set of edges of $T$ to the set of colors ${C}$. A \textit{morphism} of ${C}$-colored trees from $t$ to $t'$ is a morphism of trees such that the induced function on edges commutes with the coloring functions.
\end{defin}

\begin{nota}
Note that trees are denoted by capital letters, whereas colored trees are denoted by small letters. Let $t=(T ,\kappa)$ be a ${C}$-colored tree and $\mathcal{V}$ be a $({C},\mathbb{S})$-module. For any vertex $v$ of $t$, we denote by $\mathrm{in}(v)$ and $\mathrm{out}(v)$ respectively the set of inputs and the one-point-set of the output of $v$. Then we will denote the object $\mathcal{V}(\kappa(\mathrm{out}(v));\kappa_{|\mathrm{in}(v)})$ simply by $\mathcal{V}(v)$.
\end{nota}

For any $({C},\mathbb{S})$-module $\mathcal{V}$  and any ${C}$-colored tree $t=(({V},{F},u,\rho,r),\kappa)$, we denote by $t(\mathcal{V})$ the following colimit in the category $\mathsf{E}$ 
\begin{equation}\label{w(t)}
\displaystyle{t(\mathcal{V}):= \Big(\coprod_{\phi:\underline{n} \rightarrow {V}} \mathcal{V}(\phi(1)) \otimes \cdots \otimes \mathcal{V}(\phi(n)) \Big)_{\mathbb{S}_n}}\ ,
\end{equation}
which is made up of all the possible ways of labeling the vertices of the tree $t$ with elements of $\mathcal{V}$.
This colimit is taken over the set of bijections from the set $\underline{n}$ to the set ${V}$ of vertices of $t$, $n$ being the number of vertices of $t$, modulo the action of $\mathbb{S}_n$.\\

For any object $(\chi:X \rightarrow {C} ; c)$ in the category $\mathsf{Bij}_{{C}}$, we consider the category $\mathsf{Tree}_{C}(\chi ; c)$ where the  objects are pairs $(t,\alpha)$, with $t$  a $C$-colored tree whose root is colored by $c$ and $\alpha$  a bijection from $X$ to the leaves of $t$ such that the following diagram commutes
$$
\xymatrix{X \ar[rr]^(0.44){\alpha} \ar[rd]_{\chi} &&  \mathrm{leaves}(t) \ar[ld]^{\kappa} \\
& {C} \  . }
$$
The morphisms in $\mathsf{Tree}_{C}(\chi ; c)$ from $(t,\alpha)$ to $(t',\alpha')$ are the isomorphisms of $C$-colored trees $\beta: t \rightarrow t'$ such that $\alpha' =\beta \alpha $ on the leaves. For any $({C},\mathbb{S})$-module, Formula~(\ref{w(t)}) induces a functor from the category $\mathsf{Tree}_{C}(\chi ; c)$ to the category $\mathsf{E}$, i.e. a diagram in $\mathsf{E}$.

\begin{defin}[Tree module]
For any $({C},\mathbb{S})$-module $\mathcal{V}$, the \textit{tree module} $\mathbb{T}\mathcal{V}$ is defined, for any object $(\chi ; c)$ of the category of $\mathsf{Bij}_{{C}}$, by the following colimit:
$$
\displaystyle \mathbb{T}\mathcal{V}(\chi ; c):= \mathop{\rm colim}_{(t,\alpha) \in \mathsf{Tree}_{C}(\chi ; c)}
t(\mathcal{V})\ .
$$
This construction is functorial in $(\chi ; c)$ and thus defines a $({C},\mathbb{S})$-module.
\end{defin}

For any ${C}$-colored tree $t$, we can consider the object $t(\mathcal{V})$ of $\mathsf{E}$ as a $({C},\mathbb{S})$-module by taking the above colimit only over the pairs $(t', \alpha)$ such that $t'$ is isomorphic, as a $C$-colored tree, to $t$, that is 
$$
t(\mathcal{V})(\chi ; c):=
\mathop{\rm colim}_{(t',\alpha) \in \mathsf{Tree}_{C}(\chi ; c)\atop t' \simeq t}
 t'(\mathcal{V})\ .
$$
Note that the coproduct $\coprod_{[t]} t(\mathcal{V})$ over the isomorphism classes $[t]$ of $C$-colored trees is isomorphic to the tree module of $\mathcal{V}$, 
$$
\mathbb{T}\mathcal{V} \cong \coprod_{[t]} t(\mathcal{V}) \ .
$$

Finally, the tree module $\mathbb{T}\mathcal{V}$ is functorial in $\mathcal{V}$ and thus defines an endofunctor $\mathbb{T}$ of the category $({C},\mathbb{S})\textrm{-}\mathsf{Mod}$. It canonically extends to an endofunctor $\mathbb{T}$ of the whole category of colored $\mathbb{S}$-modules. In the sequel, we will also work with the \textit{augmented tree module}  $\overline{\mathbb{T}}\mathcal{V}$  made up of non trivial trees.
$$
\overline{\mathbb{T}}\mathcal{V} \cong \coprod_{[t] \neq |} t(\mathcal{V}) \ .
$$

\begin{rmk}
Let $t=t_1 \sqcup \ldots \sqcup t_k$ be a partition of the $C$-colored tree $t$ into sub-trees. Let $\phi: C  \rightarrow D$ be a function and let $f(t_i)$ be a morphism of colored $\mathbb{S}$-modules over $\phi$ from $t_i(\mathcal{V})$ to $(D,\mathcal{W})$,  for any $i$. Then, it induces a morphism of colored $\mathbb{S}$-modules over $\phi$ from $t(\mathcal{V})$ to $(D,\mathcal{W})$, that we denote by $f(t_1) \otimes \cdots \otimes f(t_k)$.
\end{rmk}

\begin{prop}
The colored $\mathbb{S}$-module $\mathbb{T}\mathcal{V}$ has a canonical structure of a colored operad given by the grafting of trees. We denote this colored operad by $\mathbb{T}^o\mathcal{V}$. It gives rise to  a functor $\mathbb{T}^o: \mathbb{S}\textsf{-}\mathsf{Mod} \to \mathsf{Op}$ from the category of $\mathbb{S}$-modules to the category $\mathsf{Op}$ of colored operads, which is left adjoint to the forgetful functor $\mathsf{Op} \to \mathbb{S}\textsf{-}\mathsf{Mod}$ .
\end{prop}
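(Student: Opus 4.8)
The plan is to build the operad structure on $\mathbb{T}\mathcal{V}$ by grafting, to promote this to a functor, and finally to verify the universal property of a free construction against the forgetful functor. For the first step, I would define the composition map $\gamma:\mathbb{T}\mathcal{V}\circ\mathbb{T}\mathcal{V}\to\mathbb{T}\mathcal{V}$ on the colimit defining the composite product as follows: an element of $(\mathbb{T}\mathcal{V}\circ\mathbb{T}\mathcal{V})(\chi;c)$ is the datum of an outer tree $s$ with $k$ leaves colored by some $\psi:\underline{k}\to C$ together with, for each $j$, an inner tree $t_j$ whose root is colored by $\psi(j)$; grafting each $t_j$ onto the $j$-th leaf of $s$ produces a single $C$-colored tree with leaves colored by $\chi$ and root colored by $c$, hence an element of $\mathbb{T}\mathcal{V}(\chi;c)$. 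The unit $\eta:I_C\to\mathbb{T}\mathcal{V}$ is the inclusion of the summand indexed by the trivial tree $|$, which by definition contributes $\mathds{1}_{\mathsf{E}}$ at each color. I would then check the monoid axioms: associativity of $\gamma$ is exactly the associativity of iterated grafting (a tree of trees of trees can be contracted in either order to the same tree), while unitality records that grafting a trivial tree changes nothing. One must verify that these maps descend to the various quotients by the symmetric groups appearing in the composite product and in the tree module; this is a direct bookkeeping check, identical in spirit to the non-colored case treated in \cite{LodayVallette12}.

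For functoriality, a morphism of $\mathbb{S}$-modules $f=(\phi,f^*):(C,\mathcal{V})\to(D,\mathcal{W})$ induces $\mathbb{T}^o f=(\phi,\mathbb{T}f^*)$ by relabeling each vertex of a $C$-colored tree through $f^*$ and pushing colors along $\phi$. Since grafting is defined purely combinatorially on the underlying tree and commutes with vertexwise relabeling, the map $\mathbb{T}f^*$ automatically respects $\gamma$ and $\eta$, so it is a morphism of operads, and the assignment clearly preserves identities and composites.

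Finally, for the adjunction I would exhibit the unit $\eta_{\mathcal{V}}:\mathcal{V}\to\mathbb{T}\mathcal{V}$ as the inclusion of the corolla (one-vertex tree) summands and prove the universal property. Because $\phi^*\mathscr{P}$ carries a canonical $C$-operad structure whenever $\mathscr{P}$ is a $D$-operad, via the lax monoidal functor $\phi^*$, it suffices by the adjunction $\phi_!\dashv\phi^*$ and the definition of morphisms of colored operads to treat a fixed color set $C$ and to produce, for every $C$-operad $\mathscr{P}$ and every morphism of $(C,\mathbb{S})$-modules $g:\mathcal{V}\to\mathscr{P}$, a unique $C$-operad morphism $\widetilde{g}:\mathbb{T}^o\mathcal{V}\to\mathscr{P}$ with $\widetilde{g}\,\eta_{\mathcal{V}}=g$. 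I would define $\widetilde{g}$ on the summand $t(\mathcal{V})$ by applying $g$ to each vertex label and then contracting the resulting $\mathscr{P}$-decorated tree using the composition $\gamma$ of $\mathscr{P}$; uniqueness is then forced because every tree is an iterated grafting of corollas, so $\widetilde{g}$ is determined by its restriction $g$ to corollas.

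The main obstacle is precisely the well-definedness of this contraction map $\widetilde{g}$: one must show that composing the $\mathscr{P}$-labels along a tree in any admissible order yields the same element of $\mathscr{P}$, and that this is compatible with the equivariance and the colimits built into $\mathbb{T}\mathcal{V}$. This is exactly where the associativity and the $\mathbb{S}$-equivariance of $\mathscr{P}$ are used, and it is the colored counterpart of the standard fact that the free operad is the tree module; the colors only add the constraint that grafted edges carry matching colors, which the construction respects by design. Naturality in both $\mathcal{V}$ and $\mathscr{P}$ is then routine.
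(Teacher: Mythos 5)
Your proof is correct and takes essentially the same approach as the paper: the paper's own proof simply defers to the non-colored case of Loday--Vallette and to the preceding remark for the extension of morphisms, and what you spell out (composition by grafting, unit as the trivial-tree summand, functoriality, and the universal property via vertexwise application of $g$ followed by iterated composition in $\mathscr{P}$, with uniqueness forced by generation under grafting of corollas) is precisely that argument adapted to colors. Your reduction to a fixed color set, using that $\phi^*\mathscr{P}$ is a $C$-operad and the paper's definition of morphisms in $\mathsf{Op}$, matches how the paper's framework is set up, so no genuinely different route is taken.
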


\begin{proof}
The proof is similar to the  non-colored case, see \cite[Section~5.8]{LodayVallette12}. The extension of morphisms from $\mathcal V$ to $\mathbb{T}^o\mathcal{V}$ is given by the construction mentioned in the above remark.
\end{proof}

In plain words, the colored operad $\mathbb{T}^o\mathcal{V}$ is the \textit{free colored operad} on the colored $\mathbb{S}$-module $\mathcal{V}$.

%\begin{proof}
%For any $\mathbb{S}$-module $({C},V)$, any operad $({D},P,\gamma,\eta)$ and any morphism $f=(\phi,f^*)$ from $({C},V)$ to $({D},P)$, the morphism of $({C},\mathbb{S})$-modules $f^*: V \rightarrow \phi^*P$ can be extended to a unique morphism of ${C}$-operads from $\mathbb{T}^oV$ to $\phi^*P$. Consequently, $f$ can be lifted to a unique morphism of operads from $({C},\mathbb{T}^oV)$ to $({D},P,\gamma,\eta)$.
%\end{proof}

\begin{nota}
If $f$ is a morphism of $\mathbb{S}$-modules from $({C},\mathbb{T}\mathcal{V})$ to $({D},\mathcal{W})$, then we denote by $f(t)$ the restriction of $f$ to $t(\mathcal{V})$, for any ${C}$-colored tree $t$.
\end{nota}

\subsection{Conilpotent colored cooperads} We suppose again that $\mathsf{E}$ is an abelian category.

\begin{defin}[Conilpotent colored cooperads]
A \textit{conilpotent} colored cooperad $\mathscr{C}=({C}, \mathcal{C}, \Delta, \varepsilon, u)$ is a coaugmented colored cooperad such that the images of any element under the right-hand side iterations of the decomposition map $\bar{\Delta}+ \id \circ I_C$  stabilize at some point.(We refer the reader to \cite[Section~$5.8$]{LodayVallette12} for more details in the non-colored case.) The full subcategory of the category of coaugmented colored cooperads made up of the conilpotent colored cooperads is denoted by $\mathsf{ConilCoop}$.
\end{defin}

\begin{prop}
For any $({C},\mathbb{S})$-module $\mathcal{V}$, the tree module $\mathbb{T}\mathcal{V}$ has a canonical structure of conilpotent ${C}$-cooperad given by the degrafting of trees. We denote this colored cooperad by $\mathbb{T}^c\mathcal{V}$. 
This defines a functor $\mathbb{T}^c : \mathbb{S}\textsf{-}\mathsf{Mod} \to \mathsf{ConilCoop}$ from the category of colored $\mathbb{S}$-modules to the category of conilpotent colored cooperads, which is right adjoint to the forgetful functor $\mathscr C \mapsto \overline{\mathcal{C}}$.
\end{prop}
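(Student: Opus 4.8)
The plan is to dualize the construction of the free colored operad $\mathbb{T}^o\mathcal{V}$, replacing grafting by degrafting. First I would equip the tree module $\mathbb{T}\mathcal{V}$ with three structure maps. The counit $\varepsilon : \mathbb{T}\mathcal{V} \to I_{C}$ is the projection onto the summand indexed by the trivial tree, and the coaugmentation $u : I_{C} \to \mathbb{T}\mathcal{V}$ is the corresponding inclusion. The decomposition map $\Delta : \mathbb{T}\mathcal{V} \to \mathbb{T}\mathcal{V} \circ \mathbb{T}\mathcal{V}$ is defined on the summand $t(\mathcal{V})$ by summing over all two-level degraftings of $t$, that is, over all choices of a sub-tree $t_0$ containing the root together with the sub-trees $t_1, \ldots, t_k$ grafted onto the leaves of $t_0$, whose regrafting recovers $t$; such a configuration contributes the labelled piece $t_0$ in the left factor tensored with the labelled pieces $t_1, \ldots, t_k$ in the right factors, viewed inside the colimit defining $\mathbb{T}\mathcal{V} \circ \mathbb{T}\mathcal{V}$. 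Since a $C$-colored tree has finitely many vertices, it admits only finitely many such degraftings, so $\Delta$ is a well-defined finite sum, natural in the object $(\chi ; c)$ of $\BijC$ and hence a morphism of $(C,\mathbb{S})$-modules.

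Next I would check the comonoid axioms. Counitality is immediate, since cutting off the trivial top tree or the trivial bottom edges returns $t$ unchanged. Coassociativity, namely $(\Delta \circ \id)\Delta = (\id \circ \Delta)\Delta$, amounts to the statement that the two iterations of degrafting produce each three-layer degrafting of $t$ exactly once; this is the mirror image of the associativity of grafting already established for $\mathbb{T}^o\mathcal{V}$, and the symmetric-group coinvariants appearing in the composite product match on both sides. The coaugmentation $u$ is a morphism of cooperads because the trivial tree is the unique component with no vertex, so its only degrafting is the trivial one. Conilpotency then follows from finiteness: iterating the reduced decomposition $\overline{\Delta} + \id \circ I_{C}$ on the class of a fixed tree $t$ eventually produces only configurations whose inner pieces are trivial, so the images stabilize after finitely many steps. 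This exhibits $\mathbb{T}^c\mathcal{V} := (C, \mathbb{T}\mathcal{V}, \Delta, \varepsilon, u)$ as an object of $\mathsf{ConilCoop}$, naturally in $\mathcal{V}$.

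Finally I would establish the adjunction, which at the level of hom-sets reads $\mathrm{Hom}_{\mathbb{S}\textsf{-}\mathsf{Mod}}(\overline{\mathcal{C}}, \mathcal{V}) \cong \mathrm{Hom}_{\mathsf{ConilCoop}}(\mathscr{C}, \mathbb{T}^c\mathcal{V})$, where $\overline{\mathcal{C}}$ is the coaugmentation coideal of $\mathscr{C}$. The counit of the adjunction is the projection $p : \overline{\mathbb{T}}\mathcal{V} \to \mathcal{V}$ onto the corolla (one-vertex) summand, which identifies $\mathcal{V}$ with the space of cogenerators. Given a morphism of $\mathbb{S}$-modules $g : \overline{\mathcal{C}} \to \mathcal{V}$, I would build a morphism of conilpotent cooperads $\widetilde{g} : \mathscr{C} \to \mathbb{T}^c\mathcal{V}$ whose component landing in the summand $t(\mathcal{V})$ is obtained by applying the iterated reduced coproduct of $\mathscr{C}$ (which terminates by conilpotency) and then decorating each resulting vertex by means of $g$; the one-vertex part recovers $g$, so $p \circ \overline{\widetilde{g}} = g$. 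Conversely, any morphism of conilpotent cooperads into the cofree cooperad is determined by its corestriction $p \circ (-)$ onto the cogenerators, again because conilpotency forces each higher component to be read off from the iterated coproduct. This yields a bijection natural in $\mathscr{C}$ and $\mathcal{V}$, proving the adjunction.

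The main obstacle is the combination of the coassociativity verification with the uniqueness clause of the universal property: one must track how the colimits over $\mathsf{Tree}_{C}(\chi ; c)$ interact with the symmetric-group coinvariants in the composite product, and prove that conilpotency indeed reconstructs a cooperad morphism component by component from its value on the cogenerators. As in the non-colored case treated in \cite[Section~5.8]{LodayVallette12}, these reduce to bookkeeping once the degrafting decomposition is set up, but the presence of colors requires threading the coloring functions $\kappa$ and the objects $(\chi ; c)$ of $\BijC$ through every identification.
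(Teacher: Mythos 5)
Your proposal is correct and follows essentially the same route as the paper: the paper also endows $\mathbb{T}\mathcal{V}$ with the degrafting coproduct (deferring the comonoid and conilpotency checks to the non-colored case in Loday--Vallette, Section~5.8) and realizes the adjunction by the formula $Rf=(\mathbb{T}^c f)\,\delta$, where $\delta:\mathscr{C}\to\mathbb{T}^c\mathcal{C}$ is the iterated-coproduct map — exactly your ``apply the iterated reduced coproduct, then decorate each vertex by $g$'' construction, with the same determination-by-cogenerators argument for uniqueness. The only difference is expository: you spell out details the paper outsources to the non-colored reference.
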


\begin{proof} 
The proof works in the same way as in the non-colored case, see \cite[Section~$5.8$]{LodayVallette12}. 
For any conilpotent cooperad $\mathscr{C}=({C},\mathcal{C},\Delta,\varepsilon,u)$, there is a canonical morphism of conilpotent ${C}$-cooperads  $\delta : \mathscr{C} \to \mathbb{T}^c\mathcal{C}$  \cite[Proof of Theorem~$5.8.9$]{LodayVallette12}. 
The natural isomorphism
$$
\mathrm{Hom}_{\mathbb{S}\textsf{-}\mathsf{Mod}}(({C},\overline{\mathcal{C}}),({D},\mathcal{V})) \cong \mathrm{Hom}_{\mathsf{ConilCoop}}(\mathscr{C},({D},\mathbb{T}^c\mathcal{V}))\ ,
$$
for any conilpotent colored cooperad $\mathscr{C}$ and any $\mathbb{S}$-module $({D},\mathcal{V})$ is given as follows. Any morphism of $\mathbb{S}$-modules $f:({C},\overline{\mathcal{C}}) \rightarrow ({D},\mathcal{V})$ extends to a morphism of conilpotent colored cooperads $Rf$ from $\mathscr{C}$ to $\mathbb{T}^c\mathcal{V}$ using the following formula:
$$
Rf=(\mathbb{T}^cf)\,  \delta \ .
$$
\end{proof}
In plain words, the colored cooperad $\mathbb{T}^c\mathcal{V}$ is the \textit{cofree conilpotent colored cooperad} on the colored $\mathbb{S}$-module $\mathcal{V}$. In the case where $\mathscr{C}$ is a cofree conilpotent colored cooperad  $\mathscr{C}=({C},\mathbb{T}^c\mathcal{W})$, the adjoint morphism $Rf : \mathbb{T}^c\mathcal{W} \to \mathbb{T}^c\mathcal{V}$ is given by the more simple formula
\begin{equation}\label{adjoint1}
\displaystyle{Rf(t)= \sum_{t=t_1 \sqcup t_2 \sqcup \ldots \sqcup t_k}    f(t_1) \otimes f(t_2) \otimes \cdots \otimes f(t_k)}\ , 
\end{equation}
where the sum is taken over the partitions with no trivial component of the ${C}$-colored tree $t$.

\subsection{Derivations and coderivations}

In this paragraph, the monoidal category $\mathsf{E}$ is, most of the time, the category $\mathsf{gr}\textsf{-}\mathsf{Mod}$ of graded $\mathbb{K}$-modules with degree zero linear maps. It is a subcategory of the category $\mathsf{gr}\textsf{-}\mathsf{Mod}^{\mathsf{deg}}$ of graded $\mathbb{K}$-modules with graded morphisms. We will just allow ourselves to use morphisms of degree different from zero to build codifferentials.\\

Let $\mathcal{V}$ and $\mathcal{W}$ be two $({C},\mathbb{S})$-modules. By definition, $\mathcal{V}$, $\mathcal{W}$ and $\mathcal{V} \circ \mathcal{W}$ are $\mathsf{gr}\textsf{-}\mathsf{Mod}$-presheaves over $\mathsf{Bij}_{{C}}$. Let $f: \mathcal{V} \rightarrow \mathcal{V}$ and $g: \mathcal{W} \rightarrow \mathcal{W}$ be two endomorphisms of $\mathsf{gr}\textsf{-}\mathsf{Mod}^{\mathsf{deg}}$-presheaves over $\mathsf{Bij}_{{C}}$. For any homogeneous elements $x \in \mathcal{V}(\phi ; c)$ and $x_i \in \mathcal{W}(\psi_i ; \phi(i))$, for $1 \leq i \leq k$, we consider the following map:
$$
x \otimes x_1 \otimes \cdots \otimes x_k \mapsto f(x) \otimes x_1 \otimes \cdots \otimes x_k + \sum_{i=1}^n (-1)^{|g|(|x|+|x_1|+\cdots +|x_{i-1}|)} x \otimes x_1 \otimes \cdots \otimes g(x_i) \otimes \cdots \otimes x_k
$$
The collection of these maps can be lifted to a morphism 
$$
f\circ \id_{\mathcal W} + \id_{\mathcal V} \circ' g: \mathcal{V} \circ \mathcal{W} \rightarrow \mathcal{V} \circ \mathcal{W}
$$
of $\mathsf{gr}\textsf{-}\mathsf{Mod}^{\mathsf{deg}}$-presheaves over $\mathsf{Bij}_{{C}}$, which is a linearization of the morphism $f\circ g$.

\begin{defin}[Derivations, differentials, coderivations, and codifferentials]\leavevmode
\begin{itemize}

\item[$\triangleright$]
A \textit{derivation} of a colored operad $\mathscr{P}=({C},\mathcal{P},\gamma, \eta)$ is a morphism $d:\mathcal{P} \rightarrow \mathcal{P}$ of $\mathsf{gr}\textsf{-}\mathsf{Mod}^{\mathsf{deg}}$-presheaves over $\mathsf{Bij}_{{C}}$ such that $$\gamma \  (d \circ \id_{\mathcal P} +
\id_{\mathcal P}\circ' d)= d \  \gamma\ .$$
A \textit{differential} is a degree $-1$ square-zero derivation. 

\item[$\triangleright$]
A \textit{coderivation} of a colored cooperad $\mathscr{C}=({C},\mathcal{C},\Delta,\varepsilon)$ is a morphism $d:\mathcal{C} \rightarrow \mathcal{C}$ of $\mathsf{gr}\textsf{-}\mathsf{Mod}^{\mathsf{deg}}$-presheaves over $\mathsf{Bij}_{{C}}$ such that 
$$ (d \circ \id_{\mathcal C} +
\id_{\mathcal C}\circ' d)\  \Delta= \Delta \  d \ . $$
A \textit{codifferential} is a degree $-1$ square-zero coderivation. 

\end{itemize}  
\end{defin}

%A colored operad in the category $\mathsf{dg}\textsf{-}\mathsf{Mod}$ of differential graded vector spaces is a colored operad in the category $\mathsf{gr}\textsf{-}\mathsf{Mod}$ of graded vector spaces equipped with a differential $d$. 
A dg colored operad is an operad equipped with a differential. Morphisms of dg colored operads are morphisms of graded colored operads commuting with the differentials. We denote this category by $\mathsf{dg}\textsf{-}\mathsf{Op}$. The same phenomenon holds for dg colored cooperads, i.e. colored cooperads equipped with a codifferential. For any coaugmented colored cooperad $\mathscr{C}=({C},\mathcal{C},\Delta,\varepsilon,u)$, we require moreover that coderivations satisfy $\varepsilon \  d=0$ and $d \ u=0$.\\
%We denote by $\mathsf{dg}\mathrm{-}\mathsf{Coop}$ the category of coaugmented cooperads together with a \textit{codifferential} and morphisms of coaugmented cooperads which commute with the codifferentials.

Coderivations on cofree colored cooperads are completely characterized by their projections onto their generators.

\begin{prop}\label{prop:Coder}
Let ${\gamma} : \overline{\mathbb{T}}\mathcal{V} \rightarrow \mathcal{V}$ be a graded morphism. There is a unique coderivation $d_{\gamma}$ on the cofree colored cooperad $\mathbb{T}^c\mathcal V$ which extends ${\gamma}$; it is given by the following  formula:
$$
{d_{\gamma}}(t) = \sum_{s \subset t}  \id \otimes \cdots \otimes  {\gamma}(s) \otimes \cdots \otimes \id \ ,   
$$
where the sum is taken over the non-trivial sub-trees $s$ of a colored tree $t$. In this context, the coderivation $d_{\gamma}$ squares to zero if and only if $\, {\gamma} \circ d_{\gamma}=0$.
\end{prop}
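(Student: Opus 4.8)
The plan is to follow the strategy of the non-colored case \cite{LodayVallette12}, adapting it to the tree module $\mathbb{T}\mathcal{V} \cong \coprod_{[t]} t(\mathcal{V})$ whose cofree cooperad structure is the degrafting coproduct $\Delta$. Write $\pi: \mathbb{T}\mathcal{V} \rightarrow \mathcal{V}$ for the canonical projection onto the summand indexed by the one-vertex (corolla) trees. The whole statement rests on the principle, inherited from the cofree/adjunction property established above, that a coderivation of $\mathbb{T}^c\mathcal{V}$ is entirely determined by its corestriction $\pi \circ (-)$ to the cogenerators.

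For uniqueness, suppose $d$ is any coderivation with $\pi \circ d_{|\overline{\mathbb{T}}\mathcal{V}} = \gamma$. Iterating the coderivation identity $\Delta \, d = (d \circ \id + \id \circ' d)\, \Delta$ and composing with $\pi$ on each tensor factor, one expresses the components of $d(x)$ under the iterated comultiplication purely in terms of $\gamma$. Since $\mathbb{T}^c\mathcal{V}$ is conilpotent, for a fixed labeled tree $t$ the iterated coproduct vanishes after finitely many steps, so this recovery is finite; it pins down $d(t)$ uniquely and necessarily agrees with the displayed formula.

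For existence I would define $d_{\gamma}$ by the given sum over non-trivial sub-trees and check the coderivation identity directly. The heart of the computation is a combinatorial bijection between the terms of the two sides: in $\Delta \, d_{\gamma}(t)$ one first contracts a sub-tree $s$ via $\gamma$ and then degrafts $t/s$, whereas in $(d_{\gamma} \circ \id + \id \circ' d_{\gamma})\,\Delta(t)$ one first degrafts $t$ into a lower tree and upper trees and then lets $\gamma$ act inside a single piece. The key point, and the step I expect to be the main obstacle to make fully rigorous, is that the degraftings of $t/s$ correspond exactly to the degraftings of $t$ that keep the connected sub-tree $s$ on one side of the cut: these are precisely the terms in which $\gamma$ acts inside the bottom tree (contributing to $d_{\gamma} \circ \id$) or inside a top tree (contributing to $\id \circ' d_{\gamma}$), while the degraftings that would separate the vertices of $s$ do not descend to $t/s$ and hence appear on neither side. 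Bookkeeping the Koszul signs produced by the linearized composite $\circ'$ then yields the desired equality.

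Finally, for the squaring statement (in the relevant degree $-1$ situation, where $d_{\gamma}$ is odd) the composite $d_{\gamma}^2 = \tfrac12 [d_{\gamma},d_{\gamma}]$ is again a coderivation of $\mathbb{T}^c\mathcal{V}$, the cross terms cancelling by the sign rule for odd maps. By the uniqueness principle above, such a coderivation vanishes if and only if its corestriction $\pi \circ d_{\gamma}^2$ does; and since $\pi \circ d_{\gamma} = \gamma$ on $\overline{\mathbb{T}}\mathcal{V}$, one has $\pi \circ d_{\gamma}^2 = \gamma \circ d_{\gamma}$. Therefore $d_{\gamma}^2 = 0$ if and only if $\gamma \circ d_{\gamma} = 0$.
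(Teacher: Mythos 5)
Your proposal is correct and takes essentially the same route as the paper, whose proof simply defers to the non-colored case in Loday--Vallette (Chapter~6): determination of a coderivation by its corestriction to the cogenerators via conilpotency, direct verification of the coderivation identity for the explicit sub-tree formula, and the square-zero criterion obtained by applying the uniqueness principle to the coderivation $d_{\gamma}^2$. Your combinatorial bijection between degraftings of $t/s$ and degraftings of $t$ keeping $s$ in a single piece is exactly the key point of that standard argument, transported to the colored setting.
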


\begin{proof}
The proof is similar to the non-colored case, see \cite[Chapter~$6$]{LodayVallette12}.
\end{proof}

\begin{nota}
For a coderivation $d_{\gamma}$  on a cofree colored cooperad $\mathbb{T}^c\mathcal{V}$, we denote by ${\gamma}$ its projection onto $\mathcal{V}$.
\end{nota}

As we have already seen through Equation~(\ref{adjoint1}), a morphism of conilpotent cofree colored cooperads from $({C},\mathbb{T}^c\mathcal{V}) $ to $({D},\mathbb{T}^c\mathcal{W})$ is equivalent to the data of a morphism of $\mathbb{S}$-modules $f$ from $({C},\overline{\mathbb{T}}\mathcal{V})$ to $ ({D},\mathcal{W})$. If the cofree colored cooperads are equipped with codifferentials, then the following proposition gives the condition on $f$ under which $Rf$ is a morphism of dg cooperads.

\begin{prop}\label{key}
Let $({C},\mathcal{V})$ and $({D},\mathcal{W})$ be $\mathbb{S}$-modules, let $d_{\gamma}$ and $d_{\nu}$ be two codifferentials on $\mathbb{T}^c\mathcal{V}$ and $\mathbb{T}^c\mathcal{W}$ respectively, and let $f: ({C},\overline{\mathbb{T}}\mathcal{V}) \rightarrow ({D},\mathcal{W})$ be a morphism of $\mathbb{S}$-modules. Then $Rf$ is a morphism of coaugmented dg cooperads, i.e. it commutes with the codifferentials, if and only if
\begin{equation}\label{eqn:MORPH}
{\nu} Rf = f d_{\gamma} \ .
\end{equation}
\end{prop}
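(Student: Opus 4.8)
The plan is to reduce the statement to the universal property that maps into a cofree conilpotent cooperad are detected by their projection onto the cogenerators $\mathcal{W}$. Write $\pi_{\mathcal{W}}\colon \mathbb{T}^c\mathcal{W} \to \mathcal{W}$ for this projection. By construction (Equation~(\ref{adjoint1})) $Rf$ is the unique cooperad morphism with $\pi_{\mathcal{W}}\, Rf = f$, and by definition $\nu = \pi_{\mathcal{W}}\, d_{\nu}$ is the corestriction of the codifferential $d_{\nu}$. Compatibility of $Rf$ with the codifferentials is the equality $d_{\nu}\, Rf = Rf\, d_{\gamma}$ of two degree $-1$ maps $\mathbb{T}^c\mathcal{V} \to \mathbb{T}^c\mathcal{W}$; projecting onto $\mathcal{W}$ via $\pi_{\mathcal{W}}$ and using $\pi_{\mathcal{W}}\,d_{\nu} = \nu$ and $\pi_{\mathcal{W}}\, Rf = f$ yields $\nu\, Rf = f\, d_{\gamma}$ at once, which settles the ``only if'' direction.

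For the converse I would first observe that both $d_{\nu}\, Rf$ and $Rf\, d_{\gamma}$ are coderivations relative to the cooperad morphism $Rf$, by which I mean maps $\Phi\colon \mathbb{T}^c\mathcal{V} \to \mathbb{T}^c\mathcal{W}$ satisfying $\Delta_{\mathcal{W}}\,\Phi = (\Phi \circ Rf + Rf \circ' \Phi)\,\Delta_{\mathcal{V}}$. For $d_{\nu}\, Rf$ this follows by feeding the morphism identity $\Delta_{\mathcal{W}}\,Rf = (Rf \circ Rf)\,\Delta_{\mathcal{V}}$ into the coderivation identity $\Delta_{\mathcal{W}}\,d_{\nu} = (d_{\nu} \circ \id + \id \circ' d_{\nu})\,\Delta_{\mathcal{W}}$ and using $(d_{\nu} \circ \id)(Rf \circ Rf) = (d_{\nu} Rf)\circ Rf$ together with $(\id \circ' d_{\nu})(Rf \circ Rf) = Rf \circ'(d_{\nu} Rf)$, where the degree zero of $Rf$ ensures no extra Koszul sign intervenes. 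For $Rf\, d_{\gamma}$ the computation is symmetric, now inserting the coderivation identity for $d_{\gamma}$ before applying the morphism identity for $Rf$.

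The key input is then the relative analogue of Proposition~\ref{prop:Coder}: an $(Rf,Rf)$-coderivation from the conilpotent cooperad $\mathbb{T}^c\mathcal{V}$ to the cofree conilpotent cooperad $\mathbb{T}^c\mathcal{W}$ is uniquely determined by its corestriction $\pi_{\mathcal{W}}\,\Phi$ onto the cogenerators. Granting this, and since $\pi_{\mathcal{W}}(d_{\nu}\, Rf) = \nu\, Rf$ while $\pi_{\mathcal{W}}(Rf\, d_{\gamma}) = f\, d_{\gamma}$, the hypothesis $\nu\, Rf = f\, d_{\gamma}$ forces $d_{\nu}\, Rf = Rf\, d_{\gamma}$, which is exactly the asserted compatibility with codifferentials.

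I expect the main obstacle to be precisely this uniqueness statement for relative coderivations, which is not literally Proposition~\ref{prop:Coder} (that treats ordinary coderivations of a single cofree cooperad). It should be established by the same mechanism: conilpotency of $\mathbb{T}^c\mathcal{V}$ lets one reconstruct $\Phi$ from $\pi_{\mathcal{W}}\,\Phi$ by an explicit finite sum indexed by sub-trees, as in the formula of Proposition~\ref{prop:Coder}, with each local factor now contributing either a copy of $f$ (coming from $Rf$) or the corestriction $\pi_{\mathcal{W}}\,\Phi$. Concretely, one shows that the difference of two relative coderivations with the same corestriction is again a relative coderivation with vanishing corestriction, and then argues it is zero by induction on the number of cogenerators it sees, using the decomposition of $\Delta_{\mathcal{V}}$. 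Once this lemma is in place, the two verifications above are routine sign bookkeeping and the stated equivalence follows.
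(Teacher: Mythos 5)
Your argument is correct, but it takes a more structural route than the paper's, so a comparison is worthwhile. The ``only if'' direction is identical: both you and the paper project the relation $d_{\nu}\,Rf = Rf\,d_{\gamma}$ onto the cogenerators. For the converse, however, the paper never introduces coderivations along a morphism: it simply composes the two explicit formulas already available --- Formula~(\ref{adjoint1}) for $Rf$ and Proposition~\ref{prop:Coder} for the codifferentials --- to obtain, for every colored tree $t$, the expression $(Rf\,d_{\gamma})(t)=\sum f(t_1)\otimes\cdots\otimes (f d_{\gamma})(s)\otimes\cdots\otimes f(t_k)$ and the identical expression for $(d_{\nu}\,Rf)(t)$ with $(\nu Rf)(s)$ in place of $(f d_{\gamma})(s)$, both sums running over sub-trees $s$ and partitions $t=t_1\sqcup\ldots\sqcup s\sqcup\ldots\sqcup t_k$; the hypothesis $\nu Rf = f d_{\gamma}$ then gives equality term by term. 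You instead recognize $d_{\nu}\,Rf$ and $Rf\,d_{\gamma}$ as coderivations along the morphism $Rf$ (your verification of the relative co-Leibniz rule is sound, and the degree-zero observation does dispose of the Koszul signs) and appeal to uniqueness of a relative coderivation into a cofree conilpotent cooperad given its corestriction. That lemma is true --- it is the cooperadic version of the mechanism behind Proposition~10.5.3 of Loday--Vallette, which the paper cites --- but it is the one genuine debt of your proof: you only sketch it, and in the present colored setting it must be formulated for maps living over a function $\phi:C\to D$ of color sets and must carry along the coaugmentation constraints $\varepsilon\, d=0$ and $d\, u=0$. Note that the explicit reconstruction formula you propose for proving that lemma (a sum over sub-trees with one factor the corestriction and the remaining factors copies of $f$) is literally the paper's double sum, so the paper in effect proves by hand exactly the instance of your lemma that is needed. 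Your approach buys a reusable conceptual statement valid for any pair of relative coderivations; the paper's buys brevity and self-containment, since the formulas it composes were already established.
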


\begin{proof}
The proof is similar to the proof of Proposition~$10.5.3$ of \cite{LodayVallette12}. On the one hand, if $d_{\nu} Rf = Rf  d_{\gamma}$, then ${\nu} \  Rf= f \  d_{\gamma}$ as ${\nu}$ (resp. $f$) is the projection onto $W$ of $d_{\nu}$ (resp. $Rf$). On the other hand, since $Rf$ is given by Formula ($\ref{adjoint1}$) and since $d_\gamma$ is given by  Proposition~\ref{prop:Coder},  we have
$$
(Rf  d_{\gamma})(t)= \sum_{s \subset t} \sum_{t=t_1 \sqcup \ldots \sqcup s \sqcup \ldots \sqcup t_k} f(t_1) \otimes \cdots \otimes (f  d_{\gamma})(s) \otimes \cdots \otimes f(t_k)  \ ,
$$
 for any ${C}$-colored tree $t$.
We also have:
$$
(d_{\nu}  Rf)(t)= \sum_{s \subset t} \sum_{t=t_1 \sqcup \ldots \sqcup s \sqcup \ldots \sqcup t_k} f(t_1) \otimes \cdots \otimes ({\nu} Rf)(s) \otimes \cdots \otimes f(t_k)  \ .
$$
So, if ${\nu}  Rf= f  d_{\gamma}$, then $d_{\nu} Rf = Rf  d_{\gamma}$.
\end{proof}

\paragraph{\sc Convention}\label{convdg}
In the case where the category $\mathsf{E}$ is the category $\mathsf{dg}\textsf{-}\mathsf{Mod}$ of chain complexes, a codifferential on the cofree cooperad  $\mathbb{T}^c\mathcal{V}$ has the from $d_1+d_{\geq 2}$, where $d_1$ is the internal codifferential induced by the differential of $\mathcal{V}$ through the formula
$$
d_1(x_1 \otimes \cdots \otimes x_m)= \sum_{i=1}^n (-1)^{|x_1|+ \cdot\cdot\cdot +|x_{i-1}|} 
(x_1 \otimes \cdots \otimes d_{\mathcal V}(x_i) \otimes \cdots \otimes x_n )\ ,
$$
and where $d_{\geq 2}$ is an additional codifferential, which is nonzero only on $\mathbb{T}^{\geq 2}\mathcal{V}$, the summand made up of trees with at least two vertices. We refer the reader to \cite[Chapter~$6$]{LodayVallette12} for more details.

\subsection{The categories of homotopy colored operads}\label{subscteionhomotopyoperads}

The concept of homotopy operads in the differential  graded context was introduced by Pepijn Van der Laan in the non-colored case in the paper \cite{VanDerLaan03}. In order to compare this notion to $\infty$-operads, we need to extend it by including colors and adding a homotopy coherent unit. In this section, the category $\mathsf{E}$ is the category $\mathsf{dg}\textsf{-}\mathsf{Mod}$ of chain complexes.

\begin{defin}[Strict unital homotopy colored operads]\leavevmode
\begin{itemize}
\item[$\triangleright$]
A \textit{nonunital homotopy colored operad} $\mathscr{P}=({C},\mathcal{P}, {\gamma})$ is the data of a colored  $\mathbb{S}$-module $({C},\mathcal{P})$ and a codifferential $d_{\gamma}$ on the cofree colored cooperad $\mathbb{T}^c(s\mathcal{P})$ on the suspension $s\mathcal{P}$ of $\mathcal{P}$. 

\noindent
A \textit{strict unital homotopy colored operad} $\mathscr{P}=({C},\mathcal{P},{\gamma}, \eta)$ is the data of a nonunital homotopy operad $({C},\mathcal{P},{\gamma})$ together with a morphism $\eta: I_C \to \mathcal P$ of $\mathbb{S}$-modules called the \textit{unit}. For each color $c$ in ${C}$, we denote by $\ii_c$ the image of the unit $1_{\mathbb{K}}$ of the ground field $\mathbb{K}$ under the map $\eta(c;c)$ from $\mathbb{K} = I_{{C}}(c;c)$ to $\mathcal{P}(c;c)$. Furthermore we require that the unit satisfies the following homotopy coherences:
$$\quad \quad \quad 
\left\{\begin{array}{ll}
\gamma(s \ii)=0 \\
{\gamma}(t)(s \ii_c \otimes sp)=sp,&
\text{for colored trees}\  t \ \text{with 2 vertices}; \\
{\gamma}(t)(sp \otimes s\ii_c)=(-1)^{|p|}sp,&
\text{for colored trees}\  t \ \text{with 2 vertices}; \\
{\gamma}(t)(sp_1 \otimes \cdots \otimes s\ii_c \otimes \cdots \otimes sp_{n-1})=0, &\text{for colored trees}\  t \ \text{with at least 3 vertices}.
\end{array}\right.
$$
In the second (resp. the third) equation $s\ii_c$ (resp. $sp$) labels the vertex attached to the root of $t$.

\item[$\triangleright$]
A \textit{morphism of nonunital homotopy colored operads} $\mathcal{P}=({C},\mathcal{P}, {\gamma}) \rightsquigarrow \mathcal{Q}=({D},\mathcal{Q}, {\nu})$ is a morphism of coaugmented dg cooperads 
$R f : (\mathbb{T}^c(s\mathcal{P}), d_\gamma)  \to (\mathbb{T}^c(s\mathcal{Q}), d_\nu)$.

\noindent
A \textit{morphism of strict unital homotopy colored operads} from $\mathscr{P}=({C},\mathcal{P}, {\gamma}, \eta)$ to $\mathscr{Q}=({D},\mathcal{Q},{\nu},\theta)$ is a morphism $R f$ of nonunital homotopy colored operads such that its composite with the projection onto the generators $f: \overline{\mathbb{T}}(s\mathcal{P}) \rightarrow s\mathcal{Q}$ satisfies:
$$
\left\{\begin{array}{ll}
f(s\ii_c)=s\ii_{\phi(c)} & \\
f(sp_1 \otimes \cdots \otimes s\ii_c \otimes \cdots \otimes sp_{n-1})=0\ .& 
\end{array}\right.
$$
The category of strict unital homotopy colored operads is denoted by $\mathsf{su Op}_{\infty}$.
\end{itemize}
\end{defin}

\paragraph{\sc Interpretation} Let us unfold this definition a little bit. A nonunital homotopy colored operad $\mathscr{P}=({C},\mathcal{P}, {\gamma})$ can actually be viewed as a colored $\mathbb{S}$-module endowed with a partial composition ``associative up to higher homotopies''. To be precise, it is necessary to give an orientation to the trees. This orientation will allow us to deal with the signs inherent to the underlying symmetric monoidal structure of the category $\mathsf{dg}\textsf{-}\mathsf{Mod}$ and which come from the suspension. For example, let $t=(\{v_0,v_1,v_2\},{F}, u,\rho,r,\kappa)$ be a ${C}$-colored tree with three vertices represented in the following picture. 

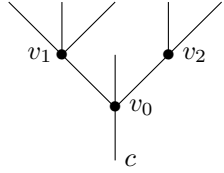
\begin{figure}[h]
\begin{tikzpicture}[scale=0.7]
\draw (2,1) node {$\bullet$} ;
\draw (2,1) node[right]{$\, v_0$} ;

\draw (1,2) node {$\bullet$} ;
\draw (1,2) node[left]{$v_1$} ;
\draw (3,2) node {$\bullet$} ;
\draw (3,2) node[right]{$\, v_2$} ;
\draw (2,0) -- (2,1) ;

\draw (2,1) -- (1,2) ;
\draw (2,1) -- (2,2) ;
\draw (2,1) -- (3,2) ;

\draw (1,2) -- (0,3) ;
\draw (1,2) -- (1,3) ;
\draw (1,2) -- (2,3) ;

\draw (3,2) -- (3,3) ;
\draw (3,2) -- (4,3) ;
\draw (2,0) node[right] {$c$} ;
\end{tikzpicture}
\caption{The tree $t$.}
\end{figure}

The set of leaves is denoted by $l$ and the color of the root is $c \in {C}$. The sub-tree containing $v_0$ and $v_1$ (respectively $v_0$ and $v_2$) is denoted by $t_1$ (resp. $t_2$) and its leaves by $l_1$ (resp. $l_2$). Let $x$ be an element of the $\mathbb{K}$-module $t(\mathcal{P})$. It is equal to a sum of elements $a_0 \otimes a_1 \otimes a_2$ where $a_i \in \mathcal{P}(v_i)$. The choice of such a representation of $x$ is related to the way we travel along the tree: in, this case, the path is $(v_0, v_1, v_2)$. This defines an orientation of the colored tree $t$. This orientation induces a morphism $\mathcal{P}(v_0) \otimes \mathcal{P}(v_1) \otimes \mathcal{P}(v_2) \rightarrow s\mathcal{P}(v_0) \otimes s\mathcal{P}(v_1) \otimes s\mathcal{P}(v_2)$ through the mapping $$a_0 \otimes a_1 \otimes a_2 \mapsto (-1)^{|a_1|}(sa_0) \otimes (sa_1) \otimes (sa_2)\ . $$ 
Then, applying ${\gamma}$ and the desuspension map $
s\mathcal{P} \rightarrow \mathcal{P}$ gives a morphism $$\gamma_3:\mathcal{P}(v_0) \otimes \mathcal{P}(v_1) \otimes \mathcal{P}(v_2) \rightarrow \mathcal{P}(l)$$
of degree $1$. Furthermore, the orientation that we have chosen for $t$ induces orientations on $t_1$, $t_2$, $t/t_1$ and $t/t_2$. Applying the same procedure produces degree $0$ morphisms respectively from $\mathcal{P}(v_0) \otimes \mathcal{P}(v_1)$ to $\mathcal{P}(l_1)$, from $\mathcal{P}(v_0) \otimes \mathcal{P}(v_2)$ to $\mathcal{P}(l_2)$, from $\mathcal{P}(l_1) \otimes \mathcal{P}(v_2)$ to $\mathcal{P}(l)$ and from $\mathcal{P}(l_2) \otimes \mathcal{P}(v_1)$ to $\mathcal{P}(l)$. We denote all of them by $\gamma_2$, since they amount to composing $2$ vertices labeled by $\mathcal P$. Let $\tau$ be the canonical isomorphism $\mathcal{P}(v_0) \otimes \mathcal{P}(v_1) \otimes \mathcal{P}(v_2) \simeq \mathcal{P}(v_0) \otimes \mathcal{P}(v_2) \otimes \mathcal{P}(v_1)$. The fact that $d_{\gamma}$ squares to zero implies:
$$
\gamma_{2} (\gamma_{2} \otimes \id) - \gamma_{2} (\gamma_{2} \otimes \id)\tau = \partial (\gamma_3)
$$
where $\partial (\gamma_3)= d_{\mathcal{P}} \  \gamma_3 + \gamma_3 \  d_{\mathcal{P}^{\otimes 3}}$. We interpret $\gamma_2$ as a partial composition; so the above equation shows that the parallel composition is not strictly associative but ``associative up to homotopy``; and this homotopy is precisely $\gamma_3$.
In the same way, $\gamma_3$ applied to trees with $3$ vertices one above another provides us with a homotopy for the sequential composite of $\gamma_2$. The other maps ${\gamma}(t)$, for bigger trees $t$, are higher homotopies. Indeed, given an orientation on a colored tree $t$, we have the following equation:
$$
\partial(\gamma(t))= \sum_{s \subset t} \pm \gamma(t/s) (\id \otimes \cdots \otimes  \gamma(s) \otimes \cdots \otimes \id)\ ,
$$
%$$
%\partial(\gamma_n)= \sum_{s \subset t} \pm \gamma_{n+1-q} (Id \otimes \cdots \otimes  \gamma_{q} \otimes \cdots \otimes Id)
%$$ 
where the sum is taken over all the sub-trees $s$ of $t$, with at least $2$ vertices.
In strict unital homotopy colored operads, the composition is relaxed up to homotopy but the unit remains strict.\\

\paragraph{\sc Interpretation}  A morphism of nonunital homotopy colored operads from $\mathscr{P}=({C},\mathcal{P}, {\gamma}) $ to $\mathscr{Q}=({D},\mathcal{Q}, {\nu})$ is a morphism of colored $\mathbb{S}$-modules $f:\overline{\mathbb{T}}^c(s\mathcal{P}) \rightarrow s\mathcal{Q}$ such that ${\nu}  Rf=f d_{\gamma}$, according to Proposition~\ref{key}. Again, a choice of orientation of a colored tree $t$ with $n$ vertices $v_0, \ldots, v_{n-1}$ gives a morphism of graded $\mathbb{K}$-modules $f_n:\mathcal{P}(v_0) \otimes \cdots \otimes \mathcal{P}(v_{n-1}) \rightarrow \mathcal{Q}(l)$ and morphisms $\gamma_n: \mathcal{P}(v_0) \otimes \cdots \otimes \mathcal{P}(v_{n-1}) \rightarrow \mathcal{P}(l)$ and $\nu_n: \mathcal{Q}(v_0) \otimes \cdots \otimes \mathcal{Q}(v_{n-1}) \rightarrow \mathcal{Q}(l)$. In the case where the tree $t$ has two vertices, the fact that $f$ is a morphism of dg cooperads implies:
$$
\nu_2 (f_{1} \otimes f_1) - f_1 \gamma_2= \partial (f_2)\ ,
$$
where $\partial (f_2)=d_{\mathcal{Q}} \  f_2 + f_2 \  d_{\mathcal{P}^{\otimes 2}} $. Since $\gamma_2$ and $\nu_2$ are interpreted as composite maps, $f_{1}$ commutes with these compositions up to homotopy; and this homotopy is precisely $f_2$. The other maps $f(t)$ for bigger trees $t$ are the data of a higher homotopical control. A morphism of strict unital homotopy colored operads commutes with the composite maps up to higher homotopies but strictly  with the units.

\begin{prop}\label{prop:Comparison}\leavevmode
\begin{itemize}
\item[$\triangleright$]
Nonunital homotopy colored operad concentrated in arity one is the same notion as nonunital $\mathcal{A}_{\infty}$-category. Strict unital homotopy colored operad concentrated in arity one is the same notion as  $\mathcal{A}_{\infty}$-category. 

\item[$\triangleright$] The forgetful functor from strict unital colored operads to nonunital colored operads has a left adjoint which is an embedding of category of nonunital homotopy colored operads into the category of strict unital homotopy colored operads.

\item[$\triangleright$]
The category $\mathsf{dg}\textsf{-}\mathsf{Op}$ of differential graded colored operads embeds canonically into the category $\mathsf{suOp}_{\infty}$ of strict unital homotopy colored operads.
\end{itemize}

\end{prop}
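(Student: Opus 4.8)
The plan is to treat the three assertions separately, since each rests on a different construction, and to read everything through the dictionary between codifferentials on cofree cooperads and structure maps provided by Proposition~\ref{prop:Coder} and Proposition~\ref{key}.

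For the first statement, I would start from the observation that a $(C,\mathbb{S})$-module $\mathcal{P}$ concentrated in arity one is exactly the datum, for each ordered pair of colors, of a hom-complex; the symmetric group actions are trivial. Under this restriction a colored tree all of whose vertices have arity one is forced to be a linear tree (a ladder), so the augmented tree module $\overline{\mathbb{T}}(s\mathcal{P})$ reduces to $\bigoplus_{n\geq 1}(s\mathcal{P})^{\otimes n}$, the tensor product being taken along composable chains of colors. Hence $\mathbb{T}^c(s\mathcal{P})$ is nothing but the cofree conilpotent coassociative (colored) coalgebra, and by Proposition~\ref{prop:Coder} a codifferential on it is uniquely encoded by its corestriction $\gamma\colon\overline{\mathbb{T}}(s\mathcal{P})\to s\mathcal{P}$, i.e. by a family of operations of the right degree. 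The relation $\gamma\, d_\gamma=0$ then unwinds to the usual $\mathcal{A}_\infty$-associativity relations: this is precisely the classical bar-construction dictionary, now in a many-object (colored) setting. The four strict-unit coherences of the definition specialize in arity one to the axioms of a strictly unital $\mathcal{A}_\infty$-category, and, applying the same reduction to Proposition~\ref{key}, a morphism of (strict unital) homotopy colored operads restricts to an (strictly unital) $\mathcal{A}_\infty$-functor. I expect this to be essentially a matter of matching conventions and signs.

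For the second statement, I would exhibit the left adjoint explicitly as a unitalization: given a nonunital homotopy colored operad $(C,\mathcal{P},\gamma)$, set $\mathcal{P}^+:=\mathcal{P}\oplus I_C$, adjoining a formal unit $\ii_c$ in each color, and define a codifferential on $\mathbb{T}^c(s\mathcal{P}^+)$ whose corestriction extends $\gamma$ by the four mandated formulas ($\gamma(s\ii)=0$, the two two-vertex absorption rules, and vanishing on larger trees carrying a unit). The key verification is that this squares to zero, which by Proposition~\ref{prop:Coder} reduces to checking $\gamma\, d_\gamma=0$ on trees with at least one unit leaf; this follows from the absorption relations together with the square-zero property of the original $\gamma$. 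The universal property is then formal: a morphism of strict unital homotopy colored operads out of $(C,\mathcal{P}^+)$ preserves units and vanishes on trees containing a unit, so it is determined bijectively by its restriction to the unit-free part $\overline{\mathbb{T}}(s\mathcal{P})$, which is exactly a nonunital morphism into the underlying object of the target. This natural bijection yields the adjunction; the unitalization is moreover injective on objects (one recovers $\mathcal{P}$ as the unit-free summand) and faithful (a morphism is determined by its restriction), hence an embedding of categories.

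For the third statement, I would send a dg colored operad $(\mathcal{P},\gamma,\eta,d)$ to the strict unital homotopy colored operad with underlying $\mathbb{S}$-module $\mathcal{P}$ and codifferential $d_1+d_2$ on $\mathbb{T}^c(s\mathcal{P})$, where $d_1$ is induced by the internal differential and $d_2$ is the coderivation extending the desuspension of $\gamma$ (so that $\gamma_2$ is the operadic composition and all higher $\gamma_n$ vanish), following the convention recalled just before this section. The essential point is that $d_\gamma^{2}=0$ amounts to $d_1^2=0$, $d_1 d_2+d_2 d_1=0$ and $d_2^2=0$, the last being exactly the strict associativity of $\gamma$ and hence automatic for a genuine operad; the operadic unit $\eta$ provides the strict unit, the two-vertex absorption identities and the vanishing of higher unital terms being consequences of strict unitality and associativity. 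The assignment is functorial, sending an operad morphism to the $\infty$-morphism whose only nonzero component is $f_1$, and it is again injective on objects and faithful, giving the asserted canonical embedding. In all three parts the principal obstacle is the sign bookkeeping coming from the suspension and the chosen tree orientations of the Interpretation above: the square-zero and intertwining checks are conceptually routine, but the absorption formulas and the $d_1+d_2$ decomposition must be written with signs compatible with those orientations so that the classical $\mathcal{A}_\infty$ and operadic bar-construction identities are reproduced verbatim.
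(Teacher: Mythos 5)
Your proposal is correct and follows essentially the same route as the paper: the first point by unwinding the definitions (arity-one trees are ladders, so the cofree cooperad becomes the tensor coalgebra of the bar construction), the second by the unitalization $\mathcal{P}\mapsto\mathcal{P}\oplus I_C$, and the third by the codifferential $d_1+d_2$ with $\gamma_2$ the operadic composition, $\gamma_1$ the internal differential, and all higher $\gamma_n$ vanishing, exactly as in the paper's explicit formulas. The only difference is one of completeness: the paper merely names these constructions, whereas you also spell out the square-zero checks, the adjunction bijection, and the faithfulness/injectivity giving the embeddings.
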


\begin{proof}
The proof of the first point is straightforward with the various definitions from  \cite{FOOO09I}. For the second point, any nonunital colored operad $(C,\mathcal{P},\gamma)$ is sent to $(C,\mathcal{P} \oplus I_C,\gamma)$. For the third point, any dg colored operad $\mathscr{P}=({C},\mathcal{P},\gamma, \eta)$ can be seen as the strict unital homotopy colored operad $({C},\mathcal{P}, \widetilde{\gamma}, \widetilde{\eta})$ as follows.  The structure map $\widetilde{\gamma}$
is defined by $\widetilde{\gamma}(t):=0$ for colored trees $t$ with more than $3$ vertices, by 
$$
\widetilde{\gamma}(t)(sa_0 \otimes sa_1):=(-1)^{|a_0|} s \gamma(a_0 \otimes a_1)
$$
for colored trees $t$ with  $2$ vertices, and by $\widetilde{\gamma}(t):= d_{s \mathcal{P}}=-d_{\mathcal{P}}$
for colored trees $t$ with  $1$ vertex. 
\end{proof}

 In this context, a morphism of dg colored operads $\mathscr{P} \rightarrow \mathscr{Q}$ is a morphism of strict unital homotopy colored operads $Rf$ such that the corresponding morphism of colored $\mathbb{S}$-modules $f:\overline{\mathbb{T}}(s\mathcal{P}) \rightarrow s\mathcal{Q}$ vanishes on the trees with two vertices or more.

$$
\xymatrix@M=8pt{
\mathsf{dg}\textsf{-}\mathsf{cat}   \ar@{^(->}[r] \ar@{^(->}[d] &  \mathsf{dg}\textsf{-}\mathsf{Op} \ar@{^(->}[d]  \\
\mathcal{A}_\infty\textsf{-cat} \ar@{^(->}[r] & \mathsf{su}\mathsf{Op}_\infty} 
$$

%%%%%%%%%%%%%%%%%%%%%%%%%%%%%%%%%%%%%%%%%%%%%%%%%%%

\section{The dendroidal nerve of strict unital homotopy colored operads}

In this section, we introduce the dendroidal nerve of strict unital homotopy colored operads; we then show that its image actually produces an infinity-operad. So it provides us with a new functor which relates these two notions. To compare this dendroidal nerve with the  existing constructions, we prove that it extends both Faonte--Lurie's simplicial nerve of strict unital $\mathcal{A}_\infty$-categories and Moerdijk--Weiss' homotopy coherent nerve of dg operads. Then, we characterize the morphisms of strict unital homotopy colored operads whose images under the dendroidal nerve are respectively weak equivalences and fibrations for the Cisinski--Moerdijk model structure. Finally, we endow the category of dg colored operads with a model structure introduced in \cite{Caviglia14} and show that the homotopy coherent nerve is a right Quillen functor.\\

From now on, \underline{the word ``colored'' will often be understood}. For instance, we  call ``strict unital homotopy colored operads'' simply by  \textit{``su homotopy operads''}.

\subsection{Trees as operads}

Let $\Lin$  be the functor which associates to any set the free $\mathbb{K}$-module on it
$$ 
\Lin \ : \  X \in \mathsf{Set} \ \mapsto \ \bigoplus_{x \in X} \mathbb{K}.x \in \mathsf{Mod} \subset \mathsf{gr}\textsf{-}\mathsf{Mod} \subset \mathsf{dg}\textsf{-}\mathsf{Mod}\ .
$$

This $\mathbb{K}$-module can be considered as a differential graded $\mathbb{K}$-module concentrated in degree zero with trivial differential. Therefore, we view the functor $\Lin$ as mapping into the category $\mathsf{dg}\textsf{-}\mathsf{Mod}$. This functor is a strong symmetric monoidal functor. So it can be extended to a functor from the category of colored operads on sets to the category of differential graded operads.\\

We denote by $\mathbb{K}\Omega(T):=\Lin(\Omega(T))$ the image under this functor of the set-theoretical operad $\Omega(T)$ (Definition \ref{deftreeop}). The colors of $\mathbb{K}\Omega(T)$ are the edges of $T$ and $\mathbb{K}\Omega(T)(a;\chi : X \rightarrow \mathrm{edges}(T))=\mathbb{K}$ when $\chi$ is injective and when there is a sub-tree (possibly trivial) of $T$ with root $a$ and leaves $\chi(X)$. Since $\mathbb{K}\Omega(T)$ is a dg operad, it  can be consider as a su homotopy operad. Furthermore, the map $T \in \mathsf{Tree} \mapsto \Omega(T) \in \mathsf{Op}$ defines a codendroidal object in the category of set-theoretical operads. Therefore, the map $T \mapsto \mathbb{K}\Omega(T)$ is a codendroidal object in the category $\mathsf{dg}\textsf{-}\mathsf{Op}$ of dg operads and so in the category $\mathsf{suOp}_{\infty}$ of su homotopy operads.

\subsection{The dendroidal nerve}\label{subsecthedendroidalnerve}

The usual nerve of a category is a functor which associates, to any small category $\mathsf{C}$, the simplicial set 
$$\mathrm{N} (\mathsf{C})_n:=\mathrm{Hom}_{\mathsf{Cat}}([n], \mathsf{C})\, $$ where $\mathsf{Cat}$ is the category of small categories and where $[n]$ is the poset $0< \cdots < n$ viewed as a category. More generally, the nerve of an object $X$ in a category $\mathsf{A}$ associated to a functor $F: \Delta \rightarrow \mathsf{A}$ is the simplicial set $\mathrm{N}(X)_n:=\mathrm{Hom}_{\mathsf{A}}(F([n]),X)$. In the same way, a dendroidal nerve of an object $X$ associated to a functor $F: \mathsf{Tree} \rightarrow \mathsf{A}$ is the dendroidal set $$\mathrm{N}^{\Omega}(X)_T:=\mathrm{Hom}_{\mathsf{A}}(F(T),X)\ .$$ We apply this construction to the functor $T \in \mathsf{Tree} \mapsto \mathbb{K}\Omega(T) \in \mathsf{suOp}_{\infty}$.

\begin{defin}[Dendroidal nerve]
The \textit{dendroidal nerve} of su homotopy operads $\mathrm{N}^{\Omega}$ is the functor from the category $\mathsf{suOp}_{\infty}$ to the category $\mathsf{dSet}$ of dendroidal sets defined by the following formula:
$$
\mathrm{N}^{\Omega}(\mathscr{P})_T:=\mathrm{Hom}_{\mathsf{suOp}_{\infty}}(\mathbb{K}\Omega(T), \mathscr{P})\ ,
$$
for any tree $T$ and any su homotopy operad $\mathscr{P}$.
\end{defin}

Let us describe the dendrices of the dendroidal nerve of a su homotopy operad $\mathscr{P}=({C},\mathcal{P}, \gamma, \eta)$. We denote by $d_\nu$ the structural codifferential on $\mathbb{T}^c(s\mathbb{K}\Omega(T))$, which comes from the operad structure on $\mathbb{K}\Omega(T)$.

\begin{lemma}
A $T$-dendrex $ \mathbb{K}\Omega(T) \rightsquigarrow \mathscr{P}$ is equivalent to the following  data:
\begin{itemize}
\item[$\triangleright$]
an underlying function $\phi$  from the set of edges of $T$ to the set of colors ${C}$,
\item[$\triangleright$]
 maps of graded $\mathbb{S}$-modules $f(t):t(s\mathbb{K}\Omega(T)) \rightarrow s\mathcal{P}$ over the function $\phi$ for any  tree $t=T'/T_1 \cdots T_k$, which is the contraction of a sub-tree $T'$ of $T$ along a partition $T'=T_1 \sqcup \ldots \sqcup T_k$ and which is canonically colored by the set of edges of $T$,
\end{itemize}
 satisfying the following equations, for the same class of trees $t$,

\begin{equation}
\displaystyle{\sum_{t=t_1 \sqcup \ldots \sqcup t_l} \gamma \big(t/t_1 \cdots t_l\big)\Big( f\big(t_1\big) \otimes \cdots \otimes f\big(t_l\big) \Big)  = f d_\nu}(t)\ ,
\label{adjoint4}
\end{equation}

where the sum runs over the partitions with no trivial component of the colored tree $t$. 
\end{lemma}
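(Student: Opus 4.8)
The plan is to unfold the definition of a morphism of su homotopy operads and then use the strict unit conditions to cut the data down to the announced class of trees. By definition a $T$-dendrex is a morphism $\mathbb{K}\Omega(T) \rightsquigarrow \mathscr P$ of su homotopy operads, hence a morphism of coaugmented dg cooperads $Rf \colon (\mathbb{T}^c(s\mathbb{K}\Omega(T)), d_\nu) \to (\mathbb{T}^c(s\mathcal P), d_\gamma)$ whose projection $f$ onto the generators satisfies the unit conditions. By the cofree cooperad adjunction (and Formula~(\ref{adjoint1})), such an $Rf$ is the same datum as a color function $\phi$ on the edges of $T$ together with a morphism of graded $\mathbb{S}$-modules $f \colon \overline{\mathbb T}(s\mathbb{K}\Omega(T)) \to s\mathcal P$ over $\phi$; writing $f = \coprod_t f(t)$ over colored trees $t$ colored by the edges of $T$, this is exactly a collection of maps $f(t)$. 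Applying Proposition~\ref{key} with source $\mathbb{K}\Omega(T)$ and target $\mathscr P$, the map $Rf$ commutes with the codifferentials iff $\gamma\, Rf = f\, d_\nu$; substituting Formula~(\ref{adjoint1}) for $Rf$ and using that $\gamma$ acts componentwise on each tree summand of $\overline{\mathbb T}(s\mathcal P)$ turns this identity into precisely Equation~(\ref{adjoint4}).

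Next I would pin down which trees actually carry nonzero data. Since $\mathbb{K}\Omega(T)(c; \chi)$ is $\mathbb{K}$ exactly when $c$ and the $\chi(x_i)$ are the root and leaves of a subtree of $T$, and is $0$ otherwise (Definition~\ref{deftreeop}), the module $t(s\mathbb{K}\Omega(T))$ is nonzero iff every vertex of $t$ is the profile of a subtree of $T$; grafting these subtrees along their common edge-colors then exhibits $t$ as a contraction $T'/T_1\cdots T_k$ of a subtree $T' \subseteq T$ along a partition. For such $t$ the module $t(s\mathbb{K}\Omega(T))$ is one-dimensional, so $f(t)$ is the datum of a single element of $s\mathcal P$, i.e. a map $t(s\mathbb{K}\Omega(T)) \to s\mathcal P$ over $\phi$. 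A vertex is labelled by a unit $s\ii$ exactly when its subtree is trivial, so the trees with no unit vertex are precisely those coming from partitions with no trivial component, which is the class appearing in the statement.

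The reduction is completed by the unit conditions, and this is where the main work lies. The conditions $f(s\ii_c) = s\ii_{\phi(c)}$ and $f(\cdots \otimes s\ii_c \otimes \cdots) = 0$ on trees with at least two vertices determine $f(t)$ for every $t$ possessing a unit vertex, so the free data is exactly $\phi$ together with the $f(t)$ for $t = T'/T_1\cdots T_k$ with no trivial component, and on these trees Equation~(\ref{adjoint4}) is the remaining constraint. What must still be verified is that, conversely, extending such data by the unit rules yields a genuine morphism, i.e. that $\gamma\, Rf = f\, d_\nu$ holds automatically on every tree $t$ carrying a unit vertex. Here I would split the left-hand partition sum according to whether the unit vertex is isolated and the right-hand degrafting sum according to whether the contracted two-vertex subtree absorbs the unit (recall that, by Proposition~\ref{prop:Comparison}, the structure map $\nu$ of the dg operad $\mathbb{K}\Omega(T)$ is supported on two-vertex subtrees). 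The strict unitality of $\mathbb{K}\Omega(T)$, whose composite with a unit is the identity, together with the homotopy unit coherences of $\mathscr P$ (namely $\gamma(s\ii) = 0$, $\gamma(t)(s\ii_c \otimes sp) = sp$, $\gamma(t)(sp \otimes s\ii_c) = (-1)^{|p|} sp$ for two-vertex trees, and vanishing on larger ones) then make the two sides agree term by term. The only genuine difficulty is the sign bookkeeping in this matching, which is routine but tedious.
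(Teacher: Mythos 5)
Your proof is correct and takes essentially the same route as the paper's: identify a $T$-dendrex with a morphism of $\mathbb{S}$-modules $f:\overline{\mathbb{T}}(s\mathbb{K}\Omega(T)) \rightarrow s\mathcal{P}$ satisfying the condition $\gamma\, Rf = f\, d_\nu$ of Proposition~\ref{key}, observe that the nonzero and non-unit-forced components of $f$ live exactly on the contraction trees $t=T'/T_1\cdots T_k$ where the condition becomes Equation~(\ref{adjoint4}), and conversely extend such data by the strict-unit rules. You are in fact more explicit than the paper about the converse verification on trees carrying unit vertices (which the paper asserts without detail), so no gap remains beyond the sign bookkeeping you acknowledge.
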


\begin{rmk}
As the trees $t=T'/T_1 \cdots T_k$ are canonically colored by the set of edges of $T$, the set of such colored trees is canonically bijective with the set of partitioned sub-trees of $T$ under the inverse of the mapping $T'=T_1 \sqcup  \ldots \sqcup T_k \mapsto T'/T_1 \cdots T_k$.
\end{rmk}

\begin{proof}
On the one hand, a $T$-dendrex is a morphism of su homotopy operads $ \mathbb{K}\Omega(T) \rightsquigarrow \mathscr{P}$, which can be described as a morphism of $\mathbb{S}$-modules $f:\overline{\mathbb{T}}(s \mathbb{K}\Omega(T)) \rightarrow s\mathcal{P}$ satisfying Relation~(\ref{eqn:MORPH}). In particular, it gives us an underlying function $\phi$ from the set of edges of $T$ to the set of colors $C$ of $\mathscr P$ and morphisms $f(t)$ for any colored tree $t=T'/T_1 \cdots T_k$ satisfying Relation~(\ref{adjoint4}). On the other hand, let us consider a function $\phi$ and morphisms $\{f(t)\}_{t=T'/T_1 \cdots T_k}$ satisfying Relation~(\ref{adjoint4}). Let us fix:
\begin{itemize}
\item[$\triangleright$] for any colored tree $t$ with one vertex $v$ and two edges having the same color $e \in \text{edges}(T)$,
\begin{align*}
f(t):t\big(s\mathbb{K}\Omega(T)\big) & \rightarrow s\mathcal{P}\\
sv & \mapsto s\ii_{\phi(e)}\ ;
\end{align*} 
\item[$\triangleright$] for any other tree $t$, colored by the edges of $T$, and which is different from a contraction of a sub-tree of $T$, $f(t):=0$.
\end{itemize}
The data of the function $\phi$ and the maps $\{f(t)\}_t$ amounts to a morphism $f:\overline{\mathbb{T}}(s \mathbb{K}\Omega(T)) \rightarrow s\mathcal{P}$. Furthermore, since the morphisms $\{f(t)\}_{t=T'/T_1 \cdots T_k}$ satisfy Relation~(\ref{adjoint4}), then the morphism $f$ satisfy Relation~(\ref{eqn:MORPH}) and so is a $T$-dendrex of $\mathscr P$.
\end{proof}

The definition of morphisms of su homotopy operads induces the following description of the images of dendrices under the face and degeneracy maps of the dendroidal nerve $\mathrm{N}^{\Omega}(\mathscr{P})$.
Let $x=(\phi; \{f(t)\}_{t=T'/T_1 \cdots T_k})$ be a $T$-dendrex of the dendroidal nerve $\mathrm{N}^{\Omega}(\mathscr{P})$. 
For any outer vertex $v$ of $T$, the outer face $\delta_v(x)$ of $x$ is given by the restriction of $\phi$ to the set $\mathrm{edges}(T)\setminus \mathrm{in}(v)$ and by the restriction of the family $\{f(t)\}_{t=T'/T_1\cdots T_k}$ to the contractions of the sub-trees  $T'$ of $T-\{v\}$. For any inner edge $e$ of $T$, the corresponding inner face $\delta_e(x)$ of $x$ is given by the restriction of $\phi$ to the set $\mathrm{edges}(T)\setminus \{e\}$ and by the restriction of the family $\{f(t)\}_{t=T'/T_1\cdots T_k}$ to the partitioned sub-trees  $T'$ of $T$ such that the edge $e$ is inside one of the trees $T_i$. Finally, let $e$ be an edge of $T$ and let $T_{\sigma}$ be the tree obtained from $T$ replacing $e$ by two edges $e_1$ and $e_2$ separated by a vertex $v$. 
There is a codegeneracy $s :  T_\sigma \to T$ sending $T_\sigma$ to $T$.
\begin{center}

\begin{tikzpicture}[scale=0.7]
\draw (2.5,0.5)--(2.5,1.5);
\draw (2.5,1) node[right] {$e$};
\draw (1.5,1) node {$\mapsto$};
\draw (0,0)--(0,2);
\draw (0,1) node[right] {$v$};
\draw (0,0.4) node[right] {$e_1$};
\draw (0,1.6) node[right] {$e_2$};
\draw (0,1) node {$\bullet$};
\draw (0,2.5) node[above] {$T_{\sigma}$};
\draw (1.5,2.9) node {$\to$};
\draw (2.5,2.6) node[above] {$T$};
\end{tikzpicture}

\end{center}
The corresponding degeneracy $\sigma(x)$ of $x$ is a $T_{\sigma}$-dendrex $(\phi_{\sigma};\{f_{\sigma}(t)\}_t)$ of $\mathrm{N}^{\Omega}(\mathscr{P})$ described as follows. On the one hand, we have $\phi_{\sigma}(e_1)=\phi_{\sigma}(e_2)=\phi(e)$ and $\phi_{\sigma}(a)=\phi(a)$ for the other edges $a$ of $T_{\sigma}$ which can be considered as edges of $T$. On the other hand, let  $t=T'/T_1\cdots T_k$ be a contracted sub-tree of $T_{\sigma}$.
\begin{itemize}
\item[$\triangleright$] If $k \geq 2$ and if one of the $T_i$ is the one-vertex tree made up of the vertex $v$ and the edges $e_1$ and $e_2$, then $f_{\sigma}(t)=0$.
\item[$\triangleright$] If $t$ is the one-vertex tree made up of the vertex $v$ and the edges $e_1$ and $e_2$, then $f_{\sigma}(t)$ is equal to:
\begin{align*}
f_{\sigma}(t)(e_1;e_2) :t\big(s\mathbb{K}\Omega(T_{\sigma})\big)(e_1;e_2) \simeq s\mathbb{K} & \rightarrow s\mathcal{P}(\phi(e);\phi(e))\\
sv & \mapsto s\ii_{\phi(e)}\ .
\end{align*}
\item[$\triangleright$] Otherwise, $f_{\sigma}(t)= f(t)$ since $t\big(s\mathbb{K}\Omega(T_{\sigma})\big) \simeq t\big(s\mathbb{K}\Omega(T)\big)$.
\end{itemize}

\subsection{The dendroidal nerve is an infinity-operad}

\begin{lemma}(\cite[Corollary~$3.2.7$]{MoerdijkToen10})\label{lem:Descr}
A morphism of dendroidal sets $\Lambda^e[T] \rightarrow D$ is the data of dendrices $x_v \in D_{T\backslash v}$ for any external vertex $v$ and $x_a \in D_{T/a}$ for any inner edge $a$ different from $e$, which agree on common faces.
\end{lemma}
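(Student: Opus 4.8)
The plan is to exhibit the horn $\Lambda^e[T]$ as a colimit of representable dendroidal sets indexed by the faces of $T$, and then to read off the claimed description by combining the Yoneda lemma with the universal property of the colimit. Recall first that, as a sub-presheaf of $\Omega[T]=\mathrm{Hom}_{\mathsf{Tree}}(-,T)$, the horn $\Lambda^e[T]$ is by definition generated by all the cofaces targeting $T$ except the inner coface $\delta_e$; equivalently it is the union, taken over all external vertices $v$ and all inner edges $a\neq e$, of the images of the inclusions $\Omega[T\backslash v]\hookrightarrow \Omega[T]$ and $\Omega[T/a]\hookrightarrow \Omega[T]$ induced by the cofaces $\delta_v$ and $\delta_a$. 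Since $\mathsf{dSet}$ is a presheaf category, colimits and unions of subobjects are computed tree-wise, so this union is the colimit of the diagram whose objects are these codimension-one representables together with the representables of their pairwise intersections, and whose arrows are the evident inclusions.

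The second step is to identify those pairwise intersections. Inside $\Omega[T]$, the intersection of two distinct codimension-one faces is the union of the codimension-two faces common to both: the intersection of $\Omega[T\backslash v]$ and $\Omega[T\backslash w]$ is $\Omega[T\backslash\{v,w\}]$, the intersection of $\Omega[T\backslash v]$ and $\Omega[T/a]$ is $\Omega[(T\backslash v)/a]$, and the intersection of $\Omega[T/a]$ and $\Omega[T/b]$ is $\Omega[T/\{a,b\}]$. These codimension-two trees are precisely the \emph{common faces} appearing in the statement. Consequently $\Lambda^e[T]$ is the colimit of the diagram of representables $\Omega[T']$, where $T'$ ranges over all faces reachable without applying $\delta_e$, with the morphisms given by the face maps relating them.

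The third step is formal. By the universal property of the colimit, a morphism $\Lambda^e[T]\rightarrow D$ is the same datum as a cocone, i.e.\ a family of morphisms $\Omega[T']\rightarrow D$ commuting with the face maps of the indexing diagram. By the Yoneda lemma each morphism $\Omega[T']\rightarrow D$ is exactly a dendrex in $D_{T'}$; for the codimension-one faces these are the dendrices $x_v\in D_{T\backslash v}$ and $x_a\in D_{T/a}$, and the commutativity of the cocone along the codimension-two pieces is exactly the condition that these dendrices agree on their common faces. This produces the asserted bijection.

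The one step that requires genuine care is the second: verifying that the pairwise intersections of the codimension-one faces are generated \emph{exactly} by the codimension-two faces obtained by performing both operations, with no further gluing relations. This rests on the combinatorics of how outer faces (deletion of an external vertex) and inner faces (contraction of an inner edge) of a tree overlap, which is controlled by the skeletal and normal-monomorphism structure of $\Omega[T]$ developed in \cite{MoerdijkWeiss07}; once this identification of intersections is granted, the remainder of the argument is a purely formal consequence of Yoneda together with the cocone description of the colimit, as in \cite{MoerdijkToen10}.
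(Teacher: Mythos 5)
First, note that the paper does not prove this lemma at all: it is imported verbatim from the cited reference, so the only meaningful comparison is with the standard argument that reference gives, which is indeed the one you outline. Your formal skeleton is fine: $\Lambda^e[T]$ is the union, inside $\Omega[T]$, of the images of the cofaces $\delta\neq\delta_e$; in a presheaf category a map out of a union of subobjects is exactly a family of maps on the pieces agreeing on all pairwise intersections; and Yoneda converts a map $\Omega[T']\rightarrow D$ into a dendrex of $D_{T'}$ (the cofaces being monomorphisms of trees, each image is isomorphic to the corresponding representable). Those steps are correct and purely formal.

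The genuine gap is in your second step, the identification of the pairwise intersections, and it is not merely a deferred verification: two of the three formulas you state are false in relevant cases. The mixed formula $\Omega[T\backslash v]\cap\Omega[T/a]=\Omega[(T\backslash v)/a]$ breaks down precisely when $a$ is the inner edge attaching the external vertex $v$ to the rest of $T$ (a configuration occurring in every tree with at least three vertices, and these two faces do both belong to the horn whenever $a\neq e$). In that case $a$ is an \emph{outer} edge of $T\backslash v$, so $(T\backslash v)/a$ is not even defined, and the intersection is not representable. Concretely, let $T$ have three vertices $u$ (at the root), $v$ and $w$, with inner edges $e$ (between $u$ and $w$) and $a$ (between $u$ and $v$). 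A morphism $T'\rightarrow T$ lies in the image of $\delta_v$ if and only if its image avoids $v$, and in the image of $\delta_a$ if and only if no edge of $T'$ hits the color $a$ and no vertex of $T'$ hits a subtree of $T$ containing exactly one endpoint of $a$. The intersection therefore consists of the morphisms whose image avoids $u$, $v$ and $a$ altogether, i.e.\ it is the disjoint union of the image of the corolla at $w$ and the image of the trivial tree on the root edge: two common \emph{iterated} outer faces, not a single codimension-two face. Similarly, when $T$ has exactly two vertices the intersection of the two outer faces is the trivial tree $|$. The lemma survives because every pairwise intersection is still a union of images of common iterated faces, so compatibility on intersections is still equivalent to the stated agreement on common faces; but establishing that decomposition is exactly the combinatorial content (the ``dendroidal identities'') that your proof defers, and the clean dictionary you propose to grant in its place is wrong as stated, so your argument as written does not close.
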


This lemma applied to the case of the dendroidal nerve of a su homotopy operads gives the following description.

\begin{cor}\label{cor:Dendrex}
For any tree $T$ and any inner edge $e$, a morphism of dendroidal sets $\Lambda^e[T] \rightarrow \mathrm{N}^{\Omega}(\mathscr{P})$ is equivalent to the data of:
\begin{itemize}
\item[$\triangleright$] a function $\phi$ from the set of edges of $T$ to the set ${C}$ and

\item[$\triangleright$]  morphisms of  $\mathbb{S}$-modules $f(t):  t\big(s\mathbb{K}\Omega(T)\big)  \rightarrow s\mathcal{P}$ over $\phi$,   for every contracted colored  sub-tree $t=T'/T_1  \cdots T_k$  along a partition $T'=T_1 \sqcup \ldots \sqcup T_k$, except for the full tree $T$ with no contraction and the tree $T/e$ where only the two-vertices sub-tree spanned by the edge $e$ is contracted,
\end{itemize}
satisfying Equation~(\ref{adjoint4}) for each of these trees.
\end{cor}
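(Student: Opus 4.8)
The plan is to read the statement off from Lemma~\ref{lem:Descr} combined with the description of the dendrices of $\mathrm{N}^\Omega(\mathscr P)$ and of their faces established just above it. By Lemma~\ref{lem:Descr}, a morphism $\Lambda^e[T] \to \mathrm{N}^\Omega(\mathscr P)$ is the same as a family of dendrices $x_v \in \mathrm{N}^\Omega(\mathscr P)_{T\setminus v}$, one for each external vertex $v$, and $x_a \in \mathrm{N}^\Omega(\mathscr P)_{T/a}$, one for each inner edge $a \neq e$, that agree on common faces. First I would apply the preceding description of $T'$-dendrices of the nerve to each $x_v$ and each $x_a$: every such dendrex consists of an underlying colouring together with a family of maps $f(t)$ indexed by the contracted sub-trees of $T\setminus v$ (resp.\ of $T/a$), subject to Equation~(\ref{adjoint4}). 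Reading the ``agreement on common faces'' clause through the explicit formulas for the outer and inner faces recalled before the statement, it says exactly that all these local colourings glue to a single function $\phi$ on $\mathrm{edges}(T)$ and that the local families $f(t)$ coincide wherever two faces share a contracted sub-tree; hence they assemble into one family $\{f(t)\}_t$ defined on the union of the contractions seen by the faces.

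The heart of the argument is the combinatorial identification of this union. Using the face formulas, a contracted sub-tree $t = T'/T_1\cdots T_k$ is visible in the outer face $\delta_v$ precisely when $T'$ avoids $v$, and in the inner face $\delta_a$ precisely when $a$ lies inside one of the pieces $T_i$. I would argue that $t$ is invisible in every outer face iff the connected sub-tree $T'$ contains all external vertices of $T$, which forces $T'=T$; and that, granting $T'=T$, it is invisible in every inner face $\delta_a$ with $a\neq e$ iff no inner edge other than $e$ is internal to some $T_i$. The latter leaves exactly two possibilities: either all $T_i$ are single vertices, giving $t=T$ with no contraction, or one $T_i$ is the two-vertex sub-tree spanned by $e$ while the others are single vertices, giving $t=T/e$. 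Thus the contractions missed by the horn are precisely $T$ and $T/e$, matching the exclusion in the statement.

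Finally I would check that the defining equations carry over. Each $x_v$ and each $x_a$ is a genuine dendrex, so Equation~(\ref{adjoint4}) holds for every contracted sub-tree occurring in it; since every covered $t$ occurs in at least one face and the equation for $t$ only involves $f$ evaluated on contractions of $t$ itself (the partition pieces appearing in the left sum and in $f d_\nu(t)$), all of which occur in that same face, the relation is inherited by the glued family. Conversely, given data $(\phi,\{f(t)\}_t)$ as in the statement, restricting $\{f(t)\}$ along each face produces dendrices satisfying~(\ref{adjoint4}) that automatically agree on overlaps, hence a morphism $\Lambda^e[T]\to\mathrm{N}^\Omega(\mathscr P)$. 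This yields the claimed equivalence.

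I expect the main obstacle to be the bookkeeping in the combinatorial step, and in particular the verification that a connected sub-tree containing all external vertices of $T$ must be all of $T$ (cleanest via the tree on the vertices of $T$, whose leaves are exactly the external vertices), so that the ``$T'=T$'' analysis isolates precisely $T$ and $T/e$.
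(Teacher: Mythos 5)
Your proposal is correct and takes essentially the same route as the paper: the paper's own proof consists of the single remark that the statement is a direct corollary of Lemma~\ref{lem:Descr} together with the description of the faces of dendrices of $\mathrm{N}^{\Omega}(\mathscr{P})$, which is exactly the argument you carry out. Your added details (gluing the local colourings and families $f(t)$, and the combinatorial check that the contracted sub-trees invisible to all faces of $\Lambda^e[T]$ are precisely $T$ and $T/e$) are accurate fillings-in of what the paper leaves implicit.
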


\begin{proof}
The result is a direct corollary of Lemma~\ref{lem:Descr} and the description of faces given in the previous section.
\end{proof}

\begin{thm}\label{thminftyoperad}
The dendroidal nerve of a strict unital homotopy colored operad is an $\infty$-operad.
\end{thm}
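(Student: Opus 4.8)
The plan is to verify directly the lifting property defining an $\infty$-operad: for every tree $T$ and every inner edge $e$ of $T$, any morphism $\Lambda^e[T] \to \mathrm{N}^{\Omega}(\mathscr{P})$ extends along $\Lambda^e[T] \hookrightarrow \Omega[T]$. By Corollary~\ref{cor:Dendrex}, such a horn is exactly the datum of the colouring $\phi$ together with maps $f(t)\colon t(s\mathbb{K}\Omega(T)) \to s\mathcal{P}$ for every contracted sub-tree $t = T'/T_1\cdots T_k$ \emph{other than} the full tree $T$ itself and the tree $T/e$, all subject to the structure equation~(\ref{adjoint4}). A filler is a full $T$-dendrex, i.e. the same data \emph{including} $f(T)$ and $f(T/e)$ and satisfying~(\ref{adjoint4}) for these two trees as well. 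Since $f(T)$ occurs in~(\ref{adjoint4}) only for the tree $t = T$, and $f(T/e)$ only for $t = T$ and $t = T/e$, the whole problem reduces to constructing these two maps so that~(\ref{adjoint4}) holds for $t \in \{T, T/e\}$.

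First I would isolate, inside $f\,d_\nu(T)$, the contribution of the two-vertex sub-tree $s_e$ spanned by $e$. As $\mathbb{K}\Omega(T)$ is a genuine operad concentrated in degree zero, $d_\nu$ is built from the internal compositions $\nu(s)$ over the two-vertex sub-trees $s$ of $T$, and the $s = s_e$ summand equals $f(T/e)$ precomposed with $\nu(s_e)$. Since each space $\mathbb{K}\Omega(T)(\cdots)$ is one-dimensional, the composition $\nu(s_e)$ surjects onto the generator of the contracted vertex, so~(\ref{adjoint4}) for $t = T$ can be solved for $f(T/e)$ in terms of $f(T)$ and of the horn data. I would therefore set $f(T) := 0$ and \emph{define} $f(T/e)$ by this equation; by construction~(\ref{adjoint4}) then holds for $t = T$.

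It remains to check~(\ref{adjoint4}) for $t = T/e$, which is the heart of the argument. Writing $g := \gamma\, Rf - f\, d_\nu\colon \mathbb{T}^c(s\mathbb{K}\Omega(T)) \to s\mathcal{P}$, the equation~(\ref{adjoint4}) on a tree $t$ is precisely the vanishing $g(t) = 0$ (Proposition~\ref{key}), and by the previous steps $g$ vanishes on every tree except possibly $T/e$. Since $Rf$ is a morphism of the underlying cooperads, the defect $\theta := d_\gamma\, Rf - Rf\, d_\nu$ is an $Rf$-coderivation whose corestriction onto the cogenerators is exactly $g$; it is therefore determined by $g$ through an insertion formula parallel to Proposition~\ref{prop:Coder} and Equation~(\ref{adjoint1}). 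Using $d_\gamma^2 = 0$ and $d_\nu^2 = 0$ one computes $d_\gamma \theta + \theta\, d_\nu = 0$; projecting this identity onto $s\mathcal{P}$ yields
\[
\gamma\,\theta + g\,d_\nu = 0 .
\]
Evaluating on the summand $T$ gives the key relation. On one hand, $g\,d_\nu(T) = \pm\, g(T/e)\circ \nu(s_e)$, because the remaining two-vertex contractions of $T$ are trees $T/a$ with $a \neq e$, on which $g$ vanishes. On the other hand, the coderivation formula expresses $\theta(T)$ as a sum of terms each carrying one factor $g(u)$ with $u$ a genuine \emph{sub}-tree of $T$; no such sub-tree equals the contraction $T/e$, and $g$ vanishes on all of them as well as on $T$ itself, whence $\theta(T) = 0$ and $\gamma\,\theta(T) = 0$. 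Combining, $g(T/e)\circ\nu(s_e) = 0$, and the surjectivity of $\nu(s_e)$ forces $g(T/e) = 0$, which is exactly~(\ref{adjoint4}) for $t = T/e$.

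The maps $\phi$ and $\{f(t)\}$ then satisfy~(\ref{adjoint4}) for every contracted sub-tree, so they assemble into a morphism $\mathbb{K}\Omega(T) \rightsquigarrow \mathscr{P}$, i.e. a $T$-dendrex restricting to the given horn; this proves the lifting property and hence the theorem. I expect the main obstacle to be the last step: one must ensure that the two a priori coupled equations for $t = T$ and $t = T/e$ are consistent and that no hidden occurrence of $f(T/e)$ spoils the solvability. The coderivation identity $d_\gamma\theta + \theta\, d_\nu = 0$ is precisely what guarantees this consistency, and the only genuinely delicate point is the (routine but error-prone) bookkeeping of the Koszul signs coming from the suspension, which I have suppressed throughout.
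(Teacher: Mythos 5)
Your proof is correct and follows essentially the same route as the paper: reduce via Corollary~\ref{cor:Dendrex}, set $f(T):=0$ and solve Equation~(\ref{adjoint4}) at $t=T$ for $f(T/e)$ using invertibility of the $\nu(e)$-insertion, then deduce Equation~(\ref{adjoint4}) at $t=T/e$ from the square-zero identities combined with the vanishing of the defect on all sub-trees of $T$. Your formulation of the last step through the coderivation $\theta = d_\gamma Rf - Rf\, d_\nu$ and the identity $d_\gamma \theta + \theta\, d_\nu = 0$ is only a repackaging of the paper's direct computation $(f d_\nu d_\nu)(T)=0=(\gamma d_\gamma Rf)(T)=(\gamma Rf d_\nu)(T)$, both resting on the same two facts: $\theta(T)=0$ because the defect vanishes on sub-trees, and $\gamma d_\gamma = 0$, $d_\nu^2=0$.
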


\begin{proof}
Consider a morphism of dendroidal sets $f$ from $\Lambda^e[T]$ to $\mathrm{N}^{\Omega}(\mathscr{P})$ given by a function $\phi$ from the edges of $T$ to ${C}$ and morphisms of $\mathbb{S}$-modules $f(t):  t(s\mathbb{K}\Omega(T))  \rightarrow s\mathcal{P}$ over $\phi$ for the trees $t$ described in  Corollary~\ref{cor:Dendrex}. Recall that a morphism from $\Omega[T]=\mathrm{Hom}_{\mathsf{Tree}}(-,T)$ to $\mathrm{N}^{\Omega}(\mathcal{P})$ amounts to the data of a $T$-dendrex. So to extend $f$ to a morphism from $\Omega[T]$ to $\mathrm{N}^{\Omega}(\mathcal{P})$, we have to build $f(T)$ and $f(T/e)$ so that Equation~(\ref{adjoint4}) is fulfilled for these two trees. 
Let us recall that $d_\nu$ denotes  the structural codifferential on $\mathbb{T}^c(s\mathbb{K}\Omega(T))$. We fix $f(T):=0$. Then, because of Equation~(\ref{adjoint4}) for the tree $T$, the map $f(T/e)$ must satisfy the following formula: 
\begin{align*}
f(T/e) \Big(\id \otimes \cdots \otimes \nu(e) \otimes \cdots \otimes \id \Big)=& -\sum_{a\neq e} f(T/a) \Big(\id \otimes \cdots \otimes \nu(a) \otimes \cdots \otimes \id \Big) \\ &+ \sum_{T=T_1 \sqcup \ldots \sqcup T_k} \gamma(T/T_1 \cdots T_k)  \Big( f(T_1) \otimes \cdots \otimes f(T_k) \Big)  \ ,
\end{align*}

\noindent where the first sum runs over the inner edges of the tree $T$ different from $e$ and where the second runs over all the partitions of the tree $T$ with no trivial component. Since  $\id \otimes \cdots \otimes \nu(e) \otimes \cdots \otimes \id$ is an isomorphism of $\mathbb{S}$-modules, we have built $f(T/e)$. We know that Equation~(\ref{adjoint4}) is satisfied for every tree $t=T'/T_1 \cdots T_k$ which is the contraction of a  sub-tree $T'$ of $T$ along a partition $T'=T_1 \sqcup \ldots \sqcup T_k$ except for $t=T/e$. As in the proof of Proposition~\ref{key}, we have:
$$
(Rf d_\nu)(T)= \sum_{S \subset T \atop T=T_1 \sqcup \ldots \sqcup S \sqcup \ldots \sqcup T_k} f(T_1) \otimes \cdots \otimes (f  d_{\nu})(S) \otimes \cdots \otimes f(T_k) \ ,
$$
and
$$
(d_\gamma Rf)(T)= \sum_{S \subset T \atop T=T_1 \sqcup \ldots \sqcup S \sqcup \ldots \sqcup T_k} f(T_1) \otimes \cdots \otimes (\gamma Rf)(S) \otimes \cdots \otimes f(T_k)\ .
$$
Therefore, we have
$$
(d_\gamma Rf)(T)=(Rf  d_\nu)(T)\ ,
$$
and so
$$
(f d_\nu d_\nu)(T)=0=(\gamma d_\gamma Rf)(T)=  (\gamma Rf d_\nu)(T)\ .
$$
The above equation rewrites
$$
\sum_{a} (f d_\nu)(T/a) \Big(\id \otimes \cdots \otimes \nu(a) \otimes \cdots \otimes \id \Big) =\sum_{a} (\gamma Rf)(T/a) \Big(\id \otimes \cdots \otimes \nu(a) \otimes \cdots \otimes \id \Big) \ ,
$$
where the two sums run over the inner edges of the tree $T$.
We already know that $(f d_\nu)(T/a)=\gamma Rf(T/a)$ for all the inner edges $a$ different from $e$ and that $\id \otimes \cdots \otimes {\nu}(e) \otimes \cdots \otimes \id$ is an isomorphism. Therefore, we get
$$
f d_{\nu}(T/e) = \gamma  Rf(T/e)\ . 
$$
So $f  d_{\nu}=\gamma Rf$ and the morphism $f$ induces a $T$-dendrex of $\mathrm{N}^{\Omega}(\mathscr{P})$, which extends the initial morphism $\Lambda^e[T] \rightarrow \mathrm{N}^{\Omega}(\mathscr{P})$.
\end{proof}

So, the image of $\mathrm{N}^{\Omega}(-)$ lies in the category of $\infty$-operads. Therefore, we can consider  it  as  a functor from the category of su homotopy operads to the category of $\infty$-operads: 
$$
\mathrm{N}^{\Omega}: \mathsf{suOp}_{\infty} \rightarrow \infty\textsf{-}\mathsf{Op}\ .
$$

Recall from Proposition~\ref{prop:Comparison}, that strict unital $A_{\infty}$-categories are the su homotopy operads concentrated in arity one. G. Faonte already defined in \cite{Faonte13} a simplicial nerve  $\mathrm{N}_{\mathcal{A}_{\infty}}$ for strict unital $A_{\infty}$-categories, generalizing a first construction of J. Lurie \cite{Lurie12}.
The present dendroidal nerve is actually a generalization of Faonte's simplicial nerve. 

\begin{prop}
The simplicial part of the restriction to strict unital $A_{\infty}$-categories of the dendroidal nerve is equal to Faonte's simplicial nerve:
$$\pi\left( {\mathrm{N}^{\Omega}}_{|\mathcal{A}_\infty\textsf{-}\mathsf{cat}}\right)=\mathrm{N}_{\mathcal{A}_{\infty}}\ , $$
where $\pi$ be the restriction of dendroidal sets onto simplicial sets.
\end{prop}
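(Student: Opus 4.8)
The plan is to evaluate both sides level-wise and exhibit a natural bijection that is compatible with the simplicial operators. Recall that $\pi = i^*$ is induced by the embedding $i \colon \Delta \hookrightarrow \mathsf{Tree}$ which sends the poset $[n]$ to the linear tree with $n$ vertices. Hence, for any su homotopy operad $\mathscr{P}$,
$$
\pi\big(\mathrm{N}^{\Omega}(\mathscr{P})\big)_n = \mathrm{N}^{\Omega}(\mathscr{P})_{i[n]} = \mathrm{Hom}_{\mathsf{suOp}_{\infty}}\big(\mathbb{K}\Omega(i[n]), \mathscr{P}\big),
$$
whereas by definition $\mathrm{N}_{\mathcal{A}_{\infty}}(\mathsf{C})_n = \mathrm{Hom}_{\mathcal{A}_{\infty}\text{-}\mathsf{cat}}([n], \mathsf{C})$, where this time $[n]$ denotes the poset enriched into a dg category. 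So it suffices to identify the representing objects $\mathbb{K}\Omega(i[n])$ and $[n]$, and to check that this identification is cosimplicial.

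First I would identify $\mathbb{K}\Omega(i[n])$ with $[n]$. The subtrees of a linear tree are exactly its sub-intervals, so $\mathbb{K}\Omega(i[n])$ is concentrated in arity one: its only nonzero operation spaces are one-dimensional and indexed by the pairs of edges $i \leq j$, and operadic composition is the grafting of intervals, that is, the composition of the poset $[n]$. It carries no differential and, being the image of a genuine operad, its higher structure maps $\widetilde{\gamma}(t)$ vanish on trees with at least two vertices. Consequently, under the identification of arity-one su homotopy operads with $\mathcal{A}_{\infty}$-categories of Proposition~\ref{prop:Comparison} (composed with the embedding $\mathsf{dg}\text{-}\mathsf{cat} \hookrightarrow \mathcal{A}_{\infty}\text{-}\mathsf{cat}$), the object $\mathbb{K}\Omega(i[n])$ corresponds precisely to Faonte's $[n]$.

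Next, restricting to $\mathcal{A}_{\infty}$-categories, both $\mathbb{K}\Omega(i[n])$ and $\mathsf{C}$ are concentrated in arity one. A morphism $\mathbb{K}\Omega(i[n]) \rightsquigarrow \mathsf{C}$ is encoded by a morphism of $\mathbb{S}$-modules $f \colon \overline{\mathbb{T}}(s\mathbb{K}\Omega(i[n])) \to s\mathcal{Q}$, where $\mathcal{Q}$ denotes the underlying module of $\mathsf{C}$, and a colored tree $t$ contributes nontrivially only when $t(s\mathbb{K}\Omega(i[n])) \neq 0$, which forces every vertex of $t$ to have arity one; that is, $t$ is a linear tree. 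The maps $f(t)$ on these linear trees are exactly the components of an $\mathcal{A}_{\infty}$-functor, Equation~(\ref{adjoint4}) becomes the defining relations of such a functor, and the strict-unital constraints on $f$ reproduce the strict unitality of $\mathcal{A}_{\infty}$-functors. This is precisely the equality of categories asserted in Proposition~\ref{prop:Comparison}, so
$$
\mathrm{Hom}_{\mathsf{suOp}_{\infty}}\big(\mathbb{K}\Omega(i[n]), \mathsf{C}\big) = \mathrm{Hom}_{\mathcal{A}_{\infty}\text{-}\mathsf{cat}}\big([n], \mathsf{C}\big).
$$

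Finally, I would assemble these level-wise bijections. The composite $\mathbb{K}\Omega \circ i \colon \Delta \to \mathsf{suOp}_{\infty}$ is a cosimplicial su homotopy operad whose cofaces and codegeneracies are the images under $\mathbb{K}\Omega$ of those of $\mathsf{Tree}$; under the identification above these coincide with the cosimplicial maps of the poset cosimplicial object $n \mapsto [n]$ underlying Faonte's nerve. Hence the bijections are compatible with all faces and degeneracies and natural in $\mathsf{C}$, yielding the desired equality of simplicial sets. I expect the only genuine work to lie in the second and third steps: checking that the suspension signs and the indexing of Equation~(\ref{adjoint4}) reproduce Faonte's $\mathcal{A}_{\infty}$-functor equations with the correct signs and orientation convention. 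Since this bookkeeping is exactly what underlies the equivalence in Proposition~\ref{prop:Comparison}, it can be invoked there rather than repeated here.
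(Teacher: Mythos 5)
Your proof is correct, and it is precisely the unwinding that the paper leaves implicit: the paper states this proposition without proof, taking it to follow from the definitions of the two nerves together with Proposition~\ref{prop:Comparison}, which is exactly the route you take (identify $\mathbb{K}\Omega(i[n])$ with Faonte's dg category $[n]$, observe that concentration in arity one forces every contributing tree to be linear so that the two Hom-sets coincide, and check that the identification is cosimplicial). One slip should be corrected: the structure maps $\widetilde{\gamma}(t)$ of $\mathbb{K}\Omega(i[n])$ vanish only on trees with at least \emph{three} vertices; on two-vertex trees they are the strictly associative composition of the poset, which is nonzero and is in fact what your own phrase ``operadic composition is the grafting of intervals'' requires.
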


Finally, we show that the dendroidal nerve forgets the information contained in nonnegative degrees.

\begin{defin}
Let $tr$ be the truncation endofunctor of the category $\mathsf{dg}\text{-}\mathsf{Mod}$ which sends a chain complex of $\mathbb{K}$-module $V$ to the bounded below chain complex of $\mathbb{K}$-module $tr(V)$ defined as follows:
\begin{itemize}
\item[$\triangleright$] for any negative integer $n<0$, $tr(V)_n:=\{0\}$ 
\item[$\triangleright$] for any negative integer $n<0$, $tr(V)_n:=V_n$
\item[$\triangleright$] at degree $0$, $tr(V)_0 :=\mathrm{ker}(d:V_0 \rightarrow V_{-1})$ is the kernel of the differential. 
\end{itemize}
\end{defin}

This functor extends to an endofunctor also denoted by $tr$ of the category of su homotopy operads sending $\mathscr P = (C,\mathcal{P},\nu, \eta)$ to $(C,tr(\mathcal{P}),tr\nu, \eta)$.

\begin{prop}
The dendroidal nerve is equal to its pre-composition with the truncation functor.
$$
\mathrm{N}^{\Omega}tr=\mathrm{N}^{\Omega}
$$
\end{prop}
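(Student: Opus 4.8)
The plan is to build the natural inclusion $\iota \colon tr\,\mathscr{P} \hookrightarrow \mathscr{P}$ and to prove that the induced map of dendroidal sets $\iota_* \colon \mathrm{N}^{\Omega}(tr\,\mathscr{P}) \to \mathrm{N}^{\Omega}(\mathscr{P})$ is an equality. Since $tr(V)$ is the good truncation $\tau_{\geq 0}V$, it is a sub-chain-complex of $V$, so the first thing to check is that the structure map $\gamma$ of $\mathscr{P}$ restricts to $s\,tr\mathcal{P}$, making $tr\,\mathscr{P}$ an honest sub-su homotopy operad and $\iota$ a morphism. By the degree count below, the components $\gamma(t)$ on trees with at least three vertices land in strictly positive degree, hence in $tr\mathcal{P}$; the only boundary case is a two-vertex tree with both inputs in degree zero, and there one uses that $d_\gamma$ squaring to zero forces the binary operation $\gamma_2$ to be a chain map, so that it preserves $\ker d_{\mathcal P} = (tr\mathcal{P})_0$. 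Because $\iota$ is a monomorphism, $\iota_*$ is injective on each set of $T$-dendrices, and the whole content of the proposition is the surjectivity of $\iota_*$.

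For surjectivity I would use the explicit description of dendrices given in the Lemma of Section~\ref{subsecthedendroidalnerve}: a $T$-dendrex of $\mathrm{N}^{\Omega}(\mathscr{P})$ is a function $\phi$ on the edges of $T$ together with degree-zero maps $f(t)\colon t(s\mathbb{K}\Omega(T)) \to s\mathcal{P}$, ranging over the contracted sub-trees $t = T'/T_1\cdots T_k$, subject to Equation~(\ref{adjoint4}). The key observation is a degree count. The operad $\mathbb{K}\Omega(T) = \Lin(\Omega(T))$ is concentrated in degree $0$, so $s\mathbb{K}\Omega(T)$ sits in degree $1$ and, for a tree $t$ with $m$ vertices, $t(s\mathbb{K}\Omega(T))$ is concentrated in degree $m$. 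As the projection $f$ of a morphism of dg cooperads has degree $0$, the image of $f(t)$ lies in $(s\mathcal{P})_m = \mathcal{P}_{m-1}$.

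It then suffices to see that every $f(t)$ factors through $s\,tr\mathcal{P} \hookrightarrow s\mathcal{P}$. For $m \geq 2$ this is automatic, since $\mathcal{P}_{m-1}$ with $m-1 \geq 1$ is untouched by the truncation. For $m = 1$ the image lies in $\mathcal{P}_0$, and here Equation~(\ref{adjoint4}) does the work: for a one-vertex tree the only partition is the trivial one, so the equation reduces to $\gamma(\text{one vertex})\big(f(t)\big) = (f\,d_\nu)(t)$. Since $\mathbb{K}\Omega(T)$ carries the zero differential, the structural codifferential $d_\nu$ vanishes on one-vertex trees and the right-hand side is $0$; the left-hand side is the internal differential $d_{s\mathcal{P}}\,f(t)$, whence $d_{s\mathcal{P}}f(t) = 0$ and thus $f(t) \in s\ker\big(d_{\mathcal P}\colon \mathcal{P}_0 \to \mathcal{P}_{-1}\big) = s\,(tr\mathcal{P})_0$. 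The unit values $s\ii_c$ prescribed in the Lemma are likewise cycles, hence in $s\,tr\mathcal{P}$. Therefore an arbitrary $T$-dendrex of $\mathrm{N}^{\Omega}(\mathscr{P})$ already factors through $\iota$, so $\iota_*$ is surjective, hence bijective, on every $T$; this bijection is post-composition with the fixed morphism $\iota$, so it is automatically natural in $T$ (compatible with faces and degeneracies) and in $\mathscr{P}$, giving $\mathrm{N}^{\Omega}tr = \mathrm{N}^{\Omega}$.

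The forward estimate is a clean degree count, and the only place needing genuine care is the $m=1$ case, which is where the rigidity of the construction surfaces: the triviality of the differential on $\mathbb{K}\Omega(T)$ together with the structural Equation~(\ref{adjoint4}) forces the colors-level maps into $\ker d_{\mathcal P}$. I expect the main obstacle to be purely organizational, namely verifying that $tr$ is a well-defined endofunctor carrying a natural transformation $\iota$ to the identity — that is, handling the two-vertex, degree-zero boundary case via the chain-map property of $\gamma_2$ — so that $\iota_*$ may legitimately be treated as a subobject inclusion whose surjectivity yields the stated equality.
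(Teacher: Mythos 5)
Your proof is correct and takes essentially the same route as the paper: the paper's proof is a one-line appeal to the explicit description of dendrices in Section~3.2, and your degree count (maps $f(t)$ land in $\mathcal{P}$ in degree equal to the number of vertices minus one) together with the one-vertex case of Equation~(\ref{adjoint4}) forcing cycles in degree zero is exactly the implicit content of that remark. The extra care you take in checking that $tr$ is a well-defined endofunctor and that $\iota$ is a morphism fills in details the paper asserts without proof, but does not change the argument.
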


\begin{proof}
The proof is a straightforward consequence of the description of the nerve in Section \ref{subsecthedendroidalnerve}.
\end{proof}

\subsection{The Boardman--Vogt construction}\label{sectionboardmanvogt} We recall here the Boardman--Vogt construction for the dg operads $\mathbb{K}\Omega(T)$. For more details, we refer the reader to the original papers \cite{BergerMoerdijk07, BergerMoerdijk06, Weiss07}.

\begin{defin}[An interval in the category of chain complexes]\label{def:intervalH}
Let $H$ be the chain complex made up of two generators $h_0$ and $h_1$ in degree $0$ and one generator $h$ in degree $1$ such that the differential $d(h)$  is equal to $h_1 - h_0$. It is equipped with a symmetric product $\vee: H \otimes H \rightarrow H$ such that $h_0$ is a unit, $h_1$ is idempotent, $h$ is nilpotent, and such that $h \vee h_1=0$; it is also equipped with a map $\epsilon: H \rightarrow \mathbb{K}$ such that $\epsilon(h_i)=1_{\mathbb{K}}$ and $\epsilon(h)=0$.
\end{defin}

\begin{defin}[The Boardman--Vogt construction]\label{defin:boardman--vogtresolution}
For any tree $T$, the \textit{Boardman--Vogt construction}  $W_H(T)$ of the operad $\mathbb{K}\Omega(T)$ is the operad with the same colors (the edges of the tree $T$) and made up of colored trees whose vertices are labeled by elements of $\mathbb{K}\Omega(T)$ and whose inner edges are labeled by elements of the interval $H$. This is subject to the two following identifications.
\begin{itemize}
\item[$\triangleright$] If a vertex $v$ with one input is labeled by an identity, then the tree is identified with the same tree with the vertex $v$ removed and the two adjacent edges glued together. If the resulting edge is inner, then it is labeled  by the element of $H$ given by the product of the two elements labeling the former adjacent edges. And if the resulting edge is outer, then the tree is multiplied by the image under $\epsilon$ of the former inner adjacent edge.
\item[$\triangleright$] If an inner edge $e$ is labeled by $h_0$, then
the tree is identified with its contraction along this edge. The resulting vertex is labeled by the composition in the operad $\mathbb{K}\Omega(T)$ of the labelings of the two former adjacent vertices.
\end{itemize}
The operadic composition is given by the grafting of trees where the new inner edge is labeled by $h_1$.
\end{defin}

As an operad in the category of graded $\mathbb{K}$-modules, $W_H(T)$ is the free operad over the following $\mathbb{S}$-module $V_H(T)$.  For $T'$ a sub-tree of $T$ with $c_1, \ldots, c_n$ as inputs and $c_0$ as output, the component of the $\mathbb{S}$-module $V_H(T)$ is
$$
V_H(T)(c_1,\ldots ,c_n ; c_0):=\bigoplus_{t=T'/T_1 \cdots T_k} \mathbb{K} \, e_t\ , 
$$
where the sum is taken over the trees $t$  obtained from $T'$ by contraction with respect to one of its partition $T_1, \ldots, T_k$. The generator $e_t$ corresponds to labeling the edges of $T'$ by $h_0$ if they are inner in one of the $T_i$ and by $h$ otherwise. Its degree $|e_t|$ is equal to the number of inner edges of the tree $t$. Otherwise, we set $V_H(T)(c_0;c_1,\ldots ,c_n):= \{0\}$. The differential is given by the formula:
\begin{equation}\label{pm}
d(e_t)=\sum_a \pm \gamma(e_{ta} \otimes e_{at}) \pm e_{t/a} \ , 
\end{equation}
\noindent where the sum is taken over the inner edges $a$ of the tree $t$,  where $at$ (resp. $ta$) is the sub-tree of $t$  under (resp. above) the edge $a$, and where $\gamma(e_{ta} \otimes e_{at})$ is the composite of $e_{at}$ and $e_{ta}$ in the free operad $W_H(T)$. Furthermore, the tree $t/a$ is the contraction of $t$ along the edge $a$. 

\begin{rmk}
%The choice of the generator $e_t$ corresponds to a choice of ordering on the inner edges of the tree $t$.
The signs are produced by the signed permutations of edges. This can be done coherently as follows: we first choose a planar representative of the whole tree $T$. Then, we order the inner edges of $T$ from bottom to top and from left to right. This induces a total ordering on the inner edges of $t$, thereby numbered $1$ to $|e_t|$. In this context, the first sign $\pm$ appearing in Equation~(\ref{pm}) is $(-1)^{(a-1+|e_{ta}|-a+1)|e_{at}|}$ and the second sign is $(-1)^a$.
\end{rmk}

The Boardman--Vogt construction $W_H(T)$ is functorial in $T$ as follows. 
An outer coface $T-\{v\} \rightarrow T$ (resp. an inner coface $T/e \rightarrow T$) induces canonically an injection between operads $W_H(T-\{v\}) \rightarrow W_H(T)$ (resp. $W_H(T/e) \rightarrow W_H(T)$). A codegeneracy $T \rightarrow T/v$ induces a morphism of operads which sends an element with underlying tree $T' \subseteq T$ to the similar  element with underlying  tree $T'/v \subseteq T/v$, where the inner (resp. outer) edge resulting from the removal of $v$ is labeled by the element of $H$ given by the product of the two elements labeling the adjacent edges of $v$ (resp. is labeled by the image under $\epsilon$ of the inner adjacent edge of $v$). These morphisms satisfy the same equations as the cofaces and codegeneracies. Therefore we have defined a codendroidal operad $T \mapsto W_H(T)$.

\subsection{The dendroidal nerve for operads is the homotopy coherent nerve}

\begin{defin}[The homotopy coherent nerve]
The \textit{homotopy coherent nerve} of a dg operad $\mathscr{P}$ is defined by the following dendroidal set
$$
\mathrm{hcN}(\mathscr{P})_T:=\mathrm{Hom}_{\mathsf{dg}\textsf{-}\mathsf{Op}}(W_H(T),\mathscr{P}) \ .
$$
\end{defin}

For dg operads, this construction is equal to the dendroidal nerve.

\begin{thm}
There is a canonical isomorphism
$$
\mathrm{hcN}(\mathscr{P}) \simeq \mathrm{N}^{\Omega}(\mathscr{P}) \ ,
$$
which is natural in dg colored operads $\mathscr{P}$.
\end{thm}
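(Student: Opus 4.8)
The plan is to exhibit, for each tree $T$, a bijection between the strict morphisms of dg operads $W_H(T) \to \mathscr{P}$ and the $T$-dendrices of $\mathrm{N}^{\Omega}(\mathscr{P})$, natural in both $T$ and $\mathscr{P}$. The starting observation is that the two sets are built on \emph{the same} underlying data. On the one hand, by the description recalled in Section~\ref{sectionboardmanvogt}, the operad $W_H(T)$ is free as a graded operad on the $\mathbb{S}$-module $V_H(T)$, whose generators $e_t$ are indexed by the contracted sub-trees $t=T'/T_1\cdots T_k$ of $T$; hence a morphism of graded operads $F\colon W_H(T) \to \mathscr{P}$ is nothing but a colour map $\phi$ on the edges of $T$ together with a choice of image $F(e_t) \in \mathcal{P}$ for each such $t$. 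On the other hand, the Lemma of Section~\ref{subsecthedendroidalnerve} describes a $T$-dendrex of $\mathrm{N}^{\Omega}(\mathscr{P})$ as the same colour map $\phi$ together with graded maps $f(t)\colon t(s\mathbb{K}\Omega(T)) \to s\mathcal{P}$ indexed by the very same contracted sub-trees. Since each component of $\mathbb{K}\Omega(T)$ is one-dimensional, $f(t)$ is determined by the image of the canonical generator, and I would define the bijection by $F(e_t) \leftrightarrow f(t)$, inserting the suspension isomorphism to pass between $\mathcal{P}$ and $s\mathcal{P}$. A degree count confirms the match: a $k$-vertex contracted tree $t$ has $k-1$ inner edges, so $|e_t|=k-1$, while $f(t)$ sends the degree-$k$ generator of $t(s\mathbb{K}\Omega(T))$ into $s\mathcal{P}$, i.e.\ to a degree-$(k-1)$ element of $\mathcal{P}$.

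Next I would show that under this identification the two \emph{structure} conditions coincide. Because $W_H(T)$ is free as a graded operad, $F$ is a morphism of dg operads if and only if $d_{\mathscr{P}}F(e_t)=F(d\,e_t)$ for every generator; expanding $d\,e_t$ by Formula~(\ref{pm}) and using that $F$ commutes with the operadic composition gives
\begin{equation*}
d_{\mathscr{P}}\big(F(e_t)\big) = \sum_{a} \pm\, \gamma\big(F(e_{ta}) \otimes F(e_{at})\big) \;\pm\; \sum_a F(e_{t/a})\ ,
\end{equation*}
the sums running over the inner edges $a$ of $t$. On the dendroidal side, $\mathscr{P}$ is regarded as a su homotopy operad via Proposition~\ref{prop:Comparison}, so its structure map $\widetilde{\gamma}$ vanishes on trees with at least three vertices, equals the genuine composition on two-vertex trees, and equals the internal differential on one-vertex trees. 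Feeding this into Equation~(\ref{adjoint4}) collapses the sum over partitions to the trivial partition (producing the term $d_{\mathcal{P}}f(t)$) and the two-block partitions $t=t_1\sqcup t_2$, which are indexed by the inner edges $a$ of $t$ via $t_1=at$, $t_2=ta$; simultaneously the structural codifferential $d_\nu$ of $\mathbb{K}\Omega(T)$, described by Proposition~\ref{prop:Coder}, reduces to the sum over the two-vertex sub-trees of $t$, again indexed by the inner edges $a$, each producing the contraction $t/a$. Thus Equation~(\ref{adjoint4}) becomes, after desuspension, term-by-term the same relation as the displayed BV condition, with the composition terms matching the $\gamma\big(F(e_{ta}) \otimes F(e_{at})\big)$ terms and the $d_\nu$-contraction terms matching the $F(e_{t/a})$ terms.

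The main obstacle is the sign bookkeeping. The signs in the BV differential~(\ref{pm}) are produced by the chosen planar order on the inner edges of $T$ and the induced signed permutations of edges, whereas the signs on the dendroidal side come from the suspension and from the orientation conventions used to define $\gamma_k$ and $f_k$ in the Interpretation following the definition of su homotopy operads. I would fix the bijection $F(e_t) \leftrightarrow f(t)$ together with a specific suspension isomorphism attached to the chosen orientation, and then verify edge-by-edge that the two families of signs (and the two orderings of the tensor factors in each composition term) agree; this is the genuine computational content, and it mirrors the comparison between the bar--cobar and the $W$-resolutions in the non-colored case. Granting this, the identification intertwines the two conditions, yielding the bijection $\mathrm{hcN}(\mathscr{P})_T \cong \mathrm{N}^{\Omega}(\mathscr{P})_T$. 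Finally, naturality in $\mathscr{P}$ is immediate from the construction, and compatibility with faces and degeneracies follows because the codendroidal objects $T \mapsto W_H(T)$ and $T \mapsto \mathbb{K}\Omega(T)$ induce matching inner/outer face and degeneracy maps on the two sides, so the bijections assemble into a natural isomorphism of dendroidal sets.
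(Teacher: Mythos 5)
Your proposal is correct and follows essentially the same route as the paper's proof: both identify a $T$-dendrex of $\mathrm{hcN}(\mathscr{P})$, via the freeness of $W_H(T)$ as a graded operad on $V_H(T)$, with a colour map plus images of the generators $e_t$, match these with the maps $f(t)$ in the Lemma describing $\mathrm{N}^{\Omega}(\mathscr{P})_T$, and observe that compatibility with the differentials translates exactly into Equation~(\ref{adjoint4}), functorially in $T$ and $\mathscr{P}$. Your version merely spells out the term-by-term matching (trivial partition $\leftrightarrow$ $d_{\mathcal{P}}$, two-block partitions $\leftrightarrow$ inner-edge compositions, $d_\nu$-terms $\leftrightarrow$ contractions $e_{t/a}$) and the sign verification that the paper compresses into the sentence ``this condition amounts exactly to the fact that the morphisms $\{f(t)\}_t$ satisfy Equation~(\ref{adjoint4})''.
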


\begin{proof}
Let us unfold what a $T$-dendrex of the homotopy coherent nerve $\mathrm{hcN}(\mathscr{P})$ of a dg operad $\mathscr{P}=(C,\mathcal{P},\gamma,\eta)$ is: a morphism of differential graded operads from $W_H(T)$ to $\mathscr{P}$. In particular, it is a morphism of operads on graded $\mathbb{K}$-modules and, thus, is a morphism of $\mathbb{S}$-modules from $V_H(T)$ to $\mathcal{P}$, i.e. the data of a function $\phi$ from the set of edges of $T$ to the set $C$ and for any contraction $t=T'/T_1 \cdots T_k $ of a sub-tree $T'$ of $T$, an image of the generator $e_t$. Each one of these images induces a morphism:
$$
f(t):t(s \mathbb{K}\Omega(T)) \rightarrow s \mathcal{P} \ .
$$
These must be consistent with the fact that the $T$-dendrex is a morphism of differential graded operads. This condition amounts exactly the fact that the morphisms $\{f(t)\}_t$ satisfy Equation $(\ref{adjoint4})$. We have thus built a bijection from $\mathrm{hcN}(\mathscr{P})_T$ to $\mathrm{N}^{\Omega}(\mathscr{P})_T$, for every tree $T$. Moreover, these bijections are functorial with respect to the trees $T\in \mathsf{Tree}$. This concludes the proof. 

\end{proof}

\begin{rmk}
This result is related to with the bar-cobar adjunction. Indeed, according to \cite{BergerMoerdijk06}, the Boardman--Vogt construction for augmented reduced dg operads is equal to the bar-cobar construction; and morphisms (not necessarily augmented) of dg operads from the bar-cobar construction on $\mathbb{K}\Omega(T)$ are equivalent to morphisms of su homotopy operads from $\mathbb{K}\Omega(T)$, see \cite[Section~$10.5.5$]{LodayVallette12}.
\end{rmk}

\begin{rmk}\label{rmkfreesmodule}
Let $T$ be a tree. Let $t=T'/T_1 \cdots T_k$ be a contration of a subtree of $T$, canonically colored by the set of edges of $T$. Let $l'_1$, \ldots, $l'_m$; be the leaves of $T'$ and let $r'$ be its root. In the last proof, we implicitely use the fact that a graded morphism of $\mathbb{S}$-module $t(s\mathbb{K}\Omega(T)) \rightarrow (C,\mathcal{V})$ over a function $\phi$ is equivalent to the data of an element of $\mathcal{V}(\phi(l'_1),\ldots,\phi(l'_m) ; \phi(r'))$ whose degree is the number of vertices of the colored tree $t$.
\end{rmk}

\subsection{Homotopical properties of the dendroidal nerve} In this section, we explore the homotopical properties of the dendroidal nerve $\mathrm{N}^{\Omega}$. More precisely, we give a description of morphisms of su homotopy colored whose image under $\mathrm{N}^{\Omega}$ are weak equivalences (resp. fibrations) in the Cisinski--Moerdijk model structure. Then we relate these results to the model structure on nonunital $\mathcal{A}_{\infty}$-algebras existing in \cite[Theorem  1.3.3.1]{LefevreHasegawa03}. More precisely, we show that the simplicial nerve of $\mathcal{A}_{\infty}$-categories $\mathrm{N}_{\mathcal{A}_{\infty}}$ sends weak equivalences (resp. fibrations) of $\mathcal{A}_{\infty}$-algebras to weak equivalences (resp. fibrations) of the Joyal model structure on simplicial sets.

\begin{lemma}
The zero-homology-group functor $H_0: \mathsf{dg}\text{-}\mathsf{Mod} \rightarrow \mathsf{Mod}$ induces a functor also denoted $H_0$ from the category $\mathsf{suOp}_{\infty}$ to the category $\mathsf{Op}$ of colored operads enriched in $\mathbb{K}$-modules. Forgetting the many-inputs-elements, we get a functor $j^*H_0$ from su homotopy operads to categories enriched in $\mathbb{K}$-modules.
\end{lemma}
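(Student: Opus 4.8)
The plan is to read off, from the homotopy operad structure of $\mathscr{P} = (C,\mathcal{P},\gamma,\eta)$, only its two-vertex (binary) and three-vertex structure maps, and to observe that passing to $H_0$ annihilates every higher homotopy, so that the ``associative up to homotopy'' composition becomes strictly associative. Following the interpretation of the definition, a choice of orientations produces for each two-vertex colored tree a degree-$0$ map $\gamma_2$ (a partial composition $\circ_i$) and for each three-vertex tree a degree-$1$ map $\gamma_3$. The master equation coming from $d_\gamma^2=0$, namely $\partial(\gamma(t)) = \sum_{s\subsetneq t,\ |s|\ge 2} \pm\,\gamma(t/s)(\id\otimes\cdots\otimes\gamma(s)\otimes\cdots\otimes\id)$, has an empty right-hand side when $t$ has exactly two vertices, since such a tree admits no non-trivial proper sub-tree with at least two vertices. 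Hence $\partial(\gamma_2)=0$: the binary composition is a chain map of degree $0$, and therefore induces a well-defined partial composition $H_0(\mathcal{P})\circ_i H_0(\mathcal{P}) \to H_0(\mathcal{P})$ on the level of degree-$0$ homology classes.

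First I would verify that $H_0(\mathcal{P})$, equipped with these induced partial compositions, is a genuine $C$-colored operad in the sense of the monoidal definition recalled above, using the equivalent presentation by partial compositions (cf.\ \cite[Section~5.3]{LodayVallette12}). Equivariance under the symmetric groups is inherited from the $\mathbb{S}$-module structure of $\mathcal{P}$, which $H_0$ preserves. The two associativity axioms, sequential and parallel, correspond precisely to the two shapes of three-vertex trees; for each such tree $t$ the master equation reads as the associator being equal to $\partial(\gamma_3) = d_{\mathcal{P}}\gamma_3 + \gamma_3 d_{\mathcal{P}^{\otimes 3}}$, a boundary. Thus both associators vanish in $H_0$ and the induced partial compositions are strictly associative. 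The unit relations $\gamma_2(s\ii_c \otimes sp)=sp$ and $\gamma_2(sp\otimes s\ii_c)=(-1)^{|p|}sp$ already hold strictly on the chain level, so the classes $[\ii_c]\in H_0(\mathcal{P})(c;c)$ are two-sided units. This endows $H_0(\mathcal{P})$ with the structure of an object of $\mathsf{Op}$ enriched in $\mathbb{K}$-modules.

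Next I would treat morphisms. A morphism $Rf\colon \mathscr{P} \rightsquigarrow \mathscr{Q}$ is, by Proposition~\ref{key}, encoded by $f$ satisfying $\nu\,Rf = f\,d_\gamma$; projecting onto one-vertex trees shows that its linear component $f_1\colon \mathcal{P}\to\mathcal{Q}$ is a degree-$0$ chain map, hence induces a morphism $H_0(f_1)$ of $\mathbb{S}$-modules over $\phi$. The two-vertex projection yields $\nu_2(f_1\otimes f_1) - f_1\gamma_2 = \partial(f_2)$, again a boundary, so $H_0(f_1)$ strictly commutes with the induced compositions; together with $f_1(s\ii_c)=s\ii_{\phi(c)}$ this makes $H_0(f_1)$ a morphism of colored operads. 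Since the linear component of a composite of $\infty$-morphisms is the composite of the linear components, and $H_0$ is itself functorial on $\mathsf{dg}\textsf{-}\mathsf{Mod}$, the assignment $\mathscr{P}\mapsto H_0(\mathcal{P})$, $Rf\mapsto H_0(f_1)$ is functorial, giving $H_0\colon \mathsf{suOp}_{\infty}\to\mathsf{Op}$.

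Finally, composing with the restriction functor $j^*$ defined above produces the functor $j^*H_0$ into categories enriched in $\mathbb{K}$-modules, whose objects are the colors and whose hom-modules are the arity-one components $H_0(\mathcal{P})(c;c')$. The only genuinely delicate point is the sign bookkeeping: one must fix the orientations of the two- and three-vertex colored trees consistently so that $\gamma_2$ is an honest chain map and so that the two three-vertex associativity homotopies are correctly identified with $\gamma_3$. This is, however, exactly the sign analysis already carried out in the interpretation of the definition of su homotopy operads, and it presents no conceptual obstacle once the orientations are chosen coherently.
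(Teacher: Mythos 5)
Your proof is correct and is exactly the verification the paper leaves implicit---its own proof of this lemma consists of the single word ``Straightforward.'' Extracting $\gamma_2$ and $\gamma_3$ from the codifferential, observing that $\partial(\gamma_2)=0$ so the binary composition descends to $H_0$, that the two three-vertex master equations make the associators boundaries (hence zero on homology), that the strict unit and equivariance conditions pass directly to $H_0$, and that the linear component $f_1$ of an $\infty$-morphism is a chain map compatible with composition up to the boundary $\partial(f_2)$, is precisely the intended argument, including the final restriction along $j^*$ to arity one.
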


\begin{proof}
Straightforward.
\end{proof}

\begin{thm}\label{thmweakequivalence}
Let $f: \mathscr P = (C,\mathcal{P},\nu) \rightarrow \mathscr Q = (C,\mathcal{Q},\omega)$ be a morphism of su homotopy operads. The following assertions are equivalent.
\begin{enumerate}

\item \label{1} The morphism of $\infty$-operads $\mathrm{N}^{\Omega}(f)$ is a weak equivalence.
\item \label{2} The functor $j^*H_0(f)$ is an equivalence of categories and for any colors $c_1$, \ldots, $c_m$ and $c$, the first level morphism of chain complex of $\mathbb{K}$-modules $f: s\mathcal{P}(c_1,\ldots,c_m ; c) \rightarrow s\mathcal{Q}(\phi(c_1),\ldots,\allowbreak \phi(c_m) ; \phi(c))$ induces isomorphisms of homology groups of positive degrees.

\end{enumerate} 
\end{thm}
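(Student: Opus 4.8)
The plan is to reduce the equivalence to the characterization of weak equivalences between $\infty$-operads recalled above, namely that a morphism is a weak equivalence if and only if it is essentially surjective and fully faithful. I would therefore prove separately that essential surjectivity of $\mathrm{N}^{\Omega}(f)$ corresponds to the object-level half of assertion~(2), and that full faithfulness of $\mathrm{N}^{\Omega}(f)$ corresponds to the homology condition of~(2).

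For the object level, note that the homotopy category $i^{*}\tau_{d}\mathrm{N}^{\Omega}(\mathscr{P})$ has the colors of $\mathscr{P}$ as objects, and that its hom-sets are the $\pi_{0}$ of the operation spaces in arity one. Unwinding the description of dendrices over the trivial tree and over a one-vertex corolla, one identifies this homotopy category with $j^{*}H_{0}(\mathscr{P})$, an isomorphism being exactly an arity-one operation whose class in $H_{0}$ is invertible. Hence $\mathrm{N}^{\Omega}(f)$ is essentially surjective if and only if $j^{*}H_{0}(f)$ is essentially surjective on objects; together with the full-faithfulness analysis below this accounts for the requirement that $j^{*}H_{0}(f)$ be an equivalence of categories.

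The core of the argument, which I expect to be the main obstacle, is the explicit computation of the operation spaces. I would establish a natural isomorphism of simplicial sets
$$
\mathrm{N}^{\Omega}(\mathscr{P})^{L}(c_{1},\ldots,c_{m};c)\;\cong\;\Gamma\bigl(tr\,\mathcal{P}(c_{1},\ldots,c_{m};c)\bigr),
$$
where $\Gamma$ and $tr$ are the Dold--Kan and truncation functors of Section~\ref{section:dold-kan}. Concretely, an $n$-simplex on the left is a morphism $\mathbb{K}\Omega(C_{m,n})\rightsquigarrow\mathscr{P}$ whose restriction to the leaves is $(c_{1},\ldots,c_{m})$ and whose restriction to the spine $[n]$ is the degeneracy of $c$. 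Unfolding this through the description of dendrices, Corollary~\ref{cor:Dendrex} and Remark~\ref{rmkfreesmodule}, and using the strict unitality axioms to kill every term in which a spine vertex carries a unit, reduces such a simplex to a labelling of the contracted subtrees lying over the corolla by elements of $\mathcal{P}(c_{1},\ldots,c_{m};c)$, the structure equation~(\ref{adjoint4}) becoming precisely the face-and-degeneracy relations defining $\Gamma$. This is the operadic generalization of Faonte's computation for $\mathcal{A}_{\infty}$-categories, and the careful bookkeeping of signs and of the unit identifications is where the genuine work lies.

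Granting this identification the conclusion is formal. Since $\pi_{k}(\Gamma W)\cong H_{k}(W)$ for every nonnegatively graded complex $W$ and every $k\ge 0$, the induced map on operation spaces is a weak homotopy equivalence of simplicial sets if and only if $f$ induces an isomorphism on $H_{k}\bigl(\mathcal{P}(c_{1},\ldots,c_{m};c)\bigr)$ for every $k\ge 0$, equivalently an isomorphism on the homology of $s\mathcal{P}(c_{1},\ldots,c_{m};c)$ in all positive degrees. Thus $\mathrm{N}^{\Omega}(f)$ is fully faithful if and only if the homology condition of~(2) holds for every arity, its arity-one part in degree zero coinciding with the full faithfulness of $j^{*}H_{0}(f)$. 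Combining this with the object-level equivalence of the second paragraph yields the equivalence of~(1) and~(2).
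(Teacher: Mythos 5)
Your proposal is correct and follows essentially the same route as the paper: reduce to essential surjectivity plus full faithfulness via Heuts' criterion, identify $\tau_d\mathrm{N}^{\Omega}$ with $H_0$ at the homotopy-category level, and compute the operation spaces through Dold--Kan, concluding with $\pi_k(\Gamma W)\cong H_k(W)$. The only cosmetic difference is that you phrase the key identification as an isomorphism $\mathrm{N}^{\Omega}(\mathscr{P})^{L}(c_{1},\ldots,c_{m};c)\cong\Gamma\bigl(tr\,\mathcal{P}(c_{1},\ldots,c_{m};c)\bigr)$, whereas the paper proves the Dold--Kan-equivalent statement that this simplicial $\mathbb{K}$-module has normalized chain complex isomorphic to $tr(\mathcal{P}(c_{1},\ldots,c_{m};c))$ (Lemma \ref{lemma:normalizeschaincomplex}).
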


The proof of Theorem \ref{thmweakequivalence} requires the following lemmata.

\begin{lemma}\label{lemma:normalizeschaincomplex}
Let $\mathscr{P}$ be a su homotopy operad. For any colors $c$, $c_1$, \ldots, $c_m$, the simplicial set $\mathrm{N}^{\Omega}(\mathscr{P})^L(c_1,\ldots,c_m ; c)$ is a simplicial $\mathbb{K}$-module and its normalized chain complex (see \cite[3.2]{GoerssJardine09}) is isomorphic the chain complex $tr(\mathcal{P}(c_1,\ldots,c_m ; c))$.
\end{lemma}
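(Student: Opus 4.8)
The plan is to turn the definition of $\mathrm{N}^{\Omega}(\mathscr{P})^L(c_1,\ldots,c_m;c)$ into explicit algebraic data, read off the $\mathbb{K}$-module structure, and then compute the normalized complex. By definition an $n$-simplex is a $C_{m,n}$-dendrex whose restriction to the $m$ leaves is $c_1,\ldots,c_m$ and whose restriction to the ladder $[n]$ is the totally degenerate simplex on the color $c$. Unwinding the description of dendrices together with the explicit degeneracy formula of Section~\ref{subsecthedendroidalnerve}, the degeneracy condition rigidifies the ladder part: every one-vertex ladder sub-tree is sent to $s\ii_c$ and every ladder sub-tree with at least two vertices is sent to $0$. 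Thus an $n$-simplex amounts to the family of values $f(t)$ on the contracted sub-trees $t$ of $C_{m,n}$ containing the top corolla vertex $w$, subject to Equation~(\ref{adjoint4}).

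First I would establish the simplicial $\mathbb{K}$-module structure. In any partition $t=t_1\sqcup\cdots\sqcup t_l$ of a corolla-containing tree exactly one block contains $w$, and every other block lies inside the ladder and therefore contributes $s\ii_c$ or $0$. Strict unitality of $\mathscr{P}$ makes $\gamma$ absorb these units linearly, through $\gamma(t)(s\ii_c\otimes sp)=sp$, $\gamma(t)(sp\otimes s\ii_c)=(-1)^{|p|}sp$ and the vanishing on larger trees; hence every term of Equation~(\ref{adjoint4}), as well as the right-hand side $fd_\nu(t)$, is $\mathbb{K}$-linear and homogeneous in the corolla-values. The $n$-simplices therefore form the $\mathbb{K}$-submodule of $\prod_{t\ni w}s\mathcal{P}$ cut out by these homogeneous linear relations, with zero element the simplex all of whose corolla-values vanish, and the faces and degeneracies — which act on corolla-values by selection, relabelling, and $\gamma$-insertion of units — are $\mathbb{K}$-linear. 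This is precisely where strict unitality enters.

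Next I would compute the normalization by sending a chain to its top value $f(C_{m,n})\in s\mathcal{P}_{n+1}$, the value on the full tree with no contraction, which by Remark~\ref{rmkfreesmodule} and desuspension is an element of $\mathcal{P}(c_1,\ldots,c_m;c)_n$. I claim this induces the required isomorphism with $tr(\mathcal{P}(c_1,\ldots,c_m;c))$. The outer face $d_0$ deletes the bottom ladder vertex while the remaining faces contract the successive ladder edges (the top one merging into $w$); the vanishing conditions defining $N(-)_n=\bigcap_{i<n}\ker d_i$ force every corolla-value carried by a proper sub-tree or proper contraction to be determined by the others, so that a normalized chain is freely parametrized by its top value. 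At the bottom the one-vertex instance of Equation~(\ref{adjoint4}) reads $d_{s\mathcal{P}}f=0$, which after desuspension is exactly $d_{\mathcal{P}}(\cdot)=0$ and produces the degree-zero truncation $\ker(d\colon\mathcal{P}_0\to\mathcal{P}_{-1})$, while in positive degrees the top value is unconstrained and ranges over all of $\mathcal{P}_n$. Finally, on a normalized chain only the last face survives in the Moore differential, $\partial=(-1)^n d_n$, and unwinding (\ref{adjoint4}) for the full tree — using that $d_\nu$ on $\mathbb{K}\Omega(C_{m,n})$ is purely the operadic composition, its internal part being zero — identifies $\partial$ with $\pm d_{\mathcal{P}}$, which is the differential of $tr(\mathcal{P}(c_1,\ldots,c_m;c))$.

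I expect the main obstacle to be the combinatorial and sign bookkeeping of the general-$n$ step: proving that the conditions $d_0=\cdots=d_{n-1}=0$ genuinely collapse the whole family $\{f(t)\}_{t\ni w}$ onto the single top value, and that the residual face $d_n$ reproduces $\pm d_{\mathcal{P}}$ with the correct sign. This requires a disciplined simultaneous use of the unit-absorption identities for $\gamma$ and of the description of $d_\nu$ as a signed sum of edge-contractions. I would organize it as an induction on the number of vertices of the contracted trees, anchored by the cases $n=0$ and $n=1$ — where one already sees $\ker d$ appear in degree $0$ and the differential $\pm d_{\mathcal{P}}$ in degree $1$ — and then propagate along the telescoping structure of the face maps.
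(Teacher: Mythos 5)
Your proposal is correct and follows essentially the same route as the paper's proof: unfold an $n$-simplex as the values $f(t)$ on corolla-containing contracted sub-trees of $C_{m,n}$ (the ladder values being rigidified to $s\ii_c$ or $0$ by the degeneracy condition), use strict unitality to make Equation~(\ref{adjoint4}) $\mathbb{K}$-linear in those values, and identify a normalized chain with its top value, with the $n=0$ case giving $\ker(d)$ and the last face giving $\pm d_{\mathcal{P}}$. The bookkeeping obstacle you flag is resolved in the paper without induction, by observing that the corolla-containing contracted trees are in bijection with increasing sequences $0\leq i_0<\cdots<i_k\leq n$ of surviving ladder edges, under which the relations become the single explicit formula $d_{\mathcal{P}}(p_{i_0<\cdots<i_k})=\sum_{\alpha=0}^{k}(-1)^{\alpha}p_{i_0<\cdots<\widehat{i_\alpha}<\cdots<i_k}$ and the faces and degeneracies become reindexing maps, from which your normalization argument follows by inspection.
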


\begin{proof}[Proof of Lemma \ref{lemma:normalizeschaincomplex}]
Let us unfold what is the simplicial set $X:=\mathrm{N}^{\Omega}(\mathscr{P})^L(c_1,\ldots,c_m;c)$. An $n$-simplex of $X$ is the data of a morphism of su homotopy operads over $(c_1,\ldots,c_m,c)$ from $\mathbb{K}\Omega(C_{m,n})$ to $\mathscr P$  and whose restriction to $\Delta[n]$ is the degeneracy of the color $c$. It is then the data of maps of graded $\mathbb{S}$-modules $t(s\mathbb{K}\Omega(C_{m,n})) \rightarrow s\mathcal{P}$ over $(c_1,\ldots,c_m,c)$ for any colored tree $t$ obtained from $C_{m,n}$ by contracting a subtree. As the restriction of the morphism of su homotopy operads to $[n]$ is given by $c$, we can restrict our attention to such colored trees $t$ which contain the corolla $C_m$. Let us number the edges of $[n] \subset C_{m,n}$ from bottom to top and by $0$ to $n$. Then, there is a bijection between the set of such contracted trees $t$ and the set of sequences of integers $0\leq i_0 < \cdots < i_k \leq n$ given by the numbers of the edges of $[n] \subset C_{m,n}$ which still appear in $t$. Therefore, according to Remark \ref{rmkfreesmodule} the map $t(s\mathbb{K}\Omega(C_{m,n})) \rightarrow s\mathcal{P}$ corresponds to an element $p_{i_0< \cdots <i_k} \in \mathcal{P}(c_1,\ldots,c_m;c)$ whose degree is the number of vertices of $t$ minus $1$, that is $k$. Relation (\ref{adjoint4}) applied to these elements give the following equation:
$$
d_{\mathcal{P}} (p_{i_0< \cdots <i_k})=\sum_{\alpha=0}^k (-1)^{\alpha} p_{i_0< \cdots <\widehat{i_\alpha} < \cdots < i_k}
$$
where $i_0< \cdots <\widehat{i_\alpha} < \cdots < i_k$ is obtained from the sequence $i_0< \cdots < i_k$ by retrieving the integer $i_\alpha$. A face $d_i$ corresponding to a coface $\delta_i$ sends the collection $\{p_{i_0< \cdots <i_k}\}_{0 \leq i_0< \cdots <i_k \leq n}$ to the collection $\{p_{ \delta_i(j_0)< \cdots <\delta_i (j_k)} \}_{0\leq j_0< \cdots <j_k \leq n-1}$. A degeneracy $s_i$ corresponding to a codegeneracy $\sigma_i$ sends the collection $\{p_{i_0< \cdots <i_k}\}_{0 \leq i_0< \cdots <i_k \leq n}$ to the collection $(p'_{j_0< \cdots <j_k})_{0 \leq j_0< \cdots <j_k \leq n+1}$ where $p'_{j_0< \cdots <j_k}=p_{\sigma_i(j_0) < \cdots <\sigma_i(j_k)}$ if we have indeed $\sigma(j_0)_i < \cdots <\sigma_i(j_k)$ and $p'_{(j_0< \cdots <j_k)}=0$ otherwise. Then, it is clear that $X=\mathrm{N}^{\Omega}(\mathscr{P})^L(c_1,\ldots,c_m;c)$ is a simplicial $\mathbb{K}$-module. Its normalized chain complex is the chain complex $N(X)$ concentrated in non-negative degrees such that $N(X)_n=\bigcap_{i=0}^{n-1} ker(d_i)$ and the differential $d:N(X)_n \rightarrow N(X)_{n-1}$ is $(-1)^n d_n$ where $d_n$ is the $n^{th}$ face map. This shows finally that $tr(\mathcal{P}(c_1,\ldots,c_m;c))$ is isomorphic to $N(X)$.
\end{proof}

\begin{lemma}\label{lemmacatlevel}
The functor $\tau_d \mathrm{N}^{\Omega}$ from su homotopy operads to set--theoretical colored operads is isomorphic to the functor $H_0$.
\end{lemma}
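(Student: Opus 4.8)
The plan is to compare the two functors objectwise on colors, operations, and composition, and then to check naturality; the essential input is Lemma~\ref{lemma:normalizeschaincomplex} together with the fact that, on an $\infty$-operad, the left adjoint $\tau_d$ produces the associated homotopy operad. Fix a su homotopy operad $\mathscr{P}=(C,\mathcal{P},\nu,\eta)$. First I would identify the colors: since $\tau_d$ is left adjoint to $\mathrm{N}_d$ and the colors of a dendroidal set $D$ are the elements of $D_|$, the colors of $\tau_d\mathrm{N}^{\Omega}(\mathscr{P})$ are
$$\mathrm{N}^{\Omega}(\mathscr{P})_| = \mathrm{Hom}_{\mathsf{suOp}_{\infty}}(\mathbb{K}\Omega(|),\mathscr{P}) = C,$$
which is exactly the color set of $H_0(\mathscr{P})$, the choice of a morphism amounting to the choice of a color.

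Next I would compute the operations. By Theorem~\ref{thminftyoperad} the dendroidal set $\mathrm{N}^{\Omega}(\mathscr{P})$ is an $\infty$-operad, so $\tau_d\mathrm{N}^{\Omega}(\mathscr{P})$ is its homotopy operad: for fixed colors $c_1,\dots,c_m;c$, its arity-$m$ operations are the connected components $\pi_0\big(\mathrm{N}^{\Omega}(\mathscr{P})^L(c_1,\dots,c_m;c)\big)$, two $C_m$-dendrices being identified precisely when joined by a $C_{m,1}$-dendrex degenerate along the ladder $[1]$. By Lemma~\ref{lemma:normalizeschaincomplex} this space is a simplicial $\mathbb{K}$-module whose normalized chain complex is $tr\big(\mathcal{P}(c_1,\dots,c_m;c)\big)$. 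For a simplicial $\mathbb{K}$-module $X$ one has $\pi_0(X)=H_0(N(X))$ as sets, and truncation leaves the zeroth homology unchanged, since $tr(V)_{-1}=0$ and $tr(V)_0=\ker(d\colon V_0\to V_{-1})$ give
$$H_0(tr(V)) = \ker(d\colon V_0\to V_{-1})/\mathrm{im}(d\colon V_1\to V_0) = H_0(V).$$
Hence there is a bijection between the arity-$m$ operations of $\tau_d\mathrm{N}^{\Omega}(\mathscr{P})$ and $H_0\big(\mathcal{P}(c_1,\dots,c_m;c)\big)$, i.e.\ the arity-$m$ operations of $H_0(\mathscr{P})$. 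Equivalently, a $C_m$-dendrex is a degree-zero cycle $p$ of $\mathcal{P}(c_1,\dots,c_m;c)$ by Equation~(\ref{adjoint4}) for $t=C_m$, and the homotopy relation identifies two such cycles exactly when they differ by a boundary.

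Finally I would match the algebraic structure and verify naturality. The units of $\tau_d\mathrm{N}^{\Omega}(\mathscr{P})$ are the classes of the degenerate colors, which correspond to the classes $[\ii_c]$, the units of $H_0(\mathscr{P})$. For the composition, following the interpretation of horn filling after the definition of $\infty$-operad, one composes two operations by lifting a two-vertex horn and taking its inner face; by the computation in the proof of Theorem~\ref{thminftyoperad}, with $f(T)=0$ this inner face $f(T/e)$ represents precisely the operadic composite $\gamma_2$ of the chosen representatives, since for a two-vertex tree the corrective sum over the other inner edges is empty. As $\gamma_2$ induces the composition of $H_0(\mathscr{P})$ on zeroth homology, the bijections assemble into an isomorphism of colored operads, and every step used --- the adjunction, Lemma~\ref{lemma:normalizeschaincomplex}, and the passage $\pi_0=H_0$ --- is natural in $\mathscr{P}$. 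The main obstacle is this last structural verification: one must carefully track representatives through the chosen lift and its inner face to see that the composition law produced by $\tau_d$ coincides with the one induced by $\gamma_2$ on $H_0$, and that associativity, which holds only up to the homotopy $\gamma_3$ in $\mathscr{P}$, becomes strict after passing to $H_0$.
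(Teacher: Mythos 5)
Your proof is correct and follows essentially the same route as the paper: Theorem \ref{thminftyoperad} plus the concrete description of $\tau_d$ on $\infty$-operads (same colors, operations given by $\pi_0$ of the operation spaces), followed by the identification of this $\pi_0$ with $H_0(\mathcal{P}(c_1,\ldots,c_m;c))$. The paper leaves that identification and the matching of units, composition and naturality as a ``straightforward computation'', which you have simply carried out in detail via Lemma \ref{lemma:normalizeschaincomplex} and the explicit horn filler from the proof of Theorem \ref{thminftyoperad}.
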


\begin{proof}[Proof of Lemma \ref{lemmacatlevel}]
We know from Theorem \ref{thminftyoperad} that for any su homotopy operad $\mathscr P$, $\mathrm{N}^{\Omega}(\mathscr P)$ is an $\infty$-operad. Thus Section 3.5 of \cite{Weiss07} gives us a concrete description of the operad $\tau_d\mathrm{N}^{\Omega}(\mathscr P)$: it has the same colors as $\mathscr P$ and for any such colors $c_1$, \ldots, $c_m$ and $c$, the set $\tau_d\mathrm{N}^{\Omega}(\mathscr P)(c_1,\ldots,c_m ; c)$ is the $\pi_0$ of the simplicial set $\mathrm{N}^{\Omega}(\mathscr P)(c_1,\ldots,c_m ; c)$. A straightforward computation shows that the operads $\tau_d\mathrm{N}^{\Omega}(\mathscr P)$ and $H_0(\mathscr P)$ are canonically isomorphic.
\end{proof}

\begin{proof}[Proof of Theorem \ref{thmweakequivalence}]
The theorem is a straightforward consequence of Lemma \ref{lemma:normalizeschaincomplex} and Lemma \ref{lemmacatlevel}.
\end{proof}

\begin{thm}\label{thmfibrations}
Let $f: \mathscr P = (C,\mathcal{P},\nu) \rightarrow \mathscr Q = (C,\mathcal{Q},\omega)$ be a morphism of su homotopy operads. The following assertions are equivalent.
\begin{enumerate}

\item \label{1} The morphism of $\infty$-operads $\mathrm{N}^{\Omega}(f)$ is a fibration.
\item \label{2} The functor $j^*H_0(f)$ is a categorical fibration (also called isofibraton) and for any colors $c_1$, \ldots, $c_m$ and $c$, the first level morphism of chain complex of $\mathbb{K}$-modules $f: s\mathcal{P}(c_1,\ldots,c_m ; c) \rightarrow s\mathcal{Q}(\phi(c_1),\ldots,\phi(c_m) ; \phi(c))$ is a degreewise epimorphism for degrees $n \geq 2$.

\end{enumerate} 
\end{thm}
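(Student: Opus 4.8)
The plan is to combine the characterisation of fibrations between $\infty$-operads from Theorem~\ref{thmcisinskimoerdijk} with the explicit description of the dendrices of $\mathrm{N}^{\Omega}$. Since $\mathrm{N}^{\Omega}(\mathscr P)$ and $\mathrm{N}^{\Omega}(\mathscr Q)$ are $\infty$-operads by Theorem~\ref{thminftyoperad}, the map $\mathrm{N}^{\Omega}(f)$ is a fibration precisely when (i) $i^{*}\tau_{d}\mathrm{N}^{\Omega}(f)$ is a categorical fibration and (ii) $\mathrm{N}^{\Omega}(f)$ has the right lifting property against every inner horn inclusion $\Lambda^{e}[T]\hookrightarrow\Omega[T]$. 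Condition (i) is immediate from Lemma~\ref{lemmacatlevel}: the functor $\tau_{d}\mathrm{N}^{\Omega}$ is isomorphic to $H_{0}$, so $i^{*}\tau_{d}\mathrm{N}^{\Omega}(f)$ is the underlying functor of $j^{*}H_{0}(f)$, and being a categorical fibration depends only on that underlying functor; hence (i) is equivalent to the first half of assertion (2). It therefore remains to prove that (ii) is equivalent to the degreewise-epimorphism condition in degrees $\geq 2$; these two conditions involve disjoint ranges of degrees, so together they yield assertion (2).

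To analyse (ii) I would fix a tree $T$ with an inner edge $e$ and a lifting square. By Corollary~\ref{cor:Dendrex} the top edge is a colour function $\phi$ together with maps $g(t)\colon t(s\mathbb{K}\Omega(T))\to s\mathcal{P}$ for every contracted subtree $t=T'/T_{1}\cdots T_{k}$ other than $T$ and $T/e$, satisfying Equation~(\ref{adjoint4}); the bottom edge is a full $T$-dendrex $b$ of $\mathscr Q$; and commutativity says that $f\circ g$ and $b$ agree on all these trees. Writing $N$ for the number of vertices of $T$, Remark~\ref{rmkfreesmodule} identifies the missing datum $g(T)$ with an element of $s\mathcal{P}(c_{1},\ldots,c_{m};c)$ in degree $N\geq 2$, and the trivial-partition term of the composition formula (cf. Equation~(\ref{adjoint1})) turns $(f\circ g)(T)=b(T)$ into $f_{1}(g(T))=b(T)-\sum_{l\geq 2}f(T/t_{1}\cdots t_{l})(g(t_{1})\otimes\cdots\otimes g(t_{l}))$, where $f_{1}\colon s\mathcal{P}\to s\mathcal{Q}$ is the first level map of $f$ and the right-hand side only involves horn data. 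If $f_{1}$ is a degreewise epimorphism in degrees $\geq 2$ one can solve for $g(T)$; then Equation~(\ref{adjoint4}) for $T$ determines $g(T/e)$ uniquely because $\id\otimes\cdots\otimes\nu(e)\otimes\cdots\otimes\id$ is an isomorphism, exactly as in the proof of Theorem~\ref{thminftyoperad}. The same $d_{\nu}^{2}=0$ bookkeeping used there forces Equation~(\ref{adjoint4}) for $T/e$ as well, so $g$ is now a genuine $T$-dendrex and $f\circ g$ a $T$-dendrex of $\mathscr Q$; since $f\circ g$ and $b$ already agree on the horn and on $T$, subtracting their Equation~(\ref{adjoint4}) for the tree $T$ and inverting $\id\otimes\cdots\otimes\nu(e)\otimes\cdots\otimes\id$ once more gives $(f\circ g)(T/e)=b(T/e)$, whence $f\circ g=b$ and the lift exists.

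For the converse I would exhibit horns that detect surjectivity. Given colours $c_{1},\ldots,c_{m},c$, an integer $n\geq 2$ and an element $q\in s\mathcal{Q}(\phi(c_{1}),\ldots,\phi(c_{m});\phi(c))_{n}$, take $T=C_{m,n-1}$, a corolla $C_{m}$ sitting on a chain of $n-1$ unary vertices, coloured so that its leaves carry $c_{1},\ldots,c_{m}$ and every other edge carries $c$, and let $e$ be an inner edge. I would build the $T$-dendrex $b$ of $\mathscr Q$ supported, away from the units, only on $T$ and $T/e$, with $b(T)$ the element $q$ and $b(T/e)$ the unique map satisfying $b(T/e)(\id\otimes\cdots\otimes\nu(e)\otimes\cdots\otimes\id)=d_{s\mathcal{Q}}(q)$; the relation $d_{s\mathcal{Q}}^{2}=0$ makes Equation~(\ref{adjoint4}) hold on every tree, so $b$ is a legitimate dendrex. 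Against $b$ I place the zero horn on the $\mathscr P$-side, which is compatible because $b$ vanishes on every horn tree. A lift yields $g(T)$ with $(f\circ g)(T)=b(T)=q$; as the horn components vanish, the lower-order terms of the composition disappear and $f_{1}(g(T))=q$, so by Remark~\ref{rmkfreesmodule} the datum $g(T)$ is a degree $n$ element of $s\mathcal{P}(c_{1},\ldots,c_{m};c)$ mapping to $q$. Because inner horns only exist for trees with at least two vertices, this procedure reaches precisely degrees $n\geq 2$, matching the statement.

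The main difficulty is that a single inner horn must simultaneously produce the two unknowns $g(T)$ and $g(T/e)$, of which only $g(T)$ is free while $g(T/e)$ is forced by Equation~(\ref{adjoint4}) for $T$; the crux is therefore to verify that this forced value is automatically consistent both with Equation~(\ref{adjoint4}) for $T/e$ and with the constraint $(f\circ g)(T/e)=b(T/e)$. Both consistencies rest on the $d^{2}=0$ computation already isolated in the proof of Theorem~\ref{thminftyoperad}, so the genuinely new ingredient is only the degree count, which pins the obstruction to filling $g(T)$ to the cokernel of $f_{1}$ in degree $N\geq 2$. I expect the most error-prone point to be keeping the signs and the strict units under control when checking that the ad hoc dendrex $b$ of the necessity argument really satisfies Equation~(\ref{adjoint4}).
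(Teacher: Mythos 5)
Your proposal is correct and follows essentially the same route as the paper's proof: the categorical-fibration half via Lemma~\ref{lemmacatlevel}, the lifting direction by solving $f_1(g(T))=b(T)-\sum_{l\geq 2}f(T/t_1\cdots t_l)(g(t_1)\otimes\cdots\otimes g(t_l))$ (the paper's $\tilde h(T)$) using surjectivity in degree $\geq 2$, defining $g(T/e)$ by inverting $\id\otimes\cdots\otimes\nu(e)\otimes\cdots\otimes\id$, and running the $d^2=0$ argument of Theorem~\ref{thminftyoperad}; and the converse by pairing a dendrex of $\mathscr Q$ supported on $T$, $T/e$ and units (the paper uses $C_{m,n}$, you use $C_{m,n-1}$ with suspended degrees — the same construction up to indexing) against the unit-only horn on $\mathscr P$. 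The only differences are cosmetic: your final consistency check at $T/e$ subtracts the two instances of Equation~(\ref{adjoint4}) for $T$, where the paper phrases the identical computation as a diagram chase $h\,d_\gamma(T)=fRg\,d_\gamma(T)$.
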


\begin{proof}
As usual, we denote by $\phi$ the underlying function of $f$.
\begin{itemize}

\item[$(2) \Rightarrow (1)$] By Lemma \ref{lemmacatlevel} we know that $i^*\tau_d\mathrm{N}^{\Omega}(f)$ is a categorical fibration. Now, suppose that we have the following commutative diagram
$$
\xymatrix@R=25pt@C=25pt{\Lambda^e[T] \ar[r] \ar[d] & \mathrm{N}^{\Omega}(\mathscr P) \ar[d]^{\mathrm{N}^{\Omega}(f)} \\
\Omega[T] \ar[r] & \mathrm{N}^{\Omega}(\mathscr Q)}
$$
where $T$ is a tree with a root $r$ and leaves $l_1$, \ldots, $l_k$. This diagram corresponds to morphisms of graded $\mathbb{S}$-modules:
\begin{itemize}
\item[$\triangleright$] on the one hand, $h(t): t(s\mathbb{K}\Omega(T)) \rightarrow s\mathcal{Q}$ for any tree $t$ colored by the edges of $T$ and obtained by contracting a sub-tree of $T$.
\item[$\triangleright$] on the other hand, $g(t): t(s\mathbb{K}\Omega(T)) \rightarrow s\mathcal{P}$ for the same colored trees $t$ except for $t=T$ and $t=T/e$.
\end{itemize}
The underlying function from the set of edges of $T$ to $C$ (resp. $C'$) of the graded morphisms $g(t)$ (resp. $h(t)$) is denoted $\phi_g$ (resp. $\phi_h$). We have $\phi \phi_g=\phi_h$. Let $\tilde{h}(T): T(s\mathbb{K}\Omega(T)) \rightarrow s\mathcal{Q}$ be the following composition of graded morphisms:
$$
\tilde{h}(T):=h(T)- \sum_{T=T_1 \sqcup \ldots \sqcup T_k \atop k \geq 2} f(T/T_1 \cdots T_k)\Big( g(T_1) \otimes \cdots \otimes g(T_k) \Big) \ .
$$
Let $x$ be a generator of $T(s\mathbb{K}\Omega(T))(l_1,\ldots,l_k ; r)$. Its degree is the number of vertices of $T$ and so is equal or higher than $2$. The surjectivity condition for $f$ ensures us that $\tilde{h}(T)(x)$ has an antecedent through the first level map $f:s\mathcal{P}(\phi_g(l_1),\ldots,\phi_g(l_k) ; \phi_g(r)) \rightarrow s\mathcal{Q}(\phi_h(l_1),\ldots,\phi_h(l_k) ; \phi_h(r))$. By Remark \ref{rmkfreesmodule}, this antecedent corresponds to a map of graded $\mathbb{S}$-modules $g(T): T(s\mathbb{K}\Omega(T)) \rightarrow s\mathcal{P}$ over $\phi_g$. Furthermore, we have $\tilde{h}(T)=f g(T)$ and therefore, we have $h(T)=fRg(T)$.\\

As in the proof of Theorem \ref{thminftyoperad}, let us define $g(T/e)$ by
\begin{align*}
g(T/e) \Big(\id \otimes \cdots \otimes \nu(e) \otimes \cdots \otimes \id \Big) = & -\sum_{a \neq  e} g(T/a) \Big(\id \otimes \cdots \otimes \nu(a) \otimes \cdots \otimes \id \Big)\\ 
&+\sum_{T=T_1 \sqcup \ldots \sqcup T_k} \gamma(T/T_1 \cdots T_k)  \Big( g(T_1) \otimes \cdots \otimes g(T_k) \Big)\ ,
\end{align*}
\noindent where the first sum runs over the inner edges of the tree $T$ different from $e$ and where the second sum runs over all the partitions of the tree $T$ with no trivial component. The same method as in the proof of Theorem \ref{thminftyoperad} shows that the data of the maps $g(t)$ defines a $T$-dendrex $p$ of $\mathrm{N}^{\Omega}(\mathscr P)$. Let us prove that $\mathrm{N}^{\Omega}(f)(p)=q$ where $q$ is the $T$-dendrex of $\mathrm{N}^{\Omega}(\mathscr Q)$ corresponding to $h$. This amounts to prove that $h(t)=fRg(t)$ for any colored tree obtained by contracting a subtree of $T$. We already know that this is true for every such $t$ except $t=T/e$. In the spirit of Proposition \ref{key}, we prove that $RfRg(T)=Rh(T)$ for any such tree except $t=T/e$. We consider the following commutative diagram:
$$
\xymatrix@R=35pt@C=25pt{T(s\mathbb{K}\Omega(T)) \ar[r]_{Rg(T)} \ar@/^2pc/[rr]^{Rh(T)} \ar[d]_{d_{\gamma}} & \mathbb{T}(s\mathcal{P}) \ar[d]^{d_{\nu}} \ar[r]^{Rf} & \mathbb{T}(s\mathcal{Q})\ar[d]^{\omega}\\
\mathbb{T}(s\mathbb{K}\Omega(T)) \ar[r]_{Rg} & \mathbb{T}(s\mathcal{P}) \ar[r]_f & \mathcal{Q}}
$$
where $d_{\gamma}$ is the structural coderivation of the cooperad $\mathbb{T}^c(s\mathbb{K}\Omega(T))$. As $\omega Rh= h d_{\gamma}$, we have $h d_{\gamma}(T)=\omega Rh(T)=\omega RfRg(T)= f d_{\nu} Rg(T)= f Rg d_{\gamma}(T)$. Then, applying the same procedure as in the proof of Theorem \ref{thminftyoperad}, we get that $h(T/e)=fRg(T/e)$. This proves that $h=fRg$ and so that the square above has a lifting. So, by the characterization of the fibrations between fibrant dendroidal sets given in Theorem \ref{thmcisinskimoerdijk}, $\mathrm{N}^{\Omega}(f)$ is a fibration.

\item[$(1) \Rightarrow (2)$] By Lemma \ref{lemmacatlevel}, we know that $j^*H_0(f)$ is a categorical fibration. Furthermore, let   $n \geq 1$ be an integer and let $q \in \mathcal{Q}(\phi(c_1),\ldots,\phi(c_m);\phi(c))_n$ be an element of degree $n$ of $\mathscr{Q}$. We know that the tree $C_{m,n}$ is made up of a corolla with $m$ leaves above a linear tree of length $n$. Let $e$ be the lowest inner edge of $C_{m,n}$. For any tree $t$ colored by the edges of $C_{m,n}$ and which is a contraction of a subtree of $C_{m,n}$, let $sq_t$ be the element of $s\mathcal{Q}$ as follows.
\begin{itemize}
\item[$\triangleright$] If $t$ is the whole tree $C_{m,n}$, then $sq_t:=sq \in s\mathcal{Q}(\phi(c_1),\ldots,\phi(c_m);\phi(c))_{n+1}\ .$ 
\item[$\triangleright$] If $t$ is the contracted tree $C_{m,n}/e$, then $sq_t = -sdq \in s\mathcal{Q}(\phi(c_1),\ldots,\phi(c_m);\phi(c))_{n}\ .$
\item[$\triangleright$] If $t$ has only one vertex and does not contain the corolla $C_m$, then $sq_t= s\mathtt{id}_{\phi(c)} \in s\mathcal{Q}(\phi(c);\phi(c))_{1}\ .$
\item[$\triangleright$] Otherwise, $sq_t = 0 \in s\mathcal{Q}(\phi(c_1),\ldots,\phi(c_m);\phi(c))$ if $t$ contains the corolla and $sq_t = 0 \in s\mathcal{Q}(\phi(c);\phi(c))$ if not.
\end{itemize}
\noindent According to Section \ref{subsecthedendroidalnerve}, these elements $sq_t$ define a $C_{m,n}$-dendrex of the dendroidal set $\mathrm{N}^\Omega(\mathscr Q)$, i.e. a morphism $\Omega[C_{m,n}] \rightarrow \mathrm{N}^\Omega(\mathscr Q)$. For any tree $t$ colored by the edges of $C_{m,n}$ and which is a contraction of a subtree of $C_{m,n}$, except for $t=C_{m,n}$ and $t=C_{m,n}/e$, let $sp_t$ be the element of $s\mathcal{P}$ as follows.
\begin{itemize}
\item[$\triangleright$] If $t$ has only one vertex and does not contain the corolla $C_m$, then $sp_t= s\mathtt{id}_{c} \in s\mathcal{P}(c;c)_{1}\ .$
\item[$\triangleright$] Otherwise, $sp_t = 0 \in s\mathcal{P}(c_1,\ldots,c_m;c)$ if $t$ contains the corolla and $sp_t = 0 \in s\mathcal{P}(c;c)$ if not.
\end{itemize} 
According to Corollary \ref{cor:Dendrex}, these elements $p_t$ define a morphism of dendroidal sets from $\Lambda^e[C_{m,n}]$ to $\mathrm{N}^{\Omega}(\mathscr P)$. The two morphisms of dendroidal sets that we have built fit in the following commutative square of dendroidal sets.
$$
\xymatrix@R=25pt@C=25pt{\Lambda^e[T] \ar[r] \ar[d] & \mathrm{N}^{\Omega}(\mathscr P) \ar[d]^{\mathrm{N}^{\Omega}(f)} \\
\Omega[T] \ar[r] & \mathrm{N}^{\Omega}(\mathscr Q)}
$$
As $\mathrm{N}^{\Omega}(f)$ is a fibration, the square has a lifting. This provides an element $sp \in s\mathcal{P}(c_1,\ldots,c_m;c)$ such that $f(sp)=sq$. 
\end{itemize}
%$$
%\xymatrix@R=40pt@C=25pt{T(s\mathbb{K}\Omega(T))(l_1,\ldots,l_m ; r) \ar@{-->}[r]^(0.45){g(T)} \ar[dr]_(0.45){\tilde{h}(T)} & s\mathcal{P}(\phi_g(l_1),\ldots,\phi_g(l_k); \phi_g(r)) \ar[d]^{f}\\
%& s\mathcal{Q}(\phi_h(l_1),\ldots,\phi_h(l_k); \phi_h(r))}
%$$
\end{proof}

Recall from \cite[Theorem  1.3.3.1]{LefevreHasegawa03} that, if $\mathbb{K}$ is a field, the category of nonunital $\mathcal{A}_{\infty}$-algebras admits a model structure without limits where
\begin{itemize}
\item[$\triangleright$] the weak equivalences are the morphisms $f: \mathscr A=(\mathcal{A},\gamma) \rightarrow \mathscr A'=(\mathcal{A}',\gamma')$ such that the first level map $f_1 : s\mathcal{A} \rightarrow s\mathcal{A}'$ is a quasi-isomorphism.
\item[$\triangleright$] the fibrations are the morphisms $f: \mathscr A=(\mathcal{A},\gamma) \rightarrow \mathscr A'=(\mathcal{A}',\gamma')$ such that the first level map $f_1:s\mathcal{A} \rightarrow s\mathcal{A}'$ is a degreewise epimorphism.
\item[$\triangleright$] the cofibrations are the morphisms $f: \mathscr A=(\mathcal{A},\gamma) \rightarrow \mathscr A'=(\mathcal{A}',\gamma')$ such that the first level map $f_1:s\mathcal{A} \rightarrow s\mathcal{A}'$ is a degreewise monomorphism.
\end{itemize}
We know from Section \ref{subscteionhomotopyoperads} that the category of nonunital $\mathcal{A}_{\infty}$-algebras is embedded in the category of strict unital $\mathcal{A}_{\infty}$-categories. Then, Theorem \ref{thmweakequivalence} and Theorem \ref{thmfibrations} have the following consequence.

\begin{cor}\leavevmode
The simplicial nerve $\mathrm{N}_{\mathcal{A}_{\infty}}$ sends weak equivalences (resp. fibrations) of nonunital $\mathcal{A}_{\infty}$-algebras to weak equivalences (resp. fibrations) of simplicial sets for the Joyal model structure. 
\end{cor}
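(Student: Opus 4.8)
The plan is to factor the statement through the dendroidal nerve and the two homotopical characterisations already proved. By Proposition~\ref{prop:Comparison}, a nonunital $\mathcal{A}_{\infty}$-algebra $\mathscr{A}$ embeds, after formally adjoining a unit, as a strict unital homotopy operad $\widetilde{\mathscr{A}}$ concentrated in arity one, that is, as a one-object strict unital $\mathcal{A}_{\infty}$-category; write $\widetilde{f}$ for the image of a morphism $f$. Using the identification $\pi\bigl(\mathrm{N}^{\Omega}_{|\mathcal{A}_{\infty}\textsf{-}\mathsf{cat}}\bigr)=\mathrm{N}_{\mathcal{A}_{\infty}}$, where $\pi=i^*$ restricts dendroidal sets to simplicial sets, one has $\mathrm{N}_{\mathcal{A}_{\infty}}(f)=\pi\,\mathrm{N}^{\Omega}(\widetilde{f})$. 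It therefore suffices to show two things: first, that $\mathrm{N}^{\Omega}(\widetilde{f})$ is a weak equivalence (resp. a fibration) of $\infty$-operads for the Cisinski--Moerdijk model structure; and second, that $\pi$ carries such a map to a weak equivalence (resp. a fibration) in the Joyal model structure.

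For the first point I would match the Lef\`evre--Hasegawa classes against condition~(2) of Theorem~\ref{thmweakequivalence} and of Theorem~\ref{thmfibrations}. In the weak-equivalence case, if $f_1\colon s\mathcal{A}\to s\mathcal{A}'$ is a quasi-isomorphism then it is in particular an isomorphism on homology in every positive degree, which is exactly the arity-one (and only nonvanishing) instance of the homological condition; moreover, since $\widetilde{\mathscr{A}}$ has a single object, $H_0(f_1)$ being an isomorphism of endomorphism algebras makes $j^*H_0(\widetilde{f})$ fully faithful and, trivially, essentially surjective, hence an equivalence of categories. Theorem~\ref{thmweakequivalence} then yields that $\mathrm{N}^{\Omega}(\widetilde{f})$ is a weak equivalence. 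In the fibration case, $f_1$ being a degreewise epimorphism immediately gives surjectivity in all degrees $n\ge 2$, so the one remaining condition is that $j^*H_0(\widetilde{f})$ be a categorical fibration, after which Theorem~\ref{thmfibrations} applies.

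The transfer along $\pi$ I would justify as follows. By the Remark after Theorem~\ref{thmcisinskimoerdijk}, the Joyal model structure is the restriction of the Cisinski--Moerdijk one to the slice $\mathsf{dSet}/\Delta[0]\simeq\mathsf{sSet}$, and in this framework $(i_!,i^*)$ is a Quillen pair. Hence the right adjoint $\pi=i^*$ sends fibrations to fibrations and, by Ken Brown's lemma, weak equivalences between fibrant objects to weak equivalences; concretely, lifting $\pi\,\mathrm{N}^{\Omega}(\widetilde{f})$ against the inner simplicial horns reduces, by the adjunction $i_!\dashv i^*$, to lifting $\mathrm{N}^{\Omega}(\widetilde{f})$ against the inner horns of the corresponding linear trees, and its fundamental category is $j^*H_0(\widetilde{\mathscr{A}})$ by Lemma~\ref{lemmacatlevel}. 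Since $\mathrm{N}^{\Omega}(\widetilde{\mathscr{A}})$ and $\mathrm{N}^{\Omega}(\widetilde{\mathscr{A}}')$ are $\infty$-operads by Theorem~\ref{thminftyoperad}, hence fibrant, both $\mathrm{N}_{\mathcal{A}_{\infty}}(f)$ is a Joyal weak equivalence (resp. an isofibration between quasi-categories), which is the desired conclusion.

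The main obstacle is the categorical condition in the fibration case, namely showing that $j^*H_0(\widetilde{f})$ is an isofibration. Unlike the homological epimorphism condition, this does not follow formally from $f_1$ being degreewise surjective: surjectivity of a chain map need not descend to a surjection on $H_0$, and even granting that, one must lift invertible endomorphisms of the target algebra to invertible ones in the source. I would handle this by exploiting the one-object structure, identifying $j^*H_0(\widetilde{f})$ with the unitalisation of the induced map $H_0(\mathcal{A})\to H_0(\mathcal{A}')$ of (nonunital) $\mathbb{K}$-algebras and reducing the lifting of an isomorphism to the production of an invertible lift of a unit; this is precisely the step where the hypothesis that $\mathbb{K}$ is a field and the exact shape of the Lef\`evre--Hasegawa fibrations must be used. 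Once this lifting is secured, Theorems~\ref{thmweakequivalence} and~\ref{thmfibrations} apply, and the Quillen transfer along $\pi$ finishes the argument.
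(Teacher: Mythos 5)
Your decomposition is exactly the paper's: the paper's entire proof of this corollary is the assertion that it follows from Theorem~\ref{thmweakequivalence} and Theorem~\ref{thmfibrations}, applied to the unitalisation $\widetilde{f}$ and read through the identification of $\mathrm{N}_{\mathcal{A}_{\infty}}$ with the simplicial part of $\mathrm{N}^{\Omega}$. Your verification of the weak-equivalence clause is complete and correct: a quasi-isomorphism $f_1$ gives the positive-degree homology isomorphisms, $H_0(f)$ being an isomorphism makes $j^*H_0(\widetilde{f})$ an isomorphism (hence an equivalence) of one-object categories, and the transfer along $\pi=i^*$ via the Quillen pair $(i_!,i^*)$ and Ken Brown's lemma legitimately converts the Cisinski--Moerdijk classes into the Joyal ones.

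The fibration clause, however, contains a genuine gap, which you honestly flag (``once this lifting is secured'') but do not close --- and it cannot be closed, because the implication you need is false. Take $\mathcal{A}$ to be the chain complex with $\mathcal{A}_0=\mathbb{K}v$, $\mathcal{A}_{-1}=\mathbb{K}w$, $d(v)=w$, and all higher operations zero; take $\mathcal{A}'=\mathbb{K}x$ concentrated in degree $0$ with zero multiplication; and let $f$ be the strict morphism with $f_1(v)=x$, $f_1(w)=0$. Then $f_1$ is a degreewise epimorphism, so $f$ is a Lef\`evre--Hasegawa fibration, yet $H_0(\mathcal{A})=0$ while $H_0(\mathcal{A}')=\mathbb{K}x$, so $j^*H_0(\widetilde{f})$ is the unit inclusion of $\mathbb{K}$ into the dual numbers $\mathbb{K}[x]/(x^2)$; the isomorphism $1+x$ of the target has no preimage at all, hence this functor is not an isofibration, and by Theorem~\ref{thmfibrations} the morphism $\mathrm{N}^{\Omega}(\widetilde{f})$ is not a fibration of dendroidal sets. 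One also sees this directly on the simplicial nerve: $1+x$ is a degree-zero cycle of $\widetilde{\mathcal{A}'}$, hence a $1$-simplex, and it is an equivalence in the quasi-category $\mathrm{N}_{\mathcal{A}_{\infty}}(\widetilde{\mathcal{A}'})$, whereas the $1$-simplices of $\mathrm{N}_{\mathcal{A}_{\infty}}(\widetilde{\mathcal{A}})$ are only the multiples of the unit, so the equivalence cannot be lifted and $\mathrm{N}_{\mathcal{A}_{\infty}}(f)$ is not a Joyal fibration. Consequently your proposed repair (using the one-object structure and the field hypothesis to produce an invertible lift of a unit) cannot succeed: the fibration half of the statement requires the isofibration condition on $H_0$ as an additional hypothesis, exactly as it is built into the fibrations of Proposition~\ref{prop:caviglia}. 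Note that the paper's own one-line proof passes over this same point silently; your write-up at least has the merit of making the missing step explicit, but as it stands the argument is incomplete at a step that is in fact false.
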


\begin{proof}
This result is a straightforward consequence of Theorem \ref{thmweakequivalence} and Theorem \ref{thmfibrations}.
\end{proof}

To extend such a result to the operadic level, one would need a homotopy theory of homotopy operads. This will be the subject of another paper.

\subsection{The homotopy coherent nerve is a right Quillen functor} 
There is a pair of adjoint functors
\begin{equation}\label{adj:hcN}
\xymatrix{\mathsf{dg}\text{-}\mathsf{Op} \ar@<1ex>[r]^{\mathrm{hcN}} & \mathsf{dSet} \ar@<1ex>[l]^{W_!^{dg}}}
\end{equation}
where the functor $W_!^{dg}$,  left adjoint to the homotopy coherent nerve, is constructed as follows. For any tree $T$, we set $W_!^{dg}(\Omega[T]):=W_H(T)$ and then, since a dendroidal set is a colimit of a diagram made up of trees, the image of a dendroidal set is the corresponding colimit of the diagram made up of the images of the trees. By definition, this functor preserves colimits. In this section, we will show that this adjunction is a Quillen adjunction with respect to the model category structure on dg operads introduced in \cite{Caviglia14}, when the characteristic of the field $\mathbb{K}$ is $0$.

\begin{prop}[\cite{Caviglia14} Theorem 4.22 and Proposition 5.3]\label{prop:caviglia}
Assume that $\mathbb{K}$ is a characteristic $0$ field. The category $\mathsf{dg}\text{-}\mathsf{Op}$ admits a right proper cofibrantly generated model structure where
\begin{itemize}
\item[$\triangleright$] the weak equivalences are the morphisms $f: \mathscr P \rightarrow \mathscr Q$ such that $j^*H_0(f)$ is an essentially surjective functor and such that the morphism of chain complexes $f: \mathcal{P}(c_1,\ldots,c_m ; c) \rightarrow \mathcal{Q}(\phi(c_1),\ldots,\phi(c_m) ; \phi(c))$ is a quasi-isomorphism, for any integer $m \geq0$ and for any colors $c_1$, \ldots, $c_m$ and $c$, where $\phi$ is the function underlying $f$.
\item[$\triangleright$] the fibrations are the morphisms $f: \mathscr P \rightarrow \mathscr Q$ such that the functor $j^*H_0(f)$ is an isofibration and the morphism of chain complexes $f: \mathcal{P}(c_1,\ldots,c_m ; c) \rightarrow \mathcal{Q}(\phi(c_1),\ldots,\phi(c_m) ; \phi(c))$ is a degreewise epimorphism, for any colors $c_1$, \ldots, $c_m$ and $c$.
\end{itemize}
\end{prop}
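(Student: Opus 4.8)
The plan is to construct this model structure by right transfer from chain complexes along the free--forgetful adjunction, first with a fixed colour set and then assembling over varying colours by means of an interval in operads, finally invoking Kan's recognition theorem. Because $\mathbb{K}$ has characteristic zero, the symmetric groups act with invertible order, so the free coloured operad monad $\mathbb{T}^o$ is \emph{admissible}: the relevant symmetric powers preserve quasi-isomorphisms (coinvariants agree with invariants and $\mathbb{K}[\mathbb{S}_n]$ is semisimple), which is exactly what makes the transfer work.

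First I would fix a colour set $C$ and equip $(C,\mathbb{S})\text{-}\mathsf{Mod}$ with the projective model structure, whose weak equivalences and fibrations are the levelwise quasi-isomorphisms and levelwise epimorphisms; this exists since it is a presheaf category valued in the cofibrantly generated category $\mathsf{dg}\text{-}\mathsf{Mod}$. The category of $C$-coloured dg operads is the category of $\mathbb{T}^o$-algebras, and the Schwede--Shipley / Berger--Moerdijk transfer theorem (applicable in characteristic zero) then yields a cofibrantly generated model structure on $C$-coloured dg operads whose weak equivalences and fibrations are the levelwise ones, with generating (acyclic) cofibrations the images under $\mathbb{T}^o$ of the sphere--disc inclusions $S^{n-1}\hookrightarrow D^n$ (resp.\ $0\hookrightarrow D^n$) placed in each arity.

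Next I would let the colours vary. To the fixed-colour generators I add two maps governing the set of colours: the inclusion $\varnothing\hookrightarrow \mathbf{1}$ of the empty operad into the operad $\mathbf{1}$ with one colour and only its identity operation (which forces surjectivity on colours), and, as a further generating acyclic cofibration, the inclusion $\mathbf{1}\hookrightarrow \mathcal{H}$ of a colour into the ``interval operad'' $\mathcal{H}$ built from the interval $H$ of Definition~\ref{def:intervalH}, i.e.\ the dg category with two objects and a contractible complex of mutually inverse isomorphisms between them. The map $\mathbf{1}\hookrightarrow\mathcal{H}$ encodes the homotopy relation between colours: the right lifting property against it yields precisely the isofibration condition on $j^*H_0$, while pushouts along it are essentially surjective by construction and locally quasi-isomorphisms by a direct computation with $H$. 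I would then apply Kan's recognition theorem to this enlarged set $I$ of generating cofibrations and set $J$ of generating acyclic cofibrations, taking $W$ to be the class described in the statement; its closure under retracts and two-out-of-three is immediate from the corresponding properties of quasi-isomorphisms and of essentially surjective functors after applying $j^*H_0$.

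The decisive steps are the two matching conditions of the recognition theorem. One checks $\mathrm{inj}(I)=W\cap\mathrm{fib}$ by unwinding lifting properties: lifting against the sphere--disc generators gives a levelwise acyclic fibration of chain complexes, against $\varnothing\hookrightarrow\mathbf{1}$ gives surjectivity on colours, and against $\mathbf{1}\hookrightarrow\mathcal{H}$ upgrades this to surjectivity up to the homotopy relation, which together are exactly the acyclic fibrations. The harder condition is that every relative $J$-cell complex lies in $W$ and is a cofibration; the fixed-colour part is the output of the transfer in the first step, and the crux is to show that \emph{pushouts of the interval inclusion} $\mathbf{1}\hookrightarrow\mathcal{H}$ preserve weak equivalences and remain local quasi-isomorphisms. \textbf{This is the main obstacle}: as in Bergner's treatment of simplicial categories, controlling the homotopy type of the hom-complexes after freely adjoining an equivalence between two colours demands an explicit analysis of the amalgamated product with $\mathcal{H}$, and it is here that characteristic zero is genuinely used, through the exactness of $\mathbb{S}_n$-invariants, to keep the symmetric-monoidal bookkeeping homotopically exact. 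Finally, right properness follows because fibrations are in particular levelwise fibrations of chain complexes and the projective structure on $\mathsf{dg}\text{-}\mathsf{Mod}$ is right proper, so pullback of a local weak equivalence along a fibration is again a local weak equivalence, while the essential-surjectivity component is preserved upon applying $j^*H_0$ to a pullback along an isofibration.
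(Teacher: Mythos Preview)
The paper does not prove this proposition at all: it is stated as a citation of \cite{Caviglia14}, Theorem~4.22 and Proposition~5.3, with no proof environment. The only content the paper adds is the subsequent Remark, which explains why the description of weak equivalences and fibrations given here coincides with Caviglia's original formulation (for weak equivalences this is \cite[Proposition~5.3]{Caviglia14}; for fibrations one reduces to the canonical model structure on dg categories of Berger--Moerdijk and identifies it with Tabuada's). So your write-up is not a reproduction of the paper's argument but rather an attempted independent proof of the cited result.

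As such a sketch, your outline is broadly in the right spirit---transfer to fixed-colour operads in characteristic zero, then handle varying colours via an interval object and a recognition/lifting argument, as in the Bergner--Tabuada--Berger--Moerdijk--Caviglia line of work---and you correctly flag the genuine difficulty, namely controlling pushouts along the interval inclusion. But several steps are only gestures. The claim that ``$\mathrm{inj}(I)=W\cap\mathrm{fib}$'' does not follow just from the three lifting properties you list: lifting against $\varnothing\hookrightarrow\mathbf{1}$ and against the interval inclusion does not by itself force the underlying colour map to behave correctly with respect to the local quasi-isomorphism condition, and one must be careful that the fixed-colour generators really detect levelwise acyclic fibrations once colours vary. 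More seriously, your treatment of the ``main obstacle'' is a placeholder: showing that pushouts along $\mathbf{1}\hookrightarrow\mathcal{H}$ are local quasi-isomorphisms in the operadic (not just categorical) setting is precisely the technical heart of \cite{Caviglia14}, and invoking semisimplicity of $\mathbb{K}[\mathbb{S}_n]$ is necessary but far from sufficient---one needs a careful filtration argument on the free product. Finally, right properness is not immediate from right properness of $\mathsf{dg}\text{-}\mathsf{Mod}$: pullbacks in $\mathsf{dg}\text{-}\mathsf{Op}$ are not computed levelwise on underlying $\mathbb{S}$-modules when the colour maps differ, so your last paragraph does not constitute a proof. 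In short, your plan is a reasonable roadmap to the literature, but it is not a proof, and the paper itself simply defers to \cite{Caviglia14}.
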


\begin{rmk} The model structure given here may seem different from the definition given by Caviglia. It is not the case. In fact:
\begin{itemize}
\item[$\triangleright$] The two notions of weak equivalences coincide by \cite[Proposition 5.3]{Caviglia14}.
\item[$\triangleright$] Let $f: \mathscr P \rightarrow \mathscr Q$ of be a morphism of dg operads such that the morphism of chain complexes $f: \mathcal{P}(c_1,\ldots,c_m ; c) \rightarrow \mathcal{Q}(\phi(c_1),\ldots,\phi(c_m) ; \phi(c))$ is a degreewise epimorphism, for any colors $c_1$, \ldots, $c_m$ and $c$.  Then, $f$ is a fibration in the sense of \cite{Caviglia14} if and only the functor $j^*f: j^*\mathscr P \rightarrow j^*\mathscr Q$ is a fibration for the canonical model structure on dg categories introduced in \cite[Definition 1.6]{BergerMoerdijk13}. Besides, this canonical model structure coincides with the model structure introduced by Tabuada in \cite{Tabuada05}; then $f$ is a fibration if and only if $j^*f$ is a fibration in the sense of Tabuada, so if and only if it is a fibration in the sense of Proposition \ref{prop:caviglia}.
\end{itemize} 
\end{rmk}

\begin{thm}
When the characteristic of the field  $\mathbb{K}$ is $0$, the adjunction (\ref{adj:hcN}) is a Quillen adjunction.
\end{thm}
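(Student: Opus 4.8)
The plan is to verify the Quillen adjunction through the right adjoint: it suffices to show that $\mathrm{hcN}$ carries fibrations of dg operads to fibrations of dendroidal sets and acyclic fibrations to acyclic fibrations, since this is exactly equivalent to asking that the left adjoint $W_!^{dg}$ preserve cofibrations and acyclic cofibrations, i.e. to the definition of a Quillen adjunction. The starting point is the natural isomorphism $\mathrm{hcN} \simeq \mathrm{N}^{\Omega}$ on dg operads, together with the fact (Theorem \ref{thminftyoperad}) that $\mathrm{N}^{\Omega}(\mathscr P)$ is always an $\infty$-operad, hence fibrant; this puts us precisely in the setting of the characterization theorems \ref{thmweakequivalence} and \ref{thmfibrations}, which describe fibrations and weak equivalences \emph{between fibrant objects}. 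A dg operad, viewed as a su homotopy operad via Proposition \ref{prop:Comparison}, has its only nonzero structure concentrated on one-vertex trees, so the ``first level map'' $f\colon s\mathcal P(c_1,\ldots,c_m;c)\to s\mathcal Q(\phi(c_1),\ldots,\phi(c_m);\phi(c))$ appearing in those theorems is simply the suspension of the arity-wise chain map $\mathcal P(c_1,\ldots,c_m;c)\to \mathcal Q(\phi(c_1),\ldots,\phi(c_m);\phi(c))$.

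For the fibration case I would argue as follows. Let $f$ be a fibration in the Caviglia model structure of Proposition \ref{prop:caviglia}. Then $j^*H_0(f)$ is an isofibration and each arity-wise map is a degreewise epimorphism in every degree, in particular in degrees $n\geq 2$. These are exactly the two conditions in assertion $(2)$ of Theorem \ref{thmfibrations}, whence $\mathrm{hcN}(f)=\mathrm{N}^{\Omega}(f)$ is a fibration of $\infty$-operads.

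For the acyclic fibration case, let $f$ be simultaneously a fibration and a weak equivalence for Caviglia's structure. In addition to the fibration conditions above, $j^*H_0(f)$ is now essentially surjective and each arity-wise map is a quasi-isomorphism. From the arity-one quasi-isomorphisms one gets that $j^*H_0(f)$ is fully faithful, hence, being also essentially surjective, an equivalence of categories; and a quasi-isomorphism in all degrees is in particular an isomorphism on homology in positive degrees after suspension. These are precisely the conditions of assertion $(2)$ of Theorem \ref{thmweakequivalence}, so $\mathrm{N}^{\Omega}(f)$ is a weak equivalence, and combined with the fibration case it is an acyclic fibration. This establishes both preservation properties and hence that $(\ref{adj:hcN})$ is a Quillen adjunction.

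The main point — and the only place where the argument is not completely formal — is the comparison of the two lists of conditions, where one must check that Caviglia's requirements are systematically \emph{stronger} than the dendroidal ones: degreewise epimorphism in all degrees implies epimorphism in degrees $n\geq 2$, and quasi-isomorphism in all degrees implies isomorphism on the positive homology groups of the suspension. The latter uses the truncation computation of Lemma \ref{lemma:normalizeschaincomplex}, which shows that only the nonnegative part of the mapping complexes is seen by the simplicial operation spaces $\mathrm{N}^{\Omega}(-)^L$, so no condition in negative degrees is needed on the dendroidal side. One should also keep careful track of the suspension degree shift and observe that the $j^*H_0$-level conditions match on the nose; these bookkeeping checks, together with the char $0$ hypothesis needed only to invoke Proposition \ref{prop:caviglia}, are the substance of the proof.
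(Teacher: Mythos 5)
Your proposal is correct and takes essentially the same route as the paper: the paper's entire proof is the one-line statement that the result is a straightforward consequence of Theorem \ref{thmweakequivalence} and Theorem \ref{thmfibrations}, and your argument is precisely the spelled-out version of that, using the right-adjoint criterion for Quillen adjunctions and checking that Caviglia's fibrations and weak equivalences imply the conditions in assertion (2) of those two theorems. The details you supply (the suspension degree shift, epimorphisms in all degrees implying epimorphisms in degrees $\geq 2$, and full faithfulness of $j^*H_0(f)$ coming from the arity-one quasi-isomorphisms) are exactly the bookkeeping the paper leaves implicit.
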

 
\begin{proof}
This is a straightforward consequence of Theorem \ref{thmweakequivalence} and Theorem \ref{thmfibrations}.
\end{proof}

\begin{rmk}
If $\mathbb{K}$ is not a characteristic $0$ field, a model structure as in Proposition \ref{prop:caviglia} exists on the category of reduced dg operads, i.e. dg operads with no elements of arity $0$. Moreover, we have a similar Quillen adjunction between reduced dg operads and reduced dendroidal sets.
\end{rmk}

%%%%%%%%%%%%%%%%%%%%%%%%%%

\section{The big nerve of dg categories and dg colored operads}

In \cite[1.3.1]{Lurie12}, Lurie introduces another functor from dg categories to quasi-categories called the big nerve $\mathrm{N}^{\mathrm{big}}_{\mathrm{dg}}$ and he proves that it is point-wise equivalent to the homotopy coherent nerve $\mathrm{hcN}$. In this section, we extend Lurie's big nerve functor to dg colored operads and show that it is point-wise equivalent to the homotopy coherent nerve of dg operads $\mathrm{hcN}$. To do so, we have to reformulate Lurie's arguments since the Alexander--Whitney map is not symmetric, that is his formula cannot be applied mutatis mutandis on the operadic level.

\subsection{The Boardman--Vogt construction for simplicial operads}

We recall here the Boardman--Vogt construction for the simplicial operad $\Omega(T)$ for any tree $T$, see \cite{BergerMoerdijk06} for more details. \\

\paragraph{\sc Notation}For any integer $n\geq 0$, the set $\Delta[1]_n$ has $n+2$ elements that we denote $e_{0,n}=(0\, 0 \cdots 0)$, $e_{1,n}=(0\, 0\cdots 0\, 1)$, \ldots, $e_{n+1,n}=(1\, 1\cdots 1)$.\\

For any tree $T$, let $W_{\Delta[1]}(T)$ be the simplicial operad whose colors are the edges of $T$ and such that
\begin{itemize}
\item[$\triangleright$] $W_{\Delta[1]}(T)(e_1, \ldots ,e_m ; e) := \prod_{in(T')} \Delta[1]$ if there is a subtree $T'$ of $T$ whose leaves are $e_1$, \ldots, $e_m$ and whose root is $e$, where $in(T')$ is the set of inner edges of $T'$.
\item[$\triangleright$] $W_{\Delta[1]}(T)(e_1, \ldots ,e_m ; e):=\emptyset$, the initial simplicial set, otherwise.
\end{itemize}
The operadic composition is given by the grafting of trees where the new inner edge is labeled by the degeneracies of $1 \in \Delta[1]_0$.\\

As in Definition \ref{def:intervalH}, the simplicial set $\Delta[1]$ has a structure of interval given by the following morphisms.
\begin{itemize}
\item[$\triangleright$] the morphism $\Delta[1] \rightarrow \Delta[0]$
\item[$\triangleright$] the morphism $\mathrm{max}:\Delta[1]\times \Delta[1] \rightarrow \Delta[1]$ which sends the couple $(e_{i,n},e_{j,n})$ to $\mathrm{max}(e_{i,n},e_{j,n}):= e_{max(i,j),n}$.
\end{itemize}
This interval structure induces a cosimplicial structure on the mapping $T \mapsto W_{\Delta[1]}(T)$.

\subsection{The big nerve of dg operads}
Originally, Lurie defined the big nerve as follows. From any dg category $\mathscr C$, one can truncate the mapping spaces and then apply the Dold--Kan functor $\Gamma$ defined in Section \ref{section:dold-kan}. Since this last one is  monoidal, one gets a simplicial category. One can then can apply the nerve of simplicial categories to obtain a quasi-category.  In other words, the big nerve $\mathrm{N}^{\mathrm{big}}_{\mathrm{dg}}(\mathscr C)$ of a dg category $\mathscr C$ is the following simplicial set
$$
\mathrm{N}^{\mathrm{big}}_{\mathrm{dg}}(\mathscr C)_n:=\mathrm{Hom}_{\mathsf{sSet}\text{-}\mathsf{cat}}\big(W_{\Delta[1]}([n]),\Gamma (tr(\mathscr C))\big)\ , 
$$
where $\mathsf{sSet}\text{-}\mathsf{cat}$ is category of simplicial categories. However, notice that the functor $\Gamma$ from nonnegatively graded chain complexes to simplicial $\mathbb{K}$-modules is not symmetric monoidal. Therefore, the big nerve cannot be directly extended to dg operads using Lurie's formula. We first have to reformulate its definition.\\

Let $\mathbb{K}\text{-}\mathsf{sSet}\text{-}\mathsf{cat}$ be the category of categories enriched in simplicial $\mathbb{K}$-modules, and let $\mathsf{dg}\text{-}\mathsf{cat}^{\geq 0}$ be the category of categories enriched over the monoidal category $\mathsf{dg}\text{-}\mathsf{Mod}^{\geq 0}$ of nonnegatively graded chain complexes. The functors $\Gamma$, $C/D$ and $N$ are lax monoidal and so extend respectively to a functor from the category $\mathsf{dg}\text{-}\mathsf{cat}^{\geq 0}$ (resp. $\mathbb{K}\text{-}\mathsf{sSet}\text{-}\mathsf{cat}$) to the category $\mathbb{K}\text{-}\mathsf{sSet}\text{-}\mathsf{cat}$ (resp. $\mathsf{dg}\text{-}\mathsf{cat}^{\geq 0}$).

\begin{lemma}\label{lemma:adjunctiondoldkan}
The functor $C/D: \mathbb{K}\text{-}\mathsf{sSet}\text{-}\mathsf{cat} \rightarrow \mathsf{dg}\text{-}\mathsf{cat}^{\geq 0}$ is left adjoint to the functor $\Gamma: \mathsf{dg}\text{-}\mathsf{cat}^{\geq 0} \rightarrow \mathbb{K}\text{-}\mathsf{sSet}\text{-}\mathsf{cat}$
\end{lemma}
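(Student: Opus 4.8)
The plan is to deduce the enriched adjunction from the ordinary Dold--Kan adjunction $C/D \dashv \Gamma$ recalled in Section~\ref{section:dold-kan}, promoting it along the lax monoidal structures to the level of enriched categories. Both functors are already available at the enriched level: $\Gamma$ is lax monoidal, so it induces $\Gamma_{*}$ by applying $\Gamma$ to hom-objects and transporting composition through its lax structure map $\Gamma(U) \otimes \Gamma(V) \to \Gamma(U \otimes V)$, while $C/D$ is lax symmetric monoidal through the Eilenberg--Zilber map $\nabla$ and induces $(C/D)_{*}$ by transporting composition through $\nabla$. Since a morphism of enriched categories is an object-function together with a family of maps on hom-objects compatible with composition and units, I would fix the object-function $F$ on both sides (it is literally the same datum) and reduce the statement to a natural bijection between such compatible families.

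At the level of hom-objects the ordinary adjunction gives, for each pair of objects, a bijection
\[
\mathrm{Hom}_{\mathsf{dg}\text{-}\mathsf{Mod}^{\geq 0}}\big(C/D(\mathcal{A}(a,a')),\, \mathcal{B}(Fa,Fa')\big)\;\cong\;\mathrm{Hom}\big(\mathcal{A}(a,a'),\, \Gamma(\mathcal{B}(Fa,Fa'))\big),
\]
the second set being taken in simplicial $\mathbb{K}$-modules, sending $\Phi$ to its transpose $\Psi = \Gamma(\Phi)\circ \eta$ and back via $\Phi = \epsilon \circ C/D(\Psi)$. The core of the proof is to check that this transpose carries a family $\{\Phi_{a,a'}\}$ compatible with the $\nabla$-composition of $(C/D)_{*}\mathcal{A}$ exactly to a family $\{\Psi_{a,a'}\}$ compatible with the composition of $\Gamma_{*}\mathcal{B}$. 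I would do this by the mate calculus: transposing the composition-compatibility square for $\Psi$ across $C/D \dashv \Gamma$, and using naturality of $\epsilon$, the triangle identity $\epsilon\circ C/D(\eta) = \mathrm{id}$, and naturality of the Alexander--Whitney map $\alpha$ (through which $\Gamma$'s lax structure is defined as a mate), turns it into a condition on $\Phi$ phrased through $\alpha$, whereas the compatibility square for $\Phi$ is phrased through $\nabla$. The unit axiom is treated in the same way using the unit components of the two lax structures.

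The \emph{main obstacle} is precisely reconciling these two composition-compatibility conditions, since $(C/D)_{*}$ is built from $\nabla$ while $\Gamma$'s lax structure is the mate of $\alpha$, and these two maps are not mutually inverse. The decisive input is the normalization identity $\alpha \circ \nabla = \mathrm{id}$ on normalized chains (Alexander--Whitney after Eilenberg--Zilber is the identity), which is exactly what lets one pass between the $\nabla$-condition and the $\alpha$-condition and thereby identify the two subsets of compatible families; this is the step I expect to require the most care, because the opposite composite $\nabla\circ\alpha$ is only homotopic, not equal, to the identity, so the matching must be organized so that only the one-sided identity is invoked. Once this identification is in place, naturality of the bijection in $\mathcal{A}$ and in $\mathcal{B}$ follows formally from naturality of the ordinary Dold--Kan transpose, which yields the enriched adjunction $C/D \dashv \Gamma$.
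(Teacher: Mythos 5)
Your setup and your mate computation are correct as far as they go, but the decisive step fails, and it fails in exactly the direction you flagged. Writing $L=C/D$, unwinding the transposition $\Phi=\epsilon\circ L(\Psi)$ shows that ``$\Psi$ respects the composition of $\Gamma_{*}\mathcal{B}$'' is equivalent to the Alexander--Whitney condition
\[
\Phi\circ L(\mu_{\mathcal{A}})\;=\;\mu_{\mathcal{B}}\circ(\Phi\otimes\Phi)\circ\alpha
\qquad\text{on }L\bigl(\mathcal{A}(a',a'')\otimes\mathcal{A}(a,a')\bigr),
\]
while ``$\Phi$ is a dg functor out of $(C/D)_{*}\mathcal{A}$'' is the Eilenberg--Zilber condition
\[
\Phi\circ L(\mu_{\mathcal{A}})\circ\nabla\;=\;\mu_{\mathcal{B}}\circ(\Phi\otimes\Phi)
\qquad\text{on }L\mathcal{A}(a',a'')\otimes L\mathcal{A}(a,a')\ .
\]
Precomposing the first with $\nabla$ and using $\alpha\circ\nabla=\mathrm{id}$ yields the second; that is the only implication your normalization identity can deliver. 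The converse is what surjectivity of the transposition map, hence the adjunction, actually needs, and it does not hold: the $\nabla$-condition only says that the two maps in the $\alpha$-condition agree on the image of $\nabla$, which is a \emph{proper} split summand of $L(\mathcal{A}\otimes\mathcal{A})$ (since $\alpha\circ\nabla=\mathrm{id}$, $\nabla$ is a split monomorphism, but it is not an epimorphism; already for hom-objects $\mathbb{K}\Delta[1]$ the diagonal $1$-simplex of $\Delta[1]\times\Delta[1]$ lies outside the simplicial submodule generated by shuffle products). This is not a matter of organizing the argument more cleverly: one can build a simplicial $\mathbb{K}$-module category $\mathcal{A}$ whose composition is the projection onto the quotient by that submodule; then $(C/D)_{*}\mathcal{A}$ has zero composition, the identity is a dg functor satisfying the $\nabla$-condition, yet its transpose violates the $\alpha$-condition because the original composition is nonzero. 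So the two sets of compatible families are genuinely different, and no argument invoking only $\alpha\circ\nabla=\mathrm{id}$ can identify them.

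This is also where you diverge from the paper. The paper does not run the mate calculus: it proves $N\Gamma=\mathrm{id}$ on $\mathsf{dg}\text{-}\mathsf{cat}^{\geq 0}$ (the same use of $\alpha\circ\nabla=\mathrm{id}$ as your good direction), asserts that $N\colon\mathbb{K}\text{-}\mathsf{sSet}\text{-}\mathsf{cat}\to\mathsf{dg}\text{-}\mathsf{cat}^{\geq 0}$ is fully faithful, deduces $\mathrm{Hom}(\mathscr{C},\Gamma\mathscr{D})\cong\mathrm{Hom}(N\mathscr{C},\mathscr{D})$, and transfers the result along $C/D\cong N$. Your missing implication is precisely this full-faithfulness assertion (every dg functor between normalized-chain categories underlies a unique simplicial functor), which the paper states without proof; it is a substantive claim, not a formal consequence of the normalization identity, and the obstruction above applies to it verbatim. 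So your plan cannot be completed as written: either that statement must be supplied as an independent input, or the lemma must be established only for the restricted class of enriched categories (free $\mathbb{K}$-linearizations of Boardman--Vogt resolutions) on which the paper actually uses it.
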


\begin{proof}
On the one hand, the endofunctor $N\Gamma$ of the category $\mathsf{dg}\text{-}\mathsf{cat}^{\geq 0}$ is exactly the identity functor. On the other hand, the functor $N: \mathbb{K}\text{-}\mathsf{sSet}\text{-}\mathsf{cat} \rightarrow \mathsf{dg}\text{-}\mathsf{cat}^{\geq 0}$ is fully faithful. So for any category $\mathscr C$ enriched in simplicial $\mathbb{K}$-modules and for any category $\mathscr D$ enriched over $\mathsf{dg}\text{-}\mathsf{Mod}^{\geq 0}$, we have:
\begin{align*}
\mathrm{Hom}_{\mathbb{K}\text{-}\mathsf{sSet}\text{-}\mathsf{cat}}\big(\mathscr C, \Gamma \mathscr D\big) &\simeq \mathrm{Hom}_{\mathsf{dg}\text{-}\mathsf{cat}^{\geq 0}}\big(N\mathscr C, N\Gamma \mathscr D\big)\\ & \simeq \mathrm{Hom}_{\mathsf{dg}\text{-}\mathsf{cat}^{\geq 0}}\big(N\mathscr C,  \mathscr D \big)\ .
\end{align*}
Since the functor $C/D:\mathbb{K}\text{-}\mathsf{sSet}\text{-}\mathsf{cat} \rightarrow \mathsf{dg}\text{-}\mathsf{cat}^{\geq 0}$ is isomorphic the functor $N$, it is also left adjoint to $\Gamma$.
\end{proof}

Subsequently, the big nerve $\mathrm{N}^{\mathrm{big}}_{\mathrm{dg}}(\mathscr C)$ can be rewritten as
$$
\mathrm{N}^{\mathrm{big}}_{\mathrm{dg}}(\mathscr C)_n \simeq \mathrm{Hom}_{\mathsf{dg}\text{-}\mathsf{cat}}\big(C/D(\mathbb{K}W_{\Delta[1]}([n])), \mathscr C \big)
$$ 
where $\mathbb{K}W_{\Delta[1]}([n])$ is the category enriched in simplicial $\mathbb{K}$-modules freely obtained from the simplicial category $W_{\Delta[1]}([n])$. Since the functor $C/D$ from simplicial $\mathbb{K}$-modules to nonnegatively graded chain complexes is symmetric monoidal, then this last formula can be extended to the  operadic level.

\begin{defin}[The big nerve of dg operads]
Let $\mathscr P$ be a dg operad. The {\it big nerve} $\mathrm{N}^{\mathrm{big}}_{\mathrm{dg}}(\mathscr P)$ of $\mathscr P$ is the following dendroidal set.
$$
\mathrm{N}^{\mathrm{big}}_{\mathrm{dg}}(\mathscr P)_T:=\mathrm{Hom}_{\mathsf{dg}\text{-}\mathsf{Op}}\big(C/D(\mathbb{K}W_{\Delta[1]}(T)), \mathscr P \big)\ .
$$
The big nerve is a functor from the category of dg colored operads to the category of dendroidal sets.
\end{defin}

As the homotopy coherent nerve, the big nerve admits a left adjoint.

\begin{prop}
Let $W_!^\Delta$ be the colimit preserving functor from the category of dendroidal sets to the category of simplicial operads such that $W_!^\Delta(\Omega[T]):=W_{\Delta[1]}(T)$ for any tree $T$. Then the functor $C/D(\mathbb{K}W_!^\Delta)$ is left adjoint to the big nerve.
\end{prop}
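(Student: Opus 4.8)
The plan is to recognize this as a standard instance of the nerve--realization adjunction. Because $\mathsf{dSet}$ is the category of presheaves on $\Tree$, every dendroidal set is a colimit of representables $\Omega[T]$, and the general principle then guarantees that any colimit-preserving functor $G\colon \mathsf{dSet}\to\mathcal{A}$ into a cocomplete category $\mathcal A$ admits a right adjoint computed by $\mathscr P\mapsto\big(T\mapsto\mathrm{Hom}_{\mathcal A}(G(\Omega[T]),\mathscr P)\big)$. I would first record this principle, whose one-line proof is the chain $\mathrm{Hom}_{\mathcal A}(G(D),\mathscr P)\cong\lim_{\Omega[T]\to D}\mathrm{Hom}_{\mathcal A}(G(\Omega[T]),\mathscr P)\cong\mathrm{Hom}_{\mathsf{dSet}}\big(D,\ T\mapsto\mathrm{Hom}_{\mathcal A}(G(\Omega[T]),\mathscr P)\big)$, obtained by writing $D$ as a colimit of representables (the density formula), using cocontinuity of $G$, and finishing with the Yoneda lemma. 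Applied to $\mathcal A=\mathsf{dg}\text{-}\mathsf{Op}$ and $G=C/D(\mathbb{K}W_!^\Delta)$, together with the defining identity $G(\Omega[T])=C/D(\mathbb{K}W_{\Delta[1]}(T))$, this identifies the right adjoint of $G$ with the big nerve $\mathrm{N}^{\mathrm{big}}_{\mathrm{dg}}$ on the nose.

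Hence the statement reduces to proving that $G$ preserves colimits, which I would check factor by factor. The functor $W_!^\Delta$ preserves colimits by its very construction as the cocontinuous extension of $T\mapsto W_{\Delta[1]}(T)$. The linearization $\mathbb{K}\colon\mathsf{sSet}\text{-}\mathsf{Op}\to\mathbb{K}\text{-}\mathsf{sSet}\text{-}\mathsf{Op}$ is left adjoint to the forgetful functor (this is precisely the ``freely obtained'' functor already used to define $\mathbb{K}W_{\Delta[1]}(T)$), hence cocontinuous. The remaining factor $C/D$ is the delicate one: I would extend Lemma~\ref{lemma:adjunctiondoldkan} from enriched categories to colored operads, showing that $C/D\colon\mathbb{K}\text{-}\mathsf{sSet}\text{-}\mathsf{Op}\to\mathsf{dg}\text{-}\mathsf{Op}^{\geq 0}$ is left adjoint to $\Gamma$ and therefore preserves colimits. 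The proof of that lemma uses only that $N\Gamma=\mathrm{id}$, that $N$ is fully faithful, and that $C/D\cong N$; since these are facts about the underlying lax monoidal functors, they transport verbatim to the categories of colored operads, so this step should go through without genuinely new difficulty.

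A last point to settle is that $G$ takes values in the subcategory $\mathsf{dg}\text{-}\mathsf{Op}^{\geq 0}$ of nonnegatively graded dg operads, whereas the big nerve is phrased in $\mathsf{dg}\text{-}\mathsf{Op}$. I would note that the inclusion $\mathsf{dg}\text{-}\mathsf{Op}^{\geq 0}\hookrightarrow\mathsf{dg}\text{-}\mathsf{Op}$ is induced by a strong symmetric monoidal left adjoint on modules, its right adjoint being the truncation $tr$, so it too preserves colimits; composing, $G$ remains cocontinuous as a functor into $\mathsf{dg}\text{-}\mathsf{Op}$, and for every dg operad $\mathscr P$ the set $\mathrm{Hom}_{\mathsf{dg}\text{-}\mathsf{Op}}(C/D(\mathbb{K}W_{\Delta[1]}(T)),\mathscr P)$ is by definition the set of $T$-dendrices of $\mathrm{N}^{\mathrm{big}}_{\mathrm{dg}}(\mathscr P)$. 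The main obstacle, as indicated, is the operadic upgrade of the Dold--Kan adjunction of Lemma~\ref{lemma:adjunctiondoldkan}; everything else is the formal co-Yoneda bookkeeping.
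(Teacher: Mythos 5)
Your overall skeleton coincides with the paper's: by the density formula and Yoneda, everything reduces to the cocontinuity of $C/D(\mathbb{K}W_!^{\Delta})$, and the cocontinuity of $W_!^{\Delta}$ (by construction) and of the linearization $\mathbb{K}$ (a left adjoint to the forgetful functor) are indeed immediate. The paper's own proof consists of exactly the one remaining assertion, namely that $C/D$ preserves colimits.

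The genuine gap is in how you propose to establish that remaining assertion. You want to extend Lemma~\ref{lemma:adjunctiondoldkan} ``verbatim'' to colored operads and conclude that $C/D\colon \mathbb{K}\text{-}\mathsf{sSet}\text{-}\mathsf{Op}\to\mathsf{dg}\text{-}\mathsf{Op}^{\geq 0}$ is left adjoint to $\Gamma$, hence cocontinuous. But there is no functor $\Gamma$ on colored operads for $C/D$ to be adjoint to: as recorded in Section~\ref{section:dold-kan}, $\Gamma$ is lax monoidal (via the transpose of the Alexander--Whitney map) but \emph{not} lax \emph{symmetric} monoidal, and symmetry of the structure maps is exactly what is required to transport the $\mathbb{S}_n$-equivariant composition maps of a colored operad $\mathscr{D}$ to an operad structure on $\Gamma\mathscr{D}$. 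Lemma~\ref{lemma:adjunctiondoldkan} is genuinely a statement about enriched \emph{categories}, where composition involves no symmetry, so the non-symmetric lax structure on $\Gamma$ suffices there; this failure at the operadic level is precisely the obstruction the paper points out when it explains why Lurie's formula for the big nerve cannot be extended to dg operads and why the big nerve has to be reformulated through $C/D$ in the first place. So the step you flagged as ``the main obstacle'' is indeed the obstacle, and the claim that it ``transports verbatim, without genuinely new difficulty'' is false: the would-be right adjoint does not exist, and cocontinuity of the operadic $C/D$ does not follow by this route. To complete the proof one needs a different argument that $C/D$, induced on operads by the symmetric Eilenberg--Zilber structure, preserves (at least) the canonical colimits $\mathbb{K}W_!^{\Delta}(D)\cong\mathop{\rm colim}_{\Omega[T]\to D}\mathbb{K}W_{\Delta[1]}(T)$ --- a point the paper itself asserts without further elaboration. (Your remaining remarks --- cocontinuity of $W_!^{\Delta}$ and $\mathbb{K}$, and the passage from $\mathsf{dg}\text{-}\mathsf{Op}^{\geq 0}$ to $\mathsf{dg}\text{-}\mathsf{Op}$ via the strong monoidal inclusion left adjoint to $tr$ --- are correct.)
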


\begin{proof}
It follows from the fact that the functor $C/D$ preserves colimits.
\end{proof}

\begin{prop}
The big nerve of a dg colored operad is an $\infty$-operad.
\end{prop}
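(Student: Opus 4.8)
The plan is to reduce the horn-filling condition to a lifting problem in $\mathsf{dg}\text{-}\mathsf{Op}$ via the adjunction $C/D(\mathbb{K}W_!^\Delta) \dashv \mathrm{N}^{\mathrm{big}}_{\mathrm{dg}}$ established above, and then to mimic the argument of Theorem~\ref{thminftyoperad}. First I would fix a tree $T$ and an inner edge $e$ and transpose the defining lifting square $\Lambda^e[T] \to \mathrm{N}^{\mathrm{big}}_{\mathrm{dg}}(\mathscr P)$, $\Omega[T] \to \mathrm{N}^{\mathrm{big}}_{\mathrm{dg}}(\mathscr P)$ through the adjunction. The inner horn inclusion becomes the morphism of dg operads $i_{T,e} := C/D(\mathbb{K}W_!^\Delta)(\Lambda^e[T] \hookrightarrow \Omega[T])$ from $C/D(\mathbb{K}W_!^\Delta(\Lambda^e[T]))$ to $C/D(\mathbb{K}W_{\Delta[1]}(T))$, and the problem becomes that of extending any morphism out of the source of $i_{T,e}$ along $i_{T,e}$. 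Since this extension must exist for every dg operad $\mathscr P$ simultaneously, it suffices, and is in fact necessary, to exhibit a retraction $r$ of $i_{T,e}$ in $\mathsf{dg}\text{-}\mathsf{Op}$: take $\mathscr P$ to be the source and the identity map to produce $r$, while conversely $g \mapsto g \circ r$ yields the required lift of an arbitrary $g$.

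To build $r$, I would first record the identification $C/D(\mathbb{K}\Delta[1]) \cong H$, where $H$ is the interval of Definition~\ref{def:intervalH}: the two vertices of $\Delta[1]$ give $h_0$ and $h_1$, the nondegenerate edge gives $h$ with $d(h) = h_1 - h_0$, and the $\max$ map induces the product $\vee$. Through the Eilenberg--Zilber map, the operation complexes of $C/D(\mathbb{K}W_{\Delta[1]}(T))$ are assembled from tensor powers of $H$ indexed by the inner edges of subtrees of $T$, and the subobject $\mathrm{im}(i_{T,e})$ is the sub-dg-operad spanned by all tree-cells except those carrying the contracted, i.e. $h_0$-labelled, face along $e$. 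The retraction is then obtained by contracting the tensor factor attached to $e$: using the relation $h_0 = h_1 - d(h)$ one rewrites every missing cell in terms of cells already present, in exact parallel with the way the face $f(T/e)$ was solved for in the proof of Theorem~\ref{thminftyoperad} by setting the top label to zero and inverting the composition along $e$.

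The main obstacle is to check that the map $r$ so defined is genuinely a morphism of dg operads and a retraction. This requires verifying compatibility with the differential, where the signs of the Boardman--Vogt differential~\eqref{pm} and, crucially, the non-symmetry of the Alexander--Whitney map, the very point that forced the reformulation of the big nerve, must be handled with care, together with compatibility with the operadic grafting and the $\vee$-products across all subtrees at once. One must also unwind the colimit presentation of $W_!^\Delta(\Lambda^e[T])$ to confirm that the cells filled by the contraction are precisely the complement of the horn, so that $r \circ i_{T,e} = \mathrm{id}$. Naturality in $\mathscr P$ is then immediate, since $r$ depends only on the pair $(T,e)$, and the lift $g \circ r$ solves the original horn-filling problem, showing that $\mathrm{N}^{\mathrm{big}}_{\mathrm{dg}}(\mathscr P)$ is an $\infty$-operad.
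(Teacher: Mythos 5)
Your opening move---transposing the horn-filling problem through the adjunction $C/D(\mathbb{K}W_!^{\Delta}) \dashv \mathrm{N}^{\mathrm{big}}_{\mathrm{dg}}$---is the same as the paper's, and your reduction to exhibiting a retraction $r$ of $i_{T,e}$ in $\mathsf{dg}\text{-}\mathsf{Op}$ is logically sound (it is in fact equivalent to the statement, by your own ``take $\mathscr P$ to be the source'' observation). The genuine gap is in the construction of $r$. It rests on identifying the operation complexes of $C/D(\mathbb{K}W_{\Delta[1]}(T))$ with tensor powers of $H$ ``through the Eilenberg--Zilber map,'' but this identification does not hold: the Eilenberg--Zilber map $H^{\otimes k} \cong C/D(\mathbb{K}\Delta[1])^{\otimes k} \rightarrow C/D(\mathbb{K}\Delta[1]^{k})$ is only a chain homotopy equivalence, not an isomorphism. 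Already for $k=2$ the target is the normalized chains on the square, with two generators in degree $2$ (the two shuffle triangles) and five in degree $1$ (including the diagonal), while $H^{\otimes 2}$ has one generator in degree $2$ and four in degree $1$. The operad whose operation spaces genuinely \emph{are} tensor powers of $H$ is $W_H(T)$, i.e.\ the one computing the homotopy coherent nerve; your recipe of ``contracting the tensor factor attached to $e$ via $h_0 = h_1 - d(h)$'' is exactly the mechanism of the proof of Theorem~\ref{thminftyoperad} for $\mathrm{hcN}$, and it is simply not defined on the big-nerve operad: it says nothing about where the shuffle cells of the cubes go, and your description of the horn as ``all cells except those carrying an $h_0$-label on $e$'' is likewise the $W_H$-picture, not the cubical one. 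Conflating $W_H(T)$ with $C/D(\mathbb{K}W_{\Delta[1]}(T))$ is precisely what the paper warns cannot be done: relating them is the content of Proposition~\ref{prop:dgbvtosbv} and Theorem~\ref{thm:euivalencebigsmall}, where the comparison map $\alpha_T$ is a weak equivalence but not invertible.

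What the paper does instead is softer and avoids any explicit formula. First it observes that $W_!^{\Delta}(\Lambda^e[T])$ and $W_!^{\Delta}(\Omega[T])$ have \emph{equal} operation spaces $(e_1,\ldots,e_l;e_0)$ except for the single full one $(l_1,\ldots,l_m;r)$, so only that component needs extending. There, it transposes through the Dold--Kan adjunction $C/D \dashv \Gamma$ of Lemma~\ref{lemma:adjunctiondoldkan} to get a lifting problem of simplicial sets, which is solved because the inclusion $W_!^{\Delta}(\Lambda^e[T])(l_1,\ldots,l_m;r) \hookrightarrow \Delta[1]^{in(T)}$ is anodyne (\cite[Section 7]{MoerdijkWeiss09}) and $\Gamma\mathcal{P}(c_1,\ldots,c_m;c)$ is a Kan complex, being a simplicial $\mathbb{K}$-module; finally the chosen lift is propagated to the other leaf-orderings using injectivity of $i \mapsto l_i$, which settles the $\mathbb{S}_m$-equivariance you flagged but did not resolve. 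If you want to keep your retraction strategy, you would have to produce an operadic strong deformation retraction at the level of chains on cubes compatible with grafting and the symmetric group actions---this is substantially harder than the paper's argument and is not supplied by the relation $h_0 = h_1 - d(h)$.
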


\begin{proof}
Let us consider a tree $T$ together with and inner edge $e$ and a morphism $f$ of dendroidal sets from $\Lambda^e[T]$ to the big nerve $\mathrm{N}^{\mathrm{big}}_{\mathrm{dg}}(\mathscr P)$ of a dg colored operad $\mathscr P$. We denote by $l_1$, \ldots, $l_m$ the leaves of the tree $T$ and by $r$ its root. The images of $l_i$ (resp. $r$) under the morphism $f$ are denoted $c_i$ (resp. $c$). The morphism $f$ and the adjunction $C/D\, \mathbb{K}W_!^{\Delta} \dashv \mathrm{N}^{\mathrm{big}}_{\mathrm{dg}}$ give a map of chain complexes from $C/D(\mathbb{K}W_!^{\Delta}(\Lambda^e[T]))(l_1,\ldots,l_m;r)$ to $\mathcal{P}(c_1,\ldots,c_m; c)$. Through the adjunction $C/D \vdash \Gamma$,  it corresponds  to a map $f'$ of simplicial sets from $W_!^{\Delta}(\Lambda^e[T])(l_1,\ldots,l_m;r) $ to $\Gamma\mathcal{P}(c_1,\ldots,c_m;c)$. Since the map $W_!^{\Delta}(\Lambda^e[T])(l_1,\ldots,l_m;r) \hookrightarrow W_!^{\Delta}(\Omega[T])(l_1,\ldots,l_m;r)$ is anodyne (see \cite[Section 7]{MoerdijkWeiss09}) and since $\Gamma\mathcal{P}(c_1,\ldots,c_m)$ is a Kan complex, there is a lifting $f''$ of the map $f'$:
$$
\xymatrix{W_!^{\Delta}(\Lambda^e[T])(l_1,\ldots,l_m;r) \ar[r]^(0.57){f'} \ar@{^(->}[d] & \Gamma\mathcal{P}(c_1,\ldots,c_m;c)\\
W_!^{\Delta}(\Omega[T])(l_1,\ldots,l_m;r)\ . \ar[ru]_{f''}}
$$
As the map $i \in \{1,\ldots,m \} \mapsto l_i$ is an injection, for any permutation $\sigma \in \mathbb{S}_m$ the structural isomorphisms $W_!^{\Delta}(\Omega[T])(l_1,\ldots,l_m;r)\simeq W_!^{\Delta}(\Omega[T])(l_{\sigma(1)},\ldots,l_{\sigma(m)};r)$ and $\Gamma\mathcal{P}(c_1,\ldots,c_m;c) \simeq \Gamma\mathcal{P}(c_{\sigma(1)},\ldots,c_{\sigma(m)};c)$ give us a morphism 
$$
W_!^{\Delta}(\Omega[T])(l_{\sigma(1)},\ldots,l_{\sigma(m)};r) \rightarrow \Gamma\mathcal{P}(c_{\sigma(1)},\ldots,c_{\sigma(m)};c)\ .
$$
Moreover, for any other inputs $e_1$, \ldots, $e_l$ and output $e_0$, the simplicial set $W_!^{\Delta}(\Omega[T])(e_1,\allowbreak \ldots,e_l;e_0)$ is exactly $W_!^{\Delta}(\Lambda^e[T])(e_1, \ldots,e_l;e_0)$. So we have maps from $W_!^{\Delta}(\Omega[T])(e_1, \ldots,e_l;e_0)$ to $\Gamma\mathcal{P}$. All these maps induce a morphism of dg colored operads from $C/D(\mathbb{K}W_!^{\Delta}(\Omega[T]))$ to $\mathscr P$ and so a morphism of dendroidal sets from $ \Omega[T]$ to $\mathrm{N}^{\mathrm{big}}_{\mathrm{dg}}(\mathscr P)$ which extends the morphism $f$.
\end{proof}

\subsection{From the big nerve to the homotopy coherent nerve}

In this section, we construct a morphism of functors $\alpha^*$ from $\mathrm{N}^{\mathrm{big}}_{\mathrm{dg}} $ to $\mathrm{hcN}$.\\

The mappings $h \mapsto e_{1,1}$, $h_0 \mapsto e_{0,0}$ and $h_1 \mapsto e_{1,0}$ induce an isomorphism from the chain complex $H$ defined in Section \ref{sectionboardmanvogt} to the chain complex $C/D(\mathbb{K}\Delta[1]):=C(\mathbb{K}\Delta[1])/D(\mathbb{K}\Delta[1])$. The functor $C/D$ is symmetric monoidal through the Eilenberg--Zilber map; see \cite[3.1]{Faonte13}. So, we get morphisms $\alpha_k$ from $H^{\otimes k}$ to $C/D(\mathbb{K}\Delta[1]^k)$ for any integer $k \geq 1$.  Let us describe them. On the one hand, we have:
$$
\alpha_k(h \otimes \cdots \otimes h)=\sum_{\sigma \in \mathbb{S}_k} \mathrm{sign}(\sigma) e_{\sigma(k),k}  \otimes \cdots \otimes e_{\sigma(1),k} \ .
$$
On the other hand, let $A \in H^{\otimes k} $ and $ B \in H^{\otimes l}$ be homogeneous elements whose cumulate degree is $n$ and such that $\alpha_{k+l}(A \otimes B)=\sum_{i \in I} A_i \otimes B_i$ where $A_i \in C/D(\mathbb{K}\Delta[1]^k)$ and $B_i \in C/D(\mathbb{K}\Delta[1]^l)$. Then we have
\begin{align*}\label{eilenberg-zilberforh}
&\alpha_{k+l+1}(A \otimes h_0 \otimes B)=\sum_{i \in I} A_i \otimes e_{0,n} \otimes B_i,\\
&\alpha_{k+l+1}(A \otimes h_1 \otimes B)=\sum_{i \in I} A_i \otimes e_{n+1,n} \otimes B_i\ .
\end{align*}
Note also that the morphism $\alpha_{k,l}$ from $C/D(\mathbb{K}\Delta[1]^k) \otimes C/D(\mathbb{K}\Delta[1]^l)$ to $C/D(\mathbb{K}\Delta[1]^{k+l})$ given by the Eilenberg--Zilber map satisfy the equation $\alpha_{k,l}(\alpha_k \otimes \alpha_l)=\alpha_{k+l}$. This follows from the fact that the Eilenberg--Zilber map is the structural map making $C/D$ into a symmetric monoidal functor.

\begin{prop}\label{prop:dgbvtosbv}
These maps $\alpha_k$ induce a morphism of dg operads $\alpha_T: W_H(T) \rightarrow C/D\mathbb{K}W_{\Delta[1]}(T)$, for any tree $T$.  Moreover, these morphisms are functorial in $T$. Subsequently, they induce
\begin{itemize}
\item[$\triangleright$] a morphism of functors $\alpha$ from $W_!^{dg}$ to $C/D\mathbb{K}W_!^{\Delta}$ \ ,
\item[$\triangleright$] and a morphism of functors $\alpha^*: \mathrm{N}^{\mathrm{big}}_{\mathrm{dg}} \rightarrow \mathrm{hcN}$\ .
\end{itemize}
\end{prop}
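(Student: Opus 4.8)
The plan is to build $\alpha_T$ out of the maps $\alpha_k$ using the freeness of $W_H(T)$, to check that it is a chain map, and then to obtain $\alpha$ and $\alpha^*$ by purely formal arguments. Recall that, as a graded operad, $W_H(T)$ is free on the $\mathbb{S}$-module $V_H(T)$; for a subtree $T'$ of $T$ its component is spanned by the generators $e_t$, one for each contraction $t=T'/T_1\cdots T_k$, and such a generator is nothing but a labeling of the inner edges of $T'$ by $h$ or $h_0$, that is a decomposable tensor in $H^{\otimes in(T')}$. On the other side, linearization sends a cartesian product of simplicial sets to a tensor product of simplicial $\mathbb{K}$-modules, so the component of $C/D\mathbb{K}W_{\Delta[1]}(T)$ attached to $T'$ is $C/D\big((\mathbb{K}\Delta[1])^{\otimes in(T')}\big)$. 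I therefore set $\alpha_T(e_t)$ to be $\alpha_{|in(T')|}$ evaluated on the corresponding element of $H^{\otimes in(T')}$, and extend it uniquely to a morphism of graded operads by freeness; no further compatibility is needed at the graded level.

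The main step is to prove that $\alpha_T$ commutes with the differentials. The differential on a generator is given by Equation~(\ref{pm}): summing over the inner edges $a$ of $t$, the relation $d(h)=h_1-h_0$ of the interval turns the slot of $a$ into its $h_0$-part, namely the contraction $e_{t/a}$, and its $h_1$-part, namely the operadic composite $\gamma(e_{ta}\otimes e_{at})$. Since each $\alpha_k$ is exactly the iterated normalized Eilenberg--Zilber map precomposed with the chain isomorphism $H\cong C/D(\mathbb{K}\Delta[1])$, it is itself a chain map, and this handles the $e_{t/a}$ terms. For the composite terms I use that $\alpha_T$ is already a morphism of graded operads, whence $\alpha_T(\gamma(e_{ta}\otimes e_{at}))=\gamma(\alpha_T(e_{ta})\otimes\alpha_T(e_{at}))$; the recursive formula for $\alpha_{k+l+1}(A\otimes h_1\otimes B)$, together with the grafting convention that labels the new edge by (the degeneracies of) $1\in\Delta[1]_0$, whose normalized value is the extreme vertex $e_{n+1,n}$, and the relation $\alpha_{k,l}(\alpha_k\otimes\alpha_l)=\alpha_{k+l}$, show that $\alpha_{|in(T')|}$ of the $h_1$-labeled monomial equals this composite. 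I expect the genuine obstacle to be the sign bookkeeping: one must match the two signs of Equation~(\ref{pm}), which come from the chosen ordering of the inner edges, against the Koszul signs of the Eilenberg--Zilber map and the shuffle signs $\mathrm{sign}(\sigma)$ appearing in $\alpha_k(h\otimes\cdots\otimes h)$.

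Next I check naturality in $T$. For inner and outer cofaces both codendroidal objects act by the evident inclusions, so the relevant square commutes on generators by construction. For a codegeneracy the structure maps merge adjacent edges using the interval products---$\vee$ on $H$ and $\mathrm{max}$ on $\Delta[1]$---and the augmentations $\epsilon$; naturality then reduces to the compatibility of the family $(\alpha_k)_k$ with these two interval structures, which again follows from the explicit formulas and from $\alpha_{k,l}(\alpha_k\otimes\alpha_l)=\alpha_{k+l}$. Thus $T\mapsto\alpha_T$ is a morphism of codendroidal dg operads.

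Finally the two stated morphisms of functors follow formally. Both $W_!^{dg}$ and $C/D\mathbb{K}W_!^{\Delta}$ are colimit-preserving and are determined on representables by $\Omega[T]\mapsto W_H(T)$ and $\Omega[T]\mapsto C/D\mathbb{K}W_{\Delta[1]}(T)$; since every dendroidal set is canonically a colimit of representables, the natural family $(\alpha_T)_T$ extends uniquely to a natural transformation $\alpha:W_!^{dg}\Rightarrow C/D\mathbb{K}W_!^{\Delta}$. The nerves $\mathrm{hcN}$ and $\mathrm{N}^{\mathrm{big}}_{\mathrm{dg}}$ are the right adjoints of $W_!^{dg}$ and $C/D\mathbb{K}W_!^{\Delta}$, so I define $\alpha^*$ to be the mate of $\alpha$ under these adjunctions; concretely $\alpha^*(\mathscr P)_T$ is precomposition with $\alpha_T$, sending $\mathrm{Hom}_{\mathsf{dg}\text{-}\mathsf{Op}}(C/D\mathbb{K}W_{\Delta[1]}(T),\mathscr P)$ to $\mathrm{Hom}_{\mathsf{dg}\text{-}\mathsf{Op}}(W_H(T),\mathscr P)$, and this is natural in $T$ and in $\mathscr P$ by the naturality already established.
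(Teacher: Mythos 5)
Your proof is correct, and it rests on the same computational ingredients as the paper's own argument: the identification of the component of either Boardman--Vogt construction attached to a sub-tree $T'$ with $H^{\otimes k}$, respectively $C/D(\mathbb{K}\Delta[1]^{k})$, where $k$ is the number of inner edges of $T'$; the multiplicativity of the Eilenberg--Zilber maps, i.e.\ $\alpha_{k,l}(\alpha_k\otimes\alpha_l)=\alpha_{k+l}$ together with the recursive formula for $\alpha_{k+l+1}(A\otimes h_1\otimes B)$, which matches the two grafting conventions $h_1\leftrightarrow e_{n+1,n}$; the compatibility of the two interval structures for codegeneracies; and the formal colimit and adjunction arguments at the end. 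The organization, however, is dual to the paper's. The paper defines $\alpha_T$ componentwise as a morphism of \emph{dg} $\mathbb{S}$-modules, so that commutation with the differentials is automatic (each $\alpha_k$ is a chain map), and all the work goes into compatibility with the operadic compositions, which is exactly the commutative square displayed in the paper's proof --- your $h_1$-step. You instead define $\alpha_T$ on the generators $e_t$ and extend by freeness to a morphism of \emph{graded} operads, so that compatibility with compositions is automatic, and all the work goes into commutation with the differentials via Equation~(\ref{pm}). Two remarks on your route. First, checking $d\,\alpha_T=\alpha_T\, d$ on generators suffices only because both differentials are operadic derivations and $\alpha_T$ is a morphism of graded operads, so that the discrepancy is a derivation along $\alpha_T$ and vanishes as soon as it vanishes on generators; you use this reduction implicitly and should state it. Second, the sign bookkeeping that you flag as ``the genuine obstacle'' and leave undone is precisely what the paper's organization renders vacuous: once your $h_1$-compatibility step is established, your multiplicative extension coincides with the componentwise iterated Eilenberg--Zilber map, and commutation with the differentials, signs included, then follows from that map being a chain map --- the signs in Equation~(\ref{pm}) are defined exactly so that the differential of $W_H(T)$ becomes the tensor differential of $H^{\otimes k}$ under this identification. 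So your one remaining gap can be closed with no computation by importing that observation, which is in effect what the paper does.
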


\begin{proof}
The maps $\alpha_k$ induce morphisms of dg $\mathbb{S}$-modules $\alpha_T: W_H(T) \rightarrow C/D\mathbb{K}W_{\Delta[1]}(T) $ for any tree $T$. We have to show that the morphisms $\alpha_T$ are morphisms of dg operads and that they are functorial with respect to the trees $T$. 
\begin{itemize}
\item[$\triangleright$] The former property follows from the fact that for any integers $k,l \geq 0$, the following square is commutative.
$$
\xymatrix@R=10pt@C=25pt{H^{\otimes k} \otimes H^{\otimes l} \ar[r]^(0.34){\alpha_k \otimes \alpha_l} \ar[dd] & C/D(\mathbb{K}\Delta[1]^k) \otimes C/D(\mathbb{K}\Delta[1]^l) \ar[d]^{\alpha_{k,l}}\\
& C/D(\mathbb{K}\Delta[1]^{k+l}) \ar[d]\\
H^{\otimes k+1+l} \ar[r]_{\alpha_{k+1+l}} & C/D(\mathbb{K}\Delta[1]^{k+1+l})\ , }
$$
where the left vertical map sends $A \otimes B \in H^{\otimes k} \otimes H^{\otimes l}$ to $A \otimes h_1 \otimes B$ and where the bottom-right vertical map is the functorial image under $C/D\, \mathbb{K}(-)$ of the morphism from $\Delta[1]^k \times \Delta[1]^l$ to $\Delta[1]^{k+1+l}$ which sends $(A,B)$ to $(A,e_{n+1,n},B)$.
\item[$\triangleright$] Let us show that the morphisms $\alpha_T$ are functorial with respect to the trees $T$. It is straightforward to show that for any coface $\delta: T \rightarrow T'$, we have $C/D\, \mathbb{K}W_{\Delta[1]}(\delta) \alpha_T =\alpha_{T'} W_{H}(\delta)$. To prove that the same equation holds for a codegeneracy, it suffices to note that the map
$$
\xymatrix{h \otimes A \in H^{\otimes k} \ar@{|->}[r]^{\alpha_k} & \sum_i \pm e_{i,n} \otimes A_i \ar@{|->}[r] & \sum_i \pm A_i \in C/D(\mathbb{K}\Delta[1]^{k-1})}
$$
is $0$.
\end{itemize}

\end{proof}

The goal of the end of Section 4 is to prove the following theorem.

\begin{thm}\label{thm:euivalencebigsmall}
For any dg colored operad $\mathscr P$, the morphism of dendroidal sets $\alpha^*(\mathscr P): \mathrm{N}^{\mathrm{big}}_{\mathrm{dg}}(\mathscr P) \rightarrow \mathrm{hcN}(\mathscr P)$ is a weak equivalence.
\end{thm}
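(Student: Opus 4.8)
The plan is to apply the characterization of weak equivalences between $\infty$-operads recalled above (the Proposition of Heuts): since $\mathrm{N}^{\mathrm{big}}_{\mathrm{dg}}(\mathscr P)$ and $\mathrm{hcN}(\mathscr P)\cong \mathrm{N}^{\Omega}(\mathscr P)$ are both $\infty$-operads, the morphism $\alpha^*(\mathscr P)$ is a weak equivalence as soon as it is essentially surjective and fully faithful. Essential surjectivity is immediate: the trivial tree $|$ has no inner edge, so both $W_H(|)$ and $C/D\mathbb{K}W_{\Delta[1]}(|)$ are the unit operad on a single color and $\alpha_|$ is an isomorphism; hence $\alpha^*(\mathscr P)$ restricts to the identity bijection $\mathrm{N}^{\mathrm{big}}_{\mathrm{dg}}(\mathscr P)_| = C = \mathrm{hcN}(\mathscr P)_|$ on colors, and a fortiori $i^*\tau_d\,\alpha^*(\mathscr P)$ is bijective on objects.

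It then remains to prove fully faithfulness, i.e. that for all colors $c,c_1,\ldots,c_m$ of $\mathscr P$ the induced map of operation spaces
\[
\alpha^*(\mathscr P)\colon \mathrm{N}^{\mathrm{big}}_{\mathrm{dg}}(\mathscr P)^L(c_1,\ldots,c_m;c)\longrightarrow \mathrm{hcN}(\mathscr P)^L(c_1,\ldots,c_m;c)
\]
is a weak homotopy equivalence of simplicial sets. First I would observe, exactly as in the proof of Lemma~\ref{lemma:normalizeschaincomplex}, that both sides are simplicial $\mathbb{K}$-modules and that $\alpha^*(\mathscr P)$ is $\mathbb{K}$-linear, so that it suffices to show that it induces an isomorphism on the homology of the associated normalized Moore complexes. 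By Lemma~\ref{lemma:normalizeschaincomplex} (together with $\mathrm{hcN}\cong\mathrm{N}^{\Omega}$) the normalized complex of the target is $tr(\mathcal{P}(c_1,\ldots,c_m;c))$.

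The heart of the argument is the identification of the source and of the map. Fixing the restriction to the linear tree $[n]\subset C_{m,n}$ to be the degeneracy of $c$ rigidifies the structure coming from $[n]$, so that an $n$-simplex on either side amounts to a degree-zero chain map to $\mathcal{P}(c_1,\ldots,c_m;c)$ out of the \emph{top} mapping complex of the relevant Boardman--Vogt construction of $C_{m,n}$: the complex $H^{\otimes n}$ for $W_H(C_{m,n})$, whose $n$ inner edges are each labelled by the interval $H$, and the complex $C/D(\mathbb{K}\Delta[1]^{\times n})$ of normalized chains on the $n$-cube for $C/D\mathbb{K}W_{\Delta[1]}(C_{m,n})$. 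Under these identifications, $\alpha^*(\mathscr P)$ is precomposition with the map $\alpha_n\colon H^{\otimes n}\to C/D(\mathbb{K}\Delta[1]^{\times n})$ of Proposition~\ref{prop:dgbvtosbv} which, through the isomorphism $H\cong C/D(\mathbb{K}\Delta[1])$, is the iterated Eilenberg--Zilber shuffle map and is functorial in $n$.

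By the Eilenberg--Zilber theorem each $\alpha_n$ is a chain homotopy equivalence, with the Alexander--Whitney map as homotopy inverse. The remaining step, which I expect to be the main obstacle, is to deduce from this levelwise homotopy equivalence of the cosimplicial top complexes that the induced map of simplicial $\mathbb{K}$-modules is a quasi-isomorphism on normalized complexes, and that $\alpha^*(\mathscr P)$ thereby realizes the comparison with $tr(\mathcal{P}(c_1,\ldots,c_m;c))$. Here one must be careful on two counts: the operation spaces are the $0$-cycles (the strict chain maps) of the internal hom complexes, not the whole hom complexes, so preservation of homotopy equivalences cannot be invoked purely formally; and $\Gamma$ is only lax, not symmetric, monoidal, which is exactly why the big nerve had to be reformulated on the $C/D$ side before Proposition~\ref{prop:dgbvtosbv}, so the shuffle/Alexander--Whitney pair and its signs have to be handled explicitly rather than by transporting Lurie's computation for dg categories verbatim. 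Once these identifications are in place, assembling the $\alpha_n$ over all $n$ yields the required quasi-isomorphism, whence fully faithfulness and, together with essential surjectivity, the theorem.
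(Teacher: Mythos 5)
Your reduction of the statement to essential surjectivity plus fully faithfulness, and the observation that both operation spaces are simplicial $\mathbb{K}$-modules so that everything can be tested on normalized complexes, agree with the paper's setup. But the core of your argument has two problems, one of identification and one of substance. First, fixing the restriction to $[n]\subset C_{m,n}$ to be the degeneracy of $c$ does more than ``rigidify'': it imposes relations on the top mapping complex. An $n$-simplex of $\mathrm{hcN}(\mathscr P)^L(c_1,\ldots,c_m;c)$ is a chain map out of the \emph{quotient} $H^{\otimes n}/\!\sim$, where $A\otimes h_1\otimes B$ is identified with $0$ or with $h_1\otimes\cdots\otimes h_1\otimes B$ according to the degree of $A$ (an $h_1$-labelled inner edge exhibits the element as an operadic composite through the linear part, where the morphism is forced to take identity or zero values); this quotient is isomorphic to $C/D\mathbb{K}\Delta[n]$, the normalized chains on the $n$-simplex, not $H^{\otimes n}$ (this is Proposition \ref{prop:isohc}). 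Likewise the big-nerve side is governed by $C/D\mathbb{K}Q[n]$ with $Q[n]=\Delta[1]^n/\!\sim$ (Proposition \ref{prop:isobig}), not by the chains on the full cube. So the map you must analyze is not the Eilenberg--Zilber map $\alpha_n$ but the map it induces on these quotients, and the Eilenberg--Zilber theorem, together with its Alexander--Whitney inverse and homotopies, does not descend to the quotients automatically.

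Second, and more seriously, the step you yourself flag as ``the main obstacle'' is exactly where the mathematical content lies, and your proposal never supplies it: even granting levelwise chain homotopy equivalences of the cosimplicial complexes, there is no formal way to conclude that the induced map of simplicial hom-spaces is a weak equivalence, because in each simplicial degree these spaces consist of \emph{strict} degree-zero chain maps (the $0$-cycles of the internal hom complex), which are not homotopy invariant in the source. The paper closes this gap by a different device altogether: it introduces the cosimplicial simplicial set $Q$, constructs an explicit retraction $\beta\colon Q[-]\rightarrow\Delta[-]$ whose composite with $a:=\alpha$ evaluated on $D_{m,-}$ satisfies $ba=\mathrm{id}$ on $C/D\mathbb{K}\Delta[-]$, hence $a^*b^*=\mathrm{id}$ on operation spaces; it then imports from \cite{Lurie09} (Propositions 2.2.2.7 and 2.2.2.9, packaged as Lemma \ref{lemma:lurie2.2.2}) the nontrivial fact that $b^*$ is a pointwise weak equivalence, and concludes by two-out-of-three that $a^*=\alpha^*(\mathscr P)(c_1,\ldots,c_m;c)$ is a weak equivalence. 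Your argument contains neither the retraction nor any substitute for Lurie's theorem, so as written it is a plan with the decisive step missing rather than a proof.
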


We already know that the colors of $\mathrm{N}^{\mathrm{big}}_{\mathrm{dg}}(\mathscr P)$ and the colors $\mathrm{hcN}(\mathscr P)$ are both the colors of $\mathscr P$ and that $\alpha^*(\mathscr P)$ is the identity on these. Hence, the morphism $\alpha^*(\mathscr P)$ is essentially surjective. So, as both $\mathrm{N}^{\mathrm{big}}_{\mathrm{dg}}(\mathscr P)$ and $\mathrm{hcN}(\mathscr P)$ are $\infty$-operads, we only have to prove that $\alpha^*(\mathscr P)$ is fully faithful to prove the theorem; that is, we have to show that for any integer $m$ and for any colors $c_1$, \ldots, $c_m$ and $c$ of $\mathscr P$, the map:
$$
\alpha^*(\mathscr P)(c_1,\ldots,c_m;c):\mathrm{N}^{\mathrm{big}}_{\mathrm{dg}}(\mathscr P)^L(c_1,\ldots,c_m;c) \rightarrow \mathrm{hcN}(\mathscr P)^L(c_1,\ldots,c_m;c) 
$$
is a weak equivalence of simplicial sets for the Kan--Quillen model structure.

\subsection{The cosimplicial simplicial set Q}

We introduce here a cosimplicial simplicial set denoted $Q$ which will allow us to deal with the operations space $\mathrm{N}^{\mathrm{big}}_{\mathrm{dg}}(\mathscr P)^L(c_1,\ldots,c_m;c)$ of the big nerve. Lurie introduced in \cite[2.2.2]{Lurie09} a very similar cosimplicial simplicial set $Q^{\bullet}$ in a slightly different context and with different conventions. The purpose of this subsection is to recall some results of \cite[Section 2.2.2]{Lurie09} about $Q^\bullet$ which extend directly to $Q$.\\

For any integer $n \geq 0$, let $D_{m,n}$ be the following colimit of dendroidal sets.
$$
D_{m,n}:= C_{m,n} \coprod_{\Delta[n]} \Delta[0]
$$
The colors of  of $D_{m,n}$ are the leaves $l_1$, \ldots, $l_m$ and the root $r$ of the tree $C_{m,n}$. Since $\Delta[-]$ is a cosimplicial simplicial set, $D_{m,-}$ is a cosimplicial dendroidal set. 

\begin{defin}[The cosimplicial simplicial set $Q$]
Let $Q$ be the cosimplicial simplicial set defined by
$$
Q[n]:= W_!^{\Delta}(D_{m,n})(l_1, \ldots,l_m;r)
$$
for any integer $n \geq 0$. 
\end{defin}

\begin{prop}\label{prop:isobig}
There is an isomorphism of simplicial sets:
$$
\mathrm{N}^{\mathrm{big}}_{\mathrm{dg}}(\mathscr P)^L(c_1,\ldots,c_m;c) \simeq \mathrm{Hom}_{\mathsf{dg}\text{-}\mathsf{Mod}}\big(C/D\mathbb{K}Q[-], \mathcal{P}(c_1,\ldots,c_m;c) \big)\ .
$$
\end{prop}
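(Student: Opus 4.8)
The plan is to compute the $n$-simplices of both sides and match them naturally in $[n]$, using the defining pushout of $D_{m,n}$ together with the adjunction $C/D\,\mathbb{K}W_!^\Delta \dashv \mathrm{N}^{\mathrm{big}}_{\mathrm{dg}}$.

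First I would reinterpret an $n$-simplex of the operation space as a color-fixed map out of $D_{m,n}$. By definition, an $n$-simplex of $\mathrm{N}^{\mathrm{big}}_{\mathrm{dg}}(\mathscr P)^L(c_1,\ldots,c_m;c)$ is a morphism $\Omega[C_{m,n}] \rightarrow \mathrm{N}^{\mathrm{big}}_{\mathrm{dg}}(\mathscr P)$ sending each leaf $l_i$ to $c_i$ and whose restriction along the canonical map $\Delta[n] \rightarrow \Omega[C_{m,n}]$ is the degeneracy of the color $c$, that is, factors as $\Delta[n] \rightarrow \Delta[0] \xrightarrow{c} \mathrm{N}^{\mathrm{big}}_{\mathrm{dg}}(\mathscr P)$. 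By the universal property of the pushout $D_{m,n}=C_{m,n}\coprod_{\Delta[n]}\Delta[0]$, such data is exactly a morphism $D_{m,n} \rightarrow \mathrm{N}^{\mathrm{big}}_{\mathrm{dg}}(\mathscr P)$ sending $l_i \mapsto c_i$ and the collapsed root color $r$ to $c$.

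Next I would cross the adjunction: such a morphism corresponds to a morphism of dg operads $C/D(\mathbb{K}W_!^\Delta(D_{m,n})) \rightarrow \mathscr P$ sending $l_i \mapsto c_i$ and $r \mapsto c$. Since $W_!^\Delta$ preserves colimits, $W_!^\Delta(D_{m,n})$ is the pushout of the simplicial operads $W_{\Delta[1]}(C_{m,n})$ and the trivial one-color operad along $W_{\Delta[1]}([n])$; its colors are $l_1,\ldots,l_m,r$. Any subtree of $C_{m,n}$ whose root is $r$ and which reaches some leaf must contain the corolla vertex, and hence all the leaves, so after collapsing the linear part the only non-identity operations are those of arity $(l_{\sigma(1)},\ldots,l_{\sigma(m)};r)$ for $\sigma \in \mathbb{S}_m$, whose underlying simplicial set in the chosen order is $Q[n]=W_!^\Delta(D_{m,n})(l_1,\ldots,l_m;r)$. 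These operations have output $r$, which never occurs as an input, and inputs among the $l_i$, which never occur as outputs; consequently there are no non-trivial composites, and $C/D(\mathbb{K}W_!^\Delta(D_{m,n}))$ is the free dg operad generated, over the fixed set of colors, by the single chain complex $C/D(\mathbb{K}Q[n])$ placed in arity $(l_1,\ldots,l_m;r)$.

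By the universal property of this free operad, a color-fixed morphism $C/D(\mathbb{K}W_!^\Delta(D_{m,n})) \rightarrow \mathscr P$ is precisely a morphism of chain complexes $C/D(\mathbb{K}Q[n]) \rightarrow \mathcal{P}(c_1,\ldots,c_m;c)$. All of these identifications are natural in $[n]$, being induced by the cosimplicial dendroidal set $D_{m,-}$ and the resulting cosimplicial simplicial set $Q[-]$, so they assemble into the asserted isomorphism of simplicial sets. I expect the main obstacle to be the structural analysis of $W_!^\Delta(D_{m,n})$ in the third step: one must check that collapsing the linear tree leaves exactly one non-trivial mapping object and imposes no composition relations, which is the operadic counterpart of Lurie's computation of $Q^\bullet$, recovered here in the case $m=1$.
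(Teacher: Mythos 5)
Your proof is correct and follows essentially the same route as the paper: identify an $n$-simplex of the operation space with a color-fixed dendroidal map out of $D_{m,n}$ via the pushout, transpose across the adjunction $C/D\,\mathbb{K}W_!^{\Delta} \dashv \mathrm{N}^{\mathrm{big}}_{\mathrm{dg}}$, and reduce a color-fixed operad map out of $C/D(\mathbb{K}W_!^{\Delta}(D_{m,n}))$ to a single chain map out of $C/D(\mathbb{K}Q[n])$. The only difference is that you spell out the structural analysis of $W_!^{\Delta}(D_{m,n})$ (no composable non-identity operations, hence freeness on the single operation space), which the paper's two-line proof leaves implicit.
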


\begin{proof}
A $n$-vertex of $\mathrm{N}^{\mathrm{big}}_{\mathrm{dg}}(\mathscr P)^L(c_1,\ldots,c_m;c)$ is a morphism of dendroidal sets from $D_{m,n}$ to $\mathrm{N}^{\mathrm{big}}_{\mathrm{dg}}(\mathscr P)$ which sends the colors $l_i$ to $c_i$ and $r$ to $c$. So it is a morphism of dg operads from $C/D\mathbb{K}W_!^{\Delta}(D_{m,n})$ to $\mathscr P$ which sends the colors $l_i$ to $c_i$ and $r$ to $c$ and so it is a morphism of chain complexes from $C/D\mathbb{K}(Q[n])$ to $\mathcal{P}(c_1,\ldots,c_m;c)$.
\end{proof}

The simplicial set $Q[n]$ admits the following description.
$$
Q[n] \simeq\Delta[1]^n   / \sim
$$
where $(A , e_{k+1,k}, B)  \sim (A', e_{k+1,k},B)$. We now describe the cosimplicial structure of $Q[-]$. Let $(q_1,\ldots,q_n) \in Q[n]_l$ be a $l$-vertex of $Q[n]$ represented by a $l$-vertex of $\Delta[1]^n$ and let $\delta_i: [n] \rightarrow [n+1]$ (resp. $\sigma_i: [n] \rightarrow [n-1]$) be a coface (resp. a codegeneracy).
\begin{itemize}
\item[$\triangleright$] If $i \geq 1$: 
\begin{align*}
&Q(\delta_i)(q_1,\ldots,q_n)=(q_1,\ldots,q_{i-1},e_{0,l},q_{i},\ldots,q_n)\ ,\\
&Q(\sigma_i)(q_1,\ldots,q_n)=(q_1,\ldots,\mathrm{max}(q_i,q_{i+1}),\ldots,q_n)\ .
\end{align*}

\item[$\triangleright$] If $i=0$:
\begin{align*}
&Q(\delta_i)(q_1,\ldots,q_n)=(e_{l+1,l},q_1,\ldots,q_n)\ ,\\
&Q(\sigma_i)(q_1,\ldots,q_n)=(q_2,\ldots,q_n)\ .
\end{align*}

\end{itemize}

\begin{defin}[A morphism from $Q$ to $\Delta$, \cite{Lurie09} Remark 2.2.2.6]
Let $\beta$ be the morphism of cosimplicial simplicial sets $\beta: Q[-] \rightarrow \Delta[-]$ which sends the element $e_{i_1,k} \otimes \cdots \otimes e_{i_n,k} \in Q[n]_k$ to
$$
[\mathrm{max}\{j|i_j=k+1\} \leq \mathrm{max}\{j | i_j \geq k\} \leq \cdots \leq \mathrm{max}_j\{j| i_j \geq 1\}] \in \Delta[n]_k
$$
with the convention $\mathrm{max}(\emptyset)=0$. We denote by $b$ the morphism of cosimplicial chain complexes $b:=C/D\mathbb{K}\beta: C/D\mathbb{K}Q[-] \rightarrow C/D\mathbb{K}\Delta[-]$.
\end{defin}

\begin{lemma}\label{lemma:lurie2.2.2}
The morphism of functors induced by the morphism $b$
$$
b^*: \mathrm{Hom}_{\mathsf{dg}\text{-}\mathsf{Mod}}\big(C/D\mathbb{K}\Delta[-],- \big) \rightarrow \mathrm{Hom}_{\mathsf{dg}\text{-}\mathsf{Mod}}\big(C/D\mathbb{K}Q[-],- \big)
$$
is a point-wise  equivalence, that is the morphism of simplicial sets $b^*(V)$ is a weak equivalence of simplicial sets for the Kan--Quillen model structure,  for any chain complex $V$.
\end{lemma}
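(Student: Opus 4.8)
The plan is to peel away the differential-graded algebra by a chain of adjunctions, turning the statement into a purely simplicial comparison, and then to invoke the homotopy theory of Reedy cofibrant cosimplicial objects. First I would note that $C/D\,\mathbb{K}X$ is concentrated in nonnegative degrees, so a chain map out of it only sees the connective truncation of the target: $\mathrm{Hom}_{\mathsf{dg}\text{-}\mathsf{Mod}}(C/D\,\mathbb{K}X, V)\cong \mathrm{Hom}_{\mathsf{dg}\text{-}\mathsf{Mod}^{\geq 0}}(C/D\,\mathbb{K}X, tr(V))$, where $tr$ is the truncation functor. Composing with the Dold--Kan adjunction $C/D\dashv \Gamma$ of Section \ref{section:dold-kan} and with the free-module/forgetful adjunction gives a natural isomorphism
\[
\mathrm{Hom}_{\mathsf{dg}\text{-}\mathsf{Mod}}\big(C/D\,\mathbb{K}X,\, V\big)\;\cong\;\mathrm{Hom}_{\mathsf{sSet}}\big(X,\, \Gamma\, tr(V)\big),
\]
natural in the simplicial set $X$. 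Writing $K:=\Gamma\, tr(V)$, which is a simplicial $\mathbb{K}$-module and hence a Kan complex, the two simplicial sets in the statement become the ``$Y$-singular complexes'' $\mathrm{Sing}_\Delta(K)$ and $\mathrm{Sing}_Q(K)$, where $\mathrm{Sing}_Y(K)_n:=\mathrm{Hom}_{\mathsf{sSet}}(Y[n],K)$ for a cosimplicial simplicial set $Y$ (this is exactly the identification underlying Proposition \ref{prop:isobig}). Since $\mathrm{Sing}_\Delta(K)=K$ by the Yoneda lemma, the map $b^*(V)$ is identified with the map $\beta^*:K\to \mathrm{Sing}_Q(K)$ induced by $\beta:Q[-]\to\Delta[-]$. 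Thus it suffices to prove that $\beta^*$ is a weak equivalence for every Kan complex $K$.

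To establish this I would use the realization--singular adjunctions attached to a cosimplicial simplicial set. For $Y\in\{Q,\Delta\}$ set
\[
\mathrm{Re}_Y(S):=\textstyle\int^{[n]} S_n\cdot Y[n],\qquad \mathrm{Sing}_Y(K)_n:=\mathrm{Hom}_{\mathsf{sSet}}(Y[n],K),
\]
so that $\mathrm{Re}_Y\dashv\mathrm{Sing}_Y$; by the standard theory of Reedy cofibrant cosimplicial objects (see \cite[Ch.~VII]{GoerssJardine09}) the functor $\mathrm{Re}_Y$ is left Quillen on $\mathsf{sSet}$ as soon as $Y$ is Reedy cofibrant. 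The transformation $\beta$ induces a natural transformation $\mathrm{Re}_Q\Rightarrow\mathrm{Re}_\Delta$, and since $\mathrm{Re}_\Delta\cong\mathrm{id}$ by the co-Yoneda lemma this is just the natural map $\mathrm{Re}_Q(S)\to S$. The mate of a natural weak equivalence of left Quillen functors is a natural weak equivalence of the right adjoints on fibrant objects (a standard consequence of Ken Brown's lemma): explicitly, for cofibrant $S$ and the fibrant $K$, the map $\mathrm{Hom}(S,\mathrm{Sing}_\Delta K)\cong\mathrm{Hom}(\mathrm{Re}_\Delta S,K)\to\mathrm{Hom}(\mathrm{Re}_Q S,K)\cong\mathrm{Hom}(S,\mathrm{Sing}_Q K)$ is induced by a weak equivalence between cofibrant objects into a fibrant target, hence is a weak equivalence, and Yoneda in the homotopy category then forces $\beta^*:\mathrm{Sing}_\Delta K\to\mathrm{Sing}_Q K$ to be a weak equivalence. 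This reduces the whole statement to two properties of the cosimplicial simplicial set $Q$: that it is Reedy cofibrant, and that each $\beta[n]:Q[n]\to\Delta[n]$ is a weak equivalence.

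Both properties follow from Lurie's analysis of the cosimplicial simplicial set $Q^\bullet$ in \cite[Section 2.2.2]{Lurie09}, transported through the cube-quotient description $Q[n]\simeq\Delta[1]^n/\!\sim$ recorded above together with the explicit formulas for the cofaces $Q(\delta_i)$ and codegeneracies $Q(\sigma_i)$. The level equivalence $\beta[n]$ is immediate: $\Delta[n]$ is contractible, and $Q[n]$ is weakly contractible (the $n$-cube $\Delta[1]^n$ is contractible, and the identifications $(A,e_{k+1,k},B)\sim(A',e_{k+1,k},B)$ collapse faces to points without changing the homotopy type), so any map between them is a weak equivalence. Reedy cofibrancy amounts to checking that the latching maps $L_nQ\hookrightarrow Q[n]$, assembled from the degeneracy operators, are monomorphisms of simplicial sets. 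I expect this to be the main obstacle: one must understand precisely how the collapsing relation interacts with the ``max'' degeneracies $Q(\sigma_i)$ in order to show that no nondegenerate simplex of $Q[n]$ is hit by two distinct iterated degeneracies, which is exactly what guarantees that the latching inclusion is injective. This is the one genuinely combinatorial point, and it is carried out for $Q^\bullet$ in \cite[Section 2.2.2]{Lurie09} by the same argument. Once these two inputs are in hand, the adjunction reduction of the first paragraph and the Quillen mate argument of the second conclude the proof.
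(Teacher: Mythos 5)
Your proposal follows essentially the same route as the paper: the paper's entire proof of this lemma is a citation of \cite[Propositions~2.2.2.7 and 2.2.2.9]{Lurie09}, and what you have written is a detailed reconstruction of exactly that argument --- the adjunction reduction $\mathrm{Hom}_{\mathsf{dg}\text{-}\mathsf{Mod}}(C/D\mathbb{K}X,V)\cong\mathrm{Hom}_{\mathsf{sSet}}(X,\Gamma\,tr(V))$ identifying $b^*(V)$ with $\beta^*\colon K\to\mathrm{Sing}_Q(K)$ for the Kan complex $K=\Gamma\,tr(V)$, followed by the realization/singular Quillen-pair argument, with the two combinatorial inputs about $Q$ (Reedy cofibrancy, levelwise contractibility) delegated to \cite[Section~2.2.2]{Lurie09} just as the paper delegates them. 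So the approach is the right one, and your explicit translation between the dg and simplicial settings is precisely what the paper leaves implicit.

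There is, however, one step that is false as stated: it is not true that $\mathrm{Re}_Y$ is left Quillen ``as soon as $Y$ is Reedy cofibrant.'' Reedy cofibrancy only guarantees that $\mathrm{Re}_Y$ preserves cofibrations (it sends $\partial\Delta[n]\hookrightarrow\Delta[n]$ to the latching map), not trivial cofibrations. Concretely, $Y[n]:=\Delta[n]/\mathrm{sk}_0\,\Delta[n]$ is a Reedy cofibrant cosimplicial simplicial set with $Y[0]=\Delta[0]$ and $Y[1]$ the simplicial circle, and $\mathrm{Re}_Y$ sends the trivial cofibration $\Lambda^1[1]\hookrightarrow\Delta[1]$ to the inclusion of a point into the circle, which is not a weak equivalence. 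The correct hypothesis is that $Y$ be a cosimplicial resolution of the point, i.e.\ Reedy cofibrant \emph{and} homotopically constant (each $Y[n]$ weakly contractible); see \cite[Chapter~16]{Hirshhorn03}. This is not a fatal gap for you, because the missing half is exactly the levelwise contractibility of $Q[n]$ that you establish anyway in order to show that $\mathrm{Re}_Q\Rightarrow\mathrm{Re}_\Delta$ is a pointwise weak equivalence; you just need to invoke both facts together when asserting that $\mathrm{Re}_Q$ is a left Quillen functor, after which your mate argument goes through. A second, smaller slip: for a \emph{cosimplicial} object the latching object is assembled from the degree-raising maps, i.e.\ the cofaces $Q(\delta_i)$, not from the codegeneracies $Q(\sigma_i)$, so the combinatorial verification of Reedy cofibrancy concerns how the collapsing relation interacts with the formulas for $Q(\delta_i)$; since you (like the paper) defer that verification to Lurie, nothing else in your argument changes.
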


\begin{proof}
It is a consequence of \cite[Proposition 2.2.2.7]{Lurie09} and \cite[Proposition 2.2.2.9]{Lurie09}.
\end{proof}

\subsection{The big nerve is equivalent to the homotopy coherent nerve}

We now prove Theorem \ref{thm:euivalencebigsmall}. For that purpose, we use the method of the proof of \cite[Proposition 1.3.1.17]{Lurie12}.

\begin{prop}\label{prop:isohc}
There is an isomorphism of simplicial sets:
$$
\mathrm{hcN}(\mathscr P)^L(c_1,\ldots,c_m;c) \simeq \mathrm{Hom}_{\mathsf{dg}\text{-}\mathsf{Mod}}\big(C/D\mathbb{K}\Delta[-], \mathcal{P}(c_1,\ldots,c_m;c) \big)\ .
$$
\end{prop}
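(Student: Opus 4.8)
The plan is to follow, step for step, the proof of Proposition~\ref{prop:isobig}, replacing the simplicial Boardman--Vogt construction by its differential graded analogue. An $n$-simplex of $\mathrm{hcN}(\mathscr P)^L(c_1,\ldots,c_m;c)$ is, by the definition of the $(-)^L$ construction and of the cosimplicial dendroidal set $D_{m,-}=C_{m,-}\coprod_{\Delta[-]}\Delta[0]$, a morphism of dendroidal sets $D_{m,n}\to\mathrm{hcN}(\mathscr P)$ sending the leaves $l_i$ to $c_i$ and the root $r$ to $c$. Since $W_!^{dg}$ is left adjoint to $\mathrm{hcN}$ and preserves colimits, such a morphism is the same datum as a morphism of dg operads $W_!^{dg}(D_{m,n})=W_H(C_{m,n})\coprod_{W_H([n])}W_H(|)\to\mathscr P$ with the same behaviour on colours. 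As in the proof of Proposition~\ref{prop:isobig}, the colours $l_1,\ldots,l_m$ (resp. $r$) occur only as leaves (resp. as the root), so no operation of $W_!^{dg}(D_{m,n})$ with inputs among the $l_i$ and output $r$ admits a nontrivial decomposition; hence, once the colours are fixed, the operad morphism is equivalent to a single morphism of chain complexes $W_!^{dg}(D_{m,n})(l_1,\ldots,l_m;r)\to\mathcal P(c_1,\ldots,c_m;c)$, naturally in $n$.

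It then remains to produce a natural isomorphism of cosimplicial chain complexes $W_!^{dg}(D_{m,n})(l_1,\ldots,l_m;r)\cong C/D\,\mathbb K\Delta[n]$; post-composing with $\mathrm{Hom}_{\mathsf{dg}\text{-}\mathsf{Mod}}(-,\mathcal P(c_1,\ldots,c_m;c))$ then yields the statement. I would establish this by unwinding the pushout exactly as in the proof of Lemma~\ref{lemma:normalizeschaincomplex}. A generator of $W_!^{dg}(D_{m,n})(l_1,\ldots,l_m;r)$ is a contraction $t$ of a subtree of $C_{m,n}$ containing the corolla $C_m$ — the collapse of the spine $\Delta[n]$ forcing the remaining linear edges to carry only the unit — and, by the bijection recalled in Remark~\ref{rmkfreesmodule}, such trees $t$ correspond to the sequences $0\le i_0<\cdots<i_k\le n$ recording which of the $n+1$ spine edges survive in $t$, the degree of the associated generator being $k$. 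This is precisely the indexing set of the nondegenerate $k$-simplices of $\Delta[n]$, i.e. the canonical basis of $(C/D\,\mathbb K\Delta[n])_k=(N\mathbb K\Delta[n])_k$, and under this correspondence the internal differential becomes the alternating sum $\sum_{\alpha}(-1)^\alpha[\,i_0<\cdots<\widehat{i_\alpha}<\cdots<i_k\,]$, which is exactly the boundary of the normalized chains on $\Delta[n]$.

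Finally I would verify compatibility with the cosimplicial structures, so that the bijection assembles into an isomorphism of cosimplicial chain complexes: a coface $\delta_i$ reindexes $[\,i_0<\cdots<i_k\,]\mapsto[\,\delta_i(i_0)<\cdots<\delta_i(i_k)\,]$ and a codegeneracy $\sigma_i$ sends $[\,i_0<\cdots<i_k\,]$ to $[\,\sigma_i(i_0)<\cdots<\sigma_i(i_k)\,]$ when the result is strictly increasing and to $0$ otherwise, which is the cosimplicial structure of $\Delta[-]$ transported through $C/D\,\mathbb K(-)$. All of this, signs included, has already been carried out in the proof of Lemma~\ref{lemma:normalizeschaincomplex}, which describes the simplicial $\mathbb K$-module $\mathrm{N}^\Omega(\mathscr P)^L(c_1,\ldots,c_m;c)$; since $\mathrm{hcN}$ and $\mathrm{N}^\Omega$ coincide on dg operads, it suffices to reread that computation as exhibiting $\mathrm{Hom}_{\mathsf{dg}\text{-}\mathsf{Mod}}(C/D\,\mathbb K\Delta[-],\mathcal P(c_1,\ldots,c_m;c))=\Gamma\big(tr(\mathcal P(c_1,\ldots,c_m;c))\big)$ through Dold--Kan. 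The main obstacle is purely one of bookkeeping: keeping the signs of the differential and of the faces and degeneracies consistent between the Boardman--Vogt side and the normalized-chains side, the combinatorial heart — the identification of contracted subtrees with nondegenerate simplices — being the one already used for Lemma~\ref{lemma:normalizeschaincomplex}.
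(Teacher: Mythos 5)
Your proposal is correct, and its main line coincides with the paper's own proof: both reduce the statement, via the adjunction $W_!^{dg}\dashv\mathrm{hcN}$ and the pushout $D_{m,n}=C_{m,n}\coprod_{\Delta[n]}\Delta[0]$, to an isomorphism of cosimplicial chain complexes $W_!^{dg}(D_{m,-})(l_1,\ldots,l_m;r)\cong C/D\,\mathbb{K}\Delta[-]$, and both establish it by exhibiting the same basis. The difference in the main argument is one of bookkeeping: the paper describes $W_!^{dg}(D_{m,n})(l_1,\ldots,l_m;r)$ as $H^{\otimes n}/\!\sim$ and puts elements in the normal form $h_1^{\otimes i_0}\otimes A'$ with $A'$ a word in $h_0,h$, sending such a word to the simplex determined by $i_0$ and the positions $i_1<\cdots<i_k$ of the letters $h$; you index the same basis by contracted subtrees of $C_{m,n}$ containing the corolla, i.e.\ by the sequences $0\le i_0<\cdots<i_k\le n$ of Lemma \ref{lemma:normalizeschaincomplex}. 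These are literally the same combinatorics ($h$ = surviving spine edge, $h_0$ = contracted spine edge, the $h_1$-prefix = the choice of the root $i_0$). One point your write-up should make explicit when checking the differential: in the tree picture the $\alpha=0$ term of the boundary (deletion of $i_0$) does not come from contracting an edge, but from the decomposition term $\gamma(e_{ta}\otimes e_{at})$ at the lowest inner edge combined with the collapse of the spine --- exactly what the paper's relation $A\otimes h_1\otimes B\sim h_1\otimes\cdots\otimes h_1\otimes B$ (for $A$ of degree $0$; $\sim 0$ otherwise) encodes. Your closing observation, by contrast, is a genuinely different and shorter route that the paper does not take: since $\mathrm{hcN}=\mathrm{N}^{\Omega}$ on dg operads, Lemma \ref{lemma:normalizeschaincomplex} together with the Dold--Kan correspondence gives
$$
\mathrm{hcN}(\mathscr P)^L(c_1,\ldots,c_m;c)\cong\Gamma\big(tr(\mathcal{P}(c_1,\ldots,c_m;c))\big)\cong\mathrm{Hom}_{\mathsf{dg}\text{-}\mathsf{Mod}}\big(C/D\,\mathbb{K}\Delta[-],\mathcal{P}(c_1,\ldots,c_m;c)\big)
$$
with no Boardman--Vogt computation at all. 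What that cheaper route loses is the explicit cosimplicial isomorphism $W_!^{dg}(D_{m,-})(l_1,\ldots,l_m;r)\cong C/D\,\mathbb{K}\Delta[-]$ itself, which the paper reuses afterwards: the morphism $a$, the identity $ba=\mathrm{id}$, and hence the identification of $a^*$ with $\alpha^*$ in the proof of Theorem \ref{thm:euivalencebigsmall}, are all computed through precisely this explicit description. So the explicit argument (the paper's, and your main one) buys the compatibility needed downstream, while your Dold--Kan rereading buys a faster proof of the proposition in isolation.
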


\begin{proof}
We know that
$$
\mathrm{hcN}(\mathscr P)^L(c_1,\ldots,c_m;c) \simeq \mathrm{Hom}_{\mathsf{dg}\text{-}\mathsf{Mod}}\big(W_!^{dg}(D_{m,-})(l_1, \ldots,l_m;r), \mathcal{P}(c_1,\ldots,c_m;c)\big)\ .
$$
The chain complex $W_!^{dg}(D_{m,n})(l_1, \ldots,l_m;r)$ can be described as $H^{\otimes n}/\sim$, where $A \otimes h_1 \otimes B \sim 0$, if the degree of $A$ is different from $0$ and $A \otimes h_1 \otimes B \sim h_1 \otimes \cdots \otimes h_1 \otimes B$ otherwise. Let $A$ be an homogeneous element of $H^{\otimes n}$. It has the form $A= h_1^{\otimes i_0} \otimes A'$ where $h$ occupies the places (counted from $1$ to $n$) $i_1 < \cdots < i_k$ and $A'$ does not contain $h_1$. Then, the mapping which sends $A$ to the element $[i_0 \leq i_1 \leq\cdots \leq i_k]$ gives us an isomorphism of cosimplicial chain complexes:
$$
W_!^{dg}(D_{m,-})(l_1, \ldots,l_m;r) \simeq  C/D\mathbb{K}\Delta[-]\ .
$$
\end{proof}

\begin{defin}[From $\Delta$ to $Q$]
We have introduced a morphism of functors $\alpha$ from $W_!^{dg}$ to $ C/D\mathbb{K}W_!^{\Delta}$. If we apply it to the dendroidal sets $(D_{m,n})_{n \in \mathbb{N}}$, we get
\begin{itemize}
\item[$\triangleright$] a morphism of cosimplicial chain complexes $a$ from $C/D\mathbb{K}\Delta[-]$ to $C/D\mathbb{K}Q[-]$,
\item[$\triangleright$] and a morphism of functors
$$
a^*: \mathrm{Hom}_{\mathsf{dg}\text{-}\mathsf{Mod}}\big(C/D\mathbb{K}Q[-],- \big) \rightarrow \mathrm{Hom}_{\mathsf{dg}\text{-}\mathsf{Mod}}\big(C/D\mathbb{K}\Delta[-],- \big)\ .
$$
\end{itemize}
\end{defin}

\begin{lemma}
The endomorphism $ba$ of the cosimplicial chain complexes $C/D(\mathbb{K}\Delta[-])$ is the identity.
\end{lemma}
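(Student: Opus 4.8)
The plan is to evaluate the endomorphism $ba$ directly on a basis of the cosimplicial chain complex $C/D\mathbb{K}\Delta[-]$. Since $C/D\mathbb{K}\Delta[n]$ is freely generated as a $\mathbb{K}$-module by the nondegenerate simplices $[j_0 < \cdots < j_k]$, with $0 \le j_0 < \cdots < j_k \le n$, it suffices to show that $ba$ fixes each such generator. Under the isomorphism of Proposition~\ref{prop:isohc} (together with the normal form $H^{\otimes n}/\!\sim\,\simeq C/D\mathbb{K}\Delta[n]$ from Proposition~\ref{prop:isohc}), the simplex $[j_0 < \cdots < j_k]$ corresponds to the word $w \in H^{\otimes n}$ carrying $h_1$ in the positions $1, \ldots, j_0$, the degree-one generator $h$ in the positions $j_1, \ldots, j_k$, and $h_0$ in all remaining positions; recall that $a$ is exactly the map induced on these operation spaces by the natural transformation $\alpha$.

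First I would make $a$ explicit on $w$. Using the formula $\alpha_k(h \otimes \cdots \otimes h) = \sum_{\sigma \in \mathbb{S}_k} \mathrm{sign}(\sigma)\, e_{\sigma(k),k} \otimes \cdots \otimes e_{\sigma(1),k}$ together with the insertion rules $h_0 \mapsto e_{0,k}$ and $h_1 \mapsto e_{k+1,k}$ (the cumulated degree being $k$), one obtains
$$
a\big([j_0 < \cdots < j_k]\big) = \sum_{\sigma \in \mathbb{S}_k} \mathrm{sign}(\sigma)\, u^\sigma \in C/D\mathbb{K}Q[n]\ ,
$$
where $u^\sigma \in Q[n]_k$ has $p$-th coordinate $e_{k+1,k}$ for $p \le j_0$, coordinate $e_{0,k}$ at every $h_0$-position, and coordinate $e_{\sigma(k+1-l),k}$ at the position $j_l$ for $1 \le l \le k$. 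This descends correctly to the quotient $Q[n] \simeq \Delta[1]^n/\!\sim$ because $\beta$ and $\alpha$ are defined on the quotient. Next I would apply $b = C/D\mathbb{K}\beta$, using the defining $\max$-formula for $\beta$. A coordinate-by-coordinate computation gives $\beta(u^\sigma) = [a_0 \le a_1 \le \cdots \le a_k]$ with $a_0 = j_0$ and $a_l = j_{M_l}$ for $l \ge 1$, where $M_l := \max\{\, l' : \sigma(k+1-l') \ge k+1-l \,\}$; indeed the $h_1$-positions contribute only the value $j_0$, and among the $h$-positions the threshold $k+1-l$ is met exactly by those $j_{l'}$ with $\sigma(k+1-l') \ge k+1-l$.

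The crux of the argument, and the step I expect to be the main obstacle, is the degeneracy lemma: the simplex $\beta(u^\sigma)$ is nondegenerate if and only if $M_1 < M_2 < \cdots < M_k$, and since the $M_l$ form a strictly increasing sequence drawn from $\{1, \ldots, k\}$ this forces $M_l = l$ for all $l$. Unwinding $M_l = l$ inductively (starting from $M_1 = 1$, which gives $\sigma(k) = k$, and so on) shows it is equivalent to $\sigma = \mathrm{id}$. Hence for every $\sigma \neq \mathrm{id}$ the simplex $\beta(u^\sigma)$ has a repeated vertex and therefore vanishes in the normalized complex $C/D\mathbb{K}\Delta[n]$, so the signs attached to those terms are irrelevant. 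For $\sigma = \mathrm{id}$ one reads off $a_l = j_l$ directly, giving $\beta(u^{\mathrm{id}}) = [j_0 < \cdots < j_k]$ with coefficient $\mathrm{sign}(\mathrm{id}) = 1$. Therefore $ba\big([j_0 < \cdots < j_k]\big) = [j_0 < \cdots < j_k]$, and $ba = \mathrm{id}$. The remaining verifications, that these identifications are compatible with the cosimplicial structure and that the bookkeeping of the Eilenberg--Zilber signs is consistent, are routine.
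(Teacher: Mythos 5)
Your proof is correct and follows exactly the route the paper intends: the paper's own proof is the one-line assertion that the claim ``follows from the description of the morphisms $a$ and $b$ given above, and a straightforward computation,'' and your argument is precisely that computation carried out in detail (evaluating $a$ on the normal-form word via the Eilenberg--Zilber formula, applying the $\max$-formula for $\beta$, and observing that all terms with $\sigma \neq \mathrm{id}$ are degenerate, hence vanish in $C/D$). In particular your analysis of the indices $M_l$, forcing $M_l = l$ and hence $\sigma = \mathrm{id}$, is a correct justification of the step the paper leaves implicit.
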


\begin{proof}
It follows from the description of the morphisms $a$ and $b$ given above, and a straightforward computation.
\end{proof}

\begin{proof}[Proof of Theorem \ref{thm:euivalencebigsmall}]
By Proposition \ref{prop:isobig} and Proposition \ref{prop:isohc}, the map of simplicial sets $a^*(\mathcal{P}(c_1,\ldots,\allowbreak c_m;c))$ from $\mathrm{Hom}_{\mathsf{dg}\text{-}\mathsf{Mod}}(C/D\mathbb{K}Q[-],\mathcal{P}(c_1,\ldots,c_m;c))$ to $\mathrm{Hom}_{\mathsf{dg}\text{-}\mathsf{Mod}}(C/D\mathbb{K}\Delta[-],\mathcal{P}(c_1,\ldots,c_m;c))$ can be considered as a map from $\mathrm{N}^{\mathrm{big}}_{\mathrm{dg}}(\mathscr P)^L(c_1,\ldots,c_m;c) $ to $ \mathrm{hcN}(\mathscr P)^L(c_1,\ldots,c_m;c) $. By construction, it is exactly
$$
\alpha^*(\mathscr P)(c_1,\ldots,c_m;c):\mathrm{N}^{\mathrm{big}}_{\mathrm{dg}}(\mathscr P)^L(c_1,\ldots,c_m;c) \rightarrow \mathrm{hcN}(\mathscr P)^L(c_1,\ldots,c_m;c)\ .
$$
Since its composition with the map $b^*(\mathcal{P}(c_1,\ldots,c_m;c))$ is the identity and since the map $b^*(\mathcal{P}\allowbreak(c_1,\ldots,\allowbreak c_m;c))$ is a weak equivalence, then $a^*(\mathcal{P}(c_1,\ldots,c_m;c))$ is a weak equivalence. So, for any dg colored operad $\mathscr P$, the morphism of $\infty$-operads $\alpha^*(\mathscr P)$ from $\mathrm{N}^{\mathrm{big}}_{\mathrm{dg}}(\mathscr P)$ to $\mathrm{hcN}(\mathscr P)$ is fully faithful. Since it is essentially surjective, then it is an equivalence.
\end{proof}

\begin{cor}
When the characteristic of the field $\mathbb{K}$ is zero, the adjunction $C/D\mathbb{K}W_!^{\Delta} \dashv \mathrm{N}^{\mathrm{big}}_{\mathrm{dg}}$ is a Quillen adjunction, with respect to the model structure  on the category $\mathsf{dg}\text{-}\mathsf{Op}$ of dg operads given in Proposition \ref{prop:caviglia}.
\end{cor}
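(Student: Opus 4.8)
The plan is to prove the statement by showing that the left adjoint $L := C/D\mathbb{K}W_!^{\Delta}$ is a left Quillen functor for the Cisinski--Moerdijk model structure on $\mathsf{dSet}$ (Theorem \ref{thmcisinskimoerdijk}) and the Caviglia model structure on $\mathsf{dg}\text{-}\mathsf{Op}$ (Proposition \ref{prop:caviglia}). Since both model categories are cofibrantly generated and $L$ preserves colimits, it suffices to check that $L$ sends the generating cofibrations $\partial\Omega[T]\hookrightarrow\Omega[T]$ to cofibrations of dg operads and the generating anodyne extensions to trivial cofibrations. The two inputs I would exploit are, on the one hand, the previously established fact that $W_!^{dg}\dashv\mathrm{hcN}$ is a Quillen adjunction, and on the other hand the natural transformation $\alpha\colon W_!^{dg}\Rightarrow L$ of Proposition \ref{prop:dgbvtosbv}, whose mate $\alpha^{*}\colon \mathrm{N}^{\mathrm{big}}_{\mathrm{dg}}\Rightarrow\mathrm{hcN}$ is a pointwise weak equivalence by Theorem \ref{thm:euivalencebigsmall}.

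First I would record that $\alpha$ is an objectwise weak equivalence on representables: the component $\alpha_{\Omega[T]}$ is exactly the map $\alpha_T\colon W_H(T)\to C/D\mathbb{K}W_{\Delta[1]}(T)$, which is the identity on colors and, because each Eilenberg--Zilber map $\alpha_k\colon H^{\otimes k}\to C/D(\mathbb{K}\Delta[1]^{k})$ is a quasi-isomorphism (the computation underlying Theorem \ref{thm:euivalencebigsmall}, using the isomorphism $H\cong C/D\mathbb{K}\Delta[1]$), a quasi-isomorphism on every operation complex; hence $\alpha_T$ is a weak equivalence of dg operads by Proposition \ref{prop:caviglia}. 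Both $W_H(T)$ and $C/D\mathbb{K}W_{\Delta[1]}(T)$ are cofibrant, being of Boardman--Vogt type, i.e. quasi-free on a $\Sigma$-cofibrant module. Next I would check that $L$ preserves cofibrations: the functor $W_!^{\Delta}$ sends normal monomorphisms to cofibrations of simplicial operads, and the strong monoidal linearization $\mathbb{K}(-)$ followed by the normalization $C/D$ carries these to cofibrations of dg operads, so $L$ sends each generating cofibration to a cofibration and therefore preserves all of them.

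With these two facts in hand, I would globalize $\alpha$ to every normal (that is, cofibrant) dendroidal set $X$ by transfinite induction along its skeletal filtration, which presents $X$ as an iterated pushout of the generating cofibrations starting from $\emptyset$. Both $W_!^{dg}$ and $L$ preserve these colimits and (being now known to preserve cofibrations) keep every stage cofibrant, so the gluing lemma for cofibrant objects propagates the weak equivalence $\alpha$ from the cells $\Omega[T],\partial\Omega[T]$ to $X$. Finally, for a generating anodyne extension $i\colon A\hookrightarrow B$, which has normal, hence cofibrant, source and target, the naturality square of $\alpha$ together with the fact that $W_!^{dg}(i)$ is a trivial cofibration yields, by the two-out-of-three property, that $L(i)$ is a weak equivalence; combined with the preservation of cofibrations this makes $L(i)$ a trivial cofibration. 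Hence $L$ is left Quillen and the adjunction is a Quillen adjunction.

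I expect the main obstacle to be the interlocking of the two technical steps of the middle paragraph: verifying that $L$ preserves cofibrations and that the cellular induction stays inside the cofibrant objects. The gluing lemma used to globalize $\alpha$ requires all corners of each pushout square to be cofibrant, and this cofibrancy is precisely what must be supplied by the (somewhat delicate) compatibility of $W_!^{\Delta}$, $\mathbb{K}(-)$ and $C/D$ with the generating cofibrations. Everything else is a formal consequence of Theorem \ref{thm:euivalencebigsmall}, Proposition \ref{prop:dgbvtosbv} and the already-established Quillen adjunction $W_!^{dg}\dashv\mathrm{hcN}$.
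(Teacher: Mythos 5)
There is a genuine gap, and it sits at the foundation of your argument: the claim that $L=C/D\mathbb{K}W_!^{\Delta}$ preserves cofibrations is asserted rather than proved, and the justification you sketch cannot be carried out inside this paper. You factor the claim through simplicial operads: first that $W_!^{\Delta}$ sends normal monomorphisms to cofibrations of simplicial operads, which relies on a model structure on simplicial colored operads and on $W_!^{\Delta}$ being left Quillen for it — neither of which is established or even cited here; and second that ``$\mathbb{K}(-)$ followed by $C/D$'' carries those cofibrations to cofibrations of dg operads. This second step is the real problem: one cannot argue that $C/D\mathbb{K}$ is left Quillen out of simplicial operads via a Dold--Kan adjunction, because $\Gamma$ is lax monoidal but \emph{not symmetric} monoidal — precisely the obstruction the paper emphasizes — so $\Gamma$ does not even define a functor from dg operads to simplicial operads, and no operad-level adjunction $C/D\mathbb{K}\dashv(\text{forget})\circ\Gamma$ is available. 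Relatedly, your cofibrancy claim for $C/D\mathbb{K}W_{\Delta[1]}(T)$ (``of Boardman--Vogt type, quasi-free on a $\Sigma$-cofibrant module'') is unjustified: the normalization of a free simplicial operad need not be quasi-free, so its cofibrancy would again have to come from the very preservation-of-cofibrations statement you are trying to prove. A secondary gap is the phrase ``generating anodyne extensions'': in the Cisinski--Moerdijk model structure the trivial cofibrations are \emph{not} generated by the inner horn inclusions $\Lambda^e[T]\hookrightarrow\Omega[T]$, and no explicit generating set is known; so if you mean inner anodynes, checking them does not yield left Quillen-ness. One can repair this by invoking abstract generating trivial cofibrations of the combinatorial model structure, chosen with normal domains, after which your two-out-of-three argument via $\alpha$ and $W_!^{dg}$ is fine — but this must be said, and it still presupposes the unproved cofibration step.

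For contrast, the paper avoids the left-adjoint side entirely: it proves that the right adjoint $\mathrm{N}^{\mathrm{big}}_{\mathrm{dg}}$ preserves weak equivalences (because it is pointwise equivalent to $\mathrm{hcN}$ by Theorem \ref{thm:euivalencebigsmall}, and $\mathrm{hcN}$ preserves weak equivalences by Theorem \ref{thmweakequivalence}) and preserves fibrations, the latter by a direct lifting argument: a lifting problem $\Lambda^e[T]\rightarrow\mathrm{N}^{\mathrm{big}}_{\mathrm{dg}}(\mathscr P)$ over $\Omega[T]\rightarrow\mathrm{N}^{\mathrm{big}}_{\mathrm{dg}}(\mathscr Q)$ is translated into a lifting problem of chain complexes where $f(c_1,\ldots,c_m;c)$ is a fibration and the horn inclusion becomes a trivial cofibration, and the $\mathbb{S}_m$-equivariance is handled using the injectivity of the leaf labeling; the isofibration condition comes from $\tau_d\mathrm{N}^{\mathrm{big}}_{\mathrm{dg}}\cong H_0$. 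Since preserving fibrations and all weak equivalences implies preserving trivial fibrations, this gives the Quillen adjunction with no analysis of cofibrations of dg operads at all. If you want to salvage your strategy, the most economical fix is to replace your first step by this right-adjoint verification (or by the Joyal--Tierney criterion that a cofibration-preserving left adjoint is Quillen as soon as the right adjoint preserves fibrations between fibrant objects), at which point your cell-induction globalization of $\alpha$ becomes unnecessary.
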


\begin{proof}
It suffices to prove that the functor $\mathrm{N}^{\mathrm{big}}_{\mathrm{dg}}$ preserves weak equivalences and fibrations. Since it is equivalent to the homotopy coherent nerve which preserves weak equivalences, then it preserves weak equivalences. Let $f: \mathscr P \rightarrow \mathscr Q$ be a fibration of dg operads and let $\phi$ its underlying function between colors. On the one hand, since the functors from dg operads to set-theoretical operads  $\tau_d \mathrm{N}^{\mathrm{big}}_{\mathrm{dg}}$ and $H_0 $ are isomorphic, then the functor $i^*\tau_d \mathrm{N}^{\mathrm{big}}_{\mathrm{dg}}(f)$ is an isofibration. On the other hand, let $T$ be a tree with $m \geq 0$ leaves $l_1$, \ldots, $l_m$, a root denoted by $r$ and an inner edge $e$. Consider the following commutative square of dendroidal sets
$$
\xymatrix@R=25pt@C=25pt{\Lambda^e[T] \ar[r] \ar[d] & \mathrm{N}^{\mathrm{big}}_{\mathrm{dg}}(\mathscr P) \ \,\ar[d]^{\mathrm{N}^{\mathrm{big}}_{\mathrm{dg}} (f)}  \\
\Omega[T] \ar[r] & \mathrm{N}^{\mathrm{big}}_{\mathrm{dg}}(\mathscr Q) \ .}
$$
It induces a commutative square of chain complexes.
$$
\xymatrix@R=25pt@C=25pt{C/D\mathbb{K}W_!^{\Delta}\Lambda^e[T](l_1,\ldots,l_m;r) \ar[r] \ar[d] & \mathcal{P}(c_1,\ldots,c_m;c) \ar[d]^{f(c_1,\ldots,c_m;c)}  \\
C/D\mathbb{K}W_!^{\Delta}\Omega[T](l_1,\ldots,l_m;r) \ar[r] & \mathcal{Q}(\phi(c_1),\ldots,\phi(c_m);\phi(c)) }
$$
Since the morphism $f(c_1,\ldots,c_m;c)$ is a fibration of chain complexes and since the left vertical map is a trivial cofibration, then this square has a lifting. Since the function $i \in \{1, \ldots, m \} \mapsto l_i$ is injective, we get, for any permutation $\sigma \in \mathbb{S}_m$, a lifting of the same square but where $(l_1,\ldots,l_m)$ (resp. $(c_1,\ldots,c_m)$) is replaced by $(l_{\sigma(1)},\ldots,l_{\sigma(m)})$ (resp. $(c_{\sigma(1)},\ldots,c_{\sigma(m)})$). This gives us a lifting of the first square. By the characterization of the fibrations between fibrant dendroidal sets given in Theorem \ref{thmcisinskimoerdijk}, then $\mathrm{N}^{\mathrm{big}}_{\mathrm{dg}}(f)$ is a fibration.
\end{proof}

%%%%%%%%%%%%%%%%%%%%%%%%%%%

\appendix
\section*{Appendix}
In the literature, there are  two ways to define the fully faithful morphisms of $\infty$-operads $\allowbreak f:P \rightarrow Q$:
\begin{itemize}
\item[$\triangleright$] for any integer $m$ and any colors $c_1$, \ldots, $c_m$, $c$ of $P$, the map $ P(c_1,\ldots,c_m ; c) \rightarrow Q(f(c_1),\ldots,\allowbreak f(c_m) ; f(c))$ is a weak homotopy equivalence of simplicial sets.
\item[$\triangleright$] for any integer $m$ and any colors $c_1$, \ldots, $c_m$, $c$ of $P$, the map $\allowbreak P^L(c_1,\ldots,c_m;c) \rightarrow Q^L(f(c_1),\ldots,f(c_m);f(c))$ is a weak homotopy equivalence of simplicial sets.
\end{itemize}  
The goal of this appendix is to prove that these two definitions are equivalent that is to prove the following theorem.

\begin{thm}\label{thmfullyfaithful}
For any $\infty$-operad $P$, for any integer $m$ and any colors $c$, $c_1$, \ldots, $c_m$ of $P$, the simplicial set $P(c_1,\ldots,c_m;c)$ is homotopy equivalent to the simplicial set $P^L(c_1,\ldots,c_m;c)$. In other words, there is a chain of weak homotopy equivalences between $P(c_1,\ldots,c_m;c)$ and $P^L(c_1,\ldots,c_m;c)$. Furthermore, this construction is functorial with respect to $P$.
\end{thm}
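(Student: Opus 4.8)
The plan is to interpret both simplicial sets as the homotopy function complex $\mathrm{Map}(\Omega[C_m],P)$ in the Cisinski--Moerdijk model structure of Theorem~\ref{thmcisinskimoerdijk}, computed once through a cosimplicial resolution of the source and once through a simplicial resolution of the target, and then to compare them through the associated bisimplicial set. First I would rewrite $P^L$. The requirement that a map $\Omega[C_{m,n}]\to P$ restrict on the trunk $[n]$ to the degeneracy of $c$ is exactly the requirement that it factor through the dendroidal set $D_{m,n}:=C_{m,n}\coprod_{\Delta[n]}\Delta[0]$ of Section~4 (compare Proposition~\ref{prop:isohc}); consequently the $n$-simplices of $P^L(c_1,\ldots,c_m;c)$ are precisely the maps $D_{m,n}\to P$ sending the leaves $l_1,\ldots,l_m$ to $c_1,\ldots,c_m$ and the root $r$ to $c$. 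Thus $P^L$ is the mapping complex out of the cosimplicial dendroidal set $D_{m,\bullet}$, whereas by definition $P(c_1,\ldots,c_m;c)$ is the analogous complex $\mathrm{Hom}_{\mathsf{dSet}}(\Omega[C_m],P^{(\Delta[\bullet])})$ restricted to the chosen colours, where $P^{(\Delta[\bullet])}$ is the simplicial resolution of the fibrant object $P$ furnished by the Reedy fibrant replacement of \cite{CisinskiMoerdijk13a}.

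Next I would form the bisimplicial set $X_{p,q}:=\mathrm{Hom}_{\mathsf{dSet}}(D_{m,p},P^{(\Delta[q])})$, restricted throughout to maps respecting the colours $c_1,\ldots,c_m,c$. Since $P^{(\Delta[0])}=P$ and $D_{m,0}=\Omega[C_m]$, its row $X_{\bullet,0}$ is $P^L(c_1,\ldots,c_m;c)$ and its column $X_{0,\bullet}$ is $P(c_1,\ldots,c_m;c)$. I claim the two edge inclusions into the diagonal are weak equivalences, producing the required natural zig-zag
$$P^L(c_1,\ldots,c_m;c)\ \xrightarrow{\ \sim\ }\ \mathrm{diag}(X)\ \xleftarrow{\ \sim\ }\ P(c_1,\ldots,c_m;c).$$
For the first, the map of bisimplicial sets which is constant in $q$ and equals $X_{\bullet,0}\to X_{\bullet,q}$ at level $q$ is a levelwise weak equivalence: the structure weak equivalence $P\to P^{(\Delta[q])}$ between fibrant objects is preserved by $\mathrm{Hom}_{\mathsf{dSet}}(D_{m,\bullet},-)$ because $D_{m,\bullet}$ is a cosimplicial resolution, and the diagonal lemma for bisimplicial sets turns a levelwise weak equivalence into a weak equivalence of diagonals. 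Symmetrically, for fixed $p$ the weak equivalence $D_{m,p}\to\Omega[C_m]$ between cofibrant objects is preserved by $\mathrm{Hom}_{\mathsf{dSet}}(-,P^{(\Delta[\bullet])})$ because $P^{(\Delta[\bullet])}$ is a simplicial resolution, giving the second map. Because the colour constraints are cut out as fibres of the restriction maps to the (fibrant) colour objects, and these restrictions are fibrations, the constraints descend to homotopy fibres and the levelwise weak equivalences persist after imposing them.

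The hard part will be establishing that $D_{m,\bullet}$ really is a Reedy cofibrant cosimplicial resolution of $\Omega[C_m]$. Reedy cofibrancy reduces to checking that the latching maps are normal monomorphisms, which is routine since the $D_{m,n}$ are assembled from representables along normal monomorphisms. The genuine obstacle is the weak equivalence $D_{m,n}\to\Omega[C_m]$ collapsing the output framing; I would prove it by exhibiting the section $\Omega[C_m]\hookrightarrow D_{m,n}$ as a trivial cofibration, through a saturation argument built from the Moerdijk--Weiss generating anodyne extensions \cite{MoerdijkWeiss09}. Here the relevant combinatorics is exactly the one already analysed in Section~4: the operation space $W_!^{\Delta}(D_{m,\bullet})(l_1,\ldots,l_m;r)$ is the cosimplicial simplicial set $Q$, whose contractibility and comparison with $\Delta$ are governed by Lurie's analysis in \cite{Lurie09}, confirming that the framing $D_{m,\bullet}$ is homotopically trivial in the output direction. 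Finally, functoriality in $P$ is automatic: since $\mathsf{dSet}$ is combinatorial (Theorem~\ref{thmcisinskimoerdijk}) one may choose the replacement $P\mapsto P^{(\Delta[\bullet])}$ functorially, and $D_{m,\bullet}$ does not depend on $P$, so the bisimplicial set $X$ and the entire zig-zag are natural in $P$.
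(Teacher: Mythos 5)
Your proposal is correct, and it manufactures exactly the same comparison object as the paper: your bisimplicial set $X_{p,q}$ of colour-constrained maps $D_{m,p}\to P^{(\Delta[q])}$ coincides with the paper's $M^P_{p,q}$, and your zig-zag through $\mathrm{diag}(X)$ is the paper's chain $P(c_1,\ldots,c_m;c)\to \mathrm{diag}(M^P)\leftarrow P^L(c_1,\ldots,c_m;c)$. Where you genuinely differ is in how the edge-to-diagonal maps are verified. The paper proves two stronger, pointwise statements by explicit lifting: in Lemma~\ref{lemmaappendix1}, each face $M^P_{-,q}\to M^P_{-,q-1}$ is a trivial fibration, by lifting the boundary inclusions $\partial\Omega[C_{m,k}]\subset\Omega[C_{m,k}]$ against the trivial fibrations $P_q\to P_{q-1}$ supplied by Reedy fibrancy of the Cisinski--Moerdijk replacement; in Lemma~\ref{lemmaappendix2}, the bottom external face $M^P_{p,-}\to M^P_{p-1,-}$ is a trivial fibration, by lifting the anodyne extension $\partial^{ext}\Omega[C_{m,p}]\subset\Omega[C_{m,p}]$ of \cite[Lemma 5.1]{MoerdijkWeiss09} against the matching fibration $P_k\to\lim(d_*,P)_{k-1}$; then \cite[Corollary 15.11.12]{Hirshhorn03} concludes. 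You instead route everything through resolution theory, so your whole burden is the claim that $D_{m,\bullet}$ is a cosimplicial resolution of $\Omega[C_m]$, i.e.\ that $\Omega[C_m]\hookrightarrow D_{m,n}$ is a trivial cofibration. That step, which you flag as the hard part, is not a gap, and in fact reduces to the paper's own input: since $\partial^{ext}\Omega[C_{m,n}]=\Omega[C_{m,n-1}]\cup\Delta[n]$ inside $\Omega[C_{m,n}]$, one checks $D_{m,n}\cong \Omega[C_{m,n}]\sqcup_{\partial^{ext}\Omega[C_{m,n}]}D_{m,n-1}$, so $D_{m,n-1}\to D_{m,n}$ is a pushout of the very same Moerdijk--Weiss anodyne extension used in Lemma~\ref{lemmaappendix2}, and induction from $D_{m,0}=\Omega[C_m]$ gives the claim. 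Thus both proofs consume the same two inputs (Reedy fibrancy of $P^{(\Delta[\bullet])}$ and the anodyne external boundary inclusions); yours buys the cleaner conceptual statement that $P(c_1,\ldots,c_m;c)$ and $P^L(c_1,\ldots,c_m;c)$ are two standard models of the homotopy function complex $\mathrm{Map}(\Omega[C_m],P)$ --- one from a simplicial resolution of the target, one from a cosimplicial resolution of the source, which also explains the role of the object $Q$ in Section 4 --- at the cost of extra bookkeeping (Reedy cofibrancy of $D_{m,\bullet}$, Kan fibrancy of the restriction maps to the colour objects, and the right-properness argument passing the levelwise equivalences to the colour fibres), all of which the paper's hands-on trivial-fibration lemmas render unnecessary.
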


Let $P^{(\Delta[q])}$ be the functorial Reedy fibrant resolution of $P$ defined in \cite[3.2]{CisinskiMoerdijk13a}. The two following squares are pullbacks:
$$
\xymatrix@R=30pt@C=20pt{P(c_1,\ldots,c_m;c)_q \ar[r] \ar[d] & \mathrm{Hom}_{\mathsf{dSet}}(\Omega[C_{m}], P^{(\Delta[q])}) \ar[d]\\
\Delta[0] \ar[r]_(0.25){(c,c_1,\ldots,c_m)} & \mathrm{Hom}_{\mathsf{dSet}}(\Delta[0] \sqcup\coprod_{i=1}^m \Delta[0], P^{(\Delta[q])})\ ,}
$$
$$
\xymatrix@R=30pt@C=20pt{P^L(c_1,\ldots,c_m;c)_p \ar[r] \ar[d] & \mathrm{Hom}_{\mathsf{dSet}}(\Omega[C_{m,p}], P) \ar[d]\\
\Delta[0] \ar[r]_(0.25){(c,c_1,\ldots,c_m)} & \mathrm{Hom}_{\mathsf{dSet}}(\Delta[p] \sqcup\coprod_{i=1}^m \Delta[0], P)\ .}
$$
The left vertical maps of these two diagrams are induced by cofaces maps targeting the trees $C_m$ and $C_{m,n}$. Let $\{M^P_{p,q}\}_{p,q \in \mathbb{N}}$ be the pullback of the following diagrams:
$$
\xymatrix@R=30pt@C=20pt{M^P_{p,q} \ar[rr] \ar[d] && \mathrm{Hom}_{\mathsf{dSet}}(\Omega[C_{m,p}], P^{(\Delta[q])}) \ar[d]\\
\Delta[0] \ar[rr]_(0.25){(c,c_1,\ldots,c_m)} && \mathrm{Hom}_{\mathsf{dSet}}(\Delta[p] \sqcup\coprod_{i=1}^m \Delta[0], P^{(\Delta[q])})}
$$
The collection $\{M^P_{p,q}\}_{p,q}$ has a canonical structure of a bisimplicial set. The simplicial set $M^P_{0,-}$ (resp. $M^P_{-,0}$) is equal to $P(c_1,\ldots,c_m;c)$ (resp. $P^L(c_1,\ldots,c_m;c)$). From now on, we will denote $P^{(\Delta[q])}$ simply by $P_q$.

\begin{lemma}\label{lemmaappendix1}
For any face map $d_i: P_q \rightarrow P_{q-1}$, the induced morphism of simplicial sets $M^P_{-,q} \rightarrow M^P_{-,q-1}$ is a trivial fibration.
\end{lemma}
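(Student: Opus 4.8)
The plan is to check that $M^P_{-,q}\to M^P_{-,q-1}$ has the right lifting property against the boundary inclusions $\partial\Delta[n]\hookrightarrow\Delta[n]$, which characterises the trivial fibrations of the Kan--Quillen model structure on simplicial sets. I would fix $n$ together with a commutative square built from such data and translate it into a single lifting problem in dendroidal sets, which the fibrancy of $P$ will solve.

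First I would record the two variances at play. On the one hand, $p\mapsto\Omega[C_{m,p}]$ is a cosimplicial dendroidal set (through $[p']\to[p]\rightsquigarrow C_{m,p'}\to C_{m,p}$), so that $M^P_{-,q}$ is exactly the simplicial set $p\mapsto M^P_{p,q}$ obtained by mapping it into $P_q=P^{(\Delta[q])}$ while freezing the restriction to the spine $i_!S_p$, where $S_p:=\Delta[p]\sqcup\coprod_{i=1}^m\Delta[0]$. On the other hand, $q\mapsto P^{(\Delta[q])}$ is the cotensor of $P$ by the cosimplicial simplicial set $\Delta[\bullet]$, and $d_i=(\delta_i)^{\ast}$ is induced by the coface $\delta_i\colon\Delta[q-1]\hookrightarrow\Delta[q]$. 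Writing $L_n$ for the $n$-th latching object of $\Omega[C_{m,\bullet}]$ and $\lambda_n\colon L_n\to\Omega[C_{m,n}]$ for the latching map, the lifting problem of $\partial\Delta[n]\hookrightarrow\Delta[n]$ against $M^P_{-,q}\to M^P_{-,q-1}$ unwinds, via the cotensor adjunction $\mathrm{Hom}_{\mathsf{dSet}}(D,P^{(\Delta[q])})\cong\mathrm{Hom}_{\mathsf{sSet}}(\Delta[q],\mathrm{map}(D,P))$ (with $\mathrm{map}(D,P)$ the simplicial mapping space) and the description of $\partial\Delta[n]$-maps by latching objects, into the single lifting problem in $\mathsf{dSet}$ of the pushout--product (Leibniz product) $\lambda_n\,\hat{\otimes}\,\delta_i$ against the terminal map $P\to\ast$, the diagram being constrained to fix the colours $(c,c_1,\dots,c_m)$ on the spine.

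It therefore suffices to prove that this constrained Leibniz product is a trivial cofibration of dendroidal sets, since a lift then exists because $P$ is fibrant. Here $\lambda_n$ is a normal monomorphism (a union of cofaces of $\Omega[C_{m,n}]$, on which the automorphism actions are free), and $i_!S_p\hookrightarrow\Omega[C_{m,p}]$ is normal as well, so the colour constraint merely amounts to passing to a fibre and preserves cofibrancy. The decisive input is that $\delta_i\colon\Delta[q-1]\hookrightarrow\Delta[q]$ is a weak equivalence, both simplices being weakly contractible, hence a trivial cofibration in the space-like direction of the resolution $P^{(\Delta[\bullet])}$, which is built precisely so that the coface-induced maps become trivial on the operation spaces. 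The pushout--product axiom then upgrades $\lambda_n\,\hat{\otimes}\,\delta_i$ from a cofibration to a trivial cofibration, as required; functoriality in $P$ follows from the naturality of the entire construction.

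The hard part is exactly this last upgrade: justifying that the coface direction $\delta_i$ contributes \emph{triviality} rather than mere cofibrancy. On the full dendroidal mapping space the face map $d_i$ is only a (Joyal-type) fibration, since $\delta_i$ is not a categorical equivalence; it is the fixing of the colours on $i_!S_p$---which rigidifies the categorical $\Delta[p]$-direction and leaves only the groupoidal operation spaces, where $P^{(\Delta[\bullet])}$ is a genuine resolution by Kan complexes---that converts $d_i$ into a trivial fibration after the pullback. Making this reduction rigorous, that is, verifying that the constrained Leibniz product of the normal monomorphism $\lambda_n$ with the coface $\delta_i$ is a trivial cofibration, is the technical heart of the lemma; once it is established, lifting against the fibrant $\infty$-operad $P$ is immediate.
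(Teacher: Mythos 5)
There is a genuine gap, and you have in fact flagged it yourself: the ``last upgrade'' that you defer as ``the technical heart of the lemma'' is precisely the entire content of the lemma, and your proposal never establishes it. The paper's proof does not need any pushout--product argument against $P \to \ast$: it uses as input that the face map $d_i \colon P_q \to P_{q-1}$ is \emph{already a trivial fibration of dendroidal sets}. This is a property of the Cisinski--Moerdijk construction $P^{(\Delta[\bullet])}$ itself: it is a Reedy fibrant \emph{resolution} of $P$, so its face maps are trivial fibrations (each degeneracy is a section and a weak equivalence, whence each face is a weak equivalence by two-out-of-three, and Reedy fibrancy makes it a fibration). Your difficulty arises from treating $P^{(\Delta[q])}$ as the plain cotensor $\underline{\mathrm{Hom}}(i_!\Delta[q],P)$, for which $d_i$ would indeed only be a fibration since $\Delta[q-1]\hookrightarrow\Delta[q]$ is not a categorical equivalence. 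But that is not the Cisinski--Moerdijk definition: their $P^{(\Delta[q])}$ is the modified cotensor in which the colors are constant along $\Delta[q]$, and this constancy---built into the $q$-direction of the resolution, not into the color constraint on the spine $\Delta[p]\sqcup\coprod_i\Delta[0]$ appearing in the pullback defining $M^P_{p,q}$---is what makes $d_i$ trivial. Your proposed fix attributes the triviality to the wrong pullback, and in any case is only asserted, not proved.

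Once the correct input is in place, the proof is a short unwinding rather than a Leibniz-product reduction, and this is what the paper does: a lifting problem of $\partial\Delta[k]\hookrightarrow\Delta[k]$ against $M^P_{-,q}\to M^P_{-,q-1}$ amounts to elements $b_0,\dots,b_k\in M^P_{k-1,q}$ and $a\in M^P_{k,q-1}$ with coherent faces; the $b_i$ give the $k+1$ faces of $\Omega[C_{m,k}]$ in the simplicial direction (your latching object), and the remaining outer face $[k]\hookrightarrow C_{m,k}$ is filled by the degeneracy of the color $c$, yielding a map $\partial\Omega[C_{m,k}]\to P_q$ lying over $a\colon \Omega[C_{m,k}]\to P_{q-1}$. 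Since $\partial\Omega[C_{m,k}]\hookrightarrow\Omega[C_{m,k}]$ is a normal monomorphism and $d_i$ is a trivial fibration, the lift exists, and its restriction to $[k]$ is automatically the degenerate color $c$, so it defines the required element of $M^P_{k,q}$. Note also that fibrancy of $P$, which you invoke as the engine of the lift, plays no role in this lemma; what is needed is the trivial-fibration property of the resolution's face maps, which your proposal assumes rather than proves.
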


\begin{proof}[Proof of Lemma \ref{lemmaappendix1}]
Let $k$ be an integer and suppose that we have the following diagram:
$$
\xymatrix@R=25pt@C=25pt{\partial\Delta[k] \ar[r] \ar[d] & M^P_{-,q} \ar[d]^{d_i} \\
\Delta[k] \ar[r] & M^P_{-,q-1}\ .}
$$
It corresponds to elements $b_0$, \ldots, $b_k$ in $M^P_{k-1,q}$ and $a$ in $M^P_{k,p-1}$ having coherent face relations. The color $c$ of $P$ induces a morphism $c: \Delta[k] \rightarrow P_q$. Then, $c$, $b_0$, \ldots and $b_k$ together give a map $\partial\Omega[C_{m,k}] \rightarrow P_q$ which fits into the following diagram:
$$
\xymatrix@R=25pt@C=20pt{\partial \Omega[C_{m,k}] \ar[rr]^(0.6){(b_0,\ldots,b_k,c)} \ar[d] && P_q \ar[d]^{d_i} \\
\Omega[C_{m,k}] \ar[rr] && P_{q-1}\ .}
$$ 
As the face $d_i : P_q \rightarrow P_{q-1}$ is a trivial fibration, then the former square has a lifting; therefore, the first diagram has a lifting. As this is true for any integer $k$ and for any such diagram, this proves the lemma.
\end{proof}

\begin{lemma}\label{lemmaappendix2}
let $\delta_p$ be the unique external coface $C_{m,p-1} \rightarrow C_{m,p}$ which omits the lower vertex of $C_{m,p}$. The face induced $M^P_{p,-} \rightarrow M^P_{p-1,-}$ is a trivial fibration.
\end{lemma}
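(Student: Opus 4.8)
The plan is to mirror the proof of Lemma~\ref{lemmaappendix1}: to show that the induced map $M^P_{p,-}\to M^P_{p-1,-}$ is a trivial fibration of simplicial sets, I verify the right lifting property against every boundary inclusion $\partial\Delta[k]\hookrightarrow\Delta[k]$, $k\geq 0$. Unwinding the pullback defining $M^P_{\bullet,\bullet}$, such a lifting problem is the data of a constrained morphism $a\colon\Omega[C_{m,p-1}]\to P^{(\Delta[k])}$ (the $k$-simplex below) together with a compatible family of constrained morphisms $b_0,\dots,b_k\colon\Omega[C_{m,p}]\to P^{(\Delta[k-1])}$ (the boundary above) with $b_i\circ\Omega[\delta_p]=d_i\,a$, and I must produce a $b\in M^P_{p,k}$ whose $q$-faces are the $b_i$ and whose restriction along $\delta_p$ is $a$. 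Here ``constrained'' records the two conditions built into the pullback: the restriction to the leaves is the fixed tuple of colours $(c_1,\dots,c_m)$, and the restriction to the linear spine is the degeneracy of the output colour $c$.

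The cleanest packaging is to recognise $M^P_{p,-}$ as a relative mapping space. Writing $A_p:=\Delta[p]\sqcup\coprod_{i=1}^m\Delta[0]$ and using that $P^{(\Delta[\bullet])}$ is the cosimplicial frame of $P$, so that $\mathrm{Hom}_{\mathsf{dSet}}(D,P^{(\Delta[q])})$ computes the $q$-simplices of the simplicial mapping space $\mathrm{Map}(D,P)$, the simplicial set $M^P_{p,-}$ is exactly the fibre, over the fixed colour-and-spine datum, of the fibration $\mathrm{Map}(\Omega[C_{m,p}],P)\to\mathrm{Map}(A_p,P)$ of the Cisinski--Moerdijk structure. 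Under this identification $\delta_p^{*}$ is the map induced by the outer coface $\Omega[\delta_p]\colon\Omega[C_{m,p-1}]\hookrightarrow\Omega[C_{m,p}]$, compatibly with $A_{p-1}\hookrightarrow A_p$. By the pushout--product (SM7) property of that model structure together with the fibrancy of the $\infty$-operad $P$, the map $\mathrm{Map}(\Omega[C_{m,p}],P)\to\mathrm{Map}\big(\Omega[C_{m,p-1}]\cup_{A_{p-1}}A_p,\,P\big)$ is a trivial fibration, and passing to fibres over the fixed datum (base change, which preserves trivial fibrations) then identifies $\delta_p^{*}$ with a trivial fibration, provided the Leibniz map
$$
Q:=\Omega[C_{m,p-1}]\cup_{\Delta[p-1]}\Delta[p]\longrightarrow\Omega[C_{m,p}]
$$
is a trivial cofibration of dendroidal sets.

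It therefore remains to prove that $Q\hookrightarrow\Omega[C_{m,p}]$ is a trivial cofibration. It is a normal monomorphism, so I only need it to be a weak equivalence, and here the hypothesis that $\delta_p$ is the \emph{lower external} coface is decisive: $\delta_p$ removes the unary bottom vertex $v$, so the codegeneracy contracting the whole linear part factors as $Q\to\Omega[C_m]$ and $\Omega[C_{m,p}]\to\Omega[C_m]$, both of which are weak equivalences (contracting a chain of arity-one, identity-labelled vertices). By two-out-of-three the inclusion $Q\hookrightarrow\Omega[C_{m,p}]$ is a weak equivalence, hence a trivial cofibration. More explicitly, $Q$ already contains the dendroidal Segal core of $C_{m,p}$ — the full spine $\Delta[p]$ supplies every unary corolla of the linear part and $\Omega[C_{m,p-1}]$ supplies the top $m$-corolla — and one may instead fill the remaining cells by an induction of inner-horn pushouts along the inner edge created at $v$, exhibiting the map as inner anodyne.

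The main obstacle is the reduction of the second paragraph rather than the weak-equivalence input: one must ensure that $M^P_{p,-}$ is \emph{literally} the relative mapping space computed by the frame $P^{(\Delta[\bullet])}$, so that SM7 yields a genuine trivial fibration and not merely a weak equivalence, and that the enrichment is set up so the conclusion is a trivial fibration of simplicial sets (this is harmless, since trivial fibrations coincide for the Joyal and Kan--Quillen structures). The geometric content is minor and entirely special to this coface: removing the unary identity vertex $v$ is what makes $Q\hookrightarrow\Omega[C_{m,p}]$ a weak equivalence, whereas an inner face or the upper external coface would change the homotopy type and would \emph{not} give a trivial fibration — which is exactly why the statement singles out the external coface omitting the lower vertex.
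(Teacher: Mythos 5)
Your reduction is, modulo packaging, the paper's own proof. Since $\mathsf{dSet}$ with the Cisinski--Moerdijk structure is not a simplicial model category, your ``SM7'' step has to be read as the standard Reedy-theory fact that mapping a trivial cofibration into a Reedy fibrant resolution yields a trivial fibration of simplicial sets; unwinding that fact simplex by simplex transposes the lifting problem for $\partial\Delta[k]\hookrightarrow\Delta[k]$ into the problem of lifting $Q\hookrightarrow\Omega[C_{m,p}]$ against the matching-object fibration $P^{(\Delta[k])}\rightarrow\lim(d_*,P)_{k-1}$, which is exactly the square the paper writes down. Moreover your $Q=\Omega[C_{m,p-1}]\cup_{\Delta[p-1]}\Delta[p]$ is precisely the paper's $\partial^{ext}\Omega[C_{m,p}]$: the tree $C_{m,p}$ has exactly two outer faces, namely $\Omega[C_{m,p-1}]$ (omit the root vertex) and $\Delta[p]$ (omit the top corolla), and they meet along $\Delta[p-1]$. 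The base-change step identifying $M^P_{p,-}\rightarrow M^P_{p-1,-}$ with a pullback of $\mathrm{Map}(\Omega[C_{m,p}],P)\rightarrow\mathrm{Map}(Q,P)$ is also correct.

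The genuine gap is your principal argument that $Q\hookrightarrow\Omega[C_{m,p}]$ is a weak equivalence. The two maps you feed into two-out-of-three are \emph{not} weak equivalences in the Cisinski--Moerdijk model structure: the unary vertices of $\Omega(C_{m,p})$ are \emph{free} generating operations, not identities, so contracting the linear part changes the homotopy type. Concretely, take $m=p=1$: your claim asserts that the codegeneracy $\sigma\colon\Omega[C_{1,1}]=\Delta[2]\rightarrow\Delta[1]=\Omega[C_1]$ is a weak equivalence. Both objects are dendroidal nerves of set-theoretical operads, hence $\infty$-operads, and $\sigma$ is not fully faithful in the sense recalled in the paper: writing $e_0,e_1,e_2$ for the edges of $C_{1,1}$ from root to leaf, the operation space $\Omega[C_{1,1}]^L(e_0;e_1)$ is empty, whereas its target $\Omega[C_1]^L(r;r)$ (where $r$ is the root colour, the common image of $e_0$ and $e_1$) is nonempty; by the proposition of Heuts quoted in the paper, $\sigma$ is therefore not a weak equivalence. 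Codegeneracies are equivalences only when the contracted unary vertex is labelled by an isomorphism, which a free generator is not --- this is the same reason $\Delta[1]\rightarrow\Delta[0]$ fails to be a Joyal equivalence. So the two-out-of-three shortcut collapses, and with it your main route to the key input. The statement you need is nevertheless true and is exactly what the paper invokes: $\partial^{ext}\Omega[C_{m,p}]\hookrightarrow\Omega[C_{m,p}]$ is a trivial cofibration (indeed inner anodyne) by \cite[Lemma~5.1]{MoerdijkWeiss09}. Alternatively, your secondary sketch --- $Q$ contains the Segal core of $C_{m,p}$, then fill by inner horns --- is a viable direct proof, but as written it is only a sketch, and it, not the two-out-of-three argument, must carry the weight of the proof.
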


\begin{proof}[Proof of Lemma \ref{lemmaappendix2}]
Let $k \geq 1$ be an integer. On the one hand, let $\lim (d_*,P)_{k-1}$ be the projective limit of the diagram of dendroidal sets made up of:
\begin{itemize}
\item[$\triangleright$] for any integer $0\leq i \leq n$ a copy of $P_{k-1}$ denoted by $(d_i,P_{k-1})$.
\item[$\triangleright$] if $k \geq 2$, for any pair of integers $(i,j)$ such that $0 \leq i < j \leq n$, a copy $(d_id_j,P_{k-2})$ of $P_{k-2}$ and maps
$$
\xymatrix{(d_i,P_k) \ar[r]^(0.45){d_{j-1}} & (d_id_j,P_k) & (d_j,P_k) \ar[l]_(0.45){d_i}}\ .
$$  
\end{itemize}
Then, the canonical map $P_k \rightarrow \lim (d_*,P)_{k-1}$ is a fibration; see \cite[4.3]{DwyerKan80} for more details. Suppose that we have the following diagram for an integer $k \geq 0$:
$$
\xymatrix{\partial\Delta[k] \ar[r] \ar[d] & M^P_{p,-} \ar[d]^{d_p} \\
\Delta[k] \ar[r] & M^P_{p-1,-}}
$$
If $k=0$ this diagram corresponds to the data of an element of $x\in M^P_{p-1,0}$. The degeneracy $s_{p-1}x$ of $x$ gives a lifting of the diagram. Suppose now that $k \geq 1$. The diagram induces an element $b \in (\lim (d_*,P)_{k-1})_{C_{m,p}}$ and an element $a \in (P_k)_{C_{m,p-1}}$. They fit in the following square:
$$
\xymatrix{\partial^{ext} \Omega[C_{m,p}] \ar[r]^{(a,c)} \ar[d] & P_k \ar[d] \\
\Omega[C_{m,p}] \ar[r]_b & \lim (d_*,P)_{k-1}\ .}
$$
The right vertical map is a fibration and the left verical map is a trivial cofibration; see \cite[Lemma 5.1]{MoerdijkWeiss09}. Then the square has a lifting which induces a lifting of the first square.
\end{proof}

\begin{proof}[Proof of Theorem \ref{thmfullyfaithful}]
By the two former lemmata, it is straightforward to prove that all the face maps $M^P_{p,-} \rightarrow M^P_{p-1,-}$ and $M^P_{-,q} \rightarrow M^P_{-,q-1}$ and degeneracies $M^P_{p,-} \rightarrow M^P_{p+1,-}$ and $M^P_{-,q} \rightarrow M^P_{-,q+1}$ are weak homotopy equivalences of simplicial sets. Then by \cite[Corollary 15.11.12]{Hirshhorn03}, we have a chain of weak homotopy equivalences:
$$
\xymatrix{P(c_1,\ldots,c_m;c) \ar[r]& diag(M^P) &P^L(x_1,\ldots,x_m;x)\ , \ar[l]}
$$
where $diag(M^P)$ is the diagonal of the bisimplicial set $M^P$. Furthermore, as the Reedy fibrant resolution $P^{(\Delta[-])}$ of $P$ is functorial in $P$, a morphism $f:P \rightarrow Q$ of $\infty$-operads induces the following diagram:
$$
\xymatrix{P(c_1,\ldots,c_m;c) \ar[d] \ar[r]& diag(M^P) \ar[d] &P^L(c_1,\ldots,c_m;c) \ar[l]\ar[d]\\
Q(f(c_1),\ldots,f(c_m);f(c)) \ar[r]& diag(M^Q) &Q^L(f(c_1),\ldots,f(c_m);f(c)) \ , \ar[l]}
$$
where the horizontal maps are weak homotopy equivalences and the vertical ones are canonically induced by $f$.
\end{proof}

\begin{cor}
The two definitions of the fully-faithful morphisms of $\infty$-operads are equivalent.
\end{cor}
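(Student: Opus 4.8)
The plan is to deduce the corollary directly from Theorem~\ref{thmfullyfaithful}, which already contains all the homotopical content. First I would recall the two competing definitions of a fully faithful morphism $f : P \to Q$ of $\infty$-operads: in the first sense each induced map $P(c_1,\ldots,c_m ; c) \to Q(f(c_1),\ldots,f(c_m) ; f(c))$ is a weak homotopy equivalence of simplicial sets, while in the second sense each induced map $P^L(c_1,\ldots,c_m ; c) \to Q^L(f(c_1),\ldots,f(c_m) ; f(c))$ is. The aim is to show that, for every $f$ and every tuple of colors, one condition holds if and only if the other does.

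The key input is the final commutative ladder in the proof of Theorem~\ref{thmfullyfaithful}: for fixed colors, $f$ induces a diagram whose rows are the natural zigzags
$$
P(c_1,\ldots,c_m ; c) \longrightarrow diag(M^P) \longleftarrow P^L(c_1,\ldots,c_m ; c)
$$
and
$$
Q(f(c_1),\ldots,f(c_m) ; f(c)) \longrightarrow diag(M^Q) \longleftarrow Q^L(f(c_1),\ldots,f(c_m) ; f(c)),
$$
in which all four horizontal arrows are weak homotopy equivalences and the three vertical arrows are induced by $f$. I would then apply the two-out-of-three property of weak equivalences twice. Comparing the two left-hand objects through $diag$, the vertical map $P(c_1,\ldots,c_m ; c) \to Q(f(c_1),\ldots,f(c_m) ; f(c))$ is a weak equivalence if and only if $diag(M^P) \to diag(M^Q)$ is; comparing the two right-hand objects, the vertical map $P^L(c_1,\ldots,c_m ; c) \to Q^L(f(c_1),\ldots,f(c_m) ; f(c))$ is a weak equivalence if and only if $diag(M^P) \to diag(M^Q)$ is.

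Combining these two biconditionals, both the first-sense and the second-sense conditions on $f$ at the chosen colors are equivalent to the single statement that $diag(M^P) \to diag(M^Q)$ be a weak homotopy equivalence, hence equivalent to each other. Since this holds for every $m$ and every choice of colors $c, c_1, \ldots, c_m$, the two global definitions of fully faithful morphism coincide, which is the assertion of the corollary. I expect no genuine obstacle at this stage: all of the difficulty has already been discharged in building the bisimplicial set $M^P$ and verifying, in Lemmas~\ref{lemmaappendix1} and~\ref{lemmaappendix2}, that its rows and columns are joined by weak equivalences naturally in $P$, which is exactly what makes the horizontal arrows of the ladder weak equivalences.
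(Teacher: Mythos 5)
Your argument is correct and is essentially identical to the paper's own proof: both invoke the commutative ladder from the end of the proof of Theorem~\ref{thmfullyfaithful} and apply the two-out-of-three property for weak equivalences, concluding that either condition on $f$ at fixed colors is equivalent to $diag(M^P) \rightarrow diag(M^Q)$ being a weak equivalence. Nothing is missing.
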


\begin{proof}
Consider the former diagram. As the horizontal maps are weak homotopy equivalences, then the left vertical map is a weak equivalence if and only if the central vertical map is a weak equivalence if and only if the right vertical map is a weak equivalence.
\end{proof}

%%%%%%%%%%%%%%%%%%%%%%%%%%%%%%%%%%%%%%%

\bibliographystyle{amsalpha}
\bibliography{biblg}

\end{document}